\documentclass[11pt,letterpaper]{amsart}

\usepackage{amsfonts}
\usepackage{amsmath}
\usepackage{amsthm,amssymb}
\usepackage{CJK,graphicx}
\usepackage{amscd}
\usepackage{amssymb}
\usepackage{mathrsfs}
\usepackage[all,cmtip]{xy}
\usepackage{lmodern}
\usepackage[Symbol]{upgreek}
\usepackage{bm}
\usepackage{eucal}
\usepackage[nopar]{lipsum}
\usepackage{rotating}
\usepackage{tikz}
\usepackage{calc}
\usepackage{tikz-cd}
\usepackage{mathabx}
\usepackage{tikz-3dplot}
\usepackage{hyperref}
\usetikzlibrary{calc}
\usetikzlibrary{decorations.markings,arrows}
\usetikzlibrary{shapes.geometric}
\usepackage{lipsum}
\usepackage{caption}

\newsavebox\CBox
\newcommand\hcancel[2][0.5pt]{%
	\ifmmode\sbox\CBox{$#2$}\else\sbox\CBox{#2}\fi%
	\makebox[0pt][l]{\usebox\CBox}%
	\rule[0.5\ht\CBox-#1/2]{\wd\CBox}{#1}}

\newtheorem{theorem}{Theorem}
\newtheorem{corollary}[theorem]{Corollary}
\newtheorem{definition}[theorem]{Definition}
\newtheorem{lemma}[theorem]{Lemma}
\newtheorem{proposition}[theorem]{Proposition}

\newtheorem{conjecture}[theorem]{Conjecture}

\newtheorem{remark}[theorem]{Remark}

\numberwithin{equation}{section}

\begin{document}
	
\title{\textbf{Lagrangian capacity and chain level string topology}}\author{Shah Faisal \and Yin Li}
\newcommand{\Addresses}{{
		\bigskip
		\footnotesize
		Shah Faisal, \textsc{Department of Mathematics, Uppsala University, 753 10 Uppsala, Sweden}\par\nopagebreak\textit{E-mail address}: \texttt{shah.faisal@math.uu.se}
		
		\medskip
		Yin Li, \textsc{Department of Mathematics, Uppsala University, 753 10 Uppsala, Sweden}\par\nopagebreak
		\textit{E-mail address}: \texttt{yin.li@math.uu.se}
}}
\date{}\maketitle

\begin{abstract}
We derive upper bounds for the Lagrangian capacities of Liouville domains with finite Gutt--Hutchings capacities and show that the Lagrangian capacity of a convex or concave toric domain of arbitrary dimension equals its diagonal. In particular, this completely settles the conjecture of Cieliebak--Mohnke on the Lagrangian capacity of ellipsoids. Our proof is based on an $S^1$-equivariant variant of the techniques of Fukaya and Irie, and does not use holomorphic curves with local tangency constraints, which would inevitably cause transversality issues. Moreover, we show that any extremal Lagrangian torus in a $2n$-dimensional ellipsoid must lie on the boundary, verifying a conjecture of \cite{sfe}. Applications of our results and techniques include new upper bounds on the Lagrangian width for aspherical Lagrangians in Liouville manifolds and the first computations of the Lagrangian capacities for many non-subcritical Weinstein domains in dimensions $4$ and $6$.
\end{abstract}
	
\section{Introduction}

\subsection{Background}

A symplectic manifold is a pair $(X,\omega)$, where $X$ is a smooth $2n$-dimensional manifold and $\omega$ is a closed, non-degenerate differential $2$-form. Non-degeneracy of $\omega$ means that the top-degree form $\omega^{n}$ is nowhere vanishing on $X$. In this article, we will focus on a particular class of symplectic manifolds, called \textit{Liouville domains}. By a Liouville domain we mean a compact oriented smooth manifold $X$ with nonempty boundary $\partial X$ equipped with a $1$-form $\lambda$ such that $d\lambda$ is a symplectic form on $X$. The restriction $\lambda|_{\partial X}$ is a contact form on $\partial X$, and the orientation induced by $\lambda$ on $\partial X$ agrees with the boundary orientation.

Elementary examples of Liouville domains arise from star-shaped domains $(X,\lambda_{\mathrm{std}})$ with smooth boundary in the standard symplectic vector space $(\mathbb{R}^{2n},\omega_{\mathrm{std}}:=d\lambda_{\mathrm{std}})$, where
\begin{equation}\label{eq:std}
\lambda_{\mathrm{std}}:=\frac{1}{2}\sum_{i=1}^{n}(x_idy_i-y_idx_i).
\end{equation}
Ellipsoids provide a particularly important class of smooth star-shaped domains. Given real numbers 
$0 < a_1 \le a_2 \le \cdots \le a_n < \infty$, 
the associated $2n$-dimensional ellipsoid is defined by
\[
E^{2n}(a_1,\dots,a_n):= \left\{ (z_1,\dots,z_n) \in \mathbb{C}^n \left\vert\sum_{i=1}^{n} \frac{\pi |z_i|^2}{a_i} \le 1 \right.\right\}.
\]
In particular,
\begin{equation}\label{eq:ball}
B^{2n}(a) := E^{2n}(a,\dots,a)
\end{equation}
is the standard symplectic ball of capacity $a>0$, or equivalently, with radius $\sqrt{a/\pi}$.

In this paper, we establish a relation between the \textit{Gutt--Hutchings capacities} \cite{ghs} and the \textit{Cieliebak--Mohnke capacity} \cite{cmp} for Liouville domains---the latter is also known as the \textit{Lagrangian capacity}. To define the latter, let $L \subset (X,\omega)$ be a closed Lagrangian submanifold. We define the symplectic area of $L$ by
\[
A_{\min}(L):=\inf_{\substack{A \in \pi_2(X,L) \\ \int_A \omega > 0}} 
\int_A \omega \in [0,\infty].
\]

\begin{definition}\label{definition:CM}
The Cieliebak--Mohnke capacity of a symplectic manifold $(X,\omega)$, denoted 
$C^{\mathrm{CM}}(X,\omega)$, is defined as
\[
C^{\mathrm{CM}}(X,\omega):= \sup_{\substack{L \subset (X,\omega) \\ \textrm{Lagrangian torus}}}A_{\min}(L)\;\in\; [0,\infty].
\]
\end{definition}

\begin{definition}\label{CMextremal}
A Lagrangian torus $L \subset (X,\omega)$ is called extremal if
\[
A_{\min}(L)=
	C^{\mathrm{CM}}(X,\omega).
	\]
\end{definition}

We recall some conjectures concerning the Cieliebak--Mohnke capacity, which we will study in this paper and subsequent works.

\begin{conjecture}[\cite{cmp}, Conjecture 1.5]\label{CMcapaciy-conjecture}
The Cieliebak--Mohnke capacity of ellipsoids are given by
\[
C^{\mathrm{CM}}\left(E^{2n}(a_1,\dots,a_n)\right)=\left(\frac{1}{a_1} + \frac{1}{a_2} + \cdots + \frac{1}{a_n}\right)^{-1}.
\]
\end{conjecture}

\begin{conjecture}[\cite{cmp}, Conjecture 1.9]\label{extremal-ball-conjecture}
Every extremal Lagrangian torus in the symplectic unit ball $\left(B^{2n}(1),\omega_{\mathrm{std}}\right)$ is contained entirely in the boundary $\partial B^{2n}(1)$.
\end{conjecture}

\begin{conjecture}[\cite{cmp}, Conjecture 1.8]\label{monotone-extremal-conjecture}
Let $\omega_{\mathrm{FS}}$ denote the Fubini--Study form on $\mathbb{CP}^n$. A Lagrangian torus in $(\mathbb{CP}^n,\omega_{\mathrm{FS}})$ is monotone if and only if it is extremal.
\end{conjecture}

This conjecture is of particular interest to us, as it suggests that extremal Lagrangian tori may provide a natural replacement for monotone Lagrangian tori, which only exist in monotone symplectic manifolds, but we will not prove it in this paper.

\begin{conjecture}[\cite{sfe}, Conjecture 9.11]\label{extremal-ellip-conjecture}
For any $0<a_1\le a_2\le \cdots \le a_n<\infty$, every extremal Lagrangian torus in $\left(E^{2n}(a_1,\dots,a_n),\omega_{\mathrm{std}}\right)$ is contained entirely in the boundary $\partial E^{2n}(a_1,\dots,a_n)$.
\end{conjecture}

The next conjecture predicts that Lagrangian embedding problems are governed by the same numerical obstruction as symplectic embedding problems.

\begin{conjecture}[\cite{chls}, Conjecture 2]\label{realize-CMcapacity}
For $a \ge 1$, define
\[
\operatorname{EC}(a):=\inf\left\{ r>0\left\vert E^4(1,a)\hookrightarrow\left(B^4(r)\cup B^2(r/2)\times\mathbb{C},\omega_{\mathrm{std}}\right)\right\}.\right.
\]
Then
\[
\operatorname{EC}(a)=2\, C^{\mathrm{CM}}\left(E^{4}(1,a),\omega_{\mathrm{std}}\right).
\]
\end{conjecture}

We summarize below the progresses so far on the conjectures stated above.

\begin{itemize}
	\item Cieliebak--Mohnke (2014) (\cite{cmp}, Corollary 1.3) proved Conjecture \ref{CMcapaciy-conjecture} for all balls and cylinders, i.e., for
	\[E^{2n}(a,a ,\dots, a)\text{ and } E^{2n}(a,\infty,\dots, \infty).\]
	Moreover, they established that every monotone Lagrangian torus in $(\mathbb{CP}^n,\omega_{\mathrm{FS}})$ is extremal (cf. \cite{cmp}, Corollary 1.7), which proves one direction of Conjecture \ref{monotone-extremal-conjecture}.
	\item Dimitroglou-Rizell (2015) \cite{gdu} gave a proof of Conjecture \ref{extremal-ball-conjecture} for the $4$-dimensional ball $B^{4}(1)$. His proof is essentially based on holomorphic curve techniques in dimension $4$, e.g. positivity of the intersection and automatic transversality. 
	\item Pereira in his PhD thesis (2022, cf. \cite{mpo}, Theorem 3.28) confirmed Conjecture \ref{CMcapaciy-conjecture} for all $4$-dimensional ellipsoids. In higher dimensions, he shows that the conjecture (cf. \cite{mpo}, Theorem 4.37) holds under the assumption that a suitable virtual perturbation scheme exists to define curve counts for linearized contact homology and moduli spaces with local tangency constraints, which are used to construct augmentations on the linearized contact homology algebra of ellipsoids (cf. Siegel \cite{ksh}). Under the same assumption for the perturbation scheme, a proof of Conjecture \ref{realize-CMcapacity} would also follow from \cite{mpo}.
	\item The first author in his PhD thesis (2025) proves Conjecture \ref{extremal-ball-conjecture} in all dimensions (cf. \cite{sfe}, Theorem 1.7). Using similar ideas, he also confirms Conjecture  \ref{extremal-ellip-conjecture} for a class of toric domains including all ellipsoids $E^4(a, b)$ and cylinders $B^{2k}(1)\times \mathbb{C}^m$ for arbitrary $k, m\in \mathbb{N}$ (cf. \cite{sfe}, Theorem 1.11 and Theorem 1.8). Moreover, he proves Conjecture \ref{monotone-extremal-conjecture} for $\mathbb{CP}^2$ (cf. \cite{sfe}, Theorem 1.18).
\end{itemize}

In this paper, we introduce a new symplectic capacity using all aspherical Lagrangian submanifolds, extending the Cieliebak--Mohnke capacity. Using the techniques from Fukaya--Irie \cite{kfa,kic,kie} and the second author \cite{yla}, which are based on chain level string topology, we establish an upper bound on the symplectic area of closed oriented aspherical Lagrangian submanifolds in a Liouville domain in terms of its Gutt--Hutchings capacities. We prove that this bound is sharp for the new capacity on convex and concave toric domains in $\mathbb{R}^{2n}$, thereby completely settle the Conjectures \ref{CMcapaciy-conjecture} and \ref{realize-CMcapacity}. Furthermore, under an additional assumption on the asymptotic behavior of the Gutt--Hutchings capacities, we obtain a boundary rigidity result for extremal aspherical Lagrangians in Liouville domains, which is strong enough to confirm Conjecture \ref{extremal-ellip-conjecture}.

A notable feature of our approach is that it works without any putative perturbative schemes, which is one of the main novelties of this paper.

\subsection{Summary of results}

Before stating the main results of this paper, we fix some notations and introduce a variant of the Cieliebak--Mohnke capacity using aspherical Lagrangian submanifolds.

Let $X$ be a Liouville domain with $c_1(X)=0$. Denote by $\mathit{SH}^\ast(X)$ the symplectic cohomology of $X$, and by $\mathit{SH}^\ast_{S^1}(X)$ its $S^1$-equivariant version, which are both $\mathbb{Z}$-graded vector spaces over some field $\mathbb{K}$. For the purpose of this paper, we shall take $\mathbb{K}=\mathbb{R}$ to be the field of real numbers.

Recall that the $S^1$-equivariant symplectic cohomology $\mathit{SH}^\ast_{S^1}(X)$ is the cohomology of the complex
\begin{equation}\label{eq:SC}
\left(\mathit{SC}^\ast_{S^1}(X):=\mathit{SC}^\ast(X)\otimes_\mathbb{R}\mathbb{R}(\!(u)\!)/u\mathbb{R}[\![u]\!],\partial^{S^1}:=\partial+u\delta_1+u^2\delta_2+\cdots\right),
\end{equation}
where $\mathit{SC}^\ast(X)$ is the cochain complex defining the (non-equivariant) symplectic cohomology $\mathit{SH}^\ast(X)$, $\partial$ is the usual Floer differential, $\delta_1$ is the cochain level BV operator, and $u$ is a formal variable of degree 2. The action filtration on $\mathit{SC}^\ast(X)$ induces a filtration $F^\bullet$ on $\mathit{SC}^\ast_{S^1}(X)$, and the \textit{$d$-th Gutt--Hutchings capacity} of $X$, introduced in \cite{ghs}, is defined to be
\begin{equation}
C_d^\mathrm{GH}(X):=\inf\left\{a\left\vert\partial^{S^1}(\tilde{y})=e_X\otimes u^{-d+1}\textrm{ for some }\tilde{y}\in F^{\leq a}\mathit{SC}_{S^1}^{1-2d}(X)\right.\right\}, \nonumber
\end{equation}
where $e_X\in\mathit{SC}^0(X)$ is the cochain level representative of the identity $1\in\mathit{SH}^0(X)$.

\begin{definition}\label{definition:ALC}
For a symplectic manifold $(X,\omega)$, we define 
\[C^{\mathrm{AL}}(X,\omega):=\sup_{\substack{L\subset (X,\omega)\\{\textrm{aspherical Lagrangian}}}}A_{\min}(L),\]
where the supremum on the right-hand side is taken over all closed oriented aspherical Lagrangian submanifolds $L\subset X$ which are $\mathit{Spin}$.
\end{definition}

When the choice of the symplectic form is clear, we shall simply write $C^\mathrm{CM}(X)$ and $C^\mathrm{AL}(X)$ for the Lagrangian capacities.

\begin{definition}\label{definition:Asp-extremal}
Let $L\subset(X,\omega)$ be a closed oriented aspherical Lagrangian submanifold that is $\mathit{Spin}$. It is called extremal if  
\[A_{\min}(L)=C^{\mathrm{AL}}(X,\omega).\]
\end{definition}

We note that $C^{\mathrm{AL}}(X)$ defines a symplectic capacity on star-shaped domains $X\subset\mathbb{C}^n$. Moreover, by definition
\begin{equation}
C^{\mathrm{CM}}(X) \leq C^{\mathrm{A L}}(X).
\end{equation}
Our first result relates the Lagrangian capacity $C^{\mathrm{AL}}(X)$ to Gutt--Hutchings capacities.

\begin{theorem}\label{theorem:GH-ALbound}
Let $(X,\lambda)$ be a  Liouville domain with $c_1(X)=0$. We have 
\begin{equation}\label{eq:ubound}
C^{\mathrm{AL}}(X)\leq\inf_{d\in\mathbb{N}}\frac{C^{\operatorname{GH}}_d(X)}{d}.
\end{equation}
\end{theorem}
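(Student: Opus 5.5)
Fix $d\in\mathbb{N}$ and a closed oriented $\mathit{Spin}$ aspherical Lagrangian $L\subset X$. The goal is $A_{\min}(L)\le C^{\mathrm{GH}}_d(X)/d$. The plan is to compare the $S^1$-equivariant symplectic cochain complex with an $S^1$-equivariant chain model of the free loop space of $L$, via a chain level Viterbo-type map in the spirit of Fukaya--Irie and \cite{yla}. The argument then has three ingredients.

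\emph{Step 1 (closed--open map with action control).} Build a filtered chain map
\[
\Phi:\mathit{SC}^\ast_{S^1}(X)\to C^{S^1}_{n-\ast}(\mathcal{L}L)\otimes\mathbb{R}(\!(u)\!)/u\mathbb{R}[\![u]\!]
\]
from moduli spaces of holomorphic disks with one interior puncture asymptotic to a Hamiltonian orbit and boundary on $L$, evaluated along the boundary loop. Three features are needed.
\begin{itemize}
\item Energy: a generator of action $\le a$ is sent to a combination of chains of loops in classes $\beta$ whose disk areas satisfy $\omega(\beta)\le a$.
\item Unit: $e_X$ is sent to the fundamental class $[L]$ of constant loops.
\item Equivariance: the maps $\delta_k$ intertwine with the loop rotation (string topology BV) operators, with the $u$-powers matched.
\end{itemize}
Since $L$ is aspherical, every component of $\mathcal{L}L$ is a $K(\pi,1)$. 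Disk bubbling on the Lagrangian side is then controlled in the Fukaya--Irie framework: the disk bubbles contribute a Maurer--Cartan element $x=\sum_\beta x_\beta$ in the chain level string topology dg Lie algebra. One twists the target differential by $x$ to get a genuine chain map, and the pieces $x_\beta$ have energy $\omega(\beta)\ge A_{\min}(L)$ whenever $x_\beta\ne0$.

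\emph{Step 2 (degree bookkeeping).} Take $\tilde y\in F^{\le a}\mathit{SC}^{1-2d}_{S^1}$ with $\partial^{S^1}\tilde y=e_X\otimes u^{-d+1}$. Applying $\Phi$ gives $(\partial+[x,\cdot]+u\Delta+\dots)\Phi(\tilde y)=[L]\otimes u^{-d+1}$. Because $\mathcal{L}L$ is aspherical and $L$ is $\mathit{Spin}$, the constant-loop component has no $S^1$-equivariant room. The class $[L]u^{-d+1}$ in $H^{S^1}_\ast$ of constant loops is nonzero, and it survives the untwisted differential, since constant loops form a direct summand for the untwisted operations. Hence $[L]u^{-d+1}$ can only be hit through the twisting by $x$.

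\emph{Step 3 (counting the twist).} Each application of the bracket with $x$ can lower the $u$-exponent by at most one: the BV-type identity trades a $u$-power for a loop class. To climb from $u^{-d+1}$ back to $u^0$ one therefore needs a chain involving at least $d$ nonconstant classes $\beta_1,\dots,\beta_d$ with $\sum\omega(\beta_i)$ of total energy at most $a$. Energy then gives $a\ge\sum_i\omega(\beta_i)\ge d\,A_{\min}(L)$. Letting $a\downarrow C^{\mathrm{GH}}_d(X)$ and taking the infimum over $d$ proves \eqref{eq:ubound}.

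\emph{Main obstacle.} The hardest part is Step 1 combined with the counting in Step 3. It requires an $S^1$-equivariant chain level closed--open map on de Rham chains of $\mathcal{L}L$ compatible with the Maurer--Cartan twist, and a filtration argument (by $u$-degree and by number of $x$-insertions) that makes precise the statement that each $u$-power costs one disk of area $\ge A_{\min}(L)$. I would first try to realize this through a spectral sequence filtered by the homotopy class count, mirroring \cite{yla} and Irie's chain level BV structure.
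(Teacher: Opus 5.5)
\textbf{Verdict: there is a genuine gap, in Step 3.} Your Steps 1 and 2 match what the paper packages as Theorem~\ref{theorem:deform-Viterbo}. There a Maurer--Cartan class $x$ is built from disc counts, a primitive $y$ has action bounded below by $-C^{\mathrm{GH}}_d(X)$, and the twisted equation has constant-loop component $\pm[\![L]\!]\otimes h^{-d+1}\neq0$. So some term $\ell_{m+1}(y(-a),x(a_1),\dots,x(a_m))$ with $m\geq1$ is nonzero.

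Step 3 is the only place the factor $d$ can come from, and its key principle is asserted rather than derived. That principle says each insertion of $x$ lowers the $u$-exponent by at most one, and no BV-type identity gives it:
\begin{itemize}
\item the Maurer--Cartan element carries no power of $u$;
\item the $u$-exponent is chain-level bookkeeping with no invariant meaning on $H^{S^1}_\ast(\mathcal{L}(b)L;\mathbb{R})$ for $b\neq0$;
\item nothing you say controls how the higher operations produced by homotopy transfer interact with $u$.
\end{itemize}
What the principle really encodes is a Maslov index statement. Since $u$ has degree $2$ and a disc class $a_i$ enters with degree shift $\mu(a_i)$, ``one $u$-power per disc'' amounts to $\mu(a_i)\leq2$ for every class feeding the twist. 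It also requires $\mu(a)\geq2d$ for the class carried by the primitive. Nothing in your argument stops a single disc class of Maslov index $2k$, $k>1$, from absorbing $k$ powers of $u$. Asphericity enters your proof only through the nonvanishing of $[L]u^{-d+1}$; the count needs it a second time.

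\textbf{The missing ingredient is a degree bound on the non-contractible loop components.} For aspherical $L$, every component of $\mathcal{L}L$ is homotopy equivalent to a CW complex of dimension at most $n$ (Lemma~\ref{lemma:CW}). One deduces that for $b\neq0$ the group $H^{S^1}_\ast(\mathcal{L}(b)L;\mathbb{R})$ is concentrated in degrees $0,\dots,n-1$. Use the grading $\mathbb{H}^{S^1}_\ast(b)=H^{S^1}_{\ast+n+\mu(b)-1}(\mathcal{L}(b)L;\mathbb{R})$ with $x\in\widehat{\mathbb{H}}^{S^1}_{-2}$ and $y\in\widehat{\mathbb{H}}^{S^1}_{2d}$. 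Then $x(a_i)$ sits in actual degree $n-3+\mu(a_i)$, and $y(-a)$ in actual degree $2d+n-1-\mu(a)$. Here $a\neq0$ because $\lambda(a)=\sum_i\lambda(a_i)>0$. Nonvanishing of these inputs therefore forces $\mu(a_i)\leq2$ and $\mu(a)\geq2d$. Additivity of $\mu$ gives $2m\geq\sum_i\mu(a_i)=\mu(a)\geq2d$, i.e.\ $m\geq d$. Your energy estimate then finishes the proof: $d\,A_{\min}(L)\leq\sum_{i=1}^m\lambda(a_i)=\lambda(a)\leq C^{\mathrm{GH}}_d(X)$.

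\textbf{Two smaller omissions.}
\begin{itemize}
\item The constant-disc part of the Maurer--Cartan element is nonzero at chain level. It must first be absorbed into the differential, which does not change homology, before transferring to $\mathbb{H}^{S^1}_\ast$. Only then can you treat all remaining $a_i$ as having positive energy.
\item Your construction needs $L\subset\mathrm{int}(X)$. For $L$ meeting $\partial X$, attach a collar of width $\eta$, use $C^{\mathrm{GH}}_d(X_{+\eta})=e^{\eta}C^{\mathrm{GH}}_d(X)$, and let $\eta\to0$.
\end{itemize}
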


\begin{remark}
In \cite{mpo}, a weaker version of the inequality (\ref{eq:ubound}), namely $C^\mathrm{CM}(X)\leq\inf_{d\in\mathbb{N}}\frac{C^{\operatorname{GH}}_d(X)}{d}$ is proved under the additional assumption that $\pi_1(X)=0$ (and the putative perturbation scheme mentioned before if $\dim(X)>4$). This is not needed in our case. In fact, as we will see in Section \ref{section:dilation}, the right-hand side of (\ref{eq:ubound}) exists for many Liouville domains with $\pi_1(X)\neq0$.

According to \cite{ghs}, the limit $\lim_{d\rightarrow\infty}\frac{C_d^\mathrm{GH}(X)}{d}$ exists when $X\subset\mathbb{C}^n$ is a convex or concave toric domain. Irie's work \cite{kia} provides strong evidence for the conjecture that $\lim_{d\rightarrow\infty}\frac{C_d^\mathrm{GH}(X)}{d}$ actually exists for all star-shaped toric domains $X\subset\mathbb{C}^2$.
\end{remark}

\begin{corollary}\label{CM=AL}
For any convex or concave toric domain $X\subset\mathbb{C}^n$, we have
\[C^{\mathrm{CM}}(X)=C^{\mathrm{AL}}(X)=\operatorname{diagonal}(X) .\]
In particular, Conjecture \ref{CMcapaciy-conjecture} holds, i.e., the Cieliebak--Mohnke capacity of an ellipsoid is given by
\[C^{\mathrm{CM}}\left(E^{2n}(a_1,a_2,\dots,a_n)\right)=\left(\frac{1}{a_1}+\frac{1}{a_2}+\cdots+\frac{1}{a_n}\right)^{-1}.\]
\end{corollary}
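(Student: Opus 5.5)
The proof splits into a lower bound, by exhibiting an explicit Lagrangian torus, and an upper bound, by combining Theorem \ref{theorem:GH-ALbound} with the Gutt--Hutchings computations of $C^{\mathrm{GH}}_d$ for convex and concave toric domains. The two bounds are tied together by the chain
\[
\operatorname{diagonal}(X)\le C^{\mathrm{CM}}(X)\le C^{\mathrm{AL}}(X)\le \inf_{d}\frac{C^{\mathrm{GH}}_d(X)}{d}\le \operatorname{diagonal}(X).
\]
Recall that $X=X_\Omega=\mu^{-1}(\Omega)$, where $\mu(z)=(\pi|z_1|^2,\dots,\pi|z_n|^2)$, and $\operatorname{diagonal}(X)=\sup\{c\mid (c,\dots,c)\in\Omega\}$. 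The middle inequality $C^{\mathrm{CM}}\le C^{\mathrm{AL}}$ holds because a torus is aspherical, orientable and Spin.

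\emph{Lower bound.} Fix $c<\operatorname{diagonal}(X)$ with $(c,\dots,c)$ in the interior of $\Omega$, and take the product torus $L_c=\mu^{-1}(c,\dots,c)\subset X$.
\begin{itemize}
\item We have $\pi_2(X,L_c)\cong\pi_1(L_c)\cong\mathbb{Z}^n$, since $X$ is contractible (it is star-shaped).
\item The area of the class with boundary $(k_1,\dots,k_n)$ is $c\sum k_i$.
\item Hence $A_{\min}(L_c)=c$.
\end{itemize}
Letting $c\to\operatorname{diagonal}(X)$ gives $C^{\mathrm{CM}}(X)\ge\operatorname{diagonal}(X)$.

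\emph{Upper bound.} Since $c_1(X)=0$, Theorem \ref{theorem:GH-ALbound} applies and gives $C^{\mathrm{AL}}(X)\le C^{\mathrm{GH}}_d(X)/d$ for every $d$. It therefore suffices to show $\liminf_d C^{\mathrm{GH}}_d(X)/d\le\operatorname{diagonal}(X)$; in fact I will show it for the single value $d=kn$. I use the formulas of \cite{ghs}: for a convex toric domain,
\[
C^{\mathrm{GH}}_k(X_\Omega)=\min\Bigl\{\|v\|^*_\Omega \;\Bigm|\; v\in\mathbb{N}^n,\ \textstyle\sum v_i=k\Bigr\},
\]
where $\|v\|^*_\Omega=\max_{w\in\Omega}\langle v,w\rangle$. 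For a concave toric domain there is the analogous max–min formula, $C^{\mathrm{GH}}_k=\max\{[v]_\Omega\mid \sum v_i=k+n-1\}$ with $[v]_\Omega=\min_{w\in\partial_+\Omega}\langle v,w\rangle$.

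In the convex case, take $v=(k,\dots,k)$, so $d=kn$. The desired bound $\|v\|^*_\Omega\le k\,n\,\operatorname{diagonal}(X)$ means that $\max_{w\in\Omega}\sum w_i= n\cdot\operatorname{diagonal}$. This holds because:
\begin{itemize}
\item $\Omega$ is convex and symmetric under coordinate permutations, or more generally,
\item the supporting hyperplane argument works at the diagonal point.
\end{itemize}
Symmetry is not guaranteed, so the general argument is needed. Let $p=(\delta,\dots,\delta)$ with $\delta=\operatorname{diagonal}(X)$, and take a supporting hyperplane to $\Omega$ at $p$ with normal $\nu\ge0$. For $v$ approximately proportional to $\nu$ with integer entries and $\sum v_i=d$, we get $\|v\|^*_\Omega=\langle v,p\rangle=d\delta$ exactly when $v\parallel\nu$. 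Rational approximation of $\nu$ then gives $C^{\mathrm{GH}}_d/d\to\delta$ along a subsequence. This also handles the case $\nu$ not parallel to $(1,\dots,1)$.

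In the concave case, the max–min formula is bounded above by testing $w=p\in\partial_+\Omega$: this gives $[v]_\Omega\le\langle v,p\rangle=(d+n-1)\delta$. Hence $C^{\mathrm{GH}}_d/d\le\delta(1+(n-1)/d)\to\delta$, and taking the infimum over $d$ gives the bound. For ellipsoids, $\Omega$ is a simplex with vertices $a_ie_i$, so the diagonal is $(\sum 1/a_i)^{-1}$, which yields Conjecture \ref{CMcapaciy-conjecture}.

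\emph{Main obstacle.} The main difficulty is the convex case with a general non-symmetric $\Omega$. There I must check carefully that the Gutt--Hutchings formula normalizations match, that is, that $\Omega$ is taken in moment coordinates with $\pi|z|^2$. I also need the rational approximation step, including the case where $\nu$ has zero components: then $v$ is supported on fewer coordinates, and $\langle v,p\rangle=d\delta$ still holds. The toric formulas themselves are quoted from \cite{ghs}.
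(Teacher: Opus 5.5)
Your proof is correct, but in the convex case your upper bound takes a different route from the paper.

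\textbf{How the paper argues.} The paper treats both cases at once. A convex or concave $\Omega$ is down-closed, so no point of $\Omega$ has all coordinates larger than $\delta_\Omega$. Hence $X_\Omega\subset N^{2n}(\delta_\Omega)$. Monotonicity and the single computation $C^{\mathrm{GH}}_d(N^{2n}(\delta))=\delta(d+n-1)$ from \cite{ghs} then give $C^{\mathrm{GH}}_d(X_\Omega)\le\delta_\Omega(d+n-1)$ for every $d$.

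\textbf{Concave case.} Your argument, testing $w=p$ in the max--min formula, produces exactly this bound. So here the two proofs agree in substance.

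\textbf{Convex case.} You instead use the min formula of \cite{ghs} together with the support function in the direction of an outward normal $\nu$ at $p=(\delta,\dots,\delta)$. This needs more input:
\begin{itemize}
\item the full combinatorial formula of \cite{ghs};
\item the fact $\nu\ge0$, which you should justify from down-closedness: if $\nu_i<0$, then $p-\epsilon e_i\in\Omega$ would increase $\langle\nu,\cdot\rangle$;
\item an approximation step.
\end{itemize}
The approximation can be done without Diophantine arguments. Normalize $\sum_i\nu_i=1$ and take $v=\lfloor t\nu\rfloor$ componentwise. Since $\Omega\subset\mathbb{R}^n_{\ge0}$, you get $\|v\|^*_\Omega\le t\delta$ with $d=\sum_i v_i\ge t-n$.

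In return, your route gives more. When $\nu$ can be chosen rational, you get $C^{\mathrm{GH}}_d(X_\Omega)=d\,\delta$ exactly for suitable $d$; the reverse inequality follows from $\|v\|^*_\Omega\ge\langle v,p\rangle$. This is the spectral convexity of Definition \ref{definition:asymp-sta} needed in Theorem \ref{theorem:boundaryrigidity}, which the paper checks explicitly only for ellipsoids.

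\textbf{Lower bound.} Using $\mu^{-1}(c,\dots,c)$ with $c<\delta$ is equivalent to the paper's use of the Clifford torus on $\partial X_\Omega$. The paper's choice additionally exhibits an extremal torus.

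\textbf{A claim to delete.} Your intermediate assertion that $v=(k,\dots,k)$ with $d=kn$ works, i.e.\ that $\max_{w\in\Omega}\sum_i w_i=n\delta$, is false without symmetry. For $E^4(1,2)$ one has $\max_\Omega(w_1+w_2)=2$, while $2\delta=4/3$. The supporting-hyperplane argument does not rescue this choice. Only the version with $v$ (approximately) parallel to $\nu$ is valid, so state the convex case that way from the start.
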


\begin{remark}\label{CMextremal=Asp-extremal}
By Corollary \ref{CM=AL}, a Lagrangian torus in an ellipsoid is extremal in the sense of Definition \ref{CMextremal} if and only if it is extremal in the sense of Definition \ref{definition:Asp-extremal}. 
\end{remark}

Our method of proving Theorem \ref{theorem:GH-ALbound} also leads to a new upper bound for Lagrangian width for Lagrangian submanifolds in Liouville domains. Let $L$ be a closed Lagrangian submanifold in a symplectic manifold $(X,\omega)$. We say that a symplectic embedding
\[
i:\left(B^{2n}(r),\omega_{\mathrm{std}}\right)\hookrightarrow (X,\omega)
\]
is \textit{relative to} $L$ if
\[
i^{-1}(L)=B^{2n}(r)\cap \mathbb{R}^n.
\]
We define the \textit{Lagrangian width} of $L\subset X$ by
\[
w(L;X):=\sup\left\{r\left\vert\exists\text{ symplectic embedding }B^{2n}(r)\hookrightarrow X \text{ relative to }L\right.\right\}.
\]
Let $(X,\lambda)$ be a Liouville domain. For a Lagrangian submanifold $L\subset\mathrm{int}(X)$ in the interior of $X$, we introduce the following quantities:
\[
C^{\operatorname{GH}}_d(L;X):=\inf_{L\subset W\subset X}C^{\operatorname{GH}}_d(W),\textrm{ } C^{\operatorname{GH}}_{\inf}(L;X):=\inf_{d\in\mathbb N}\frac{C^{\operatorname{GH}}_d(L;X)}{d},
\]
where the infimum is taken over all Liouville subdomains $W\subset X$
containing $L$. Note that the quantities $C^{\operatorname{GH}}_d(L;X)$ and $C^{\operatorname{GH}}_{\inf}(L;X)$ are invariant under Hamiltonian isotopies of $L$. We say that a closed Lagrangian submanifold $L\subset\mathrm{int}(X)$ admits an \textit{exact Lagrangian cap} if there exists a Liouville subdomain $W\subset X$ such that $L\setminus \operatorname{int}(W)$ is a non-empty exact Lagrangian and
$\lambda|_{L\setminus \operatorname{int}(W)}$ admits a primitive which vanishes along $L\cap \partial W$; cf. \cite{r,em}. Our argument in Section \ref{section:argument} gives the following new estimate on the Lagrangian widths of $L$, which has the advantage that in general no assumption on the displaceability of $L$ is needed. When $L\subset\mathrm{int}(X)$ is displaceable, it recovers the upper bound given by Borman--McLean (cf. \cite{bm}, Theorem 1.1).

\begin{corollary}\label{Lagrangian-width}
Let $X$ be a Liouville domain with $c_1(X)=0$. For every closed oriented aspherical Lagrangian submanifold $L\subset\mathrm{int}(X)$ which is Spin, we have
\[
w(L;X)\leq2\, C^{\operatorname{GH}}_{\inf}(L;X).
\]
In particular, $L$ does not admit an exact Lagrangian cap if it lies in a Liouville subdomain $W\subset X$ with $C^{\operatorname{GH}}_d(W)<\infty$ for some $d\in\mathbb N$.

If we further assume that the aspherical Lagrangian submanifold $L\subset\mathrm{int}(X)$ is displaceable, then
\[
w(L;X)\leq4e(L;X),
\]
where $e(L;X)$ is the displacement energy.
\end{corollary}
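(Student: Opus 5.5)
The plan is to deduce all three claims from Theorem \ref{theorem:GH-ALbound}, applied to Liouville subdomains $W$ with $L\subset W\subset X$, combined with a neck-stretching/area argument near an embedded ball relative to $L$.

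\textbf{First inequality.} Fix a symplectic embedding $i:B^{2n}(r)\hookrightarrow X$ relative to $L$, and a Liouville subdomain $W$ with $L\subset W$. Shrinking $r$ slightly, we may assume the image of the ball lies in $\mathrm{int}(W)$; this requires enlarging $W$ along its Liouville flow, and we must check that $C^{\operatorname{GH}}_d$ changes only by an arbitrarily small amount.

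The argument behind Theorem \ref{theorem:GH-ALbound} (Section \ref{section:argument}) should give more than a bound on $A_{\min}(L)$. The $S^1$-equivariant chain-level string topology argument takes the Floer chain $\tilde y$ with $\partial^{S^1}\tilde y=e_W\otimes u^{-d+1}$ and produces a class of holomorphic disks with boundary on $L$. These disks have total energy at most $C_d^{\mathrm{GH}}(W)$ and total boundary-multiplicity $d$, so some disk has area at most $C_d^{\mathrm{GH}}(W)/d$. I would rerun the argument with an almost complex structure $J$ that is standard (integrable) on $i(B^{2n}(r))$. Fukaya--Irie style arguments produce a disk through a prescribed generic point of $L$. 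Choosing that point to be $i(0)$, the monotonicity lemma for holomorphic disks with boundary on $\mathbb{R}^n\cap B^{2n}(r)$ shows that the disk's area inside the ball is at least $r/2$. Hence $r/2\le C^{\operatorname{GH}}_d(W)/d$. Taking the infimum over $W$ and $d$ gives
\[w(L;X)\le 2\,C^{\operatorname{GH}}_{\inf}(L;X).\]

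\textbf{Exact Lagrangian cap.} Suppose $L\subset W$ with $C^{\operatorname{GH}}_d(W)<\infty$, and suppose $L$ has an exact cap with respect to a subdomain $W'$. The disks above have boundary on $L$ and positive area. Stretching the neck along $\partial W'$ makes components of the disk in the cap region exact, so they contribute zero area. The relative Stokes formula, using the primitive that vanishes on $L\cap\partial W'$, should then force the limiting curves to have nonpositive energy, which is a contradiction.

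The more direct alternative is a rescaling argument: replace $W'$ by its image under the time-$(-t)$ Liouville flow. This is where I expect the main obstacle. Controlling SFT-type breaking of disks with boundary on a Lagrangian that crosses $\partial W'$ is delicate, so I would try the rescaling argument first. The point is that the exactness of the cap makes $C^{\operatorname{GH}}_d(L;X)$ invariant under shrinking, while the constructed disks' areas scale down. This would force the existence of disks of arbitrarily small positive area, contradicting Gromov compactness.

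\textbf{Displaceable case.} When $L$ is displaceable, I would invoke the known inequality bounding the Gutt--Hutchings capacities, for a Liouville neighborhood $W$ of $L$ that is displaceable in $X$, by displacement energy. Specifically, I expect
\[\inf_d \frac{C_d^{\mathrm{GH}}(W)}{d}\le C_1^{\mathrm{GH}}(W)\le 2e(W;X),\]
with the factor $2$ coming from the displacement argument as in Borman--McLean. Letting $W$ shrink to $L$ gives $e(W;X)\to e(L;X)$. Combined with the first inequality, this yields $w(L;X)\le 4e(L;X)$.
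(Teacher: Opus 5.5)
There are genuine gaps in all three parts.

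\textbf{First inequality.} The reduction ``shrinking $r$ slightly, the ball lies in $\mathrm{int}(W)$'' is false, and it is exactly where the difficulty lies. A ball relative to $L$ in $X$ can stick far out of every $W$ that nearly realizes $C^{\operatorname{GH}}_{\inf}(L;X)$. Take $n=1$, $X$ a large disc, $L$ a round circle bounding area $a$, and $W$ a concentric disc of area $a+\delta$. Then there are balls relative to $L$ of capacity close to $2a$, since the outer half of the ball lies outside $L$. Any such ball inside $W$ has capacity at most $2\delta$. Enlarging $W$ by the Liouville flow until it contains one roughly doubles $C^{\operatorname{GH}}_1$, yet $w(L;X)=2a=2C^{\operatorname{GH}}_{\inf}(L;X)$ is sharp here.

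What you actually need is a disc through the centre of the ball that is holomorphic for an almost complex structure on $X$ standard on the \emph{whole} ball, hence not of contact type along $\partial W$, with area still at most $C^{\operatorname{GH}}_d(W)/d$. The paper extracts from the proof of Theorem \ref{theorem:GH-ALbound} only the uniruledness of $L$ in $W$: Maslov $2$ discs through every point of $L$, for every convex $J$ on $W$, of area at most $C^{\operatorname{GH}}_d(W)/d$. It then invokes \cite{bm}, Lemma 1.4 and Theorem 1.2 to convert this into a bound on $w(L;X)$. Your monotonicity step is Lemma 1.4, but nothing in your proposal replaces the second ingredient.

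\textbf{Exact caps.} In the paper this is immediate from the first inequality: finite width is incompatible with an exact Lagrangian cap by Dimitroglou Rizell \cite{r}. Your neck-stretching sketch is essentially his argument, so that is the route to cite. The rescaling argument you prefer does not work. A Hamiltonian deformation of $L$ does not change the areas of discs. Shrinking $L\cap W'$ by the backward Liouville flow rescales the periods of $\lambda|_L$, so the result is no longer Hamiltonian isotopic to $L$, and you lose control of $C^{\operatorname{GH}}_d(\cdot\,;X)$. Finally, a degenerating family of Lagrangians gives no contradiction with Gromov compactness.

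\textbf{Displaceable case.} Your route needs Liouville subdomains $W\supset L$ with $e(W;X)\to e(L;X)$, and these do not exist in general. A displaceable $L$ is not exact, by Floer's theorem. A Liouville subdomain is invariant under the backward Liouville flow, so $W\supset\phi_Z^{-t}(L)$ for all $t\geq0$ and cannot shrink to $L$. For a circle $L\subset\mathbb{C}$ of radius $\epsilon$ centred at $c$ with $|c|\gg\epsilon$, every star-shaped $W\supset L$ contains the cone from the origin over $L$, of area about $|c|\epsilon$. Hence $e(W;\mathbb{C})$ is at least of that order, while $e(L;\mathbb{C})=\pi\epsilon^2$. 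The bound $C^{\operatorname{GH}}_1(W)\leq 2e(W;X)$ is also unjustified.

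The paper does not use Gutt--Hutchings capacities here at all. It replaces the Cohen--Ganatra moduli spaces by Irie's spaces $\mathcal{N}^{\geq0}_{k+1}(L,\bar{a})$ of discs solving the Cauchy--Riemann equation perturbed by $\chi_r(s)H_t$, where $H$ displaces $L$ itself. Pushing these forward gives a class $y'$ playing the role of $y$ with $d=1$, whose action is bounded below by $-2\|H\|$ (\cite{kic}, Lemma 7.17(iii)). The argument of Section \ref{section:argument} then yields Maslov $2$ discs through every point of $L$ with area at most $2e(L;X)$, and \cite{bm}, Lemma 1.4 gives $w(L;X)\leq 4e(L;X)$.
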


Using Murphy's $h$-principle for loose Legendrians \cite{m} and its extension to Lagrangian caps \cite{em}, Ekholm-Eliashberg-Murphy-Smith constructed in \cite{eems} Lagrangian embeddings of $S^1\times S^{n-1}$ into $\mathbb{C}^n$ for all $n\geq 3$, which have infinite Lagrangian widths by \cite{r}. Obviously, these Lagrangians are contained in smooth star-shaped domains whose Gutt--Hutchings capacities are finite. This shows that Corollary \ref{Lagrangian-width} fails in general for non-aspherical Lagrangians.

Our results also yield the following quantitative form of the Arnold chord conjecture, see \cite{dct}, Theorem 4, which establishes similar results for the so-called ``E3 Legendrians".

\begin{corollary}\label{chord-estimate}
Let $X\subset \mathbb{C}^n$ be a compact smooth star-shaped domain. Then every closed oriented aspherical Legendrian submanifold $\Lambda\subset \partial X$ which is Spin admits a Reeb chord of length at most
\[
\inf_{d\in\mathbb N}\frac{C^{\operatorname{GH}}_d(X)}{d}.
\]
\end{corollary}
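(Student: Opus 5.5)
We reduce Corollary \ref{chord-estimate} to Theorem \ref{theorem:GH-ALbound}, or more precisely to the mechanism behind it, by a Lagrangian-cobordism (``filling'') construction. Argue by contradiction: suppose every Reeb chord of $\Lambda\subset\partial X$ has length strictly greater than $c:=\inf_{d}C^{\mathrm{GH}}_d(X)/d$. Note that $c<\infty$, since star-shaped domains have finite Gutt--Hutchings capacities.

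The first step is to identify $\partial X$ with the level $\{1\}\times\partial X$ of the symplectization. Consider the Lagrangian cylinder $[\varepsilon,1]\times\Lambda$ inside the collar $\{r\le 1\}$ of $X$, where the Liouville form restricts to $r\alpha$. This cylinder is exact Lagrangian. Its relative homotopy classes are those of $\Lambda$, so it is aspherical and Spin whenever $\Lambda$ is.

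The second step closes the cylinder up. Double the cylinder, or better, follow the standard trick of the Lagrangian width argument in Section \ref{section:argument}: push the two ends of the cylinder by the Reeb flow and glue them to obtain a closed aspherical Lagrangian $L\cong S^1\times\Lambda$ (or $\Lambda$ times an interval doubled) inside a slightly enlarged domain $X'$ with $C^{\mathrm{GH}}_d(X')$ close to $C^{\mathrm{GH}}_d(X)$. The disks with boundary on $L$ correspond either to Reeb chords or to trivial classes. The key identity to check is that positive-area classes in $\pi_2(X',L)$ have area bounded below by roughly the minimal chord length, so that
\[
A_{\min}(L)\ \gtrsim\ \min\{\text{chord lengths}\}>c.
\]
This contradicts Theorem \ref{theorem:GH-ALbound} applied to $X'$, once the enlargement is taken sufficiently small.

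An alternative route avoids the geometric closing-up altogether. It reruns the proof of Theorem \ref{theorem:GH-ALbound} directly for the exact Lagrangian filling with Legendrian boundary. In that version, the chain-level string topology argument produces a nonconstant holomorphic disk (or chain) with boundary on $[\varepsilon,1]\times\Lambda$ and energy at most $C^{\mathrm{GH}}_d(X)/d$. Exactness of the cylinder then forces this disk to have a puncture asymptotic to a Reeb chord. Its energy equals the chord length, which gives the bound.

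The main obstacle is exactly this step: guaranteeing that the curve, or chain, produced by the $S^1$-equivariant argument does not escape as a class with zero or negative area, and that its energy is realized by a genuine chord rather than by a closed Reeb orbit. To handle this I would first try the closed-Lagrangian construction above, since it lets Theorem \ref{theorem:GH-ALbound} be used as a black box. The remaining task is then a careful action computation on $\pi_2(X',L)$, using asphericity of $\Lambda$ to rule out spherical contributions.
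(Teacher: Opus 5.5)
Your primary route, closing $[\varepsilon,1]\times\Lambda$ up to a Lagrangian $S^1\times\Lambda$ and applying Theorem~\ref{theorem:GH-ALbound} as a black box, is exactly the paper's proof (Mohnke's construction). However, the step you call the key identity rests on the wrong mechanism, and you never say where the no-short-chord hypothesis is actually used.

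The construction goes as follows. In the collar where $\lambda=r\alpha$, set $L=\{(r,\phi_t(x)) : (r,t)\in\gamma,\ x\in\Lambda\}$. Here $\phi_t$ is the Reeb flow and $\gamma$ is the corner-smoothed boundary of $[\varepsilon,1]\times[0,T]$. Since $\phi_t^\ast\alpha=\alpha$ and $\alpha|_\Lambda=0$, $\lambda$ pulls back to $r\,dt$ on $\gamma\times\Lambda$, so $L$ is Lagrangian. Because $X$ is contractible, $\pi_2(X,L)\cong\pi_1(L)$, and the area of a class is $k\oint_\gamma r\,dt=\pm k(1-\varepsilon)T$, where $k$ is the degree of its projection to $\gamma$. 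Hence $A_{\min}(L)=(1-\varepsilon)T$ on the nose. Disks on $L$ do not ``correspond to Reeb chords''; classes coming from $\pi_1(\Lambda)$ simply have zero area. Asphericity of $\Lambda$ is needed only so that $L$ satisfies the hypotheses of Theorem~\ref{theorem:GH-ALbound}, not to rule out spherical contributions, since $\pi_2(X)=0$ anyway.

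The chord hypothesis enters only through embeddedness. $L$ has a double point exactly when $\phi_s(\Lambda)\cap\Lambda\neq\emptyset$ for some $s\in(0,T]$, i.e.\ when $\Lambda$ has a chord of length $\le T$. So, assuming all chords are longer than $c$, use compactness of $\Lambda$ (the chord spectrum is closed and bounded away from $0$) to choose $T$ with $c<T<\min\{\text{chord lengths}\}$. Then $S^1\times\Lambda$ is closed, oriented, aspherical and Spin, and Theorem~\ref{theorem:GH-ALbound} gives $(1-\varepsilon)T\le c$ for every $\varepsilon>0$, a contradiction.

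Two side remarks. ``Doubling the cylinder'' does not by itself produce a Lagrangian. The enlargement to $X'$ is superfluous, because Theorem~\ref{theorem:GH-ALbound} already covers Lagrangians meeting $\partial X$ (its proof does the collar extension internally). With these corrections your argument coincides with the paper's.
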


Another application of Theorem \ref{theorem:GH-ALbound} is a proof of Conjecture \ref{realize-CMcapacity}.

\begin{corollary}\label{Embedd=CM}
We have $\mathrm{EC}(a)=2C^\mathrm{CM}\left(E^4(1,a)\right)$.
\end{corollary}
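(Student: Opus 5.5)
We prove the two inequalities separately and combine them with Corollary \ref{CM=AL}, which gives $C^{\mathrm{CM}}(E^4(1,a))=\frac{a}{1+a}$. It therefore suffices to show
\[
\mathrm{EC}(a)=\frac{2a}{1+a}\quad\text{for all } a\geq 1 .
\]

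\textbf{Lower bound} $\mathrm{EC}(a)\geq 2C^{\mathrm{CM}}(E^4(1,a))$. Start from a symplectic embedding $E^4(1,a)\hookrightarrow B^4(r)\cup B^2(r/2)\times\mathbb{C}$. Take the Clifford-type Lagrangian torus $L\subset \partial E^4(1,a)$ realizing the diagonal, i.e. the product torus with $\pi|z_1|^2=\pi|z_2|^2\cdot\frac1a$ normalized so that both areas equal $\frac{a}{1+a}$. Shrinking slightly, we may take a nearby torus in the interior with $A_{\min}$ arbitrarily close to this value. Its image is a Lagrangian torus in the target, so $A_{\min}(L)\le C^{\mathrm{CM}}(B^4(r)\cup B^2(r/2)\times\mathbb{C})$.

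We then bound the capacity of the target by $r/2$. This requires an upper bound for $C^{\mathrm{AL}}$ of this non-compact union, and we would obtain it by the mechanism of Theorem \ref{theorem:GH-ALbound}. Exhaust the union by Liouville domains $W$, for instance the smoothings of $B^4(r)\cup B^2(r/2)\times B^2(R)$. We expect $\inf_d C^{\mathrm{GH}}_d(W)/d\to r/2$, as $d\to\infty$ and with $R$ large, by using the cylinder factor. Indeed, the Gutt--Hutchings capacities of the polydisk $B^2(r/2)\times B^2(R)$ are $\min(dr/2, R)$, and monotonicity under the inclusion of the polydisk into $W$ only gives lower bounds, so we need a direct computation instead. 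The plan is to show that the union is a convex or concave toric-type domain whose moment image contains the region $\{x_1\le r/2\}$. Its diagonal, computed as in Corollary \ref{CM=AL}, is then at most $r/2$. Alternatively, one can invoke monotonicity of $C^{\mathrm{CM}}$ together with the known computation for cylinders (Cieliebak--Mohnke). This yields $\frac{a}{1+a}\le r/2$, hence $\mathrm{EC}(a)\ge\frac{2a}{1+a}$.

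\textbf{Upper bound} $\mathrm{EC}(a)\le\frac{2a}{1+a}$. Here we need explicit embeddings $E^4(1,a)\hookrightarrow B^4(r)\cup B^2(r/2)\times\mathbb{C}$ for every $r>\frac{2a}{1+a}$. We would use the toric picture: the target's moment image is the union of the triangle $\{x_1+x_2\le r\}$ and the strip $\{x_1\le r/2\}$. The ellipsoid's moment image is the triangle with vertices $(0,0)$, $(1,0)$ and $(0,a)$. Using Traynor-type symplectic folding, or integral affine shears $\begin{pmatrix}1&0\\k&1\end{pmatrix}$ applied on the region over which the strip direction is unconstrained, we map this triangle into the union. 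The binding constraint is the corner at $x_1+x_2=r$, $x_1=r/2$, and the equality case occurs exactly at the diagonal point $\left(\frac{a}{1+a},\frac{a}{1+a}\right)$ of the ellipsoid, which matches the required $r$.

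\textbf{Main obstacle.} The main obstacle is the upper-bound construction. The lower bound is essentially formal given Theorem \ref{theorem:GH-ALbound} and Corollary \ref{CM=AL}, but realizing the embeddings is delicate. If no direct toric packing works, I would first try to cite the embedding results of Cristofaro-Gardiner--Hind--et al.\ (\cite{chls}). As I recall, they compute $\mathrm{EC}(a)$ explicitly, or at least bound it by the diagonal expression. The remaining step would then be to verify that their formula equals $\frac{2a}{1+a}$.
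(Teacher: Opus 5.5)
Your lower bound, once the hedging is removed, is the paper's argument. $B^4(r)\cup B^2(r/2)\times\mathbb{C}$ is a concave toric domain: the complement of its moment image, $\{x_1>r/2,\ x_1+x_2>r\}$, is convex, and its diagonal is $r/2$. Corollary \ref{CM=AL} therefore gives its Cieliebak--Mohnke capacity as $r/2$. Monotonicity, which is valid here because both domains are star-shaped, then yields $\frac{a}{1+a}\le r/2$.

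Two of your justifications are wrong, though. First, the diagonal is at most $r/2$ because $(t,t)$ lies in the moment image only for $t\le r/2$. Equivalently, the union and all its truncations lie in $N^4(r/2)$, whose capacities $C^{\mathrm{GH}}_d=(d+1)r/2$ give the Gutt--Hutchings upper bound you were trying to extract from the polydisk. The fact that the image contains the strip $\{x_1\le r/2\}$ only gives the reverse inequality. Second, your ``alternative'' via monotonicity and the Cieliebak--Mohnke computation for cylinders runs the wrong way. The cylinder $B^2(r/2)\times\mathbb{C}$ is contained in the union, and the union cannot embed into a cylinder of capacity $r/2$ at all, since it contains $B^4(r)$.

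The genuine gap is the upper bound, which you leave open: you propose folding, affine shears, or an appeal to \cite{chls}, where this identity is only stated as a conjecture. No construction is needed, because for $r=\frac{2a}{1+a}$ the plain inclusion $E^4(1,a)\subset B^4(r)\cup B^2(r/2)\times\mathbb{C}$ already holds. To check it, take $(x_1,x_2)$ with $x_1+x_2/a\le 1$. If $x_1\le\frac{a}{1+a}=r/2$, the point lies in the strip. Otherwise, since $a\ge 1$,
\[
x_1+x_2\le x_1+a(1-x_1)=a-(a-1)x_1\le a-(a-1)\tfrac{a}{1+a}=\tfrac{2a}{1+a}=r,
\]
so the point lies in the moment image of $B^4(r)$. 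You correctly located the binding point $\bigl(\frac{a}{1+a},\frac{a}{1+a}\bigr)$, where $\partial E^4(1,a)$ touches the corner of the target, but you never checked that the whole triangle already fits. With this check, $\mathrm{EC}(a)\le\frac{2a}{1+a}=2C^{\mathrm{CM}}(E^4(1,a))$, and the proof closes exactly as in the paper.
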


\cite{chls}, Problem 11 asks about the stability of symplectic capacities. For the Cieliebak--Mohnke capacity we can prove the following.

\begin{corollary}\label{stability-CMcapacity}
Let $X \subset \mathbb{C}^n$ be a convex or concave toric domain, we have 
\[C^{\mathrm{CM}}(X)=C^{\mathrm{CM}}(X\times \mathbb{C}).\]
\end{corollary}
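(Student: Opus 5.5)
We prove the two inequalities $C^{\mathrm{CM}}(X)\le C^{\mathrm{CM}}(X\times\mathbb{C})$ and $C^{\mathrm{CM}}(X\times\mathbb{C})\le C^{\mathrm{CM}}(X)$ separately, using Corollary \ref{CM=AL} to identify $C^{\mathrm{CM}}(X)=\operatorname{diagonal}(X)$.

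\emph{Lower bound.} Let $L\subset X$ be a Lagrangian torus. Then $L\times S^1(r)\subset X\times\mathbb{C}$ is a Lagrangian torus, where $S^1(r)$ is a circle bounding area $\pi r^2$. Since $\pi_2(X\times\mathbb{C},L\times S^1(r))\cong\pi_2(X,L)\oplus\pi_2(\mathbb{C},S^1(r))$ and areas add, we get
\[
A_{\min}(L\times S^1(r))=\min\bigl(A_{\min}(L),\pi r^2\bigr),
\]
where the minimum is over classes with positive total area. More precisely, the set of positive areas is contained in the group $\Gamma_L+\pi r^2\mathbb{Z}$, where $\Gamma_L$ denotes the group of areas of classes in $\pi_2(X,L)$; its infimum over positive values could be smaller if the two areas are incommensurable. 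To avoid this, take $L$ to be an extremal-type torus, namely the Clifford-type torus $\{\pi|z_i|^2=\delta\}$ with $\delta$ slightly less than $\operatorname{diagonal}(X)$. For this torus $\Gamma_L=\delta\mathbb{Z}$, and choosing $\pi r^2=\delta$ gives $A_{\min}=\delta$. Letting $\delta\to\operatorname{diagonal}(X)$ yields $C^{\mathrm{CM}}(X\times\mathbb{C})\ge\operatorname{diagonal}(X)=C^{\mathrm{CM}}(X)$.

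\emph{Upper bound.} Apply Theorem \ref{theorem:GH-ALbound}. Two technical points must be handled, since $X\times\mathbb{C}$ is not a Liouville domain. First, any Lagrangian torus in $X\times\mathbb{C}$ is compact, so it lies in $X\times B^2(R)$ for some large $R$. Second, $X\times B^2(R)$ is not smooth, so smooth it to a Liouville domain $W_R$ with $c_1=0$ such that $X\times B^2(R)\subset W_R\subset X\times B^2(R+\epsilon)$. The capacities $C^{\mathrm{GH}}_d$ and $C^{\mathrm{CM}}$ are monotone under inclusion, so it suffices to bound $C^{\mathrm{GH}}_d$ for the product. Since $X$ is a convex or concave toric domain, $X\times B^2(R)$ is again a (possibly nonsmooth) convex or concave toric domain in $\mathbb{C}^{n+1}$, with moment region $\Omega_X\times[0,R]$. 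For this we can either use Gutt--Hutchings' combinatorial formulas directly, or use the general fact that $C^{\mathrm{GH}}_d(X\times B^2(R))\le C^{\mathrm{GH}}_d(X)$, which holds via the product/stabilization property (once $R$ exceeds the relevant values). We then conclude
\[
C^{\mathrm{CM}}(X\times\mathbb{C})\le\inf_d\frac{C^{\mathrm{GH}}_d(X)}{d}=\operatorname{diagonal}(X).
\]
The last equality is the identity already used in the proof of Corollary \ref{CM=AL}: $\lim_d C^{\mathrm{GH}}_d/d$ equals the diagonal for convex and concave toric domains. Alternatively, observe that the diagonal of $\Omega_X\times[0,R]$ equals $\operatorname{diagonal}(X)$ once $R\ge\operatorname{diagonal}(X)$, and apply Corollary \ref{CM=AL} directly in dimension $2n+2$.

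\emph{Main obstacle.} The direct route via Corollary \ref{CM=AL} in dimension $2n+2$ would be simplest. However, a product of a concave toric domain with a disk need not be concave or convex in the strict sense. I would therefore rely on the Gutt--Hutchings estimate $C^{\mathrm{GH}}_d(X\times B^2(R))\le C^{\mathrm{GH}}_d(X)$, valid for large $R$, proved by computing with the toric formula $C^{\mathrm{GH}}_d=\min\{\|v\|^*_\Omega : v\in\mathbb{N}^{n+1},\ \sum v_i=d\}$ with the last coordinate of $v$ set to zero. Checking that this formula (or at least this inequality) persists for the nonconvex or nonconcave product, through the smoothing $W_R$, is the step I expect to require the most care.
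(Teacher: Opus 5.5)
Your proposal is correct and follows the paper's proof: the paper gets the lower bound by stabilizing a torus with $S^1(\delta)$, and the upper bound by applying Theorem \ref{theorem:GH-ALbound} to $X\times B^2(r)$, using the cited stabilization $C^{\mathrm{GH}}_d(X\times B^2(r))=C^{\mathrm{GH}}_d(X)$ for large $r$ and an exhaustion lemma that plays the role of your compactness observation; your commensurability worry is moot, since for any $L\subset X$ with $A_{\min}(L)=\delta>0$ the area group is a subgroup of $\mathbb{R}$ with positive infimum of positive elements, hence equals $\delta\mathbb{Z}$. The obstacle you flag can also be bypassed without any product formula: $X_\Omega\subset N^{2n}(\delta_\Omega)$ implies $X\times B^2(R)\subset N^{2n+2}(\delta_\Omega)$, so monotonicity gives $C^{\mathrm{GH}}_d(X\times B^2(R))\le\delta_\Omega(d+n)$, and the argument (rather than the statement) of Corollary \ref{CM=AL} then applies verbatim in dimension $2n+2$.
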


Our proof of Theorem \ref{theorem:GH-ALbound} is based on a general result about the non-exact $S^1$-equivariant Viterbo functoriality for closed Lagrangian embedding $L\hookrightarrow(X,\lambda)$ in a Liouville domain $(X,\lambda)$ with finite $d$-th Gutt--Hutchings capacity $C_d^\mathrm{GH}(X)$ for some $d\in\mathbb{N}$. Note that if we assume in addition that $L$ is aspherical, then any such Lagrangian embedding is necessarily non-exact since the disc cotangent bundle $D^\ast L$ has infinite $d$-th Gutt--Hutchings capacity for any $d\in\mathbb{N}$. To state the result, we need to introduce some notations.

For $a\in H_1(L;\mathbb{Z})$, denote by $\mathcal{L}(a)L\subset\mathcal{L}L$ the subspace of the free loop space of $L$ which consists of loops in the class $a$. Consider the $S^1$-equivariant homology group
\begin{equation}\label{eq:StrH}
\mathbb{H}^{S^1}_\ast(a):=H_{\ast+n+\mu(a)-1}^{S^1}(\mathcal{L}(a)L;\mathbb{R}),
\end{equation}
where $\mu:H_1(L;\mathbb{Z})\rightarrow\mathbb{Z}$ is the Maslov index and the $S^1$-action is given by reparametrization of loops. The direct sum
\begin{equation}\label{eq:dirsum}
\mathbb{H}_\ast^{S^1}:=\bigoplus_{a\in H_1(L;\mathbb{Z})}\mathbb{H}_\ast^{S^1}(a)
\end{equation}
carries the action filtration
\begin{equation}
F^\Xi\mathbb{H}_\ast^{S^1}:=\bigoplus_{\lambda(a)>\Xi}\mathbb{H}^{S^1}_\ast(a), \nonumber
\end{equation}
which allows us to define the completion
\begin{equation}\label{eq:H-c}
\widehat{\mathbb{H}}_\ast^{S^1}:=\varprojlim_{\Xi\rightarrow\infty}\mathbb{H}_\ast^{S^1}/F^\Xi\mathbb{H}_\ast^{S^1}.
\end{equation}
Note that $\widehat{\mathbb{H}}_\ast^{S^1}$ is a module over $\mathbb{R}(\!(h)\!)/h\mathbb{R}[\![h]\!]$, where $h$ is a formal variable of degree $-2$ coming from the chain level $S^1$-action induced by reparametrization of loops. 

The following theorem generalizes \cite{yla}, Theorem 6, which deals with the case when $d=1$. Besides, it is also a quantitative version in the sense that the action of $y$ is carefully bounded from below.

\begin{theorem}\label{theorem:deform-Viterbo}
Let $X$ be a $2n$-dimensional Liouville domain with $c_1(X)=0$, and assume that the $d$-th Gutt--Hutchings capacity $C_d^\mathrm{GH}(X)$ is finite for some $d\in\mathbb{N}$. Let $L\subset\mathrm{int}(X)$ be a closed oriented Lagrangian submanifold which is $\mathit{Spin}$ and lying in the interior of $X$. Then there exists an $L_\infty$-structure $(\ell_m)_{m\geq1}$ on $\mathbb{H}_\ast^{S^1}$, together with homology classes $x\in\widehat{\mathbb{H}}_{-2}^{S^1}$, $y\in\widehat{\mathbb{H}}_{2d}^{S^1}$ such that
\begin{itemize}
	\item[(i)] $\ell_1$ vanishes.
	\item[(ii)] The $L_\infty$-structure $(\ell_m)_{m\geq2}$ respects the decomposition of $\mathbb{H}_\ast^{S^1}$ according to classes in $H_1(L;\mathbb{Z})$. In particular, it extends to the completion $\widehat{\mathbb{H}}_\ast^{S^1}$, and we continue to denote its extension by $(\ell_m)_{m\geq2}$.
	\item[(iii)] There is a constant $\eta>0$ such that $x\in F^\eta\widehat{\mathbb{H}}_{-2}^{S^1}$, and $y\in F^{-C_d^\mathrm{GH}(X)-\nu}\widehat{\mathbb{H}}_{2d}^{S^1}$ for any $\nu>0$.
	\item[(iv)] $x$ and $y$ satisfy the equations
	\begin{equation}
	\sum_{m=2}^\infty\frac{1}{m!}\ell_m(x,\cdots,x)=0, \nonumber
	\end{equation}
    \begin{equation}\label{eq:def1}
    \left(\sum_{m=2}^\infty\frac{1}{(m-1)!}\ell_m(y,x,\cdots,x)\right)_{a=0}=(-1)^{n+1}[\![L]\!]\otimes h^{-d+1},
    \end{equation}
    where the infinite sums on the left-hand side of both equations make sense because of (iii), the subscript $a=0$ means restricting to the $a=0\in H_1(L;\mathbb{Z})$ component, and $[\![L]\!]$ denotes the image of the fundamental class of $L$ under the composition
    \begin{equation}
    H_\ast(L;\mathbb{R})\rightarrow H_\ast(\mathcal{L}(0)L;\mathbb{R})\rightarrow H_\ast^{S^1}(\mathcal{L}(0)L;\mathbb{R}), \nonumber
    \end{equation}
    where the first map is induced by the inclusion of constant loops, and the second map is the erasing map in string topology.
\end{itemize}
\end{theorem}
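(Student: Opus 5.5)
The strategy is the one behind \cite{yla}, Theorem 6, and Fukaya--Irie \cite{kfa,kic,kie}, now run $S^1$-equivariantly and with action bookkeeping. First we build a chain-level model for the $S^1$-equivariant string topology of $L$. Following Irie, we use de Rham chains on the free loop space (or Moore loops), with a strict $S^1$-action given by rotation. On the $S^1$-equivariant chain complex $C_\ast^{S^1}(\mathcal{L}L)\otimes\mathbb{R}(\!(h)\!)/h\mathbb{R}[\![h]\!]$ we put a dg Lie structure: the bracket is the Chas--Sullivan string bracket, realized at chain level by fiber products at marked points. Next, the moduli spaces of holomorphic discs with boundary on $L$, with boundary marked points forgotten cyclically, give a Maurer--Cartan element $\tilde x$ of positive energy, since nonconstant discs have positive area. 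Homotopy transfer to homology gives an $L_\infty$-structure on $\mathbb{H}^{S^1}_\ast$ with $\ell_1=0$, which is (i). The operations are built from fiber products of loops in a fixed class, so they are additive in $H_1(L;\mathbb{Z})$; they also respect the action filtration, because $\lambda(a)$ is additive. This gives (ii). The transferred Maurer--Cartan class is $x$, with $x\in F^\eta$ where $\eta$ is a lower bound on the areas of nonconstant discs, supplied by Gromov compactness. This gives (iii) for $x$ and the first equation of (iv).

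For $y$, we use the equivariant Viterbo-type map from $S^1$-equivariant symplectic cochains to $x$-deformed equivariant string chains. We consider moduli spaces of holomorphic maps from a disc with one interior puncture, asymptotic to a Hamiltonian orbit of $X$, with boundary on $L$. These are decorated with the extra disc bubbles that encode the $x$-deformation, and with the parametrized family of marked-point directions that implements the $u$-powers of $\partial^{S^1}=\partial+u\delta_1+\cdots$. Evaluating the boundary loop gives a chain map
\begin{equation*}
\Phi: \left(\mathit{SC}^\ast_{S^1}(X),\partial^{S^1}\right)\rightarrow \left(C^{S^1}_\ast(\mathcal{L}L)\widehat\otimes\mathbb{R}(\!(h)\!)/h\mathbb{R}[\![h]\!],\ \partial+\sum_{m\ge2}\tfrac{1}{(m-1)!}\ell_m(\cdot,\tilde x,\dots,\tilde x)\right),
\end{equation*}
with $u\mapsto h$ and a degree shift matching the Maslov index convention in (\ref{eq:StrH}). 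We then compute $\Phi(e_X\otimes u^{-d+1})$. The unit $e_X$ comes from the minimum of a $C^2$-small Hamiltonian, so the relevant discs are constant, and their count, up to orientation sign, is $[\![L]\!]$ in the $a=0$ component. The constant $S^1$-family gives the power $h^{-d+1}$, and the sign $(-1)^{n+1}$ comes from the orientation conventions for the evaluation map.

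By the definition of $C^\mathrm{GH}_d(X)$, for every $\nu>0$ there is $\tilde y\in F^{\le C^\mathrm{GH}_d+\nu}\mathit{SC}^{1-2d}_{S^1}(X)$ with $\partial^{S^1}\tilde y=e_X\otimes u^{-d+1}$. We set $y:=$ the image of $\Phi(\tilde y)$ in homology after homotopy transfer. The chain map identity then gives (\ref{eq:def1}) once we project to $a=0$. The projection is needed because the $x$-deformed differential applied to $\Phi(\tilde y)$ in nonzero classes need not vanish, but it is exact or lands in other components, and the transfer handles this. For the filtration in (iii), the energy identity for a punctured disc $u$ with output loop in class $a$ reads
\begin{equation*}
0\le E(u)=\mathcal{A}(\gamma)-\lambda(a)+(\text{Hamiltonian error}),
\end{equation*}
so $\lambda(a)\ge-\mathcal{A}(\gamma)-\epsilon$. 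The sign convention is chosen so that $F^{-C-\nu}$ is the correct statement, and the error is absorbed into $\nu$. Since $y$ might depend on $\nu$, we take a compatible choice through a limiting argument on the completion, or simply state (iii) for each $\nu$ separately.

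The main obstacle is constructing the $S^1$-equivariant chain map $\Phi$ rigorously without virtual techniques. Irie's de Rham chain model needs Kuranishi structures with CF-perturbations on disc moduli spaces, and we must make these compatible with the cyclic $S^1$-action on boundary markings, with the higher operations $\delta_k$ (which are parametrized by $S^1$-families of Floer data), and with the loop rotation. I would first follow \cite{yla} for the $d=1$ case and then extend it using the standard description of $\delta_k$ via $(S^1)^k$-parametrized Floer cylinders (Bourgeois--Oancea / Seidel), checking that their gluing boundaries match the $h$-expansion on the string side.
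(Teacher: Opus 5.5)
Your outline has the paper's architecture: a Maurer--Cartan element from disc moduli spaces, chains for $y$ from punctured discs with angle-decorated auxiliary points (Cohen--Ganatra spaces), and homotopy transfer. But two steps fail as you state them.

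\textbf{First gap: the dg Lie structure.} The chain-level string bracket you describe, built from fiber products at marked points, is not a Lie bracket on the $S^1$-equivariant complex $C^{S^1}_\ast$. The Jacobi identity holds there only up to homotopy, so there is nothing to transfer from. This is why the paper passes to Connes' complex $C^\lambda_\ast=C^{\mathrm{nd}}_\ast/\mathrm{im}(1-t)$. Over $\mathbb{R}$ the projection $C^{S^1}_\ast\to C^\lambda_\ast$ is a quasi-isomorphism, and on $C^\lambda_\ast$ the bracket is a strict odd dg Lie bracket. All chain identities hold only after this projection: the Maurer--Cartan equation for the chains from $\overline{\mathcal{R}}_{k+2,\vartheta}(L,\bar a)$, and $\partial^{\mathrm{tot}}\underline{y}-\{\underline{x},\underline{y}\}=\underline{z}$. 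For example, the boundary formula for $\bar{x}_{m,0}$ is not $\frac12\{\bar{\tilde x}_m,\bar{\tilde x}_m\}$ in $\overline{C}^{S^1}_\ast$. Relatedly, your target differential $\partial+\sum\ell_m(\cdot,\tilde x,\dots,\tilde x)$ mixes chain-level operations with transferred homology-level ones. Nor are the $u$-powers matched by $u\mapsto h$. The paper only builds chains from the specific primitive $\tilde\beta=\sum_l\beta_l\otimes u^{-d-l+1}$. It pairs $\beta_l$ with $l$-point Cohen--Ganatra spaces landing in the $h^{-d+1}$ and $h^{-d}$ slots. It then uses $\sum_l\delta_l(\beta_l)=e_X$ and $\sum_{l\geq r}\delta_{l-r}(\beta_l)=0$ for $r\geq1$.

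\textbf{Second gap: the Maurer--Cartan element does not have positive energy.} This is the more serious problem. Constant discs give $\underline{x}(0,k)\neq0$ for $k\geq3$. In particular $\underline{x}(0,3)$, coming from constant maps in $\overline{\mathcal{R}}_{4,\vartheta}(L,0)$, is exactly the piece whose marking is $(-1)^{n+1}[L]$. Constant disc bubbles also appear in the codimension-one boundaries of all the moduli spaces involved, so these terms cannot be dropped.

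If you transfer the full element, your $x$ has nonzero $a=0$ components, contradicting (iii). In addition, the $a=0$ part of $\sum\frac{1}{(m-1)!}\ell_m(y,x,\dots,x)$ picks up uncontrolled $x(0)$-terms.

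The missing idea is the splitting $\underline{x}=\underline{x}(0)+\underline{x}^+$:
\begin{enumerate}
\item $\underline{x}(0)$ is itself Maurer--Cartan, and the low-energy deformation lemma (\cite{yla}, Lemma 30) gives $H_\ast\bigl(C^\lambda_\ast(a),\partial^{\mathrm{tot}}-\{\underline{x}(0),\cdot\}\bigr)\cong H^{S^1}_{\ast+n+\mu(a)-1}(\mathcal{L}(a)L;\mathbb{R})$.
\item Transfer from this deformed complex; this is what gives $\ell_1=0$.
\item Set $x=\sum\frac{1}{m!}p_m(\underline{x}^+,\dots,\underline{x}^+)$, which lies in $F^{2\varepsilon}$.
\item Since $\underline{z}(a)\neq0$ only for $a=0$ or $\lambda(a)\geq2\varepsilon$, positivity of $\underline{x}^+$ forces the $a=0$ component of $\sum\frac{1}{(m-1)!}p_m(\underline{z},\underline{x}^+,\dots,\underline{x}^+)$ to be just $\pi(\underline{z}(0))=(-1)^{n+1}[\![L]\!]\otimes h^{-d+1}$.
\end{enumerate}
Step 4 is what replaces your ``the transfer handles this''.

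Finally, to get exact equations in the completion you need Irie's finite-energy scheme (Theorem \ref{theorem:finite-chain} and Lemma \ref{lemma:induction}). Infinitely many moduli spaces cannot be perturbed compatibly at once, and your proposal does not address this.
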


The relation of the above theorem with the $S^1$-equivariant Viterbo functoriality is as follows. Given an exact Lagrangian embedding $L\hookrightarrow\mathrm{int}(X)$, the $S^1$-equivariant Viterbo functoriality gives a map
\[
\mathit{SH}^\ast_{S^1}(X)\rightarrow\mathit{SH}^\ast_{S^1}(D^\ast L)\xrightarrow{\cong}H_{n-\ast}^{S^1}(\mathcal{L}L;\mathbb{R})
\]
relating the $S^1$-equivariant symplectic cohomology of $X$ to the string homology of $L$. When the Lagrangian submanifold $L$ is non-exact, this map needs to be corrected by deforming $H_{n-\ast}^{S^1}(\mathcal{L}L;\mathbb{R})$ using the Maurer-Cartan element obtained by counting pseudoholomorphic discs in $X$ with boundary on $L$. In the above theorem, the class $x\in\widehat{\mathbb{H}}_{-2}^{S^1}$ plays the role of the Maurer-Cartan element. Moreover, the assumption $C_d^\mathrm{GH}(X)<\infty$ implies the existence of an $S^1$-equivariant cochain $\tilde{y}\in\mathit{SC}_{S^1}^{1-2d}(X)$ satisfying the equation $\partial^{S^1}(\tilde{y})=e_X\otimes u^{-d+1}$. Under the $S^1$-equivariant Viterbo functoriality, there should be a corresponding equation in the string homology $H_{n-\ast}^{S^1}(\mathcal{L}L;\mathbb{R})$, where the role of the primitive $\tilde{y}$ is played by $y\in\widehat{\mathbb{H}}_{2d}^{S^1}$ in Theorem \ref{theorem:deform-Viterbo}. However, when $L\subset\mathrm{int}(X)$ is non-exact, such an equation also needs to be deformed by the Maurer-Cartan element $x$, and (\ref{eq:def1}) is the deformed equation, which holds in the completed string homology $\widehat{H}_{n-\ast}^{S^1}(\mathcal{L}L;\mathbb{R})$.

It is clear that the assumptions of Theorem \ref{theorem:deform-Viterbo} are satisfied for star-shaped domains in $\mathbb{C}^n$. There are actually many examples of non-subcritical Weinstein domains where Theorem \ref{theorem:deform-Viterbo} is applicable, see our discussions in Section \ref{section:dilation}.

The proof of Theorem \ref{theorem:deform-Viterbo} relies on a class of moduli spaces of solutions to certain parametrized Floer equations considered by Cohen--Ganatra \cite{cg}. The study of these Floer solutions also enables us to get geometric constraints on the extremal Lagrangian submanifolds in a class of Liouville domains defined below.

\begin{definition}\label{definition:asymp-sta}
A Liouville domain $X$ with $c_1(X)=0$ is called spectrally convex if for a sequence $\{d_i\}_{i\in\mathbb{N}}$ of positive integers, the rate of convergence of
\[\lim_{d_i\to\infty}\frac{C^{\operatorname{GH}}_{d_i}(X)}{d_i}=\inf_{d\in\mathbb{N}}\frac{C^{\operatorname{GH}}_d(X)}{d}\]
is faster than $\frac{1}{d_i}$. Note that the sequence $\{d_i\}_{i\in\mathbb{N}}$ is allowed to be constant. If for some  $d'\in\mathbb{N}$ we have
\[ 
\frac{C^{\operatorname{GH}}_{d'}(X)}{d'}=\inf_{d\in\mathbb{N}}\frac{C^{\operatorname{GH}}_d(X)}{d},
\]
then we can take $d_i=d'$ for all $i\in\mathbb{N}$.
\end{definition}

The term ``spectrally convex" is inspired by the fact that all convex toric domains $X\subset\mathbb{C}^n$ satisfy the required decay condition of Gutt--Hutchings capacities, while there are examples of non-convex star-shaped domains for which the condition fails. With the above definition we have the following.

\begin{theorem}\label{theorem:boundaryrigidity}
Let $X$ be a spectrally convex Liouville domain with $c_1(X)=0$, then any closed oriented aspherical Lagrangian submanifold $L\subset X$ which is $\mathit{Spin}$ and satisfies
\[A_{\min}(L)=\inf_{d\in \mathbb{N}}\frac{C^{\operatorname{GH}}_d(X)}{d}\]
lies entirely in the boundary $\partial X$.
\end{theorem}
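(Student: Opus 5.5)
We argue by contradiction. Suppose $L\subset X$ is extremal in the sense above, i.e. $A_{\min}(L)=\inf_{d}C^{\mathrm{GH}}_d(X)/d=:c$, and that some point $p\in L$ lies in $\mathrm{int}(X)$. The idea is to exploit the slack in the proof of Theorem \ref{theorem:GH-ALbound}. There, from Theorem \ref{theorem:deform-Viterbo} one obtains a holomorphic configuration, namely a Cohen--Ganatra type parametrized Floer solution glued to discs on $L$. Its total boundary class $a$ has Maslov index fixed by the degree count, $\mu(a)=2d$ up to the $h$-power bookkeeping, and its energy satisfies $\lambda(a)\le C^{\mathrm{GH}}_d(X)+\nu$. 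Since $L$ is aspherical, this class decomposes into at least $d$ disc classes with positive area. Each of these discs has area at least $A_{\min}(L)$, which gives $d\cdot A_{\min}(L)\le C_d^{\mathrm{GH}}(X)$.

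For the rigidity statement we want a \emph{quantitative} improvement of this bound whenever $L$ meets the interior. The improvement has the form
\[
d\cdot A_{\min}(L)+\epsilon\le C^{\mathrm{GH}}_d(X)+\nu,
\]
where $\epsilon>0$ depends only on the geometry near the interior point, and crucially not on $d$.

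To produce $\epsilon$, we first choose a small Darboux ball $B^{2n}(r)$ centered at $p$, embedded relative to $L$ as in Corollary \ref{Lagrangian-width}. We then use the freedom in Theorem \ref{theorem:deform-Viterbo}: the input is the point constraint coming from $[\![L]\!]$ at $a=0$, and since $y$ pairs with the fundamental class, we may take the evaluation constraint to pass through $p$. A monotonicity argument then applies. Any holomorphic curve (or Floer cylinder with small Hamiltonian perturbation supported away from the ball) passing through the center of the half-ball $B^{2n}(r)\cap\{\text{relative to }L\}$ has energy at least $\frac{1}{2}\pi r^2$ inside the ball, up to a constant. This extra energy is not accounted for by the $d$ disc areas: either the curve through $p$ is one of the discs, which then has area at least $A_{\min}(L)$ plus the local contribution, or it is the cylinder component, whose energy is bounded below by the action difference. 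In both cases the energy identity yields the displayed inequality with $\epsilon\sim \pi r^2/4$, uniformly in $d$.

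Dividing by $d$ gives
\[
A_{\min}(L)\le \frac{C^{\mathrm{GH}}_d(X)}{d}-\frac{\epsilon}{d}.
\]
Now apply spectral convexity along the sequence $d_i$. Since $C^{\mathrm{GH}}_{d_i}(X)/d_i-c=o(1/d_i)$, we get
\[
A_{\min}(L)\le c+o(1/d_i)-\epsilon/d_i<c
\]
for $i$ large. This contradicts extremality. If the sequence is constant, so that the infimum is attained at $d'$, the inequality is immediate. Hence $L\cap\mathrm{int}(X)=\emptyset$, i.e. $L\subset\partial X$.

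The main obstacle is the middle step. We must make rigorous that the chain-level output of Theorem \ref{theorem:deform-Viterbo}, which is an algebraic identity in completed string homology rather than a single curve, forces a genuine moduli space with a point constraint at the interior point $p$. We must also show that the monotonicity energy is additional to the $d\,A_{\min}(L)$ accounting, with $\epsilon$ independent of $d$. I would first try to obtain this by rerunning the filtration argument with the Liouville form modified near $p$: choose the ball so that the action functional of the relevant chains drops by $\epsilon$. Equivalently, replace $L$ by a Lagrangian in a slightly shrunk or bumped domain $X'\subset X$ with $C^{\mathrm{GH}}_d(X')\le C^{\mathrm{GH}}_d(X)-\epsilon$, using a local inflation near $p$ that is compatible with $L$ and preserves $A_{\min}(L)$.
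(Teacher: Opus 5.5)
Your global strategy is the paper's: a monotonicity bound $\tfrac12\pi\delta^2$ at an interior point $p\in L\cap\mathrm{int}(X)$, independent of $d$, is played against the slack $C^{\mathrm{GH}}_{d_i}(X)-d_i\,A_{\min}(L)\to0$ that spectral convexity provides. Your final bookkeeping along $d_i$ is fine. However, the step that carries the argument is missing, and what you put in its place does not work.

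\emph{The disc branch fails.} If $p$ lies on one of the discs $u_j$, monotonicity only gives $\int u_j^\ast d\lambda\ge\max\{A_{\min}(L),\tfrac12\pi\delta^2\}$, not $A_{\min}(L)+\tfrac12\pi\delta^2$. The area of a disc is fixed by its class, and a disc of area exactly $A_{\min}(L)$ can pass through $p$. So the point constraint must land on the Cohen--Ganatra (punctured disc) component.

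\emph{The pairing argument does not force this.} ``$y$ pairs with the fundamental class'' does not put the constraint there: (\ref{eq:def1}) is an identity in completed string homology after homotopy transfer, and $[\![L]\!]$ imposes no point constraint at all.

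\emph{The fallbacks fail too.} Changing $\lambda$ near $p$ without changing $\omega$ means adding $df$ with $f$ supported near $p$. This changes neither the periods $\lambda(a)$, $a\in H_1(L;\mathbb{Z})$, nor $C^{\mathrm{GH}}_d(X)$, so no action drops. Shrinking or bumping $X$ to some $X'\supset L$ must keep $L\cap\partial X$ on $\partial X'$, so nothing forces a $d$-independent drop of $C^{\mathrm{GH}}_d$. If instead $L\subset\mathrm{int}(X)$, the rescaled domain $X_{-\eta}\supset L$ already gives $A_{\min}(L)\le e^{-\eta}\inf_d C^{\mathrm{GH}}_d(X)/d$ with no spectral convexity. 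The real content of the theorem is the case where $L$ also touches $\partial X$.

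The missing idea is to spend the energy budget \emph{before} imposing the point constraint.
\begin{enumerate}
\item Fix the Darboux ball $\phi\colon B^{2n}(\pi\delta^2)\to\mathrm{int}(X)$ at $p$ first. Since $L$ may meet $\partial X$, work in $X_{+\eta_i}$. The collar error $(e^{\eta_i}-1)C^{\mathrm{GH}}_{d_i}(X)$ grows linearly in $d_i$, so $\eta_i$ must shrink with $d_i$ (the paper takes $\eta_i<1/d_i^2$); your unspecified $\nu$ hides this.
\item Use $E_{\mathrm{top}}(v)+\sum_{j=1}^m\int u_j^\ast\lambda\le C^{\mathrm{GH}}_{d_i}(X_{+\eta_i})$ with $m\ge d_i$ and each disc of area $\ge A_{\min}(L)$. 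Spectral convexity then gives $E_{\mathrm{top}}(v)<\min\{\tfrac13\pi\delta^2,A_{\min}(L)\}$ for the Cohen--Ganatra solution $v$ itself.
\item Since this is below $A_{\min}(L)$, no disc bubbling occurs in the moduli space containing $v$. It is an honest compact manifold with corners (only Floer breaking at the puncture), of dimension at least $n+1$ after adding boundary marked points.
\item On it $\mathrm{ev}_0$ is a corner-stratified submersion, hence surjective onto $L$. This gives $v'$ in the same class, so with the same energy, and $v'(z_0)=p$.
\item Choose $H_S\equiv0$ and $J_S=J$ near $\partial S$, with $J=\phi_\ast J_{\mathrm{std}}$ on the ball, so $v'$ is genuinely holomorphic there. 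The monotonicity lemma with Lagrangian boundary then gives $\tfrac12\pi\delta^2\le E_{\mathrm{top}}(v')<\tfrac13\pi\delta^2$, the contradiction.
\end{enumerate}
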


The spectral convexity assumption of Definition \ref{definition:asymp-sta} is satisfied for many Liouville domains, there are both subcritical and non-subcritical examples (cf. Section \ref{section:capacity}). In particular, we have the following, which confirms Conjecture \ref{extremal-ellip-conjecture} in view of Remark \ref{CMextremal=Asp-extremal}.

\begin{corollary}\label{extremalonboundary}
For any $0<a_1\leq a_2\leq a_3\leq \dots\leq a_n<\infty$, every closed oriented aspherical Lagrangian submanifold in the ellispoid $E^{2n}(a_1,a_2,\dots,a_n)$ that is extremal in the sense of Definition \ref{definition:Asp-extremal} lies entirely in the boundary $\partial E^{2n}(a_1,a_2,\dots,a_n)$.
\end{corollary}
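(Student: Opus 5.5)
The plan is to deduce Corollary \ref{extremalonboundary} from Theorem \ref{theorem:boundaryrigidity}. For this we need two inputs:
\begin{itemize}
\item[(a)] the ellipsoid $E=E^{2n}(a_1,\dots,a_n)$ is a spectrally convex Liouville domain with $c_1(E)=0$;
\item[(b)] $C^{\mathrm{AL}}(E)$ equals $\inf_{d}C^{\mathrm{GH}}_d(E)/d$.
\end{itemize}
Given these, an extremal $L$ in the sense of Definition \ref{definition:Asp-extremal} satisfies $A_{\min}(L)=C^{\mathrm{AL}}(E)=\inf_d C^{\mathrm{GH}}_d(E)/d$. Theorem \ref{theorem:boundaryrigidity} then places $L$ in $\partial E$.

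The hypothesis $c_1(E)=0$ is trivial, since $E$ is a star-shaped domain in $\mathbb{C}^n$. The Gutt--Hutchings capacities of ellipsoids are computed in \cite{ghs}: $C^{\mathrm{GH}}_d(E)$ is the $d$-th smallest element, counted with multiplicity, of the multiset $\{k a_i \mid k\in\mathbb{N},\ 1\le i\le n\}$.

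Set $D=(1/a_1+\cdots+1/a_n)^{-1}$. For each $d$, let $T=C^{\mathrm{GH}}_d(E)$. Then at least $d$ elements of the multiset are $\le T$, so
\[
d\le\sum_i\lfloor T/a_i\rfloor\le T/D .
\]
Hence $C^{\mathrm{GH}}_d(E)/d\ge D$ for every $d$.

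Conversely, take $T_j=j\,\mathrm{lcm}$ when the $a_i$ are rationally dependent in a suitable sense; in general, take $T$ large and $d(T)=\sum_i\lfloor T/a_i\rfloor$. Then $C^{\mathrm{GH}}_{d(T)}(E)\le T$ and
\[
d(T)\ge T/D-n .
\]
This gives
\[
\frac{C^{\mathrm{GH}}_{d(T)}(E)}{d(T)}-D\le \frac{T}{T/D-n}-D=\frac{nD}{T/D-n}=O(1/d).
\]
This is only of order $1/d$, which is not quite the required rate "faster than $1/d_i$". So one has to choose the sequence cleverly.

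If all the ratios $a_i/a_j$ are rational, take $T_k=kM$ with $M$ a common multiple of the $a_i$. Then $T_k/a_i$ is an integer, so $d_k=T_k/D$ exactly, and $C^{\mathrm{GH}}_{d_k}(E)/d_k=D$. The infimum is attained, and the constant sequence is allowed by Definition \ref{definition:asymp-sta}.

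In the irrational case, use simultaneous Diophantine approximation (Dirichlet) to choose $T_k\to\infty$ such that each fractional part $\{T_k/a_i\}$ is $o(1)$. The defect $\sum_i\{T_k/a_i\}$ then tends to $0$, giving the error bound $o(1)/d_k$, which is the required rate.

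Alternatively, and more simply, I expect the paper's Section \ref{section:capacity} to establish spectral convexity for all convex toric domains, and ellipsoids are convex toric domains. In that case I would simply cite it, with the above computation as a sanity check.

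Step (b) is Corollary \ref{CM=AL}: $C^{\mathrm{AL}}(E)=D$, which by the computation above equals $\inf_d C^{\mathrm{GH}}_d(E)/d$.

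The main obstacle is the rate condition in (a) for irrational ratios. If the simultaneous Diophantine argument proves delicate, note that we need the lower bound $d\le T/D$ to be nearly sharp along a subsequence. Dirichlet's theorem gives integers $q$ with $\|q\,a_1/a_i\|<\varepsilon$ for all $i$, which handles this uniformly once one takes $T=q a_1$.
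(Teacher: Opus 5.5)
Your reduction matches the paper's: Corollary \ref{CM=AL} turns extremality into $A_{\min}(L)=D:=(\sum_i 1/a_i)^{-1}=\inf_d C^{\mathrm{GH}}_d(E)/d$, and Theorem \ref{theorem:boundaryrigidity} finishes once $E$ is shown to be spectrally convex. For commensurable $a_i$ your argument is exactly the paper's: at a common multiple $T=M$ one has $d=M/D$, the infimum is attained, and a constant sequence works.

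The two arguments differ on incommensurable $a_i$. The paper handles that case only with ``by density, we can assume that $a_1,\dots,a_n$ are all rationals''. That reduction is not obviously legitimate. Extremality is not stable under perturbing the $a_i$: an extremal $L\subset E(a)$ is not extremal in a slightly larger rational ellipsoid, since $D(a')>D(a)$. Nor is the rate condition uniform, because the $d$ attaining the infimum for rational $a'$ blows up as $a'\to a$. So treating the irrational case directly, as you propose, is the right call and goes beyond what the paper does. Your claim that one can find $T_k\to\infty$ with all $\{T_k/a_i\}\to 0$ is also true.

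Your final implementation of that claim, however, fails as stated. Dirichlet controls $\|qa_1/a_i\|$, the distance to the nearest integer, not the fractional part. Write $N(T)=\sum_i\lfloor T/a_i\rfloor$. Since $T=qa_1$ lies in the multiset, one has exactly $C^{\mathrm{GH}}_{N(T)}(E)-N(T)D=D\sum_i\{T/a_i\}$, and fractional parts just below $1$ ruin this. Concretely, take $(a_1,a_2,a_3)=(1,1+\tfrac{\sqrt2}{2},1+\sqrt2)$. Then $1/a_2+1/a_3=1$, so $\{q/a_2\}+\{q/a_3\}=1$ for every $q\in\mathbb{N}$. Hence every $T=qa_1$ has defect exactly $D=\tfrac12$.

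The repair is short. Use that $C^{\mathrm{GH}}_{N(T)}(E)\le T$ for every real $T$, so that $C^{\mathrm{GH}}_{N(T)}(E)-N(T)D\le D\sum_i\{T/a_i\}$, and shift upward to $T=qa_1+\varepsilon a_n$. Writing $qa_1/a_i=m_i+r_i$ with $|r_i|<\varepsilon$ gives $T/a_i=m_i+r_i+\varepsilon a_n/a_i$. Here $0<r_i+\varepsilon a_n/a_i<2\varepsilon a_n/a_1$, so for $2\varepsilon a_n/a_1<1$ you get $C^{\mathrm{GH}}_{N(T)}(E)-N(T)D<2nD\varepsilon a_n/a_1$. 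Letting $\varepsilon\to0$ produces $d_k$ with $C^{\mathrm{GH}}_{d_k}(E)-d_kD\to 0$, which is precisely what \eqref{eq:choice} in the proof of Theorem \ref{theorem:boundaryrigidity} requires.

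Finally, drop the fallback of citing Section \ref{section:capacity}. The paper never proves spectral convexity of convex toric domains; it only asserts it in passing after Definition \ref{definition:asymp-sta}, and that section concerns disc cotangent bundles. Your explicit computation is therefore essential, not a sanity check.
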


As a byproduct of the results obtained in this paper, especially Theorem \ref{theorem:GH-ALbound} and Corollary \ref{CM=AL}, we give the first computations of the Lagrangian capacities $C^\mathrm{CM}(X)$ and $C^\mathrm{AL}(X)$ for many non-subcritical Weinstein domains $X$ in Section \ref{section:capacity}.

\begin{proposition}
Let $X$ be the unit disc cotangent bundle of $S^2$, $\mathbb{RP}^2$, $S^3$ or a $3$-dimensional lens space $L(p,q)$, then
\[
C^\mathrm{CM}(X)=C^\mathrm{AL}(X)=2\pi.
\]
Let $S\subset\mathbb{R}^3$ be a Zoll sphere of revolution, such that the length of any simple closed geodesic is $l$, then
\[
C^\mathrm{CM}(D^\ast S)=C^\mathrm{AL}(D^\ast S)=l
\]
for the unit disc cotangent bundle of $S$.
\end{proposition}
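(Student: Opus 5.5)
The argument has two halves: an upper bound from Theorem \ref{theorem:GH-ALbound}, and a matching lower bound from explicit Lagrangians. Throughout, $X=D^\ast_gQ$ is the unit disc cotangent bundle with the canonical Liouville form and $c_1(X)=0$.

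\textbf{Upper bound.} By Theorem \ref{theorem:GH-ALbound}, $C^{\mathrm{CM}}(X)\le C^{\mathrm{AL}}(X)\le\inf_d C^{\mathrm{GH}}_d(X)/d$, so it suffices to find a $d$ with $C^{\mathrm{GH}}_d(X)\le d\cdot 2\pi$ (resp.\ $d\cdot l$). For a round metric on $S^2$, $\mathbb{RP}^2$, $S^3$ or $L(p,q)$, normalized so the simple closed geodesics of the relevant sphere have length $2\pi$, the Reeb flow on $S^\ast Q$ is the geodesic flow. This flow is Zoll: every orbit is closed, of period $2\pi$ on the sphere cover. For $\mathbb{RP}^2$ and lens spaces, some orbits are shorter and noncontractible, but their iterates of period $2\pi$ lift to the cover.

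For $d=1$ (and generally $d$), I would use the Morse--Bott spectral sequence for $SH^\ast_{S^1}$, together with the Viterbo isomorphism $SH^\ast_{S^1}(D^\ast Q)\cong H^{S^1}_{n-\ast}(\mathcal{L}Q)$. The goal is to show that $e_X\otimes u^{-d+1}$ becomes exact at action level $2\pi d$ (resp.\ $ld$). Equivalently, the constant-loop class $[Q]\otimes h^{-d+1}$ is killed in $S^1$-equivariant string homology by the family of iterated geodesics of length $\le 2\pi d$. For $S^2$ this is the classical fact that the space of closed geodesics (a copy of $\mathbb{RP}^3$, i.e.\ $ST S^2/S^1$) sweeps out the fundamental class. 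The Zoll case is analogous, with $l$ replacing $2\pi$.

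The main obstacle is exactly this step. One must show the Gutt--Hutchings filtration level is attained at the first (or $d$-th) Morse--Bott family, rather than merely that $C^{\mathrm{GH}}_d$ is finite. This requires a degree/index count: the Conley--Zehnder/Morse index of the $k$-th iterate family has to put a generator in the right degree $1-2d$ in a position to hit $e_X\otimes u^{-d+1}$. I would attempt this via the Gysin-type exact sequence and the known computation of $H^{S^1}_\ast(\mathcal{L}S^2)$ (Ziller, Carlsson--Cohen), checking that no lower-action obstruction survives. I expect the bound $C^{\mathrm{GH}}_1(X)\le 2\pi$ already to follow when $n$ permits; otherwise I would take $d=n-1$ and show $C^{\mathrm{GH}}_d=2\pi d$. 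Finite quotients would be handled by the transfer to the cover, or directly, since the period-$2\pi$ orbits still form the relevant Morse--Bott family.

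\textbf{Lower bound.} I need an explicit Lagrangian torus $T\subset X$ with $A_{\min}(T)$ arbitrarily close to $2\pi$ (resp.\ $l$). For $S^2$ (and the Zoll surface of revolution) I would use the $S^1$-symmetry to build an action--angle chart. The geodesic flow together with the rotation gives a Hamiltonian $T^2$-action on $T^\ast S\setminus 0_S$, with moment map $(|p|,\,p\cdot\partial_\theta)$. The unit codisc bundle then corresponds to a toric-like region in the image. The torus fiber over an interior point near $|p|=1$ with appropriately chosen angular momentum has disc classes whose areas are given by the action coordinates. Concretely, the generator coming from a collapsing cycle has area equal to $\int p\,dq$ over a closed geodesic, which is $\to l$. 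For $S^3$ and $L(p,q)$ I would use the analogous $T^3$ action-angle coordinates from the $U(2)$-symmetry, or find a symplectic embedding of a suitable ball/polydisc region. For $\mathbb{RP}^2$, I would either construct a torus analogously or quote that $C^{\mathrm{CM}}$ is monotone under the covering trick. Combining the two bounds yields the equalities.
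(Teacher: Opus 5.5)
Your architecture matches the paper's: an upper bound from Theorem~\ref{theorem:GH-ALbound} via $C^{\mathrm{GH}}_1$, and a lower bound from an explicit torus. For the upper bound you only need $d=1$; the paper takes this from the fact that the dilation is carried by the first geodesic family.

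For $S^2$, $\mathbb{RP}^2$ and Zoll spheres of revolution your lower bound works, but only after you pin down the torus to angular momentum $j=0$. Since $\pi_1(D^\ast S)=0$ and $d\lambda_{\mathrm{can}}$ is exact, $A_{\min}(T)$ is the least positive value of $\int\lambda_{\mathrm{can}}$ on \emph{all} of $H_1(T;\mathbb{Z})$, not only on ``collapsing'' classes. The rotation loop has action $2\pi j$, so only $T=\{|p|=1,\ j=0\}$, whose actions form $l\mathbb{Z}$, gives $A_{\min}=l$.
\begin{itemize}
\item For the round sphere this torus is $T_{\mathrm{Pol}}$.
\item For Zoll surfaces it is a more direct route than the paper's, which uses the Ferreira--Ramos embedding $B^2(l)\times B^2(l)\hookrightarrow D^\ast S$ together with Corollary~\ref{CM=AL}.
\item For $\mathbb{RP}^2$, use the quotient torus. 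Loops contractible in $D^\ast\mathbb{RP}^2$ lift to $T_{\mathrm{Pol}}$, so their actions lie in $2\pi\mathbb{Z}$. There is no general monotonicity of $C^{\mathrm{CM}}$ under coverings to quote instead.
\end{itemize}

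The genuine gap is $S^3$ and $L(p,q)$. The $T^3$ generated by $|p|$ (period $2\pi$) and the maximal torus $R_{(\theta_1,\theta_2)}\subset SO(4)$ does not act effectively. Indeed $\Phi^{\pi}(u,v)=(-u,-v)=R_{(\pi,\pi)}(u,v)$, because $-\mathrm{id}\in SO(4)$, which is exactly what fails in $SO(3)$.

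Consequently, on $T=\{|v|=1,\ j_1=j_2=0\}$ the loop $\gamma(s)=e^{is}\Phi^{s}(u,v)$, $0\le s\le\pi$ (complex notation on $\mathbb{C}^2$), is closed. Its action is $\int_\gamma\lambda_{\mathrm{can}}=\int_0^\pi(|p|+j_1+j_2)\,ds=\pi$, so $A_{\min}(T)=\pi$. On any other orbit $\{|v|=r,\ j_1=c_1,\ j_2=c_2\}$, the loop $\gamma$ and its complement in the closed geodesic have actions $\pi(r\pm(c_1+c_2))$, so $A_{\min}\le\pi$ there as well.

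This $T$ is precisely the paper's generalized Polterovich torus: the fibre $T^2$-action on $\widehat{Y}$ is this maximal torus, and the fibrewise zero section is its zero level. Lemma~\ref{lemma:area1} has the same defect. Its curve $c(t)$ is not closed, since $a(t)$ and $b(t)$ each increase by $\tfrac12$ as $\sigma$ winds once around $\pm1$. Closing it by a half-turn in the fibre, which costs no action, gives a loop of action $\pi$.

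Your other options fail too:
\begin{itemize}
\item A polydisc, or any convex or concave toric domain, of diagonal $2\pi$ in $\mathbb{C}^3$ has volume at least $(2\pi)^3>\mathrm{vol}(D^\ast S^3)=8\pi^3/3$.
\item A ball $B^6(a)$ only yields $a/3$.
\end{itemize}

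More decisively, the target value is out of reach. Carry out the Morse--Bott count you propose. The $k$-th family of great circles contributes $H_\ast(S^2\times S^2)$ in degrees $2(2k-1)+\{0,2,2,4\}$, while $[S^3]\otimes h^{-d+1}$ sits in degree $2d+1$. For odd $d$, only the family $k=\frac{d+1}{2}$ has a class in degree $2d+2$. Since all $C^{\mathrm{GH}}_d(D^\ast S^3)$ are finite, this gives $C^{\mathrm{GH}}_d(D^\ast S^3)=\pi(d+1)$.

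Theorem~\ref{theorem:GH-ALbound} then gives $C^{\mathrm{AL}}(D^\ast S^3)\le C^{\mathrm{GH}}_3/3=4\pi/3<2\pi$, and letting $d\to\infty$ gives $C^{\mathrm{AL}}(D^\ast S^3)\le\pi$. Together with $T$, this points to $C^{\mathrm{CM}}(D^\ast S^3)=C^{\mathrm{AL}}(D^\ast S^3)=\pi$. The quotient torus in $D^\ast L(p,q)$ likewise has $A_{\min}\le\pi$, since $\gamma$ descends to a contractible loop.

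So the $S^3$ and lens-space part of the statement cannot be proved as written, by your route or by the paper's.
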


This paper studies a version of Lagrangian capacity which is more general than the original definition of Cieliebak--Mohnke \cite{cmp} in the sense that we use closed aspherical Lagrangians in the symplectic manifold $X$ instead of just the Lagrangian tori. One can include more general Lagrangians in $X$ and consider the corresponding capacity. For example, the recent work of S. Li \cite{slt} suggests that similar techniques as explored here can be used to study capacities defined using all $L\subset X$ with $\pi_2(L)=0$, which, however, would differ drastically from $C^\mathrm{AL}(X)$ since there exist closed Lagrangians in $\mathbb{C}^n$ with $\pi_2(L)=0$ and Maslov number $>2$. See also \cite{cmp}, Remark 1.6, where the authors suggest defining the Lagrangian capacity using all closed Lagrangians. In the case of a unit ball $B^{2n}(1)$ with $n\geq2$, the capacity defined using all closed Lagrangians is conjectured to be $\frac{1}{2}$. On the other hand, $C^\mathrm{CM}\left(B^{2n}(1)\right)=C^\mathrm{AL}\left(B^{2n}(1)\right)=\frac{1}{n}$.

The paper is organized as follows. In Section \ref{section:argument}, we prove Theorems \ref{theorem:GH-ALbound} by assuming the validity of Theorem \ref{theorem:deform-Viterbo}. In Section \ref{section:corollary}, after further assuming the correctness of Theorem \ref{theorem:boundaryrigidity}, we prove Corollaries \ref{CM=AL}, \ref{Lagrangian-width}, \ref{Embedd=CM}, \ref{stability-CMcapacity} and \ref{extremalonboundary}. The proof of Theorem \ref{theorem:deform-Viterbo} is given in Section \ref{section:proof}, after introducing the relevant moduli spaces in Section \ref{section:CG} and the required $S^1$-equivariant de Rham chain model of the free loop space in Section \ref{section:de Rham}. The proof of Theorem \ref{theorem:boundaryrigidity} is postponed to Section \ref{section:extremal}, since some familiarity with the moduli spaces considered in Section \ref{section:CG} would be helpful for understanding the argument. Finally, we discuss the examples of (non-subcritical) Liouville domains where Corollary \ref{Lagrangian-width} and Theorem \ref{theorem:deform-Viterbo} are applicable and explicitly compute the Lagrangian capacites of some of these Liouville domains in Section \ref{section:example}.

\section*{Acknowledgements}
The possibility of applying the techniques of \cite{yla} to Liouville domains with finite higher Gutt--Hutchings capacities was first asked by Mark McLean during a talk given by the second author at Stony Brook University in 2023. The first author is grateful to Georgios Dimitroglou-Rizell and Luis Diogo for many valuable conversations, and to Kei Irie for helpful discussions during the 2024 Paris conference \textit{From Hamiltonian Dynamics to Symplectic Topology and Beyond}, held in honor of Claude Viterbo.

\section{Bounding the Lagrangian capacity}\label{section:argument}

Assuming Theorem \ref{theorem:deform-Viterbo}, we prove Theorem \ref{theorem:GH-ALbound} in this section. We remark that due to the complexities in the constructions of the string homology classes $x,y\in\widehat{\mathbb{H}}_\ast^{S^1}$, there are some unavoidable technical ambiguities in the proof, which will become clear once Theorem \ref{theorem:deform-Viterbo} is proved in Section \ref{section:proof}. To prove Theorem \ref{theorem:GH-ALbound}, we will use the following fact about the free loop spaces of aspherical manifolds.

\begin{lemma}[\cite{jlf}, Corollary 5.3]\label{lemma:CW}
If $L$ is an aspherical manifold, then every connected component of $\mathcal{L}L$ has the homotopy type of a CW complex of dimension at most $n$.
\end{lemma}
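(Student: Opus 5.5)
The plan is to identify each component of $\mathcal{L}L$ up to homotopy with an Eilenberg--MacLane space $K(C,1)$ for a subgroup $C\subset\pi_1(L)$, and then to realize that $K(C,1)$ as a smooth $n$-manifold, namely a covering space of $L$.

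First I would recall two standard facts. By Milnor's theorem, $\mathcal{L}L=\mathrm{Map}(S^1,L)$ has the homotopy type of a CW complex, because $S^1$ is compact and $L$ is a CW complex. Its connected components are indexed by free homotopy classes of loops, that is, by conjugacy classes $[g]$ of $\pi:=\pi_1(L)$. Write $\mathcal{L}_{[g]}L$ for the corresponding component.

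Next I would compute the homotopy groups of $\mathcal{L}_{[g]}L$ using the evaluation fibration
\[
\Omega L\to\mathcal{L}_{[g]}L\xrightarrow{\mathrm{ev}_0}L .
\]
Since $L$ is aspherical, each component of $\Omega L$ is contractible, and $\pi_0(\Omega L)\cong\pi$.
\begin{itemize}
\item The long exact sequence then gives $\pi_k(\mathcal{L}_{[g]}L)\cong\pi_k(L)=0$ for all $k\geq2$.
\item In low degrees it gives $\pi_1(\mathcal{L}_{[g]}L)\cong C(g)$, the centralizer of $g$ in $\pi$. To see this, note that $\pi_1(L)$ acts on $\pi_0(\Omega L)=\pi$ by conjugation on the fibre over the loop representing $g$, and the stabilizer of that point is $C(g)$.
\end{itemize}
This is the step I expect to need the most care: the basepoint and boundary-map bookkeeping must be done correctly. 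Alternatively, one can use the classical identification $\mathcal{L}(BG)\simeq\coprod_{[g]}BC(g)$ for discrete groups $G$, applied with $L\simeq B\pi$. Either way, $\mathcal{L}_{[g]}L$ is a $K(C(g),1)$.

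Finally, I would produce a low-dimensional model. Let $\widetilde{L}_g\to L$ be the covering space corresponding to the subgroup $C(g)\subset\pi$. Then $\widetilde{L}_g$ is a smooth $n$-manifold, possibly non-compact, with $\pi_1=C(g)$ and contractible universal cover, so it is also a $K(C(g),1)$. A smooth $n$-manifold admits a triangulation, so $\widetilde{L}_g$ is a CW complex of dimension $n$. Any two CW models of $K(C(g),1)$ are homotopy equivalent, by Whitehead's theorem together with the uniqueness of Eilenberg--MacLane spaces. Combined with Milnor's theorem, this shows that $\mathcal{L}_{[g]}L\simeq\widetilde{L}_g$ has the homotopy type of a CW complex of dimension at most $n$.
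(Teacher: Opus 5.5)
Your proof is correct and is the standard argument for this fact: the evaluation fibration $\Omega L\to\mathcal{L}_{[g]}L\to L$ shows each component is a $K(C(g),1)$, and the covering space of $L$ with fundamental group $C(g)$ gives an $n$-dimensional CW model, with Milnor's and Whitehead's theorems supplying the homotopy equivalence. The paper gives no proof of its own and only cites \cite{jlf}; your argument is the usual one behind that result.
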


\begin{proof}[Proof of Theorem \ref{theorem:GH-ALbound}]
Let $(X,\lambda)$ be a Liouville domain with $c_1(X)=0$. Suppose that for some $d\in\mathbb{N}$, the $d$-th Gutt--Hutchings capacity $C^{\operatorname{GH}}_d(X)$ of $X$ is finite. Let $J$ be any contact type almost complex structure on $X$ (cf. Section \ref{section:CG}), we aim to prove that any closed oriented aspherical Lagrangian submanifold $L\subset X$ which is $\mathit{Spin}$ bounds a $J$-holomorphic disk $u:(D,\partial D)\rightarrow(X,L)$ such that
\[
0<\int_{\partial D}u^\ast\lambda\leq\frac{C^{\operatorname{GH}}_d(X)}{d}.
\]
	
We first assume that $L\subset\mathrm{int}(X)$. The equation (\ref{eq:def1}) in Theorem \ref{theorem:deform-Viterbo} can be written as
\[
\sum_{m=1}^{\infty}\frac{1}{m!}\sum_{a=a_1+\cdots+a_m}\ell_{m+1}(y(-a),x(a_1),\cdots,x(a_m))=(-1)^{n+1}[\![L]\!]\otimes h^{-d+1},
\]
where $a,a_1,\dots,a_m\in H_1(L;\mathbb{Z})$, and $x(a_i)$ denotes the projection of $x$ onto
\[
H^{S^1}_{\ast+n+\mu(a_i)-1}(\mathcal{L}(a_i)L;\mathbb{R})
\]
under the decomposition of the string homology defined in (\ref{eq:dirsum}).
	
Under the assumption that $L$ is aspherical, there is a topological splitting on the free loop space $\mathcal{L}L$, form which it follows that the class $[\![L]\!]\otimes h^{-d+1}$ is nontrivial in the string homology $H_\ast^{S^1}(\mathcal{L}L;\mathbb{R})$. Therefore, for some $m\geq 1$, we must have
\begin{equation}\label{eq:lm}
\ell_{m+1}(y(-a),x(a_1), \dots,x(a_m))\neq 0.
\end{equation}
The gradings of the inputs in the above equation are given by
\[
|y(-a)|=2d+n-1-\mu(a), \text{ and }|x(a_i)|=n-3+\mu(a_i).
\]
By Lemma \ref{lemma:CW}, the vector space $H_\ast^{S^1}(\mathcal{L}(b)L;\mathbb{R})$ is supported in degrees $0\leq *\leq n-1$ for $b\neq0\in H_1(L;\mathbb{Z})$, with the only possible non-trivial class in $H_n^{S^1}(\mathcal{L}L;\mathbb{R})$ being a multiple of $[\![L]\!]$. But for (\ref{eq:lm}) to be non-zero, none of the entries of $\ell_{m+1}$ can be a multiple of $[\![L]\!]$. It follows that
\[
2d\leq \mu(a)\leq 2d+n-1,
\text{ }
3-n\leq \mu(a_i)\leq 2
\]
for all $i=1,\dots,m$. Consequently,
\[
2m\geq \sum_{i=1}^m \mu(a_i)=\mu(a)\geq 2d,
\]
which yields $m\geq d$.
	
By the construction of $x$ in the proof of Theorem \ref{theorem:deform-Viterbo} (cf. Section \ref{section:proof}) and that $x(a_i)\neq 0$ for each $i=1,\dots,m$, there exist non-constant $J$-holomorphic discs $u_1, \dots, u_m:(D,\partial D)\rightarrow(X,L)$\footnote{These discs exist for any convex $d\lambda$-compatible almost complex structure. However, their boundaries on $L$ do not necessarily represent distinct classes in $H_1(L;\mathbb{Z})$. For example, when $L$ is the Chekanov--Schlenk torus \cite{csn} in $\mathbb{C}^2$, all of these disks represent the same class in $H_1(L;\mathbb{Z})$.} such that
\[
\left[u_i(\partial D)\right]=a_i\in H_1(L;\mathbb{Z}),
\]
for all $i=1,\dots,m$. Moreover, it follows from (\ref{eq:lm}) and Theorem \ref{theorem:deform-Viterbo}, (ii) and (iii) that
\[
\sum_{i=1}^d \int_{\partial D}u_i^\ast\lambda\leq\sum_{i=1}^m \int_{\partial D}u_i^\ast\lambda\leq\lambda(a)\leq C^{\operatorname{GH}}_d(X).
\]
In particular, there must be some disc $u_j:(D,\partial D)\rightarrow(X,L)$ with $1\leq j\leq d$, such that
\begin{equation}\label{eq:disk-area}
0<\int_{\partial D}u_j^\ast\lambda\leq \frac{1}{d}\sum_{i=1}^d \int_{\partial D}u_i^\ast\lambda\leq \frac{C^{\operatorname{GH}}_d(X)}{d}.
\end{equation}

To deal with the case when $L\cap\partial X\neq\emptyset$, we extend $X$ slightly by attaching the collar
\[
\left([0,\eta]\times \partial X,d(e^r\lambda)\right)
\]
along $\partial X$ for some small $\eta>0$, where $r\in[0,\eta]$ is the radial coordinate, and define
\[
X_{+\eta}:=X\cup_{\partial X}\left([0,\eta]\times \partial X\right).
\]
Note that there is an obvious inclusion $X\subset X_{+\eta}$ as a Liouville subdomain. By \cite{grc}, Lemma 4.5 we have
\begin{equation}\label{eq:collar}
C^{\operatorname{GH}}_d(X_{+\eta})=e^\eta C^{\operatorname{GH}}_d(X)=(1+o(\eta))\,C^{\operatorname{GH}}_d(X).
\end{equation}
Now, we can regard $L$ as a Lagrangian submanifold in $\mathrm{int}(X_{+\eta})$. Running the same argument as above implies that there is a $J$-holomorphic disc $u:(D,\partial D)\rightarrow(X_{+\eta},L)$ satisfying
\[
0<\int_{\partial D}u^\ast\lambda\leq\frac{C^{\operatorname{GH}}_d(X_{+\eta})}{d}=\frac{\left(1+o(\eta)\right)C_d^\mathrm{GH}(X)}{d}.
\]
Letting $\eta\rightarrow0$ we get the same inequality as (\ref{eq:disk-area}).

Thus we have proved that
\[
C^{\mathrm{AL}}(X)
\leq
\inf_{l\in\mathbb{N}}\frac{C^{\operatorname{GH}}_d(X)}{d}. \qedhere
\]
\end{proof}

\section{Proofs of Corollaries}\label{section:corollary}

In this section, we prove the corollaries of Theorems \ref{theorem:GH-ALbound} and \ref{theorem:boundaryrigidity}. The proof of Theorem \ref{theorem:boundaryrigidity} will be given in Section \ref{section:extremal}, here we assume its correctness. We start by recalling some basic notions.

Given a domain $\Omega \subset \mathbb{R}^n_{\geq 0}$, the associated toric domain
$X_{\Omega} \subset \mathbb{C}^n$ is defined by
\[
X_{\Omega} := \mu^{-1}(\Omega),
\]
where $\mu: \mathbb{C}^n \to\mathbb{R}^n_{\geq 0}$ is the moment map $\mu(z_1,\dots,z_n)=\pi\big(|z_1|^2,\dots,|z_n|^2\big)$ associated to the standard Hamiltonian torus action on $\mathbb{C}^n$. $X_\Omega$ is equipped with the restriction of the standard symplectic form $\omega_\mathrm{std}$ on $\mathbb{C}^n$. Define the diagonal of $X_{\Omega}$ by
\[
\delta_{\Omega}:=\sup\{a: (a,\dots,a) \in \Omega\}.
\]
We say that $X_\Omega$ is a \textit{convex} toric domain if the set $\Omega\subset\mathbb{R}^n_{\geq 0}$ is convex, and it is a \textit{concave} toric domain if $\mathbb{R}^n_{\geq 0}\setminus\Omega$ is convex.

For $\delta>0$, we denote by $N^{2n}(\delta) \subset \mathbb{C}^n$ the non-disjoint union of cylinders of diagonal $\delta$, which is given by
\[
N^{2n}(\delta) := \mu^{-1}(\Omega'),
\]
where
\[
\Omega':=\left.\left\{(x_1,\dots,x_n)\in \mathbb{R}^n_{\geq 0}\right\vert x_i \leq \delta \text{ for some } i \in \{1,\dots,n\}\right\}.
\]

We start with the proof of Corollary \ref{CM=AL}, which computes the Lagrangian capacities of convex and concave toric domains and in particular confirms \cite{cmp}, Conjecture 1.5.

\begin{proof}[Proof of Corollary \ref{CM=AL}]
For $X_{\Omega}$ convex or concave, we have
\[
X_{\Omega} \subset N^{2n}(\delta_{\Omega}).
\]
By the monotonicity of Gutt--Hutchings capacities and \cite{ghs}, Lemma 1.19, we have
\[
C^{\operatorname{GH}}_d(X_{\Omega})\leq C^{\operatorname{GH}}_d\big(N^{2n}(\delta_{\Omega})\big)=\delta_{\Omega}(d+n-1)	\]
for all $d\in\mathbb{N}$. Hence, by Theorem \ref{theorem:GH-ALbound}, we have
\[
C^{\mathrm{AL}}(X_{\Omega})\leq\inf_{d\in\mathbb{N}}\frac{C^{\operatorname{GH}}_d(X_{\Omega})}{d}\leq\delta_{\Omega}.
\]
	
On the other hand, the Clifford torus
\[
T_\mathrm{Cl}(\delta_\Omega):=S^1(\delta_{\Omega}) \times\cdots\times S^1(\delta_{\Omega})\subset (\mathbb{C}^n, \omega_{\mathrm{std}})
\]
is a Lagrangian torus with $A_{\min}\left(T_\mathrm{Cl}(\delta_\Omega)\right)=\delta_{\Omega}$ contained in $\partial X_{\Omega}$. Therefore,
\[
C^{\mathrm{AL}}(X_{\Omega})\geq C^{\mathrm{CM}}(X_{\Omega})\geq\delta_{\Omega}.
\]
Combining the above inequalities, we conclude that
\[
C^{\mathrm{CM}}(X_{\Omega})=C^{\mathrm{AL}}(X_{\Omega})=\delta_{\Omega}.\qedhere
\]
\end{proof}

\begin{remark}
The same argument can be used to prove the that $C^{\mathrm{CM}}(X_{\Omega})=C^{\mathrm{AL}}(X_{\Omega})=\eta_{\Omega}$ for toric domains $X\subset\mathbb{C}^n$ satisfying $(\eta_\Omega,\cdots,\eta_\Omega)\in\partial\Omega$, where $\eta_\Omega:=\inf\left\{a\left\vert X_\Omega\subset N^{2n}(a)\right.\right\}$. See \cite{gprc}, Theorem 18.
\end{remark}

Next, we prove Corollary \ref{extremalonboundary}, which confirms \cite{sfe}, Conjecture 9.11. According to Theorem \ref{theorem:boundaryrigidity}, it is enough to verify that ellipsoids are spectrally convex as Liouville domains in the sense of Definition \ref{definition:asymp-sta}.

\begin{proof}[Proof of Corollary \ref{extremalonboundary}]
We note that	\[\inf_{d\in\mathbb{N}}\frac{C^{\operatorname{GH}}_d\left(E^{2n}(a_1,a_2,\dots,a_n)\right)}{d}=\left(\frac{1}{a_1}+\dots+\frac{1}{a_n}\right)^{-1}.\]
By Theorem \ref{theorem:boundaryrigidity}, it suffices to prove that for some $l\in \mathbb{N}$ we have
\[C^{\operatorname{GH}}_{d}\left(E^{2n}(a_1,a_2,\dots,a_n)\right)=d\bigg(\frac{1}{a_1}+\dots+\frac{1}{a_n}\bigg)^{-1}.\]
By density, we can assume that $a_1, a_2,\dots,a_n$ are all rationals. By \cite{ghs}, Example 1.8, the $d$-th Gutt--Hutchings capacity of the ellipsoid $E^{2n}(a_1,a_2,\dots,a_n)$ is the $d$-th term in the sequence of positive integer multiples of $a_i$ arranged in non-decreasing order with repetitions. For $x> 0$, the number of elements of the multiset $\{m a_i|m\in \mathbb{N}\}$ that are less than or equal to $x$ is given by
\[
n(x):=\sum_{i=1}^n \left\lfloor \frac{x}{a_i}\right\rfloor\in\mathbb{N},
\]
so
\[
C^{\operatorname{GH}}_d\left(E^{2n}(a_1,a_2,\dots,a_n)\right)=\min\left\{x| n(x)\ge d\right\}.
\]
Since all $a_i$'s are rational, we can choose $q\in\mathbb{N}$ and $p_i\in\mathbb{N}$ such that	
\[
a_i=\frac{p_i}{q}\text{ for } i=1,\dots,n.
\]	
Let $L=\mathrm{lcm}(p_1,\dots,p_n)$ and note that
\[
x:=\frac{L}{q}
\]
is an element of the multiset $\{m a_i|m\in \mathbb{N}\}$. We claim that for $n(x)\in \mathbb{N}$, we have
\[
C^{\operatorname{GH}}_{n(x)}\left(E^{2n}(a_1,a_2,\dots,a_n)\right)=n(x)\left(\frac{1}{a_1}+\dots+\frac{1}{a_n}\right)^{-1}.
\]
To see this, note that for each $i$ we have
\[
\frac{x}{a_i}=\frac{L/q}{n_i/q}=\frac{L}{n_i}\in\mathbb{Z}.\]
Hence
\[
n(x)=\sum_{i=1}^n \left\lfloor \frac{x}{a_i}\right\rfloor=\sum_{i=1}^n \frac{x}{a_i}=x\sum_{i=1}^n \frac1{a_i},
\]
or equivalently
\begin{equation}\label{GHin}
x=n(x)\left(\frac{1}{a_1}+\dots+\frac{1}{a_n}\right)^{-1}.
\end{equation}
Next, we prove that
\[C^{\operatorname{GH}}_{n(x)}\left(E^{2n}(a_1,a_2,\dots,a_n)\right)=x.\]
Note that
\[
x=\left(\frac{x}{a_i}\right)a_i,
\]
so $x$ is itself a multiple of each $a_i$ and hence $x$ appears in the multiset $\{m a_i|m\in \mathbb{N}\}$. If $y<x$, then for each $i$ we have
\[
\frac{y}{a_i}<\frac{x}{a_i}\quad\Rightarrow\quad\left\lfloor\frac{y}{a_i}\right\rfloor\le\frac{x}{a_i}-1.
\]
Taking the sum gives
\[
n(y)\le\sum_{i=1}^n\left(\frac{x}{a_i}-1\right)=\sum_{i=1}^n\frac{x}{a_i}-n=n(x)-n<n(x).
\]
Hence, no $y<x$ satisfies $n(y)\ge k$. Therefore
\[
C^{\operatorname{GH}}_{n(x)}\left(E^{2n}(a_1,a_2,\dots,a_n)\right)=\min\{y|\ n(y)\ge n(x)\}=x.
\]
Combining this with (\ref{GHin}), we obtain
\[
C^{\operatorname{GH}}_{n(x)}(E^{2n}(a_1,a_2,\dots,a_n))=n(x)\left(\frac{1}{a_1}+\dots+\frac{1}{a_n}\right)^{-1}.
\]
This completes our proof.
\end{proof}

We then prove Corollary \ref{Lagrangian-width}, which gives an upper bound of Lagrangian width for aspherical Lagrangian submanifolds in terms of Gutt--Hutchings capacities.

\begin{proof}[Proof of Corollary \ref{Lagrangian-width}]
Let $L\subset\mathrm{int}(X)$ be a closed oriented aspherical Lagrangian submanifold which is $\textit{Spin}$. Assume that there exists a Liouville subdomain
\[
(W,\lambda|_W)\subset (X,\lambda)
\]
such that $L\subset W$ and $C^{\operatorname{GH}}_d(W)<\infty$ for some $d\in \mathbb{N}$. It follows from the proof of Theorem \ref{theorem:GH-ALbound} in Section \ref{section:argument} that, for every convex $d\lambda$-compatible almost complex structure $J$ on $W$ and every point $p\in L$, there exists a non-constant $J$-holomorphic disc
\[
u:(D,\partial D)\to (W,L)
\]
of Maslov index $2$, with $p\in u(\partial D)$, satisfying
\[
\int_{\partial D}u^*\lambda\leq\frac{C^{\operatorname{GH}}_d(W)}{d}.
\]
The upper bound $w(L;X)\leq2C_\mathrm{inf}^\mathrm{GH}(L;X)$ then follows from \cite{bm}, Lemma 1.4 and Theorem 1.2.

Now assume that the aspherical Lagrangian submanifold $L\subset\mathrm{int}(X)$ is displaceable. In this case, we can run the same argument as in Section \ref{section:argument} with a moduli space used by Irie \cite{kic} to realize Fukaya's original proposal \cite{kfa} in place of the Cohen--Ganatra moduli space used to define the class $y\in\widehat{\mathbb{H}}_{2d}^{S^1}$. For the reader's convenience, we recall its definition. Take a compactly supported Hamiltonian function $H_t:[0,1]\times X\rightarrow\mathbb{R}$ that displaces $L$ from itself, i.e. $L\cap\phi_H^1(L)=\emptyset$, where $\phi_H^1$ is the time-$1$ flow of the Hamiltonian vector field $X_{H_t}$. Let $\chi:\mathbb{R}\rightarrow[0,1]$ be a function such that $\chi=0$ on $(-\infty,0]$ and $\chi=1$ on $[1,\infty)$. For each $r\geq0$, define $\chi_r(s)=\chi(r+s)\chi(r-s)$. Choose an identification $D\setminus\{\pm1\}\cong\mathbb{R}\times[0,1]$, and denote by $s$ and $t$ the coordinates on $\mathbb{R}$ and $[0,1]$, respectively. Define
\begin{equation}
\mathcal{N}^r_{k+1}(L,\bar{a}):=\left\{(u,z_0=1,\cdots,z_1,\cdots,z_k)\right\} \nonumber
\end{equation}
to be the space of smooth maps $u:(D,\partial D)\rightarrow(X,L)$ in the relative homotopy class $\bar{a}\in\pi_2(X,L)$ satisfying the perturbed Cauchy-Riemann equation
\begin{equation}
\left(du-X_{\chi_r(s)H_t}(u)\otimes dt\right)^{0,1}=0, \nonumber
\end{equation}
where the $(0,1)$-part is taken with respect to some convex almost complex structure on $X$, and $z_0=1,\cdots,z_k\in\partial D$ are distinct marked points on the boundary aligned counterclockwisely. Define
\begin{equation}
\mathcal{N}_{k+1}^{\geq0}(L,\bar{a}):=\bigcup_{r\geq0}\mathcal{N}^r_{k+1}(L,\bar{a}). \nonumber
\end{equation}
The class $y\in\widehat{\mathbb{H}}_{2d}^{S^1}$ is then replaced with a class $y'\in\widehat{\mathbb{H}}_2^{S^1}$ (or its non-equivariant version in $\widehat{\mathbb{H}}_2$) defined by pushing forward the virtual fundamental chains of the admissible K-spaces $\overline{\mathcal{N}}_{k+1}^{\geq0}(L,\bar{a})$ to the chain models $\mathcal{L}_{k+1}$ of the free loop space defined in Section \ref{section:de Rham} below. See also \cite{yla}, Remark 52, where the relations between the Cohen-Ganatra moduli spaces and $\overline{\mathcal{N}}_{k+1}^{\geq0}(L,\bar{a})$ is discussed. Now $y'(-a)\neq0$ for some $a\in H_1(L;\mathbb{Z})$ implies that its action is bounded below by $-2\vert\!\vert H\vert\!\vert$, where
\begin{equation}
\vert\!\vert H\vert\!\vert=\int_0^1(\max H_t-\min H_t)dt \nonumber
\end{equation}
is the Hofer norm, see \cite{kic}, Lemma 7.17, (iii). Thus the same argument as in Section \ref{section:argument} implies the existence of a Maslov $2$ holomorphic disc $u:(D,\partial D)\rightarrow (X,L)$ passing through $p\in L$ whose area is bounded above by two times the displacement energy
\[
e(L;X):=\inf\left\{\vert\!\vert H\vert\!\vert:L\cap\phi_H^1(L)=\emptyset\right\}. \qedhere
\]
Applying  \cite{bm}, Lemma 1.4 finishes the proof.
\end{proof}

We now prove Corollary \ref{chord-estimate}, which establishes a quantitative version of the Arnold chord conjecture for aspherical Legendrian submanifolds in the boundary of star-shaped domains $X\subset\mathbb{C}^n$.

\begin{proof}[Proof of Corollary \ref{chord-estimate}]
The proof is based on the well-known argument of Mohnke from \cite{mk}, so we only give the essential details. Assume, towards a contradiction, that there exists a Legendrian submanifold
$\Lambda\subset \partial X$ with no Reeb chord of length at most $T$, where
\begin{equation}\label{Reeb-chord-est}
T>\inf_{d\in\mathbb N}\frac{C^{\operatorname{GH}}_d(X)}{d}.
\end{equation}
Let $\varepsilon>0$. Starting from $\Lambda$, we flow along the Reeb vector field on $\partial X$ for time $T$. We then move the resulting Legendrian radially inward to the hypersurface $\varepsilon\partial X$. On $\varepsilon \partial X$, we apply the negative Reeb flow for time $T$, and finally return radially to $\partial X$. Since $\Lambda$ has no Reeb chord of length at most $T$, this procedure gives, after smoothing the corners, a Lagrangian embedding of $S^1\times \Lambda$ into $X$. Its minimal symplectic area is
\[
A_{\min}(S^1\times \Lambda)=(1-\varepsilon)T.
\]
If $\Lambda$ is closed, oriented, aspherical, and Spin, then Theorem \ref{theorem:GH-ALbound} applies to this Lagrangian and gives
\[
(1-\varepsilon)T\leq \inf_{d\in\mathbb N}\frac{C^{\operatorname{GH}}_d(X)}{d}
\]
for all $\varepsilon>0$. However, by \eqref{Reeb-chord-est}, this inequality is violated for sufficiently small $\varepsilon>0$. This is the desired contradiction.
\end{proof}

Next, we prove Corollary \ref{Embedd=CM}, which relates the Cieliebak--Mohnke capacity to the embedding capacity of a $4$-dimensional ellipsoid, which confirms \cite{chls}, Conjecture 2.

\begin{proof}[Proof of Corollary \ref{Embedd=CM}]

We want to prove that for every \(a>0\), we have
\[\operatorname{EC}(a)=2\, C^{\mathrm{CM}}\big(E^{4}(1,a),\omega_{\mathrm{std}}\big).
\]
To do so, it suffices to show that for every $r>0$,
\begin{equation}\label{CM-ballcylinder}
C^{\mathrm{CM}}\!\left(B^4(r)\cup B^2(r/2)\times\mathbb{C}\right)=\frac{r}{2}.
\end{equation}
Indeed, suppose there exists a symplectic embedding
\[
E^4(1,a)\hookrightarrow B^4(r)\cup B^2(r/2)\times\mathbb{C}
\]
for some $a,r>0$. By monotonicity of the Cieliebak--Mohnke capacity, we obtain
\[
C^{\mathrm{CM}}\!\left(B^4(r)\cup B^2(r/2)\times\mathbb{C},\omega_{\mathrm{std}}\right)
\ge
C^{\mathrm{CM}}\big(E^{4}(1,a),\omega_{\mathrm{std}}\big).
\]
Assuming (\ref{CM-ballcylinder}), it follows that
\[
\frac{r}{2}
\ge
C^{\mathrm{CM}}\big(E^{4}(1,a),\omega_{\mathrm{std}}\big),
\]
and hence
\[
r\ge
2\,C^{\mathrm{CM}}\big(E^{4}(1,a),\omega_{\mathrm{std}}\big).
\]
Therefore,
\begin{equation}\label{eq:ineq1}
\operatorname{EC}(a)\ge
2\,C^{\mathrm{CM}}\big(E^{4}(1,a),\omega_{\mathrm{std}}\big).
\end{equation}
On the other hand, for
\[
r=2\,C^{\mathrm{CM}}\big(E^{4}(1,a),\omega_{\mathrm{std}}\big)
=\frac{2a}{a+1},
\]
there exists a symplectic embedding given by the inclusion
\[
E^4(1,a)\hookrightarrow B^4(r)\cup B^2(r/2)\times\mathbb{C}.
\]
Consequently,
\begin{equation}\label{eq:ineq2}
\operatorname{EC}(a)\le
2\,C^{\mathrm{CM}}\big(E^{4}(1,a),\omega_{\mathrm{std}}\big).
\end{equation}
Combining (\ref{eq:ineq1}) and (\ref{eq:ineq2}) yields
\[
\operatorname{EC}(a)
=
2\,C^{\mathrm{CM}}\big(E^{4}(1,a),\omega_{\mathrm{std}}\big).
\]
It remains to verify \eqref{CM-ballcylinder}. To this end, observe that
\[
B^4(r)\cup B^2(r/2)\times\mathbb{C} \subset \mathbb{C}^2
\]
is a toric domain whose diagonal equals $r/2$. Thus (\ref{CM-ballcylinder}) follows from Corollary \ref{CM=AL}. This completes the proof.
\end{proof}

Finally, we prove Corollary \ref{stability-CMcapacity}, which establishes the stability of the Cieliebak--Mohnke capacity.

\begin{proof}[Proof of Corollary \ref{stability-CMcapacity}]
We aim to prove that for any compact star-shaped domain $X \subset\mathbb{C}^n$ and any $m\in \mathbb{N}$,
\[
C^{\mathrm{CM}}\left(X\right)=C^{\mathrm{CM}}\left(X\times \mathbb{C}^m\right)\leq\inf_{d\in\mathbb{N}}\frac{C^{\operatorname{GH}}_d(X)}{d}.
\]
Let \( L \subset X \) be a Lagrangian torus with minimal symplectic area
\(\delta = A_{\min}(L) > 0\). Consider its stabilization
\[
L_{\mathrm{stab}}:=L \times \overset{\text{$m$-times}}{S^1(\delta) \times \cdots \times S^1(\delta)}
\subset X \times \mathbb{C}^m.
\]
By construction, we have
\[
A_{\min}(L_{\mathrm{stab}}) = A_{\min}(L).
\]
It follows that
\[
C^{\mathrm{CM}}(X)\leq C^{\mathrm{CM}}(X\times\mathbb{C}^m).
\]
\cite{grc}, Theorem 1.2 and  \cite{ghs}, Equation (1-5) imply that for any $d \in \mathbb{N}$ and sufficiently large $r > 0 $, we have
\[
C^{\operatorname{GH}}_d\left(X \times B^{2m}(r)\right)
= C^{\operatorname{GH}}_d(X).
\]
Applying Theorem \ref{theorem:GH-ALbound} and \cite{mpo1}, Lemma 6.12  we obtain
\[
C^{\mathrm{CM}}(X\times \mathbb{C}^m)
\leq \lim_{r \to \infty} \inf_{d \in \mathbb{N}}
\frac{C^{\operatorname{GH}}_d(X \times B^{2m}(r))}{d}
= \inf_{d \in \mathbb{N}} \frac{C^{\operatorname{GH}}_d(X)}{d}.
\]
Putting these together yields
\[
C^{\mathrm{CM}}(X)
\leq C^{\mathrm{CM}}(X\times \mathbb{C}^m)
\leq \inf_{d\in\mathbb{N}}\frac{C^{\operatorname{GH}}_d(X)}{d}.
\]
Moreover, equality holds when $X $ is a convex or concave toric domain, by the same argument as in the proof of Corollary \ref{CM=AL}. This completes the proof.
\end{proof}

\section{Cohen--Ganatra moduli spaces}\label{section:CG}

We recall the Cohen--Ganatra moduli spaces from \cite{yla}, Section 4.2, whose definition is inspired by closely related moduli spaces studied earlier by Cohen--Ganatra \cite{cg}. Here we will only provide the essential definitions and properties, and refer to \cite{yla} for details. See also \cite{sg1,jz}.

Let ${}_l\mathcal{R}_{k+1}^1$ be the moduli space of domains
\begin{equation}\label{eq:domain}
(S;z_0,\cdots,z_k,p_1,\cdots,p_l;\ell)
\end{equation}
modulo the group of automorphisms, where $S=D\setminus\{\zeta\}$ is a closed unit disc with an interior puncture $\zeta$, which will serve as an input. At $\zeta$ there is an asymptotic marker, namely a half-line $\ell\in T_\zeta D$. $z_0,\cdots,z_k\in\partial D$ are distinct marked points on the boundary, labeled in counterclockwise order. Moreover, there is a set of auxiliary marked points $p_1,\cdots,p_l\in S$ lying in the interior of $D$. After choosing the representative of an element of ${}_l\mathcal{R}_{k+1}^1$ with the puncture $\zeta$ fixed at the origin, and the boundary marked point $z_0$ fixed at $1$, the points $p_0,\cdots,p_l$ are required to be strictly radially ordered with norms in $(0,\frac{1}{2})$, i.e.
\begin{equation}\label{eq:radial}
0<|p_l|<\cdots<|p_1|<\frac{1}{2}.
\end{equation}
We require that the asymptotic marker $\ell$ at the origin points toward the point $p_l$, see Figure \ref{fig:domain}.

\begin{figure}
	\centering
	\begin{tikzpicture}
	\filldraw[draw=black,color={black!15},opacity=0.5] (0,0) circle (1.5);
	\draw (0,0) circle [radius=1.5];
	\draw [orange] [dashed] (0,0) circle [radius=0.75];
	\draw (0,0) node {$\times$};
	\node at (0,0.25) {$\zeta$};
	\draw [orange] (0,-0.3) node[circle,fill,inner sep=1pt] {};
	\draw [orange] (-0.5,0) node[circle,fill,inner sep=1pt] {};
	\draw [orange] (0,0.65) node[circle,fill,inner sep=1pt] {};
	\draw [teal] [->] (0,0) to (0,-0.3);
	\node [orange] at (0,-0.5) {\small $p_3$};
	\node [orange] at (-0.5,-0.2) {\small $p_2$};
	\node [orange] at (0,0.85) {\small $p_1$};
	\draw (1.5,0) node[circle,fill,inner sep=1pt] {};
	\draw (0,1.5) node[circle,fill,inner sep=1pt] {};
	\draw (-1.5,0) node[circle,fill,inner sep=1pt] {};
	\draw (0,-1.5) node[circle,fill,inner sep=1pt] {};
	\node at (1.75,0) {$z_0$};
	\node at (0,1.75) {$z_1$};
	\node at (-1.75,0) {$z_2$};
	\node at (0,-1.75) {$z_3$};
	\end{tikzpicture}
	\caption{An element in the moduli space ${}_3\mathcal{R}_4^1$}
	\label{fig:domain}
\end{figure}

To describe the compactifcation ${}_l\overline{\mathcal{R}}_{k+1}^1$, which is a manifold with corners, it would be convenient to introduce the following auxiliary moduli spaces.
\begin{itemize}
	\item ${}_l^{j,j+1}\mathcal{R}_{k+1}^1$ is the moduli space of the domains (\ref{eq:domain}), except that the condition (\ref{eq:radial}) is replaced with
	\begin{equation}
	|p_l|<\cdots<|p_{j+1}|=|p_j|<\cdots<|p_1|<\frac{1}{2}, \nonumber
	\end{equation}
	for some $1\leq j<l$. 
	\item ${}_{l-1}\mathcal{R}_{k+1}^{S^1}$ is the moduli space of the same domains, but with (\ref{eq:radial}) replaced with
	\begin{equation}
	|p_l|<\cdots<|p_1|=\frac{1}{2}. \nonumber
	\end{equation}
    By forgetting the marked point $p_1$, there is an abstract identification ${}_{l-1}\mathcal{R}_{k+1}^{S^1}\cong{}_{l-1}\mathcal{R}_{k+1}^1\times S^1$, under which the compactification ${}_{l-1}\overline{\mathcal{R}}_{k+1}^{S^1}$ is abstractly modeled by ${}_{l-1}\overline{\mathcal{R}}_{k+1}^1\times S^1$. In particular, the codimension $1$ boundary stratum ${}_{l-1}^{j,j+1}\overline{\mathcal{R}}_{k+1}^1\subset{}_{l-1}\overline{\mathcal{R}}_{k+1}^1$ for some $2\leq j\leq l-1$ corresponds to a codimension $1$ boundary stratum ${}_{l-1}^{j,j+1}\overline{\mathcal{R}}_{k+1}^{S^1}\subset\partial{}_{l-1}\overline{\mathcal{R}}_{k+1}^{S^1}$, where the $S^1$ factor describes the situation that $|p_1|=|p_2|=\frac{1}{2}$. Denote by
    \begin{equation}\label{eq:forget}
    \pi_j^{S^1}:{}_{l-1}^{j,j+1}\overline{\mathcal{R}}_{k+1}^{S^1}\rightarrow{}_{l-2}\overline{\mathcal{R}}_{k+1}^{S^1}
    \end{equation}
    the map which forgets $p_j$. 
    
    Note also that there is a free $\mathbb{Z}_{k+1}$-action on the moduli space ${}_{l-1}\overline{\mathcal{R}}_{k+1}^{S^1}$ generated by the map
    \begin{equation}\label{eq:permu}
    \kappa:{}_{l-1}\overline{\mathcal{R}}_{k+1}^{S^1}\rightarrow{}_{l-1}\overline{\mathcal{R}}_{k+1}^{S^1},
    \end{equation}
    which cyclically permutes the labels of the boundary marked points $z_0,\cdots,z_k$. It can be shown that this $\mathbb{Z}_{k+1}$-action is properly discontinuous.
    
    For an element $S$ of ${}_{l-1}\overline{\mathcal{R}}_{k+1}^{S^1}$, we say that $p_1$ \textit{points at a boundary point} $z_i$, for some $0\leq i\leq k$, if for a representative of $S$ with $\zeta$ fixed at the origin, the ray from $\zeta$ to $p_1$ points at $z_i$. Denote by ${}_{l-1}\overline{\mathcal{R}}_{k+1}^{S_i^1}\subset{}_{l-1}\overline{\mathcal{R}}_{k+1}^{S^1}$ the codimension $1$ locus where $p_1$ points at $z_i$. There is a bijection
    \begin{equation}
    \tau_i:{}_{l-1}\overline{\mathcal{R}}_{k+1}^{S_i^1}\rightarrow{}_{l-1}\overline{\mathcal{R}}_{k+1}^1 \nonumber
    \end{equation}
    defined as follows. When $l\geq2$, $\tau_i$ forgets the point $p_1$ on the circle $|z|=\frac{1}{2}$, and relabels the remaining auxiliary marked points $p_2,\cdots,p_l$ as $p_1,\cdots,p_{l-1}$. When $l=1$, $\tau_i$ is defined by cyclically permuting the boundary marked points until the original $z_i$ is now labeled $z_k$, and then forgetting $p_1$. Similarly, we say that $p_1$ \textit{points between $z_i$ and} $z_{i+1\textrm{ mod }k}$ if for such a representative, the ray from $\zeta$ to $p_1$ intersects the arc in $\partial S$ from $z_i$ to $z_{i+1\textrm{ mod }k}$. The (codimension $0$) locus in ${}_{l-1}\mathcal{R}_{k+1}^{S^1}$ where $p_1$ points between $z_i$ and $z_{i+1\textrm{ mod }k}$ is denoted by ${}_{l-1}\mathcal{R}_{k+1}^{S_{i,i+1}^1}$. 
\end{itemize}
Denote by ${}_l^{j,j+1}\overline{\mathcal{R}}_{k+1}^1$ and ${}_{l-1}\overline{\mathcal{R}}_{k+1}^{S^1}$ the  compactifcations of the moduli spaces introduced above.

It is clear that we have a decomposition of ${}_{l-1}\overline{\mathcal{R}}_{k+1}^{S^1}$ into its sectors ${}_{l-1}\overline{\mathcal{R}}_{k+1}^{S_{i,i+1}^1}$. To understand the situation better, we introduce a new moduli space
\begin{equation}
{}_{l-1}\mathcal{R}_{k+1,\tau_i}^1 \nonumber
\end{equation}
for each $i=0,\cdots,k$. This is the abstract moduli space of discs with boundary marked points $z_0,\cdots,z_i,z_f,z_{i+1},\cdots,z_k$ aligned in counterclockwise order, where $z_f$ is marked as auxiliary, one interior puncture $\zeta$ at the origin, marked as an input, equipped with an asymptotic marker $\ell$ pointing in the direction of $p_l$ (or $z_f$ if $l=0$), and $l$ marked points $p_1,\cdots,p_l$ which are strictly radially ordered in the interior of the disc of radius $\frac{1}{2}$. Note that the compactification ${}_{l-1}\overline{\mathcal{R}}_{k+1,\tau_i}^1$ of ${}_{l-1}\mathcal{R}_{k+1,\tau_i}^1$ is abstractly isomorphic to ${}_{l-1}\overline{\mathcal{R}}_{k+2}^1$, except that $z_f$ is marked as auxiliary. Moreover, at any stratum of ${}_{l-1}\overline{\mathcal{R}}_{k+1,\tau_i}^1$:
\begin{itemize}
	\item we regard the main component (the one with the puncture $\zeta$) with $k'+2$ boundary marked points as an element of ${}_{l-1}\overline{\mathcal{R}}_{k'+1,\tau_j}^1$ for some $0\leq k'\leq k$ and $0\leq j\leq k'$ if it has $z_f$ as a boundary marked point, and an element of ${}_{l-1}\overline{\mathcal{R}}_{k'+2}^1$ if it does not;
	\item if a non-main disc component (the one without $\zeta$) has the boundary marked point $z_f$, we view it as an element of $\mathcal{R}_{k'+1,f_i}$ for some $0\leq k'\leq k$. This is the moduli space of discs with $k'+1$ boundary marked points, with the $i$-th point marked as forgotten. See \cite{sg1}, Appendix A.2 for its construction.
\end{itemize}
The moduli space ${}_{l-1}\overline{\mathcal{R}}_{k+1,\tau_i}$ is related to the sector ${}_{l-1}\overline{\mathcal{R}}_{k+1}^{S_{i,i+1}^1}\subset{}_{l-1}\overline{\mathcal{R}}_{k+1}^{S^1}$ via the \textit{auxiliary-rescaling map}
\begin{equation}\label{eq:aux-res}
\phi_f^i:{}_{l-1}\overline{\mathcal{R}}_{k+1,\tau_i}^1\rightarrow{}_{l-1}\overline{\mathcal{R}}_{k+1}^{S_{i,i+1}^1},
\end{equation}
which, for a representative of an element of ${}_{l-1}\overline{\mathcal{R}}_{k+1,\tau_i}$, adds a point $p_0$ on the line segment connecting $\zeta$ and $z_f$ with $|p_0|=\frac{1}{2}$ and deletes $z_f$. Then, we relabel the marked points $p_0,\cdots,p_{l-1}$ as $p_1,\cdots,p_{l}$, where now $|p_1|=\frac{1}{2}$. See Figure \ref{fig:auxre}. With the appropriate choices of orientations (cf. \cite{yla}, Appendix B), the map $\phi_f^i$ is an oriented diffeomorphism.

\begin{figure}
	\centering
	\begin{tikzpicture}
	\filldraw[draw=black,color={black!15},opacity=0.5] (0,1.9) circle (1.5);
	\draw (0,1.9) circle [radius=1.5];
	\draw [orange] [dashed] (0,1.9) circle [radius=0.75];
	\draw (0,1.9) node {$\times$};
	\node at (0,2.15) {$\zeta$};
	\draw [orange] (0,1.6) node[circle,fill,inner sep=1pt] {};
	\draw [orange] (-0.5,1.9) node[circle,fill,inner sep=1pt] {};
	\draw [teal] [->] (0,1.9) to (0,1.6);
	\node [orange] at (0,1.4) {\small $p_2$};
	\node [orange] at (-0.5,1.7) {\small $p_1$};
	\draw [blue] (1.06,2.96) node[circle,fill,inner sep=1pt] {};
	\node [blue] at (1.15,3.15) {\small $z_f$};
		
	\draw (1.5,1.9) node[circle,fill,inner sep=1pt] {};
	\draw (0,3.4) node[circle,fill,inner sep=1pt] {};
	\draw (-1.5,1.9) node[circle,fill,inner sep=1pt] {};
	\draw (0,0.4) node[circle,fill,inner sep=1pt] {};
	\node at (1.75,1.9) {$z_0$};
	\node at (0,3.65) {$z_1$};
	\node at (-1.75,1.9) {$z_2$};
	\node at (0,0.15) {$z_3$};
		
	\filldraw[draw=black,color={black!15},opacity=0.5] (5,1.9) circle (1.5);
	\draw (5,1.9) circle [radius=1.5];
	\draw [orange] [dashed] (5,1.9) circle [radius=0.75];
	\draw (5,1.9) node {$\times$};
	\node at (5,2.15) {$\zeta$};
	\draw [orange] (5,1.6) node[circle,fill,inner sep=1pt] {};
	\draw [orange] (4.5,1.9) node[circle,fill,inner sep=1pt] {};
	\draw [orange] (5.53,2.43) node[circle,fill,inner sep=1pt] {};
	\draw [teal] [->] (5,1.9) to (5,1.6);
	\node [orange] at (5,1.4) {\small $p_2$};
	\node [orange] at (4.5,1.7) {\small $p_1$};
	\node [orange] at (5.55,2.65) {\small $p_0$};
	\draw [blue] (6.06,2.96) node[circle,fill,inner sep=1pt] {};
	\node [blue] at (6.15,3.15) {\small $z_f$};
		
	\draw (6.5,1.9) node[circle,fill,inner sep=1pt] {};
	\draw (5,3.4) node[circle,fill,inner sep=1pt] {};
	\draw (3.5,1.9) node[circle,fill,inner sep=1pt] {};
	\draw (5,0.4) node[circle,fill,inner sep=1pt] {};
	\node at (6.75,1.9) {$z_0$};
	\node at (5,3.65) {$z_1$};
	\node at (3.25,1.9) {$z_2$};
	\node at (5,0.15) {$z_3$};
	\draw [blue] [dashed] (5,1.9) to (6.06,2.96);
		
	\filldraw[draw=black,color={black!15},opacity=0.5] (10,1.9) circle (1.5);
	\draw (10,1.9) circle [radius=1.5];
	\draw [orange] [dashed] (10,1.9) circle [radius=0.75];
	\draw (10,1.9) node {$\times$};
	\node at (10,2.15) {$\zeta$};
	\draw [orange] (10,1.6) node[circle,fill,inner sep=1pt] {};
	\draw [orange] (9.5,1.9) node[circle,fill,inner sep=1pt] {};
	\draw [orange] (10.53,2.43) node[circle,fill,inner sep=1pt] {};
	\draw [teal] [->] (10,1.9) to (10,1.6);
	\node [orange] at (10,1.4) {\small $p_3$};
	\node [orange] at (9.5,1.7) {\small $p_2$};
	\node [orange] at (10.55,2.65) {\small $p_1$};
		
	\draw (11.5,1.9) node[circle,fill,inner sep=1pt] {};
	\draw (10,3.4) node[circle,fill,inner sep=1pt] {};
	\draw (8.5,1.9) node[circle,fill,inner sep=1pt] {};
	\draw (10,0.4) node[circle,fill,inner sep=1pt] {};
	\node at (11.75,1.9) {$z_0$};
	\node at (10,3.65) {$z_1$};
	\node at (8.25,1.9) {$z_2$};
	\node at (10,0.15) {$z_3$};
		
	\draw [->] (2,1.9) to (3,1.9);
	\draw [->] (7,1.9) to (8,1.9);
	\end{tikzpicture}
	\caption{The definition of the auxiliary-rescaling map $\phi_f^0:{}_2\overline{\mathcal{R}}_{4,\tau_0}^1\rightarrow{}_2\overline{\mathcal{R}}_4^{S_{0,1}^1}$}
	\label{fig:auxre}
\end{figure}

The codimension $1$ boundary of ${}_l\overline{\mathcal{R}}_{k+1}^1$ is covered by the natural inclusions of the following strata (cf. \cite{yla}, Proposition 40):
\begin{equation}\label{eq:dom-bdy1}
{}_j\overline{\mathcal{M}}\times{}_{l-j}\overline{\mathcal{R}}_{k+1}^1,\textrm{ }1\leq j\leq l,
\end{equation}
\begin{equation}\label{eq:dom-bdy2}
{}_l^{j,j+1}\overline{\mathcal{R}}_{k+1}^1,\textrm{ }1\leq j\leq l-1,
\end{equation}
\begin{equation}\label{eq:dom-bdy3}
{}_{l-1}\overline{\mathcal{R}}_{k+1}^{S^1},
\end{equation}
\begin{equation}\label{eq:dom-bdy4}
{}_l\overline{\mathcal{R}}_{k_1+1}^1\textrm{ }{{}_i\times}_0\textrm{ }\overline{\mathcal{R}}_{k_2+1},\textrm{ }k_1\geq1,k_2\geq2,k_1+k_2=k+1,1\leq i\leq k_1,
\end{equation}
\begin{equation}\label{eq:dom-bdy5}
\overline{\mathcal{R}}_{k_1+1}\textrm{ }{{}_i\times}_0\textrm{ }{}_l\overline{\mathcal{R}}_{k_2+1}^1,\textrm{ }k_1\geq2,k_2\geq0,k_1+k_2=k+1, 1\leq i\leq k_1.
\end{equation}
In the above, ${}_j\mathcal{M}$ is the moduli space of $j$-point angle decorated cylinders introduced by Ganatra in \cite{sg1}, Section 4.3, and ${}_j\overline{\mathcal{M}}$ is its compactification. $\mathcal{R}_{k+1}$ is the moduli space of closed unit discs $D$ with $k+1$ distinct marked points $z_0,\cdots,z_k\in\partial D$ aligned in counterclockwise order, modulo the automorphism group $\mathit{PSL}(2,\mathbb{R})$ of $D$. The notation ${{}_i\times_0}$ means that the disc breaking happens at $z_i\in\partial S$, with the nodal point playing the role of $z_0$ on the disc bubble in $\overline{\mathcal{R}}_{k_2+1}$. The strata in (\ref{eq:dom-bdy1}) come from the real blow-ups, when the modulus of the marked points $p_{l-j+1},\cdots,p_l$ tend to $0$, see Figure \ref{fig:real-blp}.

\begin{figure}
	\centering
	\begin{tikzpicture}
	\filldraw[draw=black,color={black!15},opacity=0.5] (0,0) circle (1.5);
	\draw (0,0) circle [radius=1.5];
	\draw [orange] [dashed] (0,0) circle [radius=0.75];
	\draw (0,0) node {$\times$};
	\node at (0,0.25) {$\zeta$};
	\draw [orange] (0,-0.3) node[circle,fill,inner sep=1pt] {};
	\draw [orange] (-0.5,0) node[circle,fill,inner sep=1pt] {};
	\draw [orange] (0,0.65) node[circle,fill,inner sep=1pt] {};
	\draw [teal] [->] (0,0) to (0,-0.3);
	\node [orange] at (0,-0.5) {\small $p_3$};
	\node [orange] at (-0.5,-0.2) {\small $p_2$};
	\node [orange] at (0,0.85) {\small $p_1$};
		
	\filldraw[draw=black,color={black!15},opacity=0.5] (6,0) circle (1.5);
	\draw (6,0) circle [radius=1.5];
	\draw [orange] [dashed] (6,0) circle [radius=0.75];
	\draw (6,0) node {$\times$};
	\node at (6,-0.25) {input};
	\draw [orange] (6.4,0.5) node[circle,fill,inner sep=1pt] {};
	\node [orange] at (6.6,0.75) {\small $p_3$};
	\draw [teal] [->] (6,0) to (6.3,0.375);
		
	\filldraw[draw=black,color={black!15},opacity=0.5] (6,3.75) circle (1.5);
	\draw (6,3.75) circle [radius=1.5];
	\draw (6,3.3) node {$\times$};
	\draw (6,4.2) node {$\times$};
	\draw [blue] [dashed] (6,3.3) to (6,0);
	\draw [orange] (6.2,2.55) node[circle,fill,inner sep=1pt] {};
	\draw [orange] (5.5,2.85) node[circle,fill,inner sep=1pt] {};
	\node [orange] at (6.25,2.75) {\small $p_2$};
	\node [orange] at (5.5,3.05) {\small $p_1$};
	\draw [dashed] (6,3.75) ellipse (1.5 and 0.45);
	\node at (6,4.45) {$\zeta$};
	\node at (6,3.55) {output};
		
	\draw [->] (2,0) to (4,0);
	\node at (3,0.25) {$|p_2|\rightarrow0$};
		
	\draw (1.5,0) node[circle,fill,inner sep=1pt] {};
	\draw (0,1.5) node[circle,fill,inner sep=1pt] {};
	\draw (-1.5,0) node[circle,fill,inner sep=1pt] {};
	\draw (0,-1.5) node[circle,fill,inner sep=1pt] {};
	\node at (1.75,0) {$z_0$};
	\node at (0,1.75) {$z_1$};
	\node at (-1.75,0) {$z_2$};
	\node at (0,-1.75) {$z_3$};
		
	\draw (7.5,0) node[circle,fill,inner sep=1pt] {};
	\draw (6,1.5) node[circle,fill,inner sep=1pt] {};
	\draw (4.5,0) node[circle,fill,inner sep=1pt] {};
	\draw (6,-1.5) node[circle,fill,inner sep=1pt] {};
	\node at (7.75,0) {$z_0$};
	\node at (6,1.75) {$z_1$};
	\node at (4.25,0) {$z_2$};
	\node at (6,-1.75) {$z_3$};
	\end{tikzpicture}
	\caption{An element belonging to the boundary stratum ${}_2\overline{\mathcal{M}}\times{}_1\overline{\mathcal{R}}_{4}^1$ of ${}_3\overline{\mathcal{R}}_{4}^1$.}
	\label{fig:real-blp}
\end{figure}

Let $(X,\lambda)$ be a Liouville domain. A function $H_t:S^1\times X\rightarrow\mathbb{R}$ is an \textit{admissible Hamiltonian} if $H_t=H+F_t$, where $H:X\rightarrow\mathbb{R}$ is an autonomous Hamiltonian which equals $r^2$ on the collar neighborhood $(1-\delta,1]\times\partial X$ for some sufficiently small $\delta>0$ and $C^2$-small away from the collar $(1-\delta',1]\times\partial X$ for some $\delta'>\delta$, and a time-dependent perturbation $F_t:S^1\times X\rightarrow\mathbb{R}$. Denote by $\mathcal{H}(X)$ the set of admissible Hamiltonians $H_t$ so that all the $1$-periodic orbits of the Hamiltonian vector field $X_{H_t}$ are non-degenerate. An almost complex structure $J_t:S^1\times\mathit{TX}\rightarrow\mathit{TX}$ is called \textit{contact type} if $dr\circ J_t=-\lambda$ in the collar $(1-\delta,1]\times\partial X$. Denote by $\mathcal{J}(X)$ the set of $d\lambda$-compatible almost complex structure on $X$ which are of contact type. Before defining the Cohen--Ganatra moduli spaces, we need to choose the corresponding Floer data first.

\begin{definition}\label{definition:data}
A Floer datum for an element $(S;z_0,\cdots,z_k,p_1,\cdots,p_l;\ell)\in{}_l\mathcal{R}_{k+1}^1$ consists of the following:
	\begin{itemize}
	\item A positive cylindrical end
	\begin{equation}
	\varepsilon^+:[0,\infty)\times S^1\rightarrow S,\textrm{ }(s,t)\mapsto\left(s+(p_l)_s+\eta,t\right) \nonumber
	\end{equation}
	for some $\eta>0$, where $(p_l)_s$ is the $s\in[0,\infty)$ coordinate of the point $p_l\in[0,\infty)\times S^1$.
	\item A sub-closed 1-form $\gamma_S\in\Omega^1(S)$ such that $\gamma_S\equiv0$ near $\partial S$ and $(\varepsilon^+)^\ast\gamma_S=dt$.
	\item A domain-dependent Hamiltonian function $H_S:S\times X\rightarrow\mathbb{R}$ satisfying
	\begin{equation}\label{eq:H}
	(\varepsilon^+)^\ast H_S=H_t
	\end{equation}
	for some $H_t\in\mathcal{H}(X)$, and
	\begin{equation}
	H_S\equiv0\textrm{ near }\partial S. \nonumber
	\end{equation}
	\item A domain-dependent almost complex structure $J_S:S\times\mathit{TX}\rightarrow\mathit{TX}$ such that
	\begin{equation}
	(\varepsilon^+)^\ast J_S=J_t \nonumber
	\end{equation}
	for some $J_t\in\mathcal{J}(X)$, and
	\begin{equation}\label{eq:J}
	J_S\equiv J\textrm{ near }\partial S,
	\end{equation}
	for some fixed convex almost complex structure $J$.
	\end{itemize}
\end{definition}

Similarly, one can define Floer data for the moduli spaces ${}_l^{j,j+1}\mathcal{R}_{k+1}^1$, ${}_{l-1}\mathcal{R}_{k+1}^{S^1}$ and ${}_{l-1}\mathcal{R}_{k+1,\tau_i}^1$. We will inductively choose the Floer data on ${}_l\overline{\mathcal{R}}_{k+1}^1$ so that certain consistency conditions are satisfied. To do so, we first choose the Floer data on ${}_{l-1}\overline{\mathcal{R}}_{k+1}^{S^1}$ so that
\begin{itemize}
	\item The Floer datum on ${}_{l-1}\overline{\mathcal{R}}_{k+1}^{S^1}$ is $\mathbb{Z}_{k+1}$-equivariant under the cyclic permutation map $\kappa$ (cf. (\ref{eq:permu})).
	\item On the boundary stratum ${}_{l-1}^{j,j+1}\overline{\mathcal{R}}_{k+1}^{S^1}\subset\partial{}_{l-1}\overline{\mathcal{R}}_{k+1}^{S^1}$, the Floer datum is conformally equivalent to the one pulled back from ${}_{l-2}\overline{\mathcal{R}}_{k+1}^{S^1}$ via the forgetful map $\pi_j^{S^1}$ (cf. (\ref{eq:forget})).
\end{itemize}

\begin{definition}
A Cohen--Ganatra Floer datum is a an inductive sequence of choices, for every $k\in\mathbb{Z}_{\geq0}$ and $l\in\mathbb{N}$, of Floer data for representatives of elements of ${}_l\overline{\mathcal{R}}_{k+1}^1$ in the sense of Definition \ref{definition:data}, which vary smoothly over the moduli spaces $\left\{{}_l\overline{\mathcal{R}}_{k+1}^1\right\}_{k\geq0,l\geq1}$, such that the following conditions are satisfied.
\begin{itemize}
	\item[(i)] The choice of Floer datum on any boundary stratum of ${}_l\overline{\mathcal{R}}_{k+1}^1$ should agree with the inductively chosen Floer datum along the boundary stratum which has already been fixed.
	\item[(ii)] Near the boundary strata in (\ref{eq:dom-bdy2}), the Floer data are conformally equivalent to the ones obtained by puling back from ${}_{l-1}\overline{\mathcal{R}}_{k+1}^1$ via the maps $\pi_j:{}_l^{j,j+1}\overline{\mathcal{R}}_{k+1}^1\rightarrow{}_{l-1}\overline{\mathcal{R}}_{k+1}^1$ which forget the marked point $p_j$.
	\item[(iii)] On the codimension $1$ loci ${}_{l-1}\overline{\mathcal{R}}_{k+1}^{S_i^1}\subset{}_{l-1}\overline{\mathcal{R}}_{k+1}^{S^1}$ where $p_1$ points at $z_i$, the Floer datum should agree with the pullback by $\tau_i$ of the existing Floer datum on ${}_{l-1}\overline{\mathcal{R}}_{k+1}^1$.
\end{itemize}
\end{definition}

By \cite{yla}, Proposition 44, Cohen--Ganatra Floer datum exists. Fix such a datum, a $1$-periodic orbit $x$ of $X_{H_t}$ and a Lagrangian submanifold $L\subset\mathrm{int}(X)$ in the interior of $X$, define the \textit{Cohen--Ganatra moduli space}
\begin{equation}
{}_l\mathcal{R}_{k+1}^1(x,L) \nonumber
\end{equation}
to be the space of pairs
\begin{equation}
\left((S,z_0,\cdots,z_k,p_1,\cdots,p_l,\ell),u\right), \nonumber
\end{equation}
where $(S,z_0,\cdots,z_k,p_1,\cdots,p_l,\ell)\in{}_l\mathcal{R}_{k+1}^1$ and $u:S\rightarrow X$ is a smooth map satisfying
\begin{equation}\label{eq:CR}
	\left\{\begin{array}{l}
	(du-X_{H_S}\otimes\gamma_S)^{0,1}=0, \\
	u(\partial S)\subset L, \\
	\lim_{s\rightarrow\infty}(\varepsilon^+)^\ast u(s,\cdot)=x,
	\end{array}\right.
\end{equation}
where the $(0,1)$-part in the Floer equation is taken with respect to the domain-dependent almost complex structure $J_S$. We can decompose the moduli space ${}_l\mathcal{R}_{k+1}^1(x,L)$ according to the homotopy classes of the maps $u$, which gives
\begin{equation}
{}_l\mathcal{R}_{k+1}^1(x,L)=\bigsqcup_{\ring{\bar{a}}\in\pi_2(X,x,L)}{}_l\mathcal{R}_{k+1}^1(x,L,\ring{\bar{a}}), \nonumber
\end{equation}
where $\pi_2(X,x,L)$ denotes the set of homotopy classes of maps $u:S\rightarrow X$ with boundary on $L$ and asymptotic to $x$ at its positive puncture. We recall the following basic fact (cf. \cite{yla}, Lemma 45, (ii)).

\begin{lemma}\label{lemma:action}
${}_l\mathcal{R}_{k+1}^1(x,L,\ring{\bar{a}})\neq\emptyset$ implies that
\begin{equation}
\lambda(a)+\left\vert A_{H_t}(x)\right\vert\geq0, \nonumber
\end{equation}
where
\begin{equation}
A_{H_t}(x)=-\int_{S^1}x^\ast\lambda+\int_{S^1}H_t(x(t))dt \nonumber
\end{equation}
is the action of the orbit $x$ with respect to the Hamiltonian $H_t:S^1\times X\rightarrow\mathbb{R}$ in (\ref{eq:H}).
\end{lemma}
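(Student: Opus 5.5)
The plan is to prove Lemma \ref{lemma:action} by the usual energy identity for solutions of the perturbed Cauchy--Riemann equation (\ref{eq:CR}), combined with the sub-closedness of $\gamma_S$.

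Let $((S,z_0,\dots,z_k,p_1,\dots,p_l,\ell),u)\in{}_l\mathcal{R}_{k+1}^1(x,L,\ring{\bar a})$, and write $a\in H_1(L;\mathbb{Z})$ for the class of $u|_{\partial S}$. We consider the geometric energy
\[
E(u)=\frac12\int_S|du-X_{H_S}\otimes\gamma_S|^2_{J_S}\ \ge 0.
\]
Since $J_S$ is $d\lambda$-compatible and $u$ solves (\ref{eq:CR}), the pointwise identity gives
\[
E(u)=\int_S u^\ast d\lambda-\int_S d\bigl(H_S(u)\bigr)\wedge\gamma_S .
\]
Next we rewrite the second term as
\[
\int_S d\bigl(H_S(u)\gamma_S\bigr)-\int_S H_S(u)\,d\gamma_S-\int_S (d_SH_S)(u)\wedge\gamma_S,
\]
where $d_SH_S$ is the derivative of $H_S$ in the domain directions. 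This yields the topological energy
\[
E_{\mathrm{top}}(u)=\int_S u^\ast d\lambda-\int_S d\bigl(H_S(u)\gamma_S\bigr).
\]
It satisfies $E(u)\le E_{\mathrm{top}}(u)$ provided the curvature term $H_S\,d\gamma_S+d_SH_S\wedge\gamma_S$ has the appropriate sign.

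The sign of the curvature term is the step needing the most care. The condition $d\gamma_S\le0$ (sub-closedness) handles $H_S\,d\gamma_S$, since we may arrange $H\ge 0$ by adding a constant. The term $d_SH_S\wedge\gamma_S$ has to be controlled by the choice of Floer data in \cite{yla}. I would first try to argue exactly as in \cite{yla}, Lemma 45: arrange $H_S=\rho(s)H_t$ monotone in $s$ on the region where $\gamma_S\neq0$, so that this term is nonpositive. If that is not automatic, the admissible Hamiltonian $H_t=H+F_t$ with $F_t$ small only introduces an error absorbed by the $|\cdot|$, which is why the statement uses $|A_{H_t}(x)|$ rather than a signed action.

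Then we apply Stokes' theorem. The boundary $\partial S$ lies in $L$, and $\gamma_S$ and $H_S$ vanish near $\partial S$. The positive end converges to $x$ with $(\varepsilon^+)^\ast\gamma_S=dt$ and $(\varepsilon^+)^\ast H_S=H_t$. With orientation conventions in which the puncture is an input, we obtain
\[
0\le E_{\mathrm{top}}(u)=\int_{\partial S}u^\ast\lambda-\Bigl(\int_{S^1}x^\ast\lambda-\int_{S^1}H_t(x(t))\,dt\Bigr)=\lambda(a)+A_{H_t}(x),
\]
with the sign of the orbit term possibly flipped depending on the orientation of the end. In either case $\lambda(a)+|A_{H_t}(x)|\ge0$, which is the claim. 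We use that $\lambda(a)=\int_{\partial S}u^\ast\lambda$ depends only on $a$, because $\lambda|_L$ is closed.
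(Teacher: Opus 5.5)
Your strategy of combining the energy identity with Stokes' theorem is the standard one, and it is presumably what the paper's bare citation of \cite{yla}, Lemma 45(ii) stands for. (Minor: the first identity should read $E(u)=\int_S u^\ast d\lambda-\int_S u^\ast(d_XH_S)\wedge\gamma_S$, with the fibrewise derivative. With the total derivative $d(H_S(u))$, your next line double counts $d_SH_S$.)

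The gap is that the whole content of the lemma is the pointwise inequality $u^\ast H_S\,d\gamma_S+d_SH_S\wedge\gamma_S\le 0$, which is what gives $E(u)\le E_{\mathrm{top}}(u)$, and you leave it conditional. The Floer datum is not yours to choose at this point. It is the Cohen--Ganatra datum fixed by the inductive construction subject to conditions (i)--(iii): ends tied to $p_l$, and compatibility with $\pi_j$, $\tau_i$, $\kappa$ and with the angle-decorated cylinders that glue in at $\zeta$. Since $H_t=H+F_t$ is genuinely time dependent, gluing in pieces whose ends are rotated relative to each other typically forces $s$-dependence of $H_S$ on regions where $\gamma_S=dt$. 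So the normal form $H_S=\rho(s)H_t$ on $\mathrm{supp}\,\gamma_S$ is an extra hypothesis, and you would have to show it is compatible with the construction. Likewise, adding a constant $c>0$ to make $H\ge 0$ is not free: it shifts $A_{H_t}(x)$ by $c$ and weakens the bound you get by $c$, which the absolute value does not absorb in general.

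Your fallback is false: an error of indefinite sign coming from $F_t$ is not absorbed by the absolute value. For nonconstant orbits $A_{H_t}(x)<0$, so $|A_{H_t}(x)|=-A_{H_t}(x)$ and the claim reads exactly $\lambda(a)\ge A_{H_t}(x)$, with no room for an error. An error term only yields $\lambda(a)\ge A_{H_t}(x)-C\sup|F|$. That weaker bound would suffice for the later applications, which always carry a slack $\nu>0$, but it is not the lemma.

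Also, your hedge on the orientation of the end is not independent of the sub-closedness you invoke. Stokes gives $\int_S d\gamma_S=\sigma$ and $E_{\mathrm{top}}(u)=\lambda(a)-\sigma A_{H_t}(x)$ for the same $\sigma=\pm1$, so $d\gamma_S\le 0$ forces $\sigma=-1$. For $\sigma=+1$, sub-closedness is impossible and the $H_S\,d\gamma_S$ term would need $H_S\le 0$ rather than $H_S\ge 0$.

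What is missing is a sign or monotonicity condition on $(H_S,\gamma_S)$, built into the construction of the Cohen--Ganatra datum and holding on all of $S$, that makes the curvature term nonpositive for every element of the moduli space. Without it, your argument only proves the estimate up to an error depending on the data.
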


In a similar way, one can define the moduli spaces
\begin{equation}
{}_{l-1}\mathcal{R}_{k+1}^{S^1}(x,L,\ring{\bar{a}}), \nonumber
\end{equation}
\begin{equation}\label{eq:sector}
{}_{l-1}\mathcal{R}_{k+1,\tau_i}^1(x,L,\ring{\bar{a}}),\textrm{ }0\leq i\leq k,
\end{equation}
\begin{equation}
{}_l^{j,j+1}\mathcal{R}_{k+1}^1(x,L,\ring{\bar{a}}),\textrm{ }1\leq j\leq l-1, \nonumber
\end{equation}
using the auxiliary moduli spaces of domains ${}_{l-1}\mathcal{R}_{k+1}^{S^1}$, ${}_{l-1}\mathcal{R}_{k+1,\tau_i}^1$ and ${}_l^{j,j+1}\mathcal{R}_{k+1}^1$ introduced earlier.

In general, due to the non-exactness of $L$, the transversality of these moduli spaces cannot be achieved with standard perturbations of the domain-dependent almost complex structure $J_S$. Instead, we need the virtual perturbation scheme developed by Fukaya--Oh--Ohta--Ono \cite{fona,fooo4}.  More precisely, one can construct Kuranishi structures on the moduli spaces ${}_l\mathcal{R}_{k+1}^1(x,L,\ring{\bar{a}})$, ${}_{l-1}\mathcal{R}_{k+1}^{S^1}(x,L,\ring{\bar{a}})$, ${}_{l-1}\mathcal{R}_{k+1,\tau_i}^1(x,L,\ring{\bar{a}})$ and ${}_l^{j,j+1}\mathcal{R}_{k+1}^1(x,L,\ring{\bar{a}})$ so that they become oriented K-spaces. 

Recall that for $\bar{a}\in\pi_2(X,L)$ we have a moduli space $\mathcal{R}_{k+1}(L,\bar{a})$ parametrizing pairs $((D,z_0,\cdots,z_k),u)$, where $(D,z_0,\cdots,z_k)\in\mathcal{R}_{k+1}$ and $u:(D,\partial D)\rightarrow(X,L)$ is a $J$-holomorphic map in the class $\bar{a}$. Note that the Gromov compactification $\overline{\mathcal{R}}_{k+1}(L,\bar{a})$ is governed combinatorially by a decorated rooted ribbon tree $(T,B)$ (cf. \cite{kic}, Definition 7.18), where $T$ is a connected tree with the set of interior vertices $V_\mathrm{int}(T)$ and the set of interior edges $E_\mathrm{int}(T)$, and $B:V_\mathrm{int}(T)\rightarrow\pi_2(X,L)$ is a map which associates every interior vertex of $T$ to a relative homotopy class in $\pi_2(X,L)$. Denote by $\mathcal{T}(k+1,\bar{a})$ the set of decorated rooted ribbon trees with $k+1$ exterior vertices and $\sum_{v\in V_\mathrm{int}(T)}B(v)=\bar{a}$. For every $(T,B)\in\mathcal{T}(k+1,\beta)$, there is an \textit{interior evaluation map}
\begin{equation}\label{eq:ev-int}
\mathrm{ev}_\mathrm{int}:\prod_{v\in V_{\mathrm{int}}(T)}\mathcal{R}_{k_v+1}(L,B(v))\rightarrow\prod_{e\in E_{\mathit{int}}(T)}L^2
\end{equation}
defined by evaluating at the endpoints of each edge $e\in E_\mathrm{int}(T)$.

As in the case of $\overline{\mathcal{R}}_{k+1}(L,\bar{a})$, the strata in the Gromov compactifications ${}_l\overline{\mathcal{R}}_{k+1}^1(x,L,\ring{\bar{a}})$, ${}_l^{j,j+1}\overline{\mathcal{R}}_{k+1}^1(x,L,\ring{\bar{a}})$, ${}_{l-1}\overline{\mathcal{R}}_{k+1}^{S^1}(x,L,\ring{\bar{a}})$ and ${}_{l-1}\overline{\mathcal{R}}_{k+1,\tau_i}^1(x,L,\ring{\bar{a}})$ corresponding to disc-breaking are modeled on \textit{decorated rooted ribbon trees with a single puncture}, see \cite{yla}, Definition 46. Roughly speaking, this is a triple $(T,\ring{B},v_0)$, where $T$ is a tree with a distinguished interior vertex $v_0\in V_\mathrm{int}(T)$, such that the map $\ring{B}$ associates to $v_0$ a class in $\pi_2(X,x,L)$, and to other interior vertices a class in $\pi_2(X,L)$ with non-negative symplectic area. Denote by $\mathcal{T}(k+1,\ring{\bar{a}})$ the set of decorated rooted ribbon trees with a single puncture $(T,\ring{B},v_0)$ with $k+1$ exterior vertices and $\sum_{v\in V_\mathrm{int}(T)}\ring{B}(v)=\ring{\bar{a}}$. Similar to (\ref{eq:ev-int}), one can define interior evaluation maps on the strata in the compactified moduli space ${}_l\overline{\mathcal{R}}_{k+1}^1(x,L,\ring{\bar{a}})$.

Let $\varepsilon>0$ be chosen so that $2\varepsilon$ is less than the minimal symplectic area of $J$-holomorphic discs $u:(D,\partial D)\rightarrow(X,L)$, where the almost complex structure $J$ is fixed to be the one in (\ref{eq:J}). Choose $U\in\mathbb{N}$ so that
\begin{equation}
\varepsilon(U-1)\geq\left\vert A_{H_t}(x)\right\vert. \nonumber
\end{equation}
We summarize the properties of the compactified moduli spaces ${}_l\overline{\mathcal{R}}_{k+1}^1(x,L,\ring{\bar{a}})$, ${}_{l-1}\overline{\mathcal{R}}_{k+1}^{S^1}(x,L,\ring{\bar{a}})$, ${}_{l-1}\overline{\mathcal{R}}_{k+1,\tau_i}^1(x,L,\ring{\bar{a}})$ and ${}_l^{j,j+1}\overline{\mathcal{R}}_{k+1}^1(x,L,\ring{\bar{a}})$ in the following theorem (cf. \cite{yla}, Theorem 48 and Remark 49).

\begin{theorem}\label{theorem:moduli}
For every $k,m,l\in\mathbb{Z}_{\geq0}$, $\bar{a}\in\pi_2(X,L)$, $\ring{\bar{a}}\in\pi_2(X,x,L)$, $a=\partial\bar{a}=\partial\ring{\bar{a}}\in H_1(L;\mathbb{Z})$ and $P=\{m\}$ or $[m,m+1]$, we have the following data.
\begin{itemize}
	\item[(i)] Compact, oriented, admissible K-spaces 
	\begin{equation}\label{eq:m0}
	\overline{\mathcal{R}}_{k+1}(L,\bar{a};P),\textrm{ where }\lambda(a)<(m-k+1)\varepsilon,
	\end{equation}
	\begin{equation}\label{eq:m2}
	{}_l\overline{\mathcal{R}}_{k+1}^1(x,L,\ring{\bar{a}};P),\textrm{ where }\lambda(a)<(m-k-U)\varepsilon,
	\end{equation}
	\begin{equation}\label{eq:m3}
	{}_l^{j,j+1}\overline{\mathcal{R}}_{k+1}^1(x,L,\ring{\bar{a}};P),\textrm{ }1\leq j\leq l-1,\textrm{ where }\lambda(a)<(m-k-U)\varepsilon,
	\end{equation}
	\begin{equation}\label{eq:m4}
	{}_{l-1}\overline{\mathcal{R}}_{k+1}^{S^1}(x,L,\ring{\bar{a}};P),\textrm{ where }\lambda(a)<(m-k-U)\varepsilon,
	\end{equation}
	\begin{equation}\label{eq:m5}
	{}_{l-1}\overline{\mathcal{R}}_{k+1,\tau_i}^1(x,L,\ring{\bar{a}};P),\textrm{ }0\leq i\leq k,\textrm{ where }\lambda(a)<(m-k-U)\varepsilon,
	\end{equation}
	whose underlying topological spaces are $P\times\overline{\mathcal{R}}_{k+1}(L,\bar{a})$, $P\times{}_l\overline{\mathcal{R}}_{k+1}^1(x,L,\ring{\bar{a}})$, $P\times{}_l^{j,j+1}\overline{\mathcal{R}}_{k+1}^1(x,L,\ring{\bar{a}})$, $P\times{}_{l-1}\overline{\mathcal{R}}_{k+1}^{S^1}(x,L,\ring{\bar{a}})$ and $P\times{}_{l-1}\overline{\mathcal{R}}_{k+1,\tau_i}^1(x,L,\ring{\bar{a}})$, respectively. The dimensions of these K-spaces are
	\begin{equation}
	\dim\overline{\mathcal{R}}_{k+1}(L,\bar{a};P)=\mu(a)+n+k-2+\dim P, \nonumber
	\end{equation}
	\begin{equation}\label{eq:dim}
	\dim{}_l\overline{\mathcal{R}}_{k+1}^1(x,L,\ring{\bar{a}};P)=\mu(a)+k+2l+\mathit{CZ}(x)+\dim P,
	\end{equation}
	\begin{equation}
	\begin{split}
	\dim{}_l^{j,j+1}\overline{\mathcal{R}}_{k+1}^1(x,L,\ring{\bar{a}};P)&=\dim{}_{l-1}\overline{\mathcal{R}}_{k+1}^{S^1}(x,L,\ring{\bar{a}};P)=\dim{}_{l-1}\overline{\mathcal{R}}_{k+1,\tau_i}^1(x,L,\ring{\bar{a}};P) \\
	&=\mu(a)+k+2l-1+\mathit{CZ}(x)+\dim P, \nonumber
	\end{split}
	\end{equation}
	where $\mathit{CZ}(x)$ denotes the Conley-Zehnder index of $x$.
	\item[(ii)] Corner-stratified strongly smooth evaluation maps
	\begin{equation}\label{eq:eva0}
	\mathrm{ev}^{\mathcal{R},P}:\overline{\mathcal{R}}_{k+1}(L,\beta;P)\rightarrow P\times L^{k+1},
	\end{equation}
	\begin{equation}\label{eq:eva2}
	{}_l\mathrm{ev}^{\mathcal{R},P}:{}_l\overline{\mathcal{R}}_{k+1}^1(x,L,\ring{\bar{a}};P)\rightarrow P\times L^{k+1},
	\end{equation}
	\begin{equation}\label{eq:eva3}
	{}_l^{j,j+1}\mathrm{ev}^{\mathcal{R},P}:{}_l^{j,j+1}\overline{\mathcal{R}}_{k+1}^1(x,L,\ring{\bar{a}};P)\rightarrow P\times L^{k+1},
	\end{equation}
	\begin{equation}\label{eq:eva4}
	{}_{l-1}\mathrm{ev}^{S^1,P}:{}_{l-1}\overline{\mathcal{R}}_{k+1}^{S^1}(x,L,\ring{\bar{a}};P)\rightarrow P\times L^{k+1},
	\end{equation}
	\begin{equation}\label{eq:eva5}
	{}_{l-1}\mathrm{ev}_i^{\mathcal{R},P}:{}_{l-1}\overline{\mathcal{R}}_{k+1,\tau_i}^1(x,L,\ring{\bar{a}};P)\rightarrow P\times L^{k+1},
	\end{equation}
    whose underlying set-theoretic maps are $\mathrm{id}_P\times\mathrm{ev}^{\mathcal{R}}$, $\mathrm{id}_P\times{}_l\mathrm{ev}^\mathcal{R}$, $\mathrm{id}_P\times{}_l^{j,j+1}\mathrm{ev}^\mathcal{R}$, $\mathrm{id}_P\times{}_{l-1}\mathrm{ev}^{S^1}$ and $\mathrm{id}_P\times{}_{l-1}\mathrm{ev}_i^\mathcal{R}$, respectively, where the evaluation maps in the second factor are obvious evaluation maps at the boundary marked points. Note that for the moduli space ${}_{l-1}\overline{\mathcal{R}}_{k+1,\tau_i}^1(x,L,\ring{\bar{a}};P)$, the boundary point $z_f$ being auxiliary means that it is forgotten under the evaluation maps.
    \item[(iii)] Orientation-preserving isomorphisms of admissible K-spaces:
    \begin{equation}\label{eq:bd2}
    \begin{split}
    \partial{}_l\overline{\mathcal{R}}_{k+1}^1\left(x,L,\ring{\bar{a}};\{m\}\right)&\cong\bigsqcup_{\substack{k_1+k_2=k+1\\1\leq i\leq k_1\\ \ring{\bar{a}}_1+\bar{a}_2=\ring{\bar{a}}}}(-1)^{\varepsilon_1}{}_l\overline{\mathcal{R}}_{k_1+1}^1\left(x,L,\ring{\bar{a}}_1;\{m\}\right)\textrm{ }{{}_i\times_0}\textrm{ }\overline{\mathcal{R}}_{k_2+1}\left(L,\bar{a}_2;\{m\}\right) \\
    &\sqcup\bigsqcup_{\substack{k_1+k_2=k+1\\1\leq i\leq k_1\\ \bar{a}_1+\ring{\bar{a}}_2=\ring{\bar{a}}}}(-1)^{\varepsilon_2}\overline{\mathcal{R}}_{k_1+1}\left(L,\bar{a}_1;\{m\}\right)\textrm{ }{{}_i\times_0}\textrm{ }{}_l\overline{\mathcal{R}}^1_{k_2+1}\left(x,L,\ring{\bar{a}}_2;\{m\}\right) \\
    &\sqcup\bigsqcup_{0\leq j\leq l}(-1)^{\varepsilon_{3,j}}{}_j\overline{\mathcal{M}}\left(x,y_j;\{m\}\right)\times{}_{l-j}\overline{\mathcal{R}}_{k+1}^1\left(y_j,L,\ring{\bar{a}};\{m\}\right) \\
    &\sqcup\bigsqcup_{1\leq j\leq l-1}(-1)^{\varepsilon_4}{}_l^{j,j+1}\overline{\mathcal{R}}_{k+1}^1\left(x,L,\ring{\bar{a}};\{m\}\right) \\
    &\sqcup(-1)^{\varepsilon_5}{}_{l-1}\overline{\mathcal{R}}_{k+1}^{S^1}\left(x,L,\ring{\bar{a}};\{m\}\right),
    \end{split}
    \end{equation}
    \begin{equation}\label{eq:bd5}
    \begin{split}
    &\partial{}_l\overline{\mathcal{R}}_{k+1}^1\left(x,L,\ring{\bar{a}};[m,m+1]\right) \\
    &\cong(-1)^{\varepsilon_{6}}{}_l\overline{\mathcal{R}}_{k+1}^1\left(x,L,\ring{\bar{a}};\{m\}\right)\sqcup(-1)^{\varepsilon_{7}}{}_l\overline{\mathcal{R}}_{k+1}^1\left(x,L,\ring{\bar{a}};\{m+1\}\right) \\
    &\sqcup\bigsqcup_{\substack{k_1+k_2=k+1\\1\leq i\leq k_1\\ \ring{\bar{a}}_1+\bar{a}_2=\ring{\bar{a}}}}(-1)^{\varepsilon_{8}}{}_l\overline{\mathcal{R}}_{k_1+1}^1\left(x,L,\ring{\bar{a}}_1;[m,m+1]\right)\textrm{ }{{}_i\times_0}\textrm{ }\overline{\mathcal{R}}_{k_2+1}\left(L,\bar{a}_2;[m,m+1]\right) \\
    &\sqcup\bigsqcup_{\substack{k_1+k_2=k+1\\1\leq i\leq k_1\\ \bar{a}_1+\ring{\bar{a}}_2=\ring{\bar{a}}}}(-1)^{\varepsilon_{9}}\overline{\mathcal{R}}_{k_1+1}\left(L,\bar{a}_1;[m,m+1]\right)\textrm{ }{{}_i\times_0}\textrm{ }{}_l\overline{\mathcal{R}}_{k_2+1}\left(x,L,\ring{\bar{a}}_2;[m,m+1]\right)\\
    &\sqcup\bigsqcup_{0\leq j\leq l}(-1)^{\varepsilon_{10,j}}{}_j\overline{\mathcal{M}}\left(x,y_j;[m,m+1]\right)\times{}_{l-j}\overline{\mathcal{R}}_{k+1}^1\left(y_j,L,\ring{\bar{a}};[m,m+1]\right) \\
    &\sqcup\bigsqcup_{1\leq j\leq l-1}(-1)^{\varepsilon_{11}}{}_l^{j,j+1}\overline{\mathcal{R}}_{k+1}^1\left(x,L,\ring{\bar{a}};[m,m+1]\right)\sqcup(-1)^{\varepsilon_{12}}{}_{l-1}\overline{\mathcal{R}}_{k+1}^{S^1}\left(x,L,\ring{\bar{a}};[m,m+1]\right),
    \end{split}
    \end{equation}
    where
    \begin{equation}
    \begin{split}
    &\varepsilon_1=(k_1-i)(k_2-1)+n+k,\textrm{ }\varepsilon_2=(k_1-i)(k_2-1)+n+1, \\
    &\varepsilon_{3,j}=n+|y_j|,\textrm{ }0\leq j\leq l,\textrm{ }\varepsilon_4=0,\textrm{ }\varepsilon_5=0, \\
    &\varepsilon_{6}=1,\textrm{ }\varepsilon_{7}=0, \\
    &\varepsilon_{8}=(k_1-i)(k_2-1)+n+k+1,\textrm{ }\varepsilon_{9}=(k_1-i)(k_2-1)+n, \\
    &\varepsilon_{10,j}=n+|y_j|+1,\textrm{ }0\leq j\leq l,\textrm{ }\varepsilon_{11}=1,\textrm{ }\varepsilon_{12}=1. \nonumber
    \end{split}
    \end{equation}
    In the above, the notation ${{}_i\times_0}$ is an abbreviation for the fiber product ${{}_{\mathrm{ev}_i}\times_{\mathrm{ev}_0}}$, with
    \begin{equation}
    \mathrm{ev}_i:{}_l\overline{\mathcal{R}}_{k+1}^1(x,L,\ring{\bar{a}};P)\xrightarrow{{}_l\mathrm{ev}^{\mathcal{R},P}}P\times L^{k+1}\xrightarrow{\mathrm{id}_P\times\mathrm{pr}_i}P\times L \nonumber
    \end{equation}
    defined as a composition for the moduli space ${}_l\overline{\mathcal{R}}_{k+1}^1(x,L,\ring{\bar{a}};P)$ and similarly for the other moduli spaces. The compatibility of the Kuranishi structures at the boundaries of the admissible K-spaces ${}_l^{j,j+1}\overline{\mathcal{R}}_{k+1}^1(x,L,\ring{\beta};P)$, ${}_{l-1}\overline{\mathcal{R}}_{k+1}^{S^1}(x,L,\ring{\beta};P)$ and ${}_{l-1}\overline{\mathcal{R}}_{k+1,\tau_i}^1(x,L,\ring{\beta};P)$ are similar to that of ${}_l\overline{\mathcal{R}}_{k+1}^1(x,L,\ring{\beta};P)$ described above.
    \item[(iv)] Let $\widehat{S}_r\mathbb{X}$ be the normalized codimension $r$ corner of an admissible K-space $\mathbb{X}$, there are isomorphisms of admissible K-spaces
    \begin{equation}\label{eq:corner2}
    \begin{split}
    &\widehat{S}_r\left({}_l\overline{\mathcal{R}}_{k+1}^1(x,L,\ring{\bar{a}};P)\right)\cong\bigsqcup_{\substack{(T,\ring{B},v_0)\in\mathcal{T}(k+1,\ring{\bar{a}})\\ \#E_{\mathrm{int}}(T)+d+r_1+r_2=r \\ v_0\in V_{\mathrm{int}}(T)}}\bigsqcup_{\substack{j_1+\cdots+j_{r_2}=l_2\\l_1+l_2=l}}\left(\prod_{e\in E_{\mathrm{int}}(T)}\widehat{S}_dP\times L\right) \\
    &\textrm{ }{{}_\Delta\times\mathrm{ev}_\mathrm{int}}\textrm{ }\left(\prod_{v\in V_{\mathrm{int}}(T)\setminus\{v_0\}}\overline{\mathcal{R}}_{k_v+1}\left(L,\ring{B}(v);\widehat{S}_dP\right)\right.\\
    &\left.\times\left(\prod_{i=1}^{r_2}{}_{j_i}\overline{\mathcal{M}}(y_{j_i},x;\widehat{S}_dP)\times\widehat{S}_{r_1}({}_{l_1}\overline{\mathcal{R}}_{k_{v_0}+1}^1)\left(y_{j_i},L,\ring{B}(v_0);\widehat{S}_dP\right)\right)\right), \\
    \end{split}
    \end{equation}
    where $y_{j_i}$ are 1-periodic orbits of $X_{H_t}$ and 
    \[
    \Delta:\prod_{e\in E_\mathrm{int}(T)}\widehat{S}_dP\times L\rightarrow\prod_{e\in E_\mathrm{int}(T)}(\widehat{S}_dP\times L)^2
    \]
    is the diagonal map. The identifications of the codimension $r$ corners of the moduli spaces ${}_l^{j,j+1}\overline{\mathcal{R}}_{k+1}^1(x,L,\ring{\bar{a}};P)$, ${}_{l-1}\overline{\mathcal{R}}_{k+1}^{S^1}(x,L,\ring{\bar{a}};P)$ and ${}_{l-1}\overline{\mathcal{R}}_{k+1,\tau_i}^1(x,L,\ring{\bar{a}};P)$ are similar. 
    
    For every $r,r'\in\mathbb{N}$, the canonical covering map $\pi_{r,r'}:\widehat{S}_{r'}(\widehat{S}_r\mathbb{X})\rightarrow\widehat{S}_{r+r'}(\mathbb{X})$ coincides with the map defined from the fiber product presentation.
    \item[(v)] The Kuranishi structures on the admissible K-spaces (\ref{eq:m2}), (\ref{eq:m3}) and (\ref{eq:m4}) are invariant under the $\mathbb{Z}_{k+1}$-action induced by the cyclic permutations of the boundary marked points $z_0,\cdots,z_k$. The same $\mathbb{Z}_{k+1}$-action cyclically permutes the admissible K-spaces $\left\{\overline{\mathcal{R}}_{k+1,\tau_i}^1(x,L,\ring{\bar{a}};P)\right\}_{i=0}^k$.
\end{itemize}
\end{theorem}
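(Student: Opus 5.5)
The plan is to build these K-spaces by following the construction for the non-equivariant Cohen--Ganatra moduli spaces in \cite{yla}, Theorem 48, and the disc moduli spaces of Irie \cite{kic}, Theorem 7.33, which in turn rest on the Fukaya--Oh--Ohta--Ono machinery \cite{fona,fooo4}. The first step treats each fixed domain moduli space, namely ${}_l\overline{\mathcal{R}}_{k+1}^1$, ${}_l^{j,j+1}\overline{\mathcal{R}}_{k+1}^1$, ${}_{l-1}\overline{\mathcal{R}}_{k+1}^{S^1}$ and ${}_{l-1}\overline{\mathcal{R}}_{k+1,\tau_i}^1$. On each, I will write the Gromov compactification of solutions to (\ref{eq:CR}) as a union of strata indexed by decorated rooted ribbon trees with a single puncture, together with breakings of Floer cylinders at the input.
\begin{itemize}
\item Compactness comes from Lemma \ref{lemma:action}, the energy identity, and the maximum principle for contact-type $J_S$ with $H_S=r^2$ near the boundary. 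Since $L\subset\mathrm{int}(X)$ and $H_S\equiv 0$ near $\partial S$, solutions stay in a compact subset of $X$.
\item The virtual dimension (\ref{eq:dim}) is obtained by index additivity. The Fredholm index of the Floer operator with Lagrangian boundary and one positive puncture is $\mu(a)+\mathit{CZ}(x)$ (with the grading conventions of \cite{yla}). To this we add the dimension $k+2l$ of ${}_l\mathcal{R}_{k+1}^1$: $k$ comes from the boundary points modulo rotation fixing $z_0$, and $2l$ from the interior points, with the marker slaved to $p_l$. The other three moduli spaces have one fewer degree of freedom because of the single codimension-one constraint on the radii.
\end{itemize}

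The second step constructs Kuranishi charts stratum by stratum by gluing, and makes them compatible at the corners. This is where the parameter $P$ and the energy cutoffs $\lambda(a)<(m-k-U)\varepsilon$ enter.
\begin{itemize}
\item As in \cite{kic}, one cannot choose compatible Kuranishi structures on all moduli spaces at once. Instead, one builds a sequence indexed by $m$, inductively on energy and on the number of marked points. The choice of $\varepsilon$ guarantees that each nonconstant disc bubble lowers the available budget $(m-k)\varepsilon$ by at least one unit. The constant $U$ absorbs the possibly negative action contribution $|A_{H_t}(x)|$ of the punctured component, using Lemma \ref{lemma:action}.
\item The interpolating K-spaces over $[m,m+1]$ are built by extending the structures over $\{m\}$ and $\{m+1\}$ across a product with an interval. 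This yields the extra boundary components with signs $\varepsilon_6,\varepsilon_7$ in (\ref{eq:bd5}).
\item Admissibility and strong smoothness of the evaluation maps are inherited from the corresponding statements for disc moduli spaces in \cite{kic}. The fiber products over $L$ are taken along the evaluation maps ${}_i\times_0$.
\end{itemize}

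The third step identifies boundaries and corners, (iii) and (iv).
\begin{itemize}
\item The codimension-one boundary of the domain spaces is listed in (\ref{eq:dom-bdy1})--(\ref{eq:dom-bdy5}). Adding the Floer breakings at the puncture gives the five families in (\ref{eq:bd2}); the real blow-up strata pair with ${}_j\overline{\mathcal{M}}(x,y_j)$.
\item The corner formula (\ref{eq:corner2}) is the iterated version, and compatibility of the covering maps $\pi_{r,r'}$ follows from the inductive construction.
\item The signs $\varepsilon_1,\dots,\varepsilon_{12}$ are computed from the orientation conventions of \cite{yla}, Appendix B. This uses the Spin structure on $L$ and the fact that the auxiliary-rescaling map $\phi^i_f$ is an oriented diffeomorphism. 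It is a bookkeeping exercise that I would carry out by comparison with the $l=0$ case.
\end{itemize}

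The last step, (v), requires $\mathbb{Z}_{k+1}$-invariance of the Kuranishi structures. This is the step I expect to be the main obstacle, because the Cohen--Ganatra Floer datum is only $\mathbb{Z}_{k+1}$-equivariant on ${}_{l-1}\overline{\mathcal{R}}^{S^1}_{k+1}$, and the obstruction bundles must be chosen equivariantly while remaining compatible with the boundary identifications involving the forgetful maps $\pi_j$ and the maps $\tau_i$.

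My plan for (v) is to use that the action is free and properly discontinuous. I will construct obstruction spaces on the quotient ${}_{l-1}\overline{\mathcal{R}}_{k+1}^{S^1}(x,L,\ring{\bar{a}})/\mathbb{Z}_{k+1}$ and pull them back, using the cyclic-symmetric construction of \cite{fooo4} and \cite{kic} for the disc factors so that the fiber products stay invariant. The spaces ${}_{l-1}\overline{\mathcal{R}}^1_{k+1,\tau_i}$ are then equipped with the structures transported by $\kappa$ from $i=0$, which gives the stated cyclic permutation among them.
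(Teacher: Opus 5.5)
Your route is the paper's route. The paper gives no argument of its own for this theorem beyond invoking \cite{yla}, Theorem 48 and Remark 49, which rest on \cite{kic} and \cite{fooo4}, and your treatment of (i)--(iv) is a reasonable outline of that construction. One small correction: ${}_{l-1}\mathcal{R}_{k+1,\tau_i}^1$ carries no radius constraint. Its dimension $k+2l-1$ comes from $k+1$ free boundary points (counting $z_f$) and $l-1$ interior points, or equivalently from the map $\phi_f^i$.

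\textbf{The gap is in (v).} The statement asserts $\mathbb{Z}_{k+1}$-invariance for (\ref{eq:m2}) and (\ref{eq:m3}), not only for (\ref{eq:m4}). Section \ref{section:chain-d} also explicitly requires this invariance for (\ref{eq:CG}) and (\ref{eq:jj}). Your plan, however, only builds invariant obstruction spaces on ${}_{l-1}\overline{\mathcal{R}}_{k+1}^{S^1}(x,L,\ring{\bar{a}})/\mathbb{Z}_{k+1}$ and transports structures among the $\tau_i$-spaces.

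Worse, you start from the premise that the Cohen--Ganatra Floer datum is $\mathbb{Z}_{k+1}$-equivariant only on the $S^1$-spaces. Under that premise there is no reason for ${}_l\overline{\mathcal{R}}_{k+1}^1(x,L,\ring{\bar{a}})$ or ${}_l^{j,j+1}\overline{\mathcal{R}}_{k+1}^1(x,L,\ring{\bar{a}})$ to carry a $\mathbb{Z}_{k+1}$-action at all. Relabeling $z_0,\dots,z_k$ replaces the normalized domain by a rotated one. A solution of (\ref{eq:CR}) remains a solution only if the datum $(\varepsilon^+,\gamma_S,H_S,J_S)$ assigned to the relabeled domain is the rotated datum. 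Without that, there is nothing for the Kuranishi structure to be invariant under, and your quotient trick has nothing to act on.

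The missing step is to build $\mathbb{Z}_{k+1}$-equivariance into the inductive choice of Floer data on ${}_l\overline{\mathcal{R}}_{k+1}^1$ and ${}_l^{j,j+1}\overline{\mathcal{R}}_{k+1}^1$ themselves. You then have to check this against the consistency conditions:
\begin{itemize}
\item the relabeling interchanges the disc-bubble strata (\ref{eq:dom-bdy4}) and (\ref{eq:dom-bdy5});
\item it must commute with the forgetful maps $\pi_j$;
\item it must restrict to $\kappa$ on the face (\ref{eq:dom-bdy3});
\item it must be compatible with the gluing at the real blow-up strata (\ref{eq:dom-bdy1}).
\end{itemize}
Only after that can your free-quotient construction be run on these spaces as well. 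The invariant structure on the $S^1$-face should be fixed first and then extended inward, so that the restriction appearing in (\ref{eq:bd2}) is the invariant one.
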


As is observed in \cite{yla}, the construction of the Maurer-Cartan element (the chain $x\in\widehat{\mathbb{H}}_{-2}^{S^1}$ in Theorem \ref{theorem:deform-Viterbo}) in the $S^1$-equivariant case relies on the consideration of a moduli space of holomorphic discs that is a slight variant of $\mathcal{R}_{k+1}(L,\bar{a})$.

For $k\geq3$, define
\begin{equation}
\mathcal{R}_{k+1,\vartheta}:=\left\{(D,z_0,\cdots,z_{k-1},z_k=z_0e^{i\vartheta_k})\right\}/\mathit{PSL}(2,\mathbb{R}), \nonumber
\end{equation}
where $z_0,\cdots,z_k\in\partial D$ are aligned in counterclockwise order and $\vartheta_k\in(0,2\pi)$ is a fixed constant. Note that there is an obvious embedding $\mathcal{R}_{k+1,\vartheta}\subset\mathcal{R}_{k+1}$. For $1\leq i\leq k$, there is a map
\begin{equation}\label{eq:f1}
\pi_{\vartheta,i}:\mathcal{R}_{k+1,\vartheta}\rightarrow\mathcal{R}_k
\end{equation}
defined by applying the cyclic permutation $i$ times to the boundary marked points $z_0,\cdots,z_k$, so that $z_{i+j\textrm{ mod }k+1}$ becomes $z_j$ for $0\leq j\leq k$, and then forgetting the point labeled $z_{k+1-i}$ after the permutation. The map $(-1)^{k,i}\pi_{\vartheta,i}$ is orientation-preserving (cf. \cite{yla}, Appendix B), therefore identifies $\mathcal{R}_{k+1,\vartheta}$ as an open sector of $\mathcal{R}_k$. By \cite{yla}, Lemma 38, we actually have $\bigsqcup_{1\leq i\leq k}(-1)^{ki}\pi_{\vartheta,i}(\mathcal{R}_{k+1,\vartheta})$ covers all but codimension $1$ strata of $\mathcal{R}_k$.

For $\bar{a}\in\pi_2(X,L)$, define
\begin{equation}
\mathcal{R}_{k+1,\vartheta}(L,\bar{a}):=\left\{(u,z_0,\cdots,z_k=z_0e^{i\vartheta_k}),[u]=\bar{a}\right\}/\mathit{PSL}(2,\mathbb{R}), \nonumber
\end{equation}
where $u:(D,\partial D)\rightarrow(X,L)$ is a $J$-holomorphic map in the homotopy class $\bar{a}$ and $z_k=z_0e^{i\vartheta_k}$. Note that when $\bar{a}=0$, we have
\begin{equation}
\mathcal{R}_{2,\vartheta}(L,0)=\mathcal{R}_{3,\vartheta}(L,0)=\emptyset. \nonumber
\end{equation}
The compactification $\overline{\mathcal{R}}_{k+1,\vartheta}(L,\bar{a})$ is an admissible K-space and is modeled on decorated rooted ribbon trees $(T,B)\in\mathcal{T}(k+1,\beta)$. It follows from the embedding (\ref{eq:f1}) that
\begin{equation}
\dim\overline{\mathcal{R}}_{k+1,\vartheta}(L,\bar{a})=\dim\overline{\mathcal{R}}_k(L,\bar{a})=\mu(\bar{a})+n+k-3. \nonumber
\end{equation}
For $P=\{m\}$ or $[m,m+1]$ with $m\in\mathbb{Z}_{\geq0}$, and $a\in H_1(L;\mathbb{Z})$ satisfying $\lambda(a)<(m-k+1)\varepsilon$, there are corner-stratified strongly smooth maps
\begin{equation}
\mathrm{ev}_\vartheta^{\mathcal{R},P}:\overline{\mathcal{R}}_{k+1,\vartheta}(L,\bar{a};P)\rightarrow P\times L^{k+1} \nonumber
\end{equation}
and orientation-preserving isomorphisms of admissible K-spaces
\begin{equation}\label{eq:bd1}
\partial\overline{\mathcal{R}}_{k+1,\vartheta}(L,\bar{a};\{m\})\cong\bigsqcup_{\substack{k_1+k_2=k+1\\1\leq i\leq k_1+1\\ \bar{a}_1+\bar{a}_2=\bar{a}}}(-1)^{\varepsilon_{13}}\overline{\mathcal{R}}_{k_1+1,\vartheta}(L,\bar{a}_1;\{m\})\textrm{ }{{}_i\times_0}\textrm{ }\overline{\mathcal{R}}_{k_2+1}(L,\bar{a}_2;\{m\}),
\end{equation}
\begin{equation}\label{eq:bd4}
	\begin{split}
	&\partial\overline{\mathcal{R}}_{k+1,\vartheta}(L,\bar{a};[m,m+1])\cong-\overline{\mathcal{R}}_{k+1,\vartheta}(L,\bar{a};\{m\})\sqcup\overline{\mathcal{R}}_{k+1,\vartheta}(L,\bar{a};\{m+1\}) \\
	&\sqcup\bigsqcup_{\substack{k_1+k_2=k+1\\1\leq i\leq k_1+1\\ \bar{a}_1+\bar{a}_2=\bar{a}}}(-1)^{\varepsilon_{14}}\overline{\mathcal{R}}_{k_1+1,\vartheta}(L,\bar{a}_1;[m,m+1])\textrm{ }{{}_i\times_0}\textrm{ }\overline{\mathcal{R}}_{k_2+1}(L,\bar{a}_2;[m,m+1]),
	\end{split}
\end{equation}
where
\begin{equation}
\varepsilon_{13}+1=\varepsilon_{14}=(k_1-i)(k_2-1)+n+k_1, \nonumber
\end{equation}
and
\begin{equation}\label{eq:corner1}
	\begin{split}
	&\widehat{S}_r\overline{\mathcal{R}}_{k+2,\vartheta}(L,\beta;P)\cong\bigsqcup_{\substack{(T,B)\in\mathcal{T}(k+2,\beta)\\ \#E_{\mathrm{int}}(T)+d=r \\ v_0\in V_{0,\mathrm{int}}(T)}}\left(\prod_{e\in E_{\mathrm{int}}(T)}\widehat{S}_dP\times L\right)\textrm{ }{{}_\Delta\times\mathrm{ev}_\mathrm{int}}\textrm{ } \\
	&\left(\prod_{v\in V_{0,\mathrm{int}}(T)\setminus\{v_0\}}\overline{\mathcal{R}}_{k_v+1}\left(L,B(v);\widehat{S}_dP\right)\times\overline{\mathcal{R}}_{k_{v_0}+1,\vartheta}\left(L,B(v_0);\widehat{S}_dP\right)\right),
	\end{split}
\end{equation}
where the fiber product on the right-hand side is taken over $\prod_{e\in E_{\mathrm{int}}(T)}\left(\widehat{S}_dP\times L\right)^2$.

\section{The de Rham chain model}\label{section:de Rham}

Let $L$ be a closed, orientable manifold of dimension $n$. We shall recall in this section a de Rham chain model of the $S^1$-equivariant homology of $\mathcal{L}L$ introduced in \cite{yla}.

We start by recalling a convenient model for the space of Moore loops with marked points in $L$, which is due to Wang \cite{ywt}. Denote by $\Pi_1L$ the fundamental groupoid of $L$, which assigns to each ordered pair of points $(p,q)\in L^2$ the collection of equivalence classes of smooth paths from $p$ to $q$. Denote by
\begin{equation}
s:\Pi_1L\rightarrow L\textrm{ and }t:\Pi_1L\rightarrow L \nonumber
\end{equation}
the source and the target maps, which assigns the points $p$ and $q$ to each equivalence class of ordered pair $(p,q)$, respectively. There is an obvious map
\begin{equation}
\left\{(c_0,c_1)\in(\Pi_1L)^2|t(c_0)=s(c_1)\right\}\rightarrow\Pi_1L,\textrm{ }(c_0,c_1)\mapsto c_0\ast c_1 \nonumber
\end{equation}
induced by the concatenation of two paths. For every $k\in\mathbb{Z}_{\geq0}$, denote by $\mathcal{P}_{k+1}L\subset(\Pi_1L)^{k+1}$ the subspace consisting of $(c_0,\cdots,c_k)$ such that $t(c_i)=s(c_{i+1})$ for $0\leq i\leq k-1$. Define
\begin{equation}\label{eq:flsk}
\mathcal{L}_{k+1}L:=\left\{(c_0,\cdots,c_k)\in\mathcal{P}_{k+1}L|t(c_k)=s(c_0)\right\}.
\end{equation}
Note that compared to the space of Moore loops with $k+1$ marked points in $L$ (cf. \cite{kic}, Section 7 and \cite{kia}, Section 4.1), the space $\mathcal{L}_{k+1}L$ defined above has the advantage that it is a smooth oriented manifold of dimension $(k+1)n$. The spaces $\{\mathcal{L}_{k+1}L\}_{k\geq0}$ form finite-dimensional approximations of the free loop space $\mathcal{L}L$.

From now on, we will simply write $\mathcal{L}_{k+1}$ for $\mathcal{L}_{k+1}L$. These spaces are equipped with smooth evaluation maps
\begin{equation}
\mathrm{ev}_j^\mathcal{L}:\mathcal{L}_{k+1}\rightarrow L,\textrm{ }(c_0,\cdots,c_k)\mapsto s(c_j),\textrm{ }0\leq j\leq k \nonumber
\end{equation}
and concatenation maps
\begin{equation}\label{eq:con}
	\begin{split}
	\mathrm{con}_j:&\mathcal{L}_{k+1}\textrm{ }{{}_{\mathrm{ev}_j^\mathcal{L}}\times_{\mathrm{ev}_0^\mathcal{L}}}\textrm{ }\mathcal{L}_{k'+1}\rightarrow\mathcal{L}_{k+k'}, \\
	&\left((c_0,\cdots,c_k),(c_0',\cdots,c_k')\right)\mapsto \\
	&\left\{\begin{array}{ll}
	(c_0,\cdots,c_{j-2},c_{j-1}\ast c_0',c_1',\cdots,c_{k'-1}',c_{k'}'\ast c_j,\cdots,c_k) & \textrm{if }k'\geq1, \\
	(c_0,\cdots,c_{j-2},c_{j-1}\ast c_0'\ast c_j,c_{j+1},\cdots,c_k) & \textrm{if }k'=0.
	\end{array}\right.
	\end{split}
\end{equation}
It is clear from the definition that the concatenation maps are compatible with the decomposition $\mathcal{L}_{k+1}=\bigsqcup_{a\in H_1(L;\mathbb{Z})}\mathcal{L}_{k+1}(a)$ of $\mathcal{L}_{k+1}$ into different homotopy classes, where $\mathcal{L}_{k+1}(a)\subset\mathcal{L}_{k+1}$ is the subspace consisting of loops $\gamma=c_0\ast\cdots\ast c_k\in\Pi_1L$ with $[\gamma]=a$. Thus we obtain a map
\begin{equation}\label{eq:conca}
\mathrm{con}_j:\mathcal{L}_{k+1}(a)\textrm{ }{{}_{\mathrm{ev}_j^\mathcal{L}}\times_{\mathrm{ev}_0^\mathcal{L}}}\textrm{ }\mathcal{L}_{k'+1}(a')\rightarrow\mathcal{L}_{k+k'}(a+a')
\end{equation}
for $a,a'\in H_1(L;\mathbb{Z})$. 

Let $V$ be a smooth manifold and consider the map $\varphi:V\rightarrow\mathcal{L}_{k+1}$. We say that $\varphi$ is a smooth map if the map $\varphi$ is $C^\infty$ and the composition $\mathrm{ev}_0^\mathcal{L}\circ\varphi:V\rightarrow L$ is a submersion. For $N\in\mathbb{N}$, let $\mathfrak{V}_N$ be the collection of oriented submanifolds in $\mathbb{R}^N$, and define $\mathfrak{V}:=\bigsqcup_{N\geq1}\mathfrak{V}_N$. Let $\mathcal{P}(\mathcal{L}_{k+1}(a))$ denote the set of plots, i.e. pairs $(V,\varphi)$, where $V\in\mathfrak{V}$ and $\varphi:V\rightarrow\mathcal{L}_{k+1}(a)$ is a smooth map.

For each $N$, consider the vector space
\begin{equation}\label{eq:sp}
\bigoplus_{(V,\varphi)\in\mathcal{P}(\mathcal{L}_{k+1}(a))}\Omega_c^{\dim(V)-N}(V),
\end{equation}
where $\Omega_c^\ast(V)$ is the space of compactly supported differential forms on $V$. Denote by $Z_N$ the subspace of (\ref{eq:sp}) defined by
\begin{equation}
\begin{split}
&\left\{(V,\varphi,\pi_!\omega)-(V',\varphi\circ\pi,\omega)\left\vert(V,\varphi)\in\mathcal{P}(\mathcal{L}_{k+1}(a)),V'\in\mathfrak{V}, \right. \right. \\
&\left.\omega\in \Omega_c^{\dim(V')-N}(V'),\pi:V'\rightarrow V\textrm{ is a submersion}\right\}. \nonumber
\end{split}
\end{equation}
As a graded vector space, the $N$-th degree \textit{de Rham chain complex} of $\mathcal{L}_{k+1}(a)$ is the quotient
\begin{equation}
C_N^\mathrm{dR}(\mathcal{L}_{k+1}(a)):=\left(\bigoplus_{(V,\varphi)\in\mathcal{P}(\mathcal{L}_{k+1}(a))}\Omega_c^{\dim(V)-N}(V)\right)/Z_N. \nonumber
\end{equation}
The differential $\partial:C_\ast^\mathrm{dR}(\mathcal{L}_{k+1}(a))\rightarrow C_{\ast-1}^\mathrm{dR}(\mathcal{L}_{k+1}(a))$ is defined as
\begin{equation}
\partial(V,\varphi,\omega):=(-1)^{|\omega|+1}(V,\varphi,d\omega). \nonumber
\end{equation}
It is straightforward to check that $\partial$ is well-defined and $\partial^2=0$. The homology of $\left(C_\ast^\mathrm{dR}(\mathcal{L}_{k+1}(a)),\partial\right)$ is denoted by $H_\ast^\mathrm{dR}\left(\mathcal{L}_{k+1}(a)\right)$. Unlike in the case when $\mathcal{L}_{k+1}$ is the space of Moore loops considered in \cite{kia,kic}, $H_\ast^\mathrm{dR}(\mathcal{L}_{k+1}(a))$ is no longer isomorphic to $H_\ast(\mathcal{L}(a)L;\mathbb{R})$, but the usage of Wang's finite-dimensional model of $\mathcal{L}_{k+1}$  is justified by the following.

If one forms the total complex
\begin{equation}\label{eq:total}
\left(C_\ast^\mathrm{dR}(a):=\prod_{k=0}^\infty C_\ast^\mathrm{dR}\left(\mathcal{L}_{k+1}(a)\right),\partial^\mathrm{tot}\right),
\end{equation}
where the differential $\partial^\mathrm{tot}$ is given by
\begin{equation}\label{eq:totd}
\partial^\mathrm{tot}(x)(a,k)=\left\{\begin{array}{ll}
\partial x(a,0) & \textrm{if }k=0, \\
\partial x(a,k)+(-1)^{|x|}\sum_{i=0}^k(-1)^i\delta_{k,i}(x(a,k-1)) & \textrm{if }k\geq1.
\end{array}\right.
\end{equation}
Then there is an isomorphism
\begin{equation}\label{eq:sing}
H_\ast\left(C_\ast^\mathrm{dR}(a),\partial^\mathrm{tot}\right)\cong H_\ast^\mathrm{sing}(\mathcal{L}(a)L;\mathbb{R}),
\end{equation}
where the singular homology on the right-hand side is defined using the $C^\infty$-topology on $\mathcal{L}(a)L$. See \cite{kic,ywt}.

For $k\in\mathbb{N}$, $k'\in\mathbb{Z}_{\geq0}$, and $1\leq j\leq k$, define the fiber product
\begin{equation}\label{eq:fp}
\circ_j:C_{n+d}^\mathrm{dR}(\mathcal{L}_{k+1}(a))\otimes C_{n+d'}^\mathrm{dR}\left(\mathcal{L}_{k'+1}(a')\right)\rightarrow C_{n+d+d'}^\mathrm{dR}\left(\mathcal{L}_{k+k'}(a+a')\right)
\end{equation}
by
\begin{equation}
x\circ_jy:=(-1)^{(\dim(V)-|\omega|-n)|\omega'|}\left(V{{}_{\varphi_j}\times_{\varphi_0'}}V',\mathrm{con}_j\circ(\varphi_j\times\varphi_0'),\omega\times\omega'|_{V{{}_{\varphi_j}\times_{\varphi_0'}}V'}\right), \nonumber
\end{equation}
where $\varphi_j=\mathrm{ev}_j^\mathcal{L}\circ\varphi$ and $\varphi_0'=\mathrm{ev}_0^\mathcal{L}\circ\varphi'$. One can check that this is a chain map, and after passing to homology of the total complex (\ref{eq:total}), it corresponds to the Chas-Sullivan loop product under the isomorphism (\ref{eq:sing}).

We also need the relative version of the de Rham complex $C_\ast^\mathrm{dR}(\mathcal{L}_{k+1}(a))$, whose construction uses de Rham chains on $[-1,1]\times\mathcal{L}_{k+1}(a)$ relative to $\{-1,1\}\times\mathcal{L}_{k+1}(a)$. This is defined by replacing the set of plots $\mathcal{P}(\mathcal{L}_{k+1}(a))$ above with $\overline{\mathcal{P}}(\mathcal{L}_{k+1}(a))$, which consists of tuples $(V,\varphi,\tau_+,\tau_-)$, where
\begin{itemize}
	\item $V\in\mathfrak{V}$ and $\varphi:V\rightarrow\mathbb{R}\times\mathcal{L}_{k+1}(a)$. Write $\varphi=(\varphi_\mathbb{R},\varphi_\mathcal{L})$, define $V_I:=(\varphi_\mathbb{R})^{-1}(I)$ for every interval $I\subset\mathbb{R}$.
	\item $\varphi_\mathbb{R}$ and $\varphi_\mathcal{L}$ are $C^\infty$, and the map $V\rightarrow\mathbb{R}\times L$ defined by $v\mapsto\left(\varphi_\mathbb{R}(v),\mathrm{ev}_0\circ\varphi_\mathcal{L}(v)\right)$ is a submersion.
	\item $\tau_+:V_{\geq1}\rightarrow\mathbb{R}_{\geq1}\times V_1$ is a diffeomorphism such that
	\begin{equation}
	\varphi|_{V_{\geq1}}=(i_{\geq1}\times\varphi_\mathcal{L}|_{V_1})\circ\tau_+, \nonumber
	\end{equation}
	where $i_{\geq1}:\mathbb{R}_{\geq1}\hookrightarrow\mathbb{R}$ is the obvious inclusion.
	\item $\tau_-:V_{\leq-1}\rightarrow\mathbb{R}_{\leq-1}\times V_{-1}$ is a diffeomorphism such that
	\begin{equation}
	\varphi|_{V_{\leq-1}}=(i_{\leq-1}\times\varphi_\mathcal{L}|_{V_{-1}})\circ\tau_{-1}, \nonumber
	\end{equation}
	where $i_{\leq-1}:\mathbb{R}_{\leq-1}\hookrightarrow\mathbb{R}$ is the obvious inclusion.
\end{itemize}
Note that $V_{\geq1}$ and $V_{\leq-1}$ can be empty.

For any $(V,\varphi,\tau_+,\tau_-)\in\overline{\mathcal{P}}(\mathcal{L}_{k+1}(a))$ and $N\in\mathbb{Z}$, let $\Omega^N(V,\varphi,\tau_+,\tau_-)$ be the vector space of differential $N$-forms $\omega\in\Omega^N(V)$ such that
\begin{itemize}
	\item $\omega|_{V_{[-1,1]}}$ is compactly supported,
	\item $\omega|_{V_{\geq1}}=(\tau_+)^\ast(1\times\omega|_{V_1})$,
	\item $\omega|_{V_{\leq-1}}=(\tau_-)^\ast(1\times\omega|_{V_{-1}})$.
\end{itemize}
Define the space of $N$-th degree \textit{relative de Rham chains} to be
\begin{equation}
\overline{C}_N^\mathrm{dR}(\mathcal{L}_{k+1}(a)):=\left(\bigoplus_{(V,\varphi,\tau_+,\tau_-)\in\overline{\mathcal{P}}(\mathcal{L}_{k+1}(a))}\Omega^{\dim(V)-N-1}(V,\varphi,\tau_+,\tau_-)\right)/\overline{Z}_N, \nonumber
\end{equation}
where $\overline{Z}_N\subset\overline{C}_N^\mathrm{dR}(\mathcal{L}_{k+1}(a))$ is generated by
\begin{equation}
(V,\varphi,\tau_+,\tau_-,\omega)-(V',\varphi',\tau_+',\tau_-',\omega'), \nonumber
\end{equation}
if there exists a submersion $\pi:V'\rightarrow V$ satisfying $\varphi'=\varphi\circ\pi$, $\omega=\pi_!\omega'$, and
\begin{equation}
\tau_+\circ\pi|_{V_{\geq1}'}=(\mathrm{id}_{\mathbb{R}_{\geq1}}\times\pi|_{V_1'})\circ\tau_+', \nonumber
\end{equation}
\begin{equation}
\tau_-\circ\pi|_{V_{\leq-1}'}=(\mathrm{id}_{\mathbb{R}_{\leq-1}}\times\pi|_{V_{-1}'})\circ\tau_-', \nonumber
\end{equation}
where $\mathrm{id}_{\mathbb{R}_I}$ denotes the identity map on $\mathbb{R}_I$. The differential
\begin{equation}
\overline{\partial}:\overline{C}_\ast^\mathrm{dR}(\mathcal{L}_{k+1}(a))\rightarrow\overline{C}_{\ast-1}^\mathrm{dR}(\mathcal{L}_{k+1}(a)) \nonumber
\end{equation}
and the fiber product
\begin{equation}\label{eq:fpr}
\circ_j:\overline{C}_{n+d}^\mathrm{dR}(\mathcal{L}_{k+1}(a))\otimes\overline{C}^\mathrm{dR}_{n+d'}(\mathcal{L}_{k'+1}(a'))\rightarrow\overline{C}^\mathrm{dR}_{n+d+d'}(\mathcal{L}_{k+k'}(a+a'))
\end{equation}
are defined similarly as in the case of ordinary de Rham chains. We omit the details and refer the reader to \cite{yla}, Section 3.1 for details.

The de Rham chain complexes $C_\ast^\mathrm{dR}(\mathcal{L}_{k+1}(a))$ and $\overline{C}_\ast^\mathrm{dR}(\mathcal{L}_{k+1}(a))$ are closely related. Consider the projections $e_\pm:\overline{C}_\ast^\mathrm{dR}(\mathcal{L}_{k+1}(a))\rightarrow C_\ast^\mathrm{dR}(\mathcal{L}_{k+1}(a))$ given by
\begin{equation}\label{eq:e+}
e_+(V,\varphi,\tau_+,\tau_-,\omega):=(-1)^{\dim(V)-1}(V_1,\varphi|_{U_1},\omega|_{U_1})
\end{equation}
and
\begin{equation}\label{eq:e-}
e_-(V,\varphi,\tau_+,\tau_-,\omega):=(-1)^{\dim(V)-1}(V_{-1},\varphi|_{V_{-1}},\omega|_{V_{-1}}),
\end{equation}
where $V_1$ and $V_{-1}$ are oriented so that $\tau_+$ and $\tau_-$ are orientation preserving, respectively. One can show that $e_\pm$ are quasi-isomorphisms, with the same quasi-inverse given by the obvious inclusion
\begin{equation}\label{eq:i}
i(V,\varphi,\omega):=(-1)^{\dim(V)}(\mathbb{R}\times V,\mathrm{id}_\mathbb{R}\times\varphi,\tau_+,\tau_-,1\times\omega),
\end{equation}
so we have
\begin{equation}
H_\ast\left(\overline{C}_\ast^\mathrm{dR}(a)\right)\cong H_\ast\left(C_\ast^\mathrm{dR}(a)\right)\cong H_\ast(\mathcal{L}(a)L;\mathbb{R}), \nonumber
\end{equation}
where $\overline{C}_\ast^\mathrm{dR}(a):=\prod_{k=0}^\infty\overline{C}_\ast^\mathrm{dR}\left(\mathcal{L}_{k+1}(a)\right)$ is the relative total de Rham complex.

For the purpose of establishing the equation (\ref{eq:def1}) in Theorem \ref{theorem:deform-Viterbo}, we need a chain model of the string homology that carries the structures of a strict $S^1$-complex and an odd dg Lie algebra. Write
\begin{equation}
C_\ast(a,k):=C^\mathrm{dR}_{\ast+n+\mu(a)+k-1}(\mathcal{L}_{k+1}(a)), \nonumber
\end{equation}
\begin{equation}
C_\ast(k):=C_{\ast+n}^\mathrm{dR}(\mathcal{L}_{k+1})=\bigoplus_{a\in H_1(L;\mathbb{Z})}C_{\ast+n+\mu(a)+k-1}^\mathrm{dR}(\mathcal{L}_{k+1}(a)). \nonumber
\end{equation}
The chain complexes $\left\{C_\ast(k)\right\}_{k\geq0}$ form a non-symmetric dg operad $\mathcal{O}_L$, with multiplication
\begin{equation}
\mu_L:=(L',i_2\circ\phi,1)\in C_{-1}(0,2), \nonumber
\end{equation}
and unit
\begin{equation} 
e_L:=(L',i_0\circ\phi,1)\in C_1(0,0), \nonumber
\end{equation}
where the map $i_0:L\rightarrow\mathcal{L}_1(0)$ is given by $p\mapsto[p]=(p,p)\in\Pi_1L$, $i_2:L\rightarrow\mathcal{L}_3(0)$ is defined by taking three copies of $i_0$ and $\phi:L'\rightarrow L$ is an orientation-preserving diffeomorphism. We have
\begin{equation}
\mu_L\circ_1e_L=\mu_L\circ_2e_L=\mathrm{id}_{\mathcal{O}_L}, \nonumber
\end{equation}
where
\begin{equation}
\mathrm{id}_{\mathcal{O}_L}:=(L',i_1\circ\phi,1)\in C_0(0,1) \nonumber
\end{equation}
is the identity, with $i_1:L\rightarrow\mathcal{L}_2(0)$ given by taking two copies of $i_0$.

From now on, we assume that $L\subset X$ is a closed Lagrangian submanifold in the Liouville domain $X$. Consider the total complex
\begin{equation}\label{eq:tot}
\left(C_\ast:=\bigoplus_{a\in H_1(L;\mathbb{Z})}\prod_{k=0}^\infty C_\ast(a,k),\partial^\mathrm{tot}\right),
\end{equation}
where the differential $\partial^\mathrm{tot}$ is defined as in (\ref{eq:totd}).

$C_\ast$ has the structure of an associative dg algebra, with the product $\bullet:C_i\otimes C_j\rightarrow C_{i+j-1}$ given by
\begin{equation}\label{eq:bullet}
(x\bullet y)(a,k):=\sum_{\substack{k_1+k_2=k\\a_1+a_2=a}}(-1)^{k_1(|y|+1)}(\mu_L\circ_1 x(a_1,k_1))\circ_{k_1+1}y(a_2,k_2).
\end{equation}
Using the Liouville form $\lambda$, one can define an action filtration
\begin{equation}\label{eq:Xi}
F^\Xi C_\ast:=\bigoplus_{\lambda(a)>\Xi}\prod_{k=0}^\infty C_\ast(a,k).
\end{equation}
We denote by $\widehat{C}_\ast$ the corresponding completion of $C_\ast$.

The dg operad $\mathcal{O}_L$ also carries a cyclic structure
\begin{equation}\label{eq:cyc}
\tau_k:C_\ast(k)\rightarrow C_\ast(k),
\end{equation}
which is induced by the cyclic permutation
\begin{equation}
\mathcal{L}_{k+1}\rightarrow\mathcal{L}_{k+1},\textrm{ }(c_0,\cdots,c_k)\mapsto(c_1,\cdots,c_k,c_0). \nonumber
\end{equation}
Note that $\tau_k^{k+1}=\mathrm{id}_C$, where $\mathrm{id}_C$ is the identity map on $C_\ast(k)$. This gives rise to a chain level BV operator
\begin{equation}\label{eq:BV-c}
\begin{split}
\delta_\mathrm{cyc}&:C_\ast(a,k+1)\rightarrow C_{\ast+1}(a,k), \\
(\delta_\mathrm{cyc} x)(a,k)&:=\sum_{j=1}^{k+1}(-1)^{|x|+k(j-1)}\tau_{k+1}^j\left(x(a,k+1)\right)\circ_{k+2-j}e_L.
\end{split}
\end{equation}
After passing to homology and composing with the aforementioned isomorphism $H_\ast(C_\ast)\cong H_\ast(\mathcal{L}L;\mathbb{R})$ (cf. (\ref{eq:sing})), it coincides with the BV operator $\Delta:H_\ast(\mathcal{L}L;\mathbb{R})\rightarrow H_{\ast+1}(\mathcal{L}L;\mathbb{R})$ defined by loop rotations. As before, $\delta_\mathrm{cyc}$ is compatible with the decomposition (\ref{eq:tot}) of the total complex $C_\ast$, therefore it extends to an operator on the completion $\widehat{C}_\ast$. 

$\mathcal{O}_L$ has the structure of a cosimplicial chain complex, with the operations
\begin{equation}
\delta_{k,i}:C_\ast(k-1)\rightarrow C_\ast(k),\textrm{ }\sigma_{k,i}:C_\ast(k+1)\rightarrow C_\ast(k)
\end{equation}
for $0\leq i\leq k$ defined by
\begin{equation}\label{eq:cosimp}
\delta_{k,i}(x):=\left\{\begin{array}{ll}
\mu_L\circ_2x & i=0, \\ x\circ_i\mu_L & 1\leq i\leq k-1, \\ \mu_L\circ_1x & i=k,
\end{array}\right.
\end{equation}
\begin{equation}
\sigma_{k,i}(x):=x\circ_{i+1} e_L. \nonumber
\end{equation}
More concretely, $\delta_{k,i}$ is induced by the map
\begin{equation}
\begin{split}
\mathcal{L}_k&\rightarrow\mathcal{L}_{k+1} \\
(c_0,\cdots,c_{k-1})&\mapsto\left\{\begin{array}{ll}
\left(c_0,\cdots,c_{i-1},[s(c_i)],c_i,\cdots,c_{k-1}\right) & \textrm{if }0\leq i\leq k-1, \\
\left(c_0,\cdots,c_{k-1},[t(c_{k-1})]\right) & \textrm{if }i=k,
\end{array}\right. \nonumber
\end{split}
\end{equation}
and $\sigma_{k,i}$ is induced by the map
\begin{equation}
\begin{split}
\mathcal{L}_{k+1}&\rightarrow\mathcal{L}_k \\
(c_0,\cdots,c_k)&\mapsto(c_0,\cdots,c_{i-1},c_i\ast c_{i+1},c_{i+2},\cdots,c_k). \nonumber
\end{split}
\end{equation}
Recall the cosimplicial identities
\begin{equation}\label{eq:cod}
\delta_{k+1,j}\circ\delta_{k,i}=\delta_{k+1,i}\circ\delta_{k,j-1}\textrm{ for }0\leq i<j\leq k+1,
\end{equation}
\begin{equation}\label{eq:cos}
\sigma_{k-1,j}\circ\sigma_{k,i}=\sigma_{k-1,i}\circ\sigma_{k,j+1}\textrm{ for }0\leq i\leq j\leq k-1,
\end{equation}
\begin{equation}\label{eq:sd}
\sigma_{k,j}\circ\delta_{k+1,i}=\left\{\begin{array}{ll}
\delta_{k,i}\circ\sigma_{k-1,j-1} & 0\leq i<j\leq k, \\ \mathrm{id}_C & i=j,j+1, \\ \delta_{k,i-1}\circ\sigma_{k-1,j} & k+1\geq i>j+1\geq1,
\end{array}\right.
\end{equation}
and the cocyclic identities
\begin{equation}\label{eq:td}
\tau_k\circ\delta_{k,i}=\left\{\begin{array}{ll}
\delta_{k,k} & i=0, \\ \delta_{k,i-1}\circ\tau_{k-1} & 1\leq i\leq k,
\end{array}\right.
\end{equation}
\begin{equation}\label{eq:cocy2}
\tau_k\circ\sigma_{k,i}=\left\{\begin{array}{ll}
\sigma_{k,k}\circ\tau_{k+1}^2 & i=0, \\ \sigma_{k,i-1}\circ\tau_{k+1} & 1\leq i\leq k.
\end{array}\right.
\end{equation}

A chain $x\in C_\ast(k)$ is \textit{normalized} if $\sigma_{k-1,i}(x)=0$ for all $0\leq i\leq k-1$. The normalized chains form a subcomplex $C_\ast^\mathrm{nm}\subset C_\ast$. It follows from \cite{kia}, Lemma 2.5 that the natural inclusion $C_\ast^\mathrm{nm}\hookrightarrow C_\ast$ is a quasi-isomorphism. On the other hand, a chain $x\in C_\ast(k)$ is \textit{degenerate} if there exist $1\leq i\leq k$ and $y\in C_\ast(k-1)$ such that $x=\delta_{k,i}(y)$. Degenerate chains also form a subcomplex $D_\ast\subset C_\ast$, and the quotient $C_\ast^\mathrm{nd}:=C_\ast/D_\ast$ is called the complex of non-degenerate de Rham chains. It follows from the Dold-Kan correspondence that the composition
\begin{equation}
C_\ast^\mathrm{nm}\hookrightarrow C_\ast\twoheadrightarrow C_\ast^\mathrm{nd} \nonumber
\end{equation}
is a quasi-isomorphism, see \cite{yla}, Lemma 18. It follows that there is a well-defined BV operator on $C_\ast^\mathrm{nd}$, which we still denote by $\delta_\mathrm{cyc}$ by abuse of notations, such that $(C_\ast^\mathrm{nd},\partial^\mathrm{tot},\delta_\mathrm{cyc})$ is a strict $S^1$-complex (cf. \cite{sg1}, Definition 2.1). In particular, $\delta_\mathrm{cyc}^2=0$. See \cite{yla}, Lemma 21. 

Associated to the strict $S^1$-complex $(C_\ast^\mathrm{nd},\partial^\mathrm{tot},\delta_\mathrm{cyc})$ is an $S^1$-equivariant chain complex
\begin{equation}\label{eq:cpx-S1}
C_\ast^{S^1}:=\left(C_\ast^\mathrm{nd}\otimes_\mathbb{R}\mathbb{R}(\!(h)\!)/h\mathbb{R}[\![h]\!],\partial^{S^1}:=\partial^\mathrm{tot}+h\delta_\mathrm{cyc}\right),
\end{equation}
where the formal variable $h$ has degree $-2$. There is an obvious decomposition $C_\ast^{S^1}=\bigoplus_{a\in H_1(L;\mathbb{Z})}\prod_{k=0}^\infty C_\ast^{S^1}(a,k)$.

\begin{proposition}
There is an isomorphism
\begin{equation}\label{eq:eq-sing}
H_\ast(C_\ast^{S^1})\cong H_\ast^{S^1}(\mathcal{L}L;\mathbb{R}).
\end{equation}
\end{proposition}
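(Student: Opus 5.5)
The plan is to reduce the statement to two standard facts. The first is that a quasi-isomorphism of strict $S^1$-complexes induces an isomorphism on the associated $S^1$-equivariant homologies. The second is that the singular chains on a space with a circle action form a strict $S^1$-complex whose equivariant homology, formed with $\mathbb{R}(\!(h)\!)/h\mathbb{R}[\![h]\!]$, computes the Borel homology $H_\ast^{S^1}$.

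The steps, in order, are as follows.

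First, I will compare $(C_\ast^\mathrm{nd},\partial^\mathrm{tot},\delta_\mathrm{cyc})$ with the normalized cyclic chain model. The de Rham chains $\{C_\ast(k)\}$ form a cocyclic chain complex, by the cocyclic identities (\ref{eq:td}) and (\ref{eq:cocy2}). The non-degenerate quotient of its totalization, equipped with the Connes-type operator $\delta_\mathrm{cyc}$, is a model for the mixed complex of a cocyclic object. By the Jones/Goodwillie-type theorem for cyclic objects (in the form used in \cite{kia}, and in \cite{yla}, Lemma 21 for this de Rham model), its $S^1$-equivariant homology is the Borel equivariant homology of the geometric realization. Here the relevant realization is $\mathcal{L}L$ with the rotation action.

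Concretely, I will build a zigzag of strict $S^1$-morphisms. On one side sits $C_\ast^\mathrm{nd}$. On the other sits the normalized singular cyclic chain complex of the cocyclic space $k\mapsto \mathcal{L}_{k+1}$, or of the Moore-loop model with marked points. At the end of the zigzag sits $C_\ast^\mathrm{sing}(\mathcal{L}L)$, with its BV operator induced by the $S^1$-action. The isomorphism (\ref{eq:sing}) of \cite{kic,ywt} shows that each map in the zigzag is a non-equivariant quasi-isomorphism. The main thing to verify is that these maps intertwine $\delta_\mathrm{cyc}$ with the singular BV operator, at least up to the higher homotopies permitted for $S^1$-morphisms. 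I would do this by checking compatibility with the cyclic operators $\tau_k$ and the degeneracies $\sigma_{k,i}$.

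Second, I will apply the comparison lemma. The filtration by powers of $h^{-1}$ on $C_\ast\otimes\mathbb{R}(\!(h)\!)/h\mathbb{R}[\![h]\!]$ is exhaustive and bounded below in the $h$-direction. Its associated graded pieces are copies of the non-equivariant complex. A spectral sequence argument therefore shows that any $S^1$-quasi-isomorphism induces an isomorphism on $H_\ast$ of the equivariant complexes. For the singular model, identifying the result with $H_\ast(ES^1\times_{S^1}\mathcal{L}L)$ is the standard result. The $h$-adic completion issues do not arise, because $\mathbb{R}(\!(h)\!)/h\mathbb{R}[\![h]\!]$ consists of finite polynomials in $h^{-1}$.

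The main obstacle will be the first step: producing an honest $S^1$-equivariant comparison between the de Rham model on Wang's spaces $\mathcal{L}_{k+1}$ and singular chains on $\mathcal{L}L$, since (\ref{eq:sing}) is only known non-equivariantly. My first attempt will be to pass through the cyclic bar construction. I would first show that $\delta_\mathrm{cyc}$ on normalized chains agrees with Connes' $B$ for the cocyclic structure. I would then invoke the general fact that the totalization of a cocyclic chain complex, equipped with $B$, models the $S^1$-equivariant chains of the realization, which is Jones' theorem in its cocyclic form. If needed, I would work componentwise in $a\in H_1(L;\mathbb{Z})$, which is compatible with all the structure maps.
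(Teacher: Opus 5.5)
Your outline follows the paper's argument. The paper proves the proposition by combining Wang's $S^1$-equivariant comparison (\cite{ywa}, Proposition 4.8) with the non-equivariant de Rham--singular isomorphism for Wang's model (\cite{ywt}, Theorem 2.2.1), which is the same split you propose: an equivariant upgrade of (\ref{eq:sing}), plus invariance of the $h^{-1}$-filtered equivariant complex under $S^1$-quasi-isomorphisms. One correction of attribution: the step you flag as the main obstacle is the one the paper takes from \cite{ywa}, whereas \cite{yla}, Lemma 21 only gives the strict $S^1$-complex structure on $C_\ast^{\mathrm{nd}}$ ($\delta_{\mathrm{cyc}}^2=0$), not a Jones-type identification with Borel homology.
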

\begin{proof}
This is a consequence of \cite{ywa}, Proposition 4.8, combined with \cite{ywt}, Theorem 2.2.1.
\end{proof}

The action filtration (\ref{eq:Xi}) induces a similar action filtration on $C_\ast^{S^1}$, and we denote the corresponding completion by $\widehat{C}_\ast^{S^1}$. The homology $H_\ast(\widehat{C}_\ast^{S^1})$ gives the vector space $\widehat{\mathbb{H}}_\ast^{S^1}$ defined by (\ref{eq:H-c}).

We now define the chain level string bracket, which is expected to be a refinement of the string bracket introduced by Chas-Sullivan \cite{css}. For two $S^1$-equivariant chains $\tilde{x}=\sum_{d=0}^\infty x_d\otimes h^{-d}\in C_\ast^{S^1}$ and $\tilde{y}=\sum_{d=0}^\infty y_d\otimes h^{-d}\in C_\ast^{S^1}$, define
\begin{equation}\label{eq:bracket-pr}
	\begin{split}
	\left\{\tilde{x},\tilde{y}\right\}(a,k)&:=\sum_{\substack{a_1+a_2=a\\k_1+k_2=k+1}}\sum_{d=0}^\infty\sum_{d_1+d_2=d}\sum_{i=1}^{k_1}\sum_{j=1}^{k_2+1}(-1)^{\maltese_{ij}^{d_2}}x_{d_1}(a_1,k_1) \\
	&\circ_i\left(\tau_{k_2+1}^j(y_{d_2}(a_2,k_2+1))\circ_{k_2+2-j}e_L\right)\otimes h^{-d} \\
	&-\sum_{\substack{a_1+a_2=a\\k_1+k_2=k+1}}\sum_{d=0}^\infty\sum_{d_1+d_2=d}\sum_{i=1}^{k_1}\sum_{j=1}^{k_1+1}(-1)^{\maltese_{ij}^{d_1}+(|x_{d_2}|+1)(|y_{d_1}|+1)} \\
	&\left(\tau_{k_1+1}^j(y_{d_1}(a_1,k_1+1))\circ_{k_1+2-j}e_L\right)\circ_ix_{d_2}(a_2,k_2)\otimes h^{-d},
	\end{split}
\end{equation}
where
\begin{equation}
\maltese_{ij}^d=(i-1)(k_2-1)+(k_1-1)(|y_d|+k_2)+|y_d|+k_2(j-1). \nonumber
\end{equation}
This gives a bilinear operation on $C_\ast^{S^1}$, which is in general \textit{not} a Lie bracket. However, $\{\cdot,\cdot\}$ becomes an odd Lie bracket after passing to \textit{Connes' complex}, which is the quotient
\begin{equation}\label{eq:Connes}
C_\ast^\lambda:=C_\ast^\mathrm{nd}/\mathrm{im}(1-t),
\end{equation}
where $t:C_\ast^\mathrm{nd}\rightarrow C_\ast^\mathrm{nd}$ is the cyclic operator defined by $t_k(x):=(-1)^k\tau_k(x)$ for $x\in C_\ast(k)$. Using the cocyclic identities (\ref{eq:td}) and (\ref{eq:cocy2}), one can check that the differential $\partial^\mathrm{tot}$ on $C_\ast^\mathrm{nd}$ descends to one on $C_\ast^\lambda$, which we will still denote by $\partial^\mathrm{tot}$. It follows from \cite{yla}, Lemma 23 that the natural projection
\begin{equation}\label{eq:proj}
\left(C_\ast^\mathrm{nd}\otimes_\mathbb{R}\mathbb{R}(\!(h)\!)/u\mathbb{R}[\![h]\!],\partial^\mathrm{tot}+h\delta_\mathrm{cyc}\right)\rightarrow(C_\ast^\lambda,\partial^\mathrm{tot})
\end{equation}
is a quasi-isomorphism. For a cochain $\tilde{x}\in C_\ast^{S^1}$, denote its image under the projection (\ref{eq:proj}) by $\underline{x}\in C_\ast^\lambda$. Then the string bracket is defined as
\begin{equation}
\begin{split}
\{\cdot,\cdot\}:C_i^\lambda\otimes C_j^\lambda&\rightarrow C_{i+j+1}^\lambda, \\
\underline{x}\otimes\underline{y}&\mapsto\underline{\left\{\tilde{x},\tilde{y}\right\}}. \nonumber
\end{split}
\end{equation}
This is independent of the choices of the lifts $\tilde{x}$ and $\tilde{y}$ of $\underline{x}$ and $\underline{y}$.

\begin{proposition}
$\left(C_\ast^\lambda,\partial^\mathrm{tot},\{\cdot,\cdot\}\right)$ is a dg Lie algebra of degree $1$. In particular, for $\underline{x},\underline{y},\underline{z}\in C_\ast^\lambda$, we have the Jacobi identity
\begin{equation}\label{eq:Jacobi}
\left\{\underline{x},\{\underline{y},\underline{z}\}\right\}=\left\{\{\underline{x},\underline{y}\},\underline{z}\right\}+(-1)^{(|\underline{x}|+1)(|\underline{y}|+1)}\left\{\underline{y},\{\underline{x},\underline{z}\}\right\},
\end{equation}
and the odd Lie bracket is graded anti-symmetric in the sense that
\begin{equation}\label{eq:anti-sym}
\{\underline{x},\underline{y}\}=(-1)^{(|\underline{x}|+1)(|\underline{y}|+1)+1}\{\underline{y},\underline{x}\}.
\end{equation}
\end{proposition}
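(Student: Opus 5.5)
The plan is to show first that the bracket descends to $C_\ast^\lambda$, and then to check the three dg Lie axioms there: anti-symmetry, the Leibniz rule for $\partial^\mathrm{tot}$, and the Jacobi identity.

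\textbf{Descent and the necklace form.} By construction $\{\tilde{x},\tilde{y}\}$ inserts a cyclically rotated, $e_L$-capped copy of $\tilde{y}$ into an input of $\tilde{x}$, minus the symmetric term. Write $N:=\sum_{j}t^{j}$ for the norm operator. Up to the signs $\maltese_{ij}^{d}$, the operation $\tau^j_{k_2+1}(\cdot)\circ_{k_2+2-j}e_L$ summed over $j$ is $\sigma\circ N$, and this is exactly the summand defining $\delta_\mathrm{cyc}$ in (\ref{eq:BV-c}).

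First I check that the bracket is independent of the lift. By \cite{yla}, Lemma 23 the projection (\ref{eq:proj}) is a quasi-isomorphism. Its kernel is generated by $\mathrm{im}(1-t)$ in $h^0$ together with the positive $h$-powers. The terms carrying $h^{-d}$ with $d\geq1$ die after projection, so modulo $\mathrm{im}(1-t)$ only $x_0$ and $y_0$ contribute. Moreover $N(1-t)=0$, so replacing $y_0$ by $y_0+(1-t)z$ changes nothing. For $x_0\mapsto x_0+(1-t)z$, I use the cocyclic identities (\ref{eq:td}) and (\ref{eq:cocy2}) to move $t$ through $\circ_i$. The effect is to shift $i\mapsto i\pm1$, so the sum over $i$ changes by an element of $\mathrm{im}(1-t)$.

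After this step the bracket on $C^\lambda$ is the classical Gerstenhaber/necklace bracket of a cyclic operad (as in Ward's cyclic Deligne picture, cf.\ \cite{ywa}):
\[
\{\underline{x},\underline{y}\}=\sum_{i,j}\pm\, t^{\ast}\bigl(x\circ_i t^j y\bigr)\ \mp\ (x\leftrightarrow y),
\]
where the $e_L$-capping converts the extra marked point into a cyclic slot.

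\textbf{Anti-symmetry and Leibniz.} Anti-symmetry (\ref{eq:anti-sym}) is then immediate from the explicit second term, which is the first term with $x,y$ swapped and carries the sign $(|x|+1)(|y|+1)$; here one must reconcile $\maltese^{d_1}_{ij}$ with $\maltese^{d_2}_{ij}$. The Leibniz rule for $\partial^\mathrm{tot}$ has two parts:
\begin{itemize}
\item For the de Rham differential $\partial$, it follows because $\circ_j$ is a chain map and $e_L$, $\mu_L$ are closed.
\item For the cosimplicial part $\sum(-1)^i\delta_{k,i}$, I use the operadic associativity of $\circ_j$ together with the unit relations $\mu_L\circ_1e_L=\mu_L\circ_2e_L=\mathrm{id}$. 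The face terms from the insertion point cancel pairwise, and the boundary faces $\delta_{k,0},\delta_{k,k}$ are matched by cyclic invariance.
\end{itemize}
The fact that $\partial^\mathrm{tot}$ descends to $C^\lambda$ is already recorded before the statement.

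\textbf{Jacobi.} For (\ref{eq:Jacobi}) I expand $\{x,\{y,z\}\}$ into composites of three cyclically rotated chains, grouped by tree shape. In \emph{nested} terms, $z$ is inserted into $y$, which is inserted into $x$; these appear identically in $\{\{x,y\},z\}$ by associativity of the operad composition. In \emph{parallel} terms, $y$ and $z$ are inserted into distinct inputs of $x$; these cancel against the corresponding terms of $(-1)^{(|x|+1)(|y|+1)}\{y,\{x,z\}\}$ by the commutation relation $(a\circ_i b)\circ_{j'} c=\pm(a\circ_j c)\circ_{i'}b$ for $i<j$. Working modulo $\mathrm{im}(1-t)$ is what makes the "inserting into the root/output" terms equal to ordinary insertions, so that every term is of one of the two types.

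I expect the main obstacle to be the sign bookkeeping with the $\maltese$ signs and the Koszul signs from the fiber products (\ref{eq:fp}) and the orientation conventions. To organize it, I would use the standard desuspension trick: shift the grading so that the odd bracket becomes an ordinary graded Lie bracket built from a pre-Lie operation $x\star y=\sum\pm x\circ_i N(y)$. Jacobi then follows from the pre-Lie identity, whose associator is symmetric in the last two slots, and this symmetry is just the parallel-insertion commutation. What remains is to verify the pre-Lie identity on de Rham chains, following \cite{yla}, Section 3 and \cite{kia}.
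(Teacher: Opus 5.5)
There is a genuine gap, and it sits exactly where the content of the proposition lies. In (\ref{eq:bracket-pr}) the two terms are \emph{not} mirror images of each other. In both terms it is $y$ that is rotated and capped with $e_L$, while $x$ enters uncapped. Schematically, $\{\tilde{x},\tilde{y}\}=\sum_i x\circ_i\hat{y}\mp\sum_i\hat{y}\circ_i x$ with $\hat{y}=\sum_j\pm\tau^j(y)\circ_{k+2-j}e_L$, whereas swapping $x$ and $y$ in the first term gives $\sum_i y\circ_i\hat{x}$. In particular, $\{x,y\}$ forgets a marked point of $y$ and $\{y,x\}$ forgets one of $x$, so (\ref{eq:anti-sym}) cannot hold termwise. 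This is why the paper stresses that $\{\cdot,\cdot\}$ is not a Lie bracket on $C^{S^1}_\ast$ and only becomes one on $C^\lambda_\ast$.

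Your proposal skips this step. You assert that after descent the bracket ``is the classical necklace bracket'', with $y$ uncapped and the second term the swap of the first, but you give no computation. The remark that the capping ``converts the extra marked point into a cyclic slot'' does not supply one. So your anti-symmetry is not ``immediate'', and your Jacobi argument (nested versus parallel insertions, pre-Lie associator) is an argument for a different, manifestly symmetric bracket. For the actual bracket you would need identities relating $\delta_{\mathrm{cyc}}$ to the partial compositions $\circ_i$ modulo $\mathrm{im}(1-t)$. Examples are a BV-type compatibility, $\delta_{\mathrm{cyc}}^2=0$, and $\partial^{\mathrm{tot}}\delta_{\mathrm{cyc}}+\delta_{\mathrm{cyc}}\partial^{\mathrm{tot}}=0$ on $C^{\mathrm{nd}}_\ast$; the last is also needed in your Leibniz check, since $\partial^{\mathrm{tot}}$ must pass through the capping of $y$. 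None of these appears in your plan.

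Your descent step also rests on a misidentification. Because the capping position $k+2-j$ moves with $j$, the cocyclic identity (\ref{eq:cocy2}) gives, up to sign,
\[
\tau_{k+1}^j(y)\circ_{k+2-j}e_L=\sigma_{k,k+1-j}\,\tau_{k+1}^j(y)=\tau_k^{j-1}\,\sigma_{k,k}\,\tau_{k+1}(y).
\]
So the operation is $N\circ s$, where $s=\sigma_{k,k}\tau_{k+1}$ is the extra codegeneracy and the norm acts on the \emph{output}; it is not $\sigma\circ N$. Hence $N(1-t)=0$ only tells you that $\hat{y}$ is cyclically invariant. It does not show that replacing $y_0$ by $y_0+(1-t)z$ leaves the bracket unchanged in $C^\lambda_\ast$, and it also breaks your reduction to a pre-Lie product $x\star y=\sum_i x\circ_i N(y)$.

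The paper does not attempt any of this directly. It observes that after projecting to $C^\lambda_\ast$ the bracket (\ref{eq:bracket-pr}) coincides with the chain-level string bracket of \cite{yla}, (3.70), and then invokes \cite{yla}, Lemma 24, where the dg Lie identities are proved. A self-contained proof along your lines would have to reproduce that identification and the verification in \cite{yla}, not the symmetric necklace computation.
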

\begin{proof}
Our definition of $\{\cdot,\cdot\}$ in (\ref{eq:bracket-pr}) above is a slight modification of \cite{yla}, (3.70). It is an easy observation that after passing to the quotient complex $C_\ast^\lambda$, it coincides with the definition of the chain level string bracket in \cite{yla}. Thus the proposition follows from \cite{yla}, Lemma 24.
\end{proof}

Since the odd Lie bracket $\{\cdot,\cdot\}$ is compatible with the decomposition of $C_\ast^\lambda$ according to $(a,k)\in H_1(L;\mathbb{Z})\times\mathbb{Z}_{\geq0}$, it naturally extends to the completion $\widehat{C}_\ast^\lambda$ of $C_\ast^\lambda$ with respect to the action filtration induced from the one on $C_\ast^\mathrm{nd}$ defined by (\ref{eq:Xi}), and equips it with the structure of a dg Lie algebra $\left(\widehat{C}_\ast^\lambda,\partial^\mathrm{tot},\{\cdot,\cdot\}\right)$ of degree $1$.

There is a parallel story in the relative case, where one can construct a dg Lie algebra of degree $1$ on the Connes' complex $\overline{C}_\ast^\lambda=\overline{C}_\ast^\mathrm{nd}/\mathrm{im}(1-\bar{t})$ and its completion. The complexes $C_\ast^\lambda$ and $\overline{C}_\ast^\lambda$ are related by the chain maps
\begin{equation}\label{eq:ie}
\underline{i}:C_\ast^\lambda\rightarrow\overline{C}_\ast^\lambda,\textrm{ }\underline{e}_\pm:\overline{C}_\ast^\lambda\rightarrow C_\ast^\lambda
\end{equation}
induced by (\ref{eq:i}), (\ref{eq:e+}) and (\ref{eq:e-}), respectively, which satisfy $\underline{e}_+\circ\underline{i}=\underline{e}_-\circ\underline{i}=\mathrm{id}_C$, where $\mathrm{id}_C$ is the identity map on $C_\ast^\lambda$, and are compatible with the odd Lie brackets. The chain maps (\ref{eq:ie}) on Connes' complexes can also be lifted to maps
\begin{equation}
\tilde{i}:C_\ast^{S^1}\rightarrow\overline{C}_\ast^{S^1},\textrm{ }\tilde{e}_\pm:\overline{C}_\ast^{S^1}\rightarrow C_\ast^{S^1} \nonumber
\end{equation}
on the corresponding $S^1$-equivariant complexes such that $\tilde{e}_+\circ\tilde{i}=\tilde{e}_-\circ\tilde{i}=\mathrm{id}_C$. Here, $\mathrm{id}_C$ denotes the identity map on $C_\ast^{S^1}$. We refer to \cite{yla}, Section 3.2 for details.

\section{Proof of Theorem \ref{theorem:deform-Viterbo}}\label{section:proof}

This section is devoted to the proof of Theorem \ref{theorem:deform-Viterbo}. Using the chain model introduced in Section \ref{section:de Rham}, we first write down its chain level statement (Theorem \ref{theorem:chain}) in Section \ref{section:chain-s}. Then, we define the required chains by pushing forward the virtual fundamental chains of the moduli spaces introduced in Section \ref{section:CG}. This is done in Section \ref{section:chain-d}. Finally, we analyze the boundary strata of these moduli spaces to prove Theorem \ref{theorem:chain} in Section \ref{section:completion}.

\subsection{Chain level statements}\label{section:chain-s}

Let $L\subset\mathrm{int}(X)$ be a closed Lagrangian submanifold in the interior of a Liouville domain $X$ with $c_1(X)=0$ that is oriented and $\mathit{Spin}$. Recall that given this data, we have constructed a dg Lie algebra $\left(\widehat{C}_\ast^\lambda,\partial^\mathrm{tot},\{\cdot,\cdot\}\right)$ of degree $1$ in Section \ref{section:de Rham}, where $C_\ast^\lambda$ is a quotient complex of $C_\ast^{S^1}$, and the latter complex carries a chain map
\begin{equation}\label{eq:B}
\begin{split}
B:C_\ast^{S^1}(a,k+1)&\rightarrow C_{\ast+1}^\mathrm{nd}(a,k),\\
B(\tilde{x})(a,k)&:=\sum_{j=1}^{k+1}(-1)^{|\tilde{x}|+k(j-1)}\tau_{k+1}^j(x_0(a,k+1))\circ_{k+2-j}e_L,
\end{split}
\end{equation}
which induces the marking map $H_\ast^{S^1}(\mathcal{L}L;\mathbb{R})\rightarrow H_{\ast+1}(\mathcal{L}L;\mathbb{R})$ in string topology \cite{css} after passing to the homology (of the total complex) and composing with the isomorphisms (\ref{eq:sing}) and (\ref{eq:eq-sing}).

The aim of this subsection is to show that Theorem \ref{theorem:deform-Viterbo} is a consequence of the following theorem.

\begin{theorem}\label{theorem:chain}
Under the assumption that $C_d^\mathrm{GH}(X)<\infty$, there exist chains $\underline{x}\in\widehat{C}_{-2}^\lambda$, $\underline{y}\in\widehat{C}_{2d}^\lambda$, $\underline{z}\in\widehat{C}_{2d-1}^\lambda$ and a real number $\varepsilon\in\mathbb{R}_{>0}$ such that the following are satisfied.
\begin{itemize}
	\item[(i)] $\partial^\mathrm{tot}(\underline{x})-\frac{1}{2}\left\{\underline{x},\underline{x}\right\}=0$.
	\item[(ii)] $\partial^\mathrm{tot}(\underline{y})-\left\{\underline{x},\underline{y}\right\}=\underline{z}$.
	\item[(iii)] $\underline{x}(a,k)\neq0$ only if $\lambda(a)\geq2\varepsilon$ or $a=0$, $k\geq3$. Moreover, the chain $\underline{x}(0,3)$ admits a lift $\tilde{x}(0,3)\in C_{-2}^{S^1}(0,3)$, such that under the isomorphism (\ref{eq:sing}), the homology class of $x(0,2):=B(\tilde{x}(0,3))\in C_n^\mathrm{dR}(\mathcal{L}_3(0))$, regarded as a cycle in the total complex $C_n^\mathrm{dR}(0)$, gives $(-1)^{n+1}[L]\in H_n(\mathcal{L}(0)L;\mathbb{R})$, where $[L]$ is the cycle of constant loops.
	\item[(iv)] $\underline{z}(a,k)\neq0$ only if $\lambda(a)\geq2\varepsilon$ or $a=0$. Moreover, the chain $\underline{z}(0,0)$ lifts to a cycle $\tilde{z}(0,0)\in C_{2d-1}^{S^1}(\mathcal{L}_1(0)L)$, whose homology class $[\tilde{z}(0,0)]$ in the total complex $C_{2d-1}^{S^1}(0)$ becomes $(-1)^{n+1}[\![L]\!]\otimes h^{-d+1}$ under the isomorphism $H_\ast\left(C_{2d-1}^{S^1}(0),\partial^{S^1}\right)\cong H_{n+2d-2}^{S^1}\left(\mathcal{L}(0)L;\mathbb{R}\right)$.
	\item[(v)] $\underline{y}(a,k)\neq0$ only if $\lambda(a)\geq-C_d^\mathrm{GH}(X)$.
\end{itemize}
\end{theorem}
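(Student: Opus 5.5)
The plan is to construct the three chains by pushing forward virtual fundamental chains of the moduli spaces of Section \ref{section:CG} along the evaluation maps into Wang's spaces $\mathcal{L}_{k+1}$. The relations (i)--(ii) should then follow from the boundary descriptions (\ref{eq:bd1}), (\ref{eq:bd2}) by Stokes' formula for de Rham chains.

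\textbf{Construction of $\underline{x}$.} For each $\bar{a}\in\pi_2(X,L)$ and $k$, choose CF-perturbations on $\overline{\mathcal{R}}_{k+1,\vartheta}(L,\bar{a};\{m\})$ compatible with the corner identifications (\ref{eq:corner1}), as in \cite{kic}. Push forward along $\mathrm{ev}_\vartheta^{\mathcal{R}}$, composed with the map to $\mathcal{L}_{k+1}(a)$ recording the boundary arcs between consecutive marked points. This defines $x_0(a,k)$, hence $\tilde{x}=x_0\otimes h^0$ and its image $\underline{x}$ in $C^\lambda_\ast$. The degree is $-2$ by the dimension formula $\mu(a)+n+k-3$ and the shift in $C_\ast(a,k)$.

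The Maurer--Cartan equation (i) comes from (\ref{eq:bd1}). Each disc-bubbling stratum ${}_i\times_0$ becomes a term $x\circ_i(\tau^j x\circ e_L)$ in the bracket (\ref{eq:bracket-pr}), because $\bigsqcup_i\pi_{\vartheta,i}$ covers $\mathcal{R}_k$ up to codimension one (\cite{yla}, Lemma 38), so the forgotten point $z_k$ reproduces $\delta_{\mathrm{cyc}}$. The cosimplicial boundary terms in $\partial^{\mathrm{tot}}$ are produced by degenerate configurations and vanish in $C^{\mathrm{nd}}$. To get genuine identities, and not merely ones up to homotopy, the perturbations are constructed inductively over the energy levels $\lambda(a)<(m-k+1)\varepsilon$. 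The resulting chains are then interpolated using the $P=[m,m+1]$ spaces in the relative complex $\overline{C}^\lambda$, and passed to the limit $m\to\infty$ via $\underline{e}_\pm$ and $\underline{i}$, exactly as in \cite{kic,yla}.

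Property (iii) holds because nonconstant discs have $\lambda(a)\geq2\varepsilon$. Constant discs exist only for $k\geq3$, since $\mathcal{R}_{2,\vartheta}(L,0)=\mathcal{R}_{3,\vartheta}(L,0)=\emptyset$. For $(a,k)=(0,3)$ the moduli space is $L\times\mathcal{R}_{4,\vartheta}$, i.e.\ $L$ times a point, and $B$ of it is $\pm$ the constant-loop cycle $[L]$.

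\textbf{Construction of $\underline{y}$ and $\underline{z}$.} Fix $\tilde{y}^X=\sum_j y^X_j u^{-j}\in F^{\leq C_d^{\mathrm{GH}}+\nu}\mathit{SC}^{1-2d}_{S^1}(X)$ with $\partial^{S^1}\tilde{y}^X=e_X\otimes u^{-d+1}$. Define a chain map $\mathit{SC}^\ast_{S^1}(X)\to\widehat{C}^{S^1}$ by
\[
\tilde{\mathcal{CO}}(\gamma\,u^{-p})=\sum_{l}\bigl(\text{push-forward of }{}_l\overline{\mathcal{R}}^1_{k+1}(\gamma,L,\ring{\bar a})\bigr)\otimes h^{-p+l}.
\]
Here $l$ auxiliary points model $u^l$ (Cohen--Ganatra), and the $S^1$-strata ${}_{l-1}\overline{\mathcal{R}}^{S^1}$ are decomposed into sectors via the auxiliary-rescaling maps $\phi_f^i$ (\ref{eq:aux-res}), producing the $h\delta_{\mathrm{cyc}}$ terms. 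Set $\underline{y}:=\underline{\tilde{\mathcal{CO}}(\tilde{y}^X)}$ and $\underline{z}:=\underline{\tilde{\mathcal{CO}}(e_X\otimes u^{-d+1})}$.

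Then (ii) follows from (\ref{eq:bd2}) and (\ref{eq:bd5}):
\begin{itemize}
\item the cylinder breakings give $\tilde{\mathcal{CO}}\circ\partial^{S^1}$;
\item the strata ${}_j^{j,j+1}$ cancel by forgetful invariance;
\item the disc bubbles on either side give $\{\underline{x},\underline{y}\}$;
\item the remaining strata give $\partial^{\mathrm{tot}}\underline{y}$.
\end{itemize}
The degree $2d$ comes from (\ref{eq:dim}), using $\mathit{CZ}$ grading conventions.

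Property (v) follows from Lemma \ref{lemma:action}, letting $\nu\to0$. For (iv), the low-energy part of $\underline{z}$ counts caps asymptotic to the minimum representing $e_X$. With $a=0$ and $k=0$, it is the constant-loop chain of $L$, since the PSS-type count is the unit. It lies in the $h^{-d+1}$ slot, giving $(-1)^{n+1}[\![L]\!]\otimes h^{-d+1}$ after tracking orientations (\cite{yla}, Appendix B).

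\textbf{Main obstacle.} The hardest step is making the identities in (i)--(ii) hold strictly on the chain level while respecting the $\mathbb{Z}_{k+1}$-symmetry (Theorem \ref{theorem:moduli}(v)). This is needed so that the push-forwards descend to Connes' complex and the sector decomposition reproduces exactly $\delta_{\mathrm{cyc}}$. I would first try to choose $\mathbb{Z}_{k+1}$-invariant CF-perturbations inductively, averaging over the cyclic group, which is possible since we work over $\mathbb{R}$. Then I would use the $[m,m+1]$ homotopies together with the completeness of $\widehat{C}^\lambda$ to pass to the limit.
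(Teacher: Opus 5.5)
Your outline follows the paper's skeleton: push-forwards of $\overline{\mathcal{R}}_{k+1,\vartheta}(L,\bar a;\{m\})$ for $\underline{x}$, push-forwards of Cohen--Ganatra spaces for $\underline{y},\underline{z}$, Stokes' formula on the boundary identifications, and finite-energy chains interpolated by the $P=[m,m+1]$ spaces. However, the final limiting step is justified incorrectly.

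The corrected chains of Lemma \ref{lemma:induction} converge for the filtration $\mathcal{F}^m=\bigoplus_{\lambda(a)\geq\varepsilon(m+1-k)}C(a,k)$. This filtration couples energy to the number $k$ of marked points, so it is not the action filtration $F^\Xi$ that defines $\widehat{C}^\lambda_\ast$. The $\mathcal{F}^\bullet$-completion is a priori larger: it allows infinitely many classes $a$ with $\lambda(a)<\Xi$, provided the $k$ at which they occur tends to infinity. So completeness of $\widehat{C}^\lambda_\ast$ does not by itself place $\underline{x},\underline{y},\underline{z}$ in $\widehat{C}^\lambda_\ast$, as the statement requires.

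What is missing is the finiteness input: Theorem \ref{theorem:finite-chain}(viii), propagated through Lemma \ref{lemma:induction}(x) and used in Proposition \ref{proposition:filt}. All classes that occur lie in the additive closures $\underline{A}_x^+$, $\underline{A}_{y,z}^+$ of the classes carried by the moduli spaces. Only finitely many of these satisfy $\lambda(a)<\Xi$. This follows from Gromov compactness for $\overline{\mathcal{R}}_{2,\vartheta}(L,\bar a)$ and for the Cohen--Ganatra spaces with one boundary marked point, combined with $\lambda(a)\geq2\varepsilon$ for nonzero disc classes and the lower bound of Lemma \ref{lemma:action}.

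Your construction of $\underline{y},\underline{z}$ is packaged differently from the paper's. Write $\Phi_l(\gamma)$ for the push-forward of ${}_l\overline{\mathcal{R}}^1_{k+1}(\gamma,L,\cdot)$. You apply the full $S^1$-equivariant map $\gamma u^{-p}\mapsto\sum_l\Phi_l(\gamma)\otimes h^{-p+l}$ to a primitive of $e_X\otimes u^{-d+1}$, so (ii) becomes a chain-map identity in every power of $h$.

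The paper instead writes the primitive as $\tilde\beta=\sum_l\beta_l\otimes u^{-d-l+1}$. It pairs each $\beta_l$ with exactly $l$ auxiliary points in the $h^{-d+1}$-slot, and with $l-1$ points in the $h^{-d}$-slot, the latter only to absorb the $S^1$-strata via (\ref{eq:id3}); see (\ref{eq:t3}). It checks only the $h^{-d+1}$-component, using (\ref{eq:id1}). There the vanishing of the other $u$-components of $\partial^{S^1}\tilde\beta$ appears as cancellation among cylinder-breaking strata with a fixed number of surviving auxiliary points.

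The paper's route needs only two families of moduli spaces per $\beta_l$. Yours needs up to $l+d-1$ auxiliary points, but it gives the relation in every $h$-component. In particular it gives it in the $h^0$-component, the only one retained by the projection (\ref{eq:proj}) to Connes' complex. For $d=1$ the two constructions coincide after projection. For $d\geq2$ the chain (\ref{eq:t3}) has no $h^0$-component, so your version is the one that visibly produces a degree-$2d$ element of $C^\lambda_\ast$ satisfying (ii).

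On your route, (iv) also needs one extra observation. The $(0,0)$-components of $\Phi_l(e_X)$ with $l\geq1$ vanish, because they are de Rham chains of degree $n+2l$ on the $n$-manifold $\mathcal{L}_1(0)$. Hence $\tilde z(0,0)$ is exactly the constant-loop term in the $h^{-d+1}$-slot, as you claim.
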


To show that Theorem \ref{theorem:chain} implies Theorem \ref{theorem:deform-Viterbo}, first note that Theorem \ref{theorem:chain}, (i) and (iii) imply that
\begin{equation}
\underline{x}(0):=\sum_{k=3}^\infty\underline{x}(0,k)\in\widehat{C}_{-2}^\lambda \nonumber
\end{equation}
is still a Maurer-Cartan element for the odd dg Lie algebra $\left(\widehat{C}_\ast^\lambda,\partial^\mathrm{tot},\{\cdot,\cdot\}\right)$, so we can use it to deform the differential $\partial^\mathrm{tot}$. Define
\begin{equation}
C_\ast^\lambda(a):=\prod_{k=0}^\infty C_\ast^\lambda(a,k), \nonumber
\end{equation}
which carries the deformed differential $\partial^\mathrm{tot}_{\underline{x}(0)}:C_\ast^\lambda(a)\rightarrow C_{\ast-1}^\lambda(a)$ defined as
\begin{equation}
\partial_{\underline{x}(0)}^\mathrm{tot}(\underline{w}):=\partial^\mathrm{tot}(\underline{w})-\left\{\underline{x}(0),\underline{w}\right\} \nonumber
\end{equation}
for any $\underline{w}\in C_\ast^\lambda(a)$. It follows from \cite{yla}, Lemma 30 that the deformation of the homology $H_\ast\left(C_\ast^\lambda(a),\partial^\mathrm{tot}\right)$ induced by the ``low energy" Maurer-Carten element $\underline{x}(0)$ is trivial, i.e. we have an isomorphism
\begin{equation}\label{eq:iso1}
H_\ast\left(C_\ast^\lambda(a),\partial^\mathrm{tot}_{\underline{x}(0)}\right)\cong H_{\ast+n+\mu(a)-1}^{S^1}(\mathcal{L}(a)L;\mathbb{R}).
\end{equation}
On the other hand, consider the ``high energy" part of the Maurer-Cartan element $\underline{x}^+:=\underline{x}-\underline{x}(0)$, direct computations yield the following identities
\begin{equation}\label{eq:+}
\partial^\mathrm{tot}_{\underline{x}(0)}(\underline{x}^+)-\frac{1}{2}\left\{\underline{x}^+,\underline{x}^+\right\}=0,\textrm{ }\partial^\mathrm{tot}_{\underline{x}(0)}(\underline{y})-\left\{\underline{x}^+,\underline{y}\right\}=\underline{z}.
\end{equation}

\begin{proposition}
Theorem \ref{theorem:chain} implies Theorem \ref{theorem:deform-Viterbo}.
\end{proposition}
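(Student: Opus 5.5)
The plan is to transfer the chain-level identities (\ref{eq:+}) to homology through the isomorphism (\ref{eq:iso1}), by homological perturbation. Concretely, I would apply the homotopy transfer theorem for dg Lie algebras to $\left(\widehat{C}_\ast^\lambda,\partial^\mathrm{tot}_{\underline{x}(0)},\{\cdot,\cdot\}\right)$, working one class $a\in H_1(L;\mathbb{Z})$ at a time. This $\underline{x}(0)$-twisted complex is again an odd dg Lie algebra, since $\underline{x}(0)$ is Maurer--Cartan. Over $\mathbb{R}$ I would choose, for each $a$, a deformation retract onto $H_\ast(C_\ast^\lambda(a),\partial^\mathrm{tot}_{\underline{x}(0)})$, and then apply (\ref{eq:iso1}) together with the quasi-isomorphism (\ref{eq:proj}) to identify this homology with $\mathbb{H}^{S^1}_\ast(a)$.

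The transfer produces an $L_\infty$-structure $(\ell_m)_{m\geq1}$ on $\mathbb{H}^{S^1}_\ast$ with $\ell_1=0$, which gives (i). It also produces an $L_\infty$ quasi-isomorphism $(p_m)$ to the chain level, built from trees whose vertices are brackets and whose edges are the homotopy. Because the bracket, the differential, and the retraction data all respect the decomposition by $a$, every $\ell_m$ is additive in $H_1$-classes. It is therefore filtration-preserving and extends to the completion, which gives (ii).

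Next I would push the Maurer--Cartan element $\underline{x}^+$ forward along the $L_\infty$ morphism, using the homotopy transfer formula $x=\sum_m \frac{1}{m!}p_m(\underline{x}^+,\dots,\underline{x}^+)$. This yields a class $x$ satisfying $\sum_{m\ge2}\frac{1}{m!}\ell_m(x,\dots,x)=0$. The sum converges because $\underline{x}^+$ lies in $F^{2\varepsilon-}$ by Theorem \ref{theorem:chain}(iii), so an $m$-fold term has action at least $2m\varepsilon$. This also gives $x\in F^\eta$ with $\eta$ just below $2\varepsilon$.

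For $y$, I would use the linearization of the transferred morphism at $\underline{x}^+$: the differential $\partial^\mathrm{tot}_{\underline{x}(0)}-\{\underline{x}^+,\cdot\}$ on the chain side corresponds to the twisted differential $w\mapsto\sum\frac{1}{(m-1)!}\ell_m(w,x,\dots,x)$ on homology. Set $y=\sum_m\frac{1}{(m-1)!}p_m(\underline{y},\underline{x}^+,\dots,\underline{x}^+)$. The second identity in (\ref{eq:+}) then shows that the twisted differential of $y$ equals the image of $\underline{z}$ under the linearized morphism. Since every $p_m$ preserves actions additively and $\underline{y}$ has action $\geq -C_d^\mathrm{GH}(X)$ by (v), the class $y$ lies in $F^{-C^\mathrm{GH}_d(X)-\nu}$, which gives (iii).

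Finally I would restrict to the $a=0$ component. By Theorem \ref{theorem:chain}(iv), $\underline{z}$ is supported in $a=0$ or at action $\geq2\varepsilon$. Any term $p_m(\underline{z},\underline{x}^+,\dots)$ with $m\geq2$ has $a$-component $a_0+\sum a_i$ with $\lambda(a_i)>0$. These terms can still contribute to $a=0$, which I expect to be the main obstacle. I would handle it by a grading argument. The component $\underline{z}(0)$ has degree $2d-1$, and its $\underline{x}(0)$-twisted homology class is computed by (iv) and Lemma~30 of \cite{yla} as $(-1)^{n+1}[\![L]\!]\otimes h^{-d+1}$. For the remaining contributions, I would check, possibly after enlarging the statement so that (iv) covers the whole $a=0$ component of the pushforward, that those landing in $a=0$ assemble into $p_1$ of the $a=0$ part of the identity $\partial(\underline{y})-\{\underline{x},\underline{y}\}=\underline{z}$. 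If that cannot be arranged, the fallback is to redefine $y$ so that the sum in (\ref{eq:def1}) restricted to $a=0$ equals $[\underline{z}(0)]$ by construction. The sign $(-1)^{n+1}$ comes directly from (iv).
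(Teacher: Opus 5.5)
Your overall route is the paper's: twist by $\underline{x}(0)$, apply homotopy transfer with $p_1=\pi$ respecting the $H_1(L;\mathbb{Z})$-decomposition, and push $\underline{x}^+$, $\underline{y}$, $\underline{z}$ forward through $p$. Parts (i)--(iii) and the Maurer--Cartan equation go through as you describe. (One slip: $p$ goes \emph{from} the chain level to $\mathbb{H}_\ast^{S^1}$, which is how you actually use it.)

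The gap is in the last step, which is exactly where (\ref{eq:def1}) has to be proved. You assert that the terms $p_m(\underline{z},\underline{x}^+,\dots,\underline{x}^+)$ with $m\geq2$ can contribute to the $a=0$ component. You then leave the equation unproved, offering only an unspecified grading argument, an enlargement of the hypotheses, or a redefinition of $y$. The last option is not legitimate. $y$ must be the transfer of $\underline{y}$, otherwise neither the identity $\sum_{m\geq2}\frac{1}{(m-1)!}\ell_m(y,x,\dots,x)=\underline{Z}$ nor the action bound in (iii) survives, so you cannot adjust $y$ to force the $a=0$ identity.

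The missing observation is an action count of the kind you already use for convergence. Since $p_m$ respects the decomposition, $p_m\left(\underline{z}(a_0),\underline{x}^+(a_1),\dots,\underline{x}^+(a_{m-1})\right)$ lies in the class $a_0+a_1+\cdots+a_{m-1}$.

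\begin{itemize}
\item By Theorem \ref{theorem:chain}(iv), $\underline{z}(a_0)\neq0$ forces $a_0=0$ or $\lambda(a_0)\geq2\varepsilon$, so $\lambda(a_0)\geq0$.
\item By Theorem \ref{theorem:chain}(iii), $\underline{x}^+(a_i)\neq0$ forces $\lambda(a_i)\geq2\varepsilon$.
\end{itemize}

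Hence for $m\geq2$ the output class has action at least $2(m-1)\varepsilon>0$ and cannot be $0$. You may be conflating $\underline{z}$ with $\underline{y}$: the support of $\underline{y}$ does reach negative action, and its higher terms do land in $a=0$, which is precisely what Section \ref{section:argument} exploits.

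Consequently $\underline{Z}(0)=p_1(\underline{z}(0))=\pi\left(\sum_k\underline{z}(0,k)\right)$. Choosing $\pi$ via (iv) so that this equals $(-1)^{n+1}[\![L]\!]\otimes h^{-d+1}$, and restricting the transferred identity to $a=0$, gives (\ref{eq:def1}).
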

\begin{proof}
The isomorphism (\ref{eq:iso1}) implies the existence of $\mathbb{R}$-linear maps
\begin{equation}
\iota:\mathbb{H}_\ast^{S^1}\rightarrow C_\ast^\lambda,\textrm{ }\pi:C_\ast^\lambda\rightarrow\mathbb{H}_\ast^{S^1}\textrm{ and }\kappa:C_\ast^\lambda\rightarrow C_{\ast+1}^\lambda, \nonumber
\end{equation}
such that
\begin{equation}
\partial^\mathrm{tot}_{\underline{x}(0)}\circ\iota=0,\textrm{ }\pi\circ\partial^\mathrm{tot}_{\underline{x}(0)}=0,\textrm{ }\pi\circ\iota=\mathrm{id}_\mathbb{H}, \nonumber
\end{equation}
where $\mathrm{id}_\mathbb{H}$ denotes the identity of $\mathbb{H}_\ast^{S^1}$, and
\begin{equation}
\kappa\circ\partial^\mathrm{tot}_{\underline{x}(0)}+\partial^\mathrm{tot}_{\underline{x}(0)}\circ\kappa=\mathrm{id}_C-\iota\circ\pi, \nonumber
\end{equation}
where $\mathrm{id}_C$ is the identity of $C_\ast^\lambda$. It is not hard to see that the maps $\iota$, $\pi$ and $\kappa$ extend to the completions $\widehat{\mathbb{H}}_\ast^{S^1}$ and $\widehat{C}_\ast^\lambda$ with respect to the action filtrations, and we shall use the same notations to denote their extensions. By Theorem \ref{theorem:chain}, (iv), the map $\pi$ can be chosen so that $\sum_{k=0}^\infty\underline{z}(0,k)\in\widehat{C}_{2d-1}^\lambda$ is mapped to $(-1)^{n+1}[\![L]\!]\otimes h^{-d+1}\in H_{n+2d-2}^{S^1}(\mathcal{L}(0)L;\mathbb{R})$.

Now we apply the homotopy transfer lemma for $L_\infty$-structures (cf. \cite{kic}, Theorem 2.4 and \cite{jlf}, Proposition 4.9), which implies the existence of an $L_\infty$-structure $(\ell_m)_{m\geq1}$ on $\mathbb{H}_\ast^{S^1}$ and an $L_\infty$-homomorphism
\begin{equation}\label{eq:p}
p=(p_m)_{m\geq1}:\left(C_\ast^\lambda,\partial^\mathrm{tot}_{\underline{x}(0)},\{\cdot,\cdot\}\right)\rightarrow\left(\mathbb{H}_\ast^{S^1},(\ell_m)_{m\geq1}\right)
\end{equation}
such that $\ell_1=0$ and $p_1=\pi$. The $L_\infty$-structure $(\ell_m)_{m\geq1}$ and the $L_\infty$-homomorphism $p$ can be taken so that they respect the decompositions over $H_1(L;\mathbb{Z})$, therefore also extend to the completions $\widehat{\mathbb{H}}_\ast^{S^1}$ and $\widehat{C}_\ast^\lambda$. It follows from  the identities (\ref{eq:+}) that the elements
\begin{equation}\label{eq:MC}
\underline{X}:=\sum_{m=1}^\infty\frac{1}{m!}p_m(\underline{x}^+,\cdots,\underline{x}^+)\in\widehat{\mathbb{H}}_{-2}^{S^1},
\end{equation}
\begin{equation}\label{eq:prim}
\underline{Y}:=\sum_{m=1}^\infty\frac{1}{(m-1)!}p_m(\underline{y},\underline{x}^+,\cdots,\underline{x}^+)\in\widehat{\mathbb{H}}_{2d}^{S^1},
\end{equation}
\begin{equation}
\underline{Z}:=\sum_{m=1}^\infty\frac{1}{(m-1)!}p_m(\underline{z},\underline{x}^+,\cdots,\underline{x}^+)\in\widehat{\mathbb{H}}_{2d-1}^{S^1} \nonumber
\end{equation}
satisfy
\begin{equation}
\sum_{m=2}^\infty\frac{1}{m!}\ell_m(\underline{X},\cdots,\underline{X})=0 \nonumber
\end{equation}
and
\begin{equation}
\sum_{m=2}^\infty\frac{1}{(m-1)!}\ell_m(\underline{Y},\underline{X},\cdots,\underline{X})=\underline{Z}. \nonumber
\end{equation}
The infinite sums in the definitions of the homology classes $\underline{X}$, $\underline{Y}$, and $\underline{Z}$ above make sense since by Theorem \ref{theorem:chain}, (iii), $\underline{x}^+(a)\neq0$ only when $\lambda(a)\geq2\varepsilon$. Since $\underline{X}(a)\neq0$ only when $\lambda(a)\geq2\varepsilon$ and $\underline{Y}(a)\neq0$ only when $\lambda(a)\geq-C_d^\mathrm{GH}(X)$, Theorem \ref{theorem:deform-Viterbo}, (iii) holds with $\eta=2\varepsilon$. It remains to show that $\underline{Z}(0)=(-1)^{n+1}[\![L]\!]\otimes h^{-d+1}$. Since the $L_\infty$-homomorphism $p$ respects the decompositions over $H_1(L;\mathbb{Z})$, and $\underline{z}(a,k)\neq0$ only if $\lambda(a)\geq2\varepsilon$ or $a=0$, we obtain
\begin{equation}
\underline{Z}(0)=\pi\left(\sum_{m=0}^\infty\underline{z}(0,m)\right)=(-1)^{n+1}[\![L]\!]\otimes h^{-d+1} \nonumber
\end{equation}
by our choice of $\pi$.
\end{proof}

To overcome the technical difficulties of simultaneous perturbations of infinitely many Kuranishi maps (cf. \cite{kic}, Section 6), we will translate Theorem \ref{theorem:chain} into a statement involving finite energy chains, i.e. chains in the uncompleted complex $C_\ast^\lambda$. From now on, we choose $\varepsilon>0$ in the statement of Theorem \ref{theorem:chain} so that $2\varepsilon$ is less than the minimal symplectic area of $J$-holomorphic discs with boundary on $L\subset X$, see Section \ref{section:CG}. Define
\begin{equation}
\mathcal{F}^mC_\ast^{S^1}:=\bigoplus_{\substack{(a,k)\in H_1(L;\mathbb{Z})\times\mathbb{Z}_{\geq0}\\ \lambda(a)\geq\varepsilon(m+1-k)}}C_\ast^{S^1}(a,k), \nonumber
\end{equation}
which induces a similar filtration $\mathcal{F}^\bullet$ on its quotient complex $C_\ast^\lambda$. It is clear from the definition that the differential $\partial^\mathrm{tot}$ and the Lie bracket $\{\cdot,\cdot\}$ preserve the filtration $\mathcal{F}^\bullet$ on $C_\ast^\lambda$, i.e. $\partial^\mathrm{tot}\mathcal{F}^m\subset\mathcal{F}^m$ and $\left\{\mathcal{F}^m,\mathcal{F}^{m'}\right\}\subset\mathcal{F}^{m+m'}$. In the same way, one can define a filtration $\overline{\mathcal{F}}^\bullet$ on the relative chain complexes $\overline{C}_\ast^{S^1}$ and $\overline{C}_\ast^\lambda$. We will define the chains $\underline{x},\underline{y},\underline{z}\in\widehat{C}_\ast^\lambda$ in the statement of Theorem \ref{theorem:chain} as limits of sequences of finite energy chains in $C_\ast^\lambda$ with respect to the filtration $\mathcal{F}^\bullet$ (and similarly for the corresponding relative chains $\underline{\bar{x}},\underline{\bar{y}},\underline{\bar{z}}\in\widehat{\overline{C}}_\ast^\lambda$).

Note that a priori, the completions of $C_\ast^{S^1}$ (resp. $C_\ast^\lambda$) with respect to the filtration $\mathcal{F}^m$ defined above can be different from $\widehat{C}_\ast^{S^1}$ (resp. $\widehat{C}_\ast^\lambda$) defined using the action filtration $F^\Xi$ (cf. (\ref{eq:Xi})), hence the different notations. However, following a strategy of \cite{kie}, we will show that the limits $\underline{x}$, $\underline{y}$ and $\underline{z}$ actually lie in $\widehat{C}_\ast^\lambda$, see Proposition \ref{proposition:filt} below. We now give the finite energy version of the chain level statement of Theorem \ref{theorem:deform-Viterbo}.

\begin{theorem}\label{theorem:finite-chain}
Let $L$ and $X$ be as before, with $L\subset\mathrm{int}(X)$, and assume that $C_d^\mathrm{GH}(X)<\infty$ for some $d\in\mathbb{N}$. There exist integers $I,U\in\mathbb{Z}_{\geq3}$ and a sequence $(\underline{x}_i,\underline{y}_i,\underline{z}_i,\bar{\underline{x}}_i,\bar{\underline{y}}_i,\bar{\underline{z}}_i)_{i\geq I}$ of chains with $\underline{x}_i,\underline{y}_i,\underline{z}_i\in C_\ast^\lambda$, and $\bar{\underline{x}}_i,\bar{\underline{y}}_i,\bar{\underline{z}}_i\in\overline{C}_\ast^\lambda$, such that the following conditions hold.
\begin{itemize}
	\item[(i)]
	$\underline{x}_i\in\mathcal{F}^1C_{-2}^\lambda$, $\bar{\underline{x}}_i\in\overline{\mathcal{F}}^1\overline{C}_{-2}^\lambda$, $\underline{y}_i\in \mathcal{F}^{-U}C_{2d}^\lambda$, $\bar{\underline{y}}_i\in\overline{\mathcal{F}}^{-U}\overline{C}_{2d}^\lambda$, $\underline{z}_i\in \mathcal{F}^{-1}C_{2d-1}^\lambda$, $\bar{\underline{z}}_i\in\overline{\mathcal{F}}^{-1}\overline{C}_{2d-1}^\lambda$.
	\item[(ii)]
	$\underline{x}_i=\underline{e}_-(\bar{\underline{x}}_i)$, $\underline{y}_i=\underline{e}_-(\bar{\underline{y}}_i)$, $\underline{z}_i=\underline{e}_-(\bar{\underline{z}}_i)$.
	\item[(iii)]
	$\bar{\partial}^\mathrm{tot}(\bar{\underline{x}}_i)-\frac{1}{2}\left\{\bar{\underline{x}}_i,\bar{\underline{x}}_i\right\}\in\overline{\mathcal{F}}^i\overline{C}_{-3}^\lambda$, $\bar{\partial}^\mathrm{tot}(\bar{\underline{y}}_i)-\left\{\bar{\underline{x}}_i,\bar{\underline{y}}_i\right\}-\bar{\underline{z}}_i\in \overline{\mathcal{F}}^{i-U-1}\overline{C}_{2d-1}^\lambda$, $\bar{\partial}^\mathrm{tot}(\bar{\underline{z}}_i)-\left\{\bar{\underline{x}}_i,\bar{\underline{z}}_i\right\}\in \overline{\mathcal{F}}^{i-2}\overline{C}_{2d-2}^\lambda$, where $\bar{\partial}^\mathrm{tot}$ is the differential on $\overline{C}_\ast^\lambda$.
	\item[(iv)]
	$\underline{x}_{i+1}-\underline{e}_+(\bar{\underline{x}}_i)\in\mathcal{F}^iC_{-2}^\lambda$, $\underline{y}_{i+1}-\underline{e}_+(\bar{\underline{y}}_i)\in\mathcal{F}^{i-U-1}C_{2d}^\lambda$, $\underline{z}_{i+1}-\underline{e}_+(\bar{\underline{z}}_i)\in\mathcal{F}^{i-2}C_{2d-1}^\lambda$.
	\item[(v)]$\underline{x}_i(a,k)\neq0$ only if $\lambda(a)\geq2\varepsilon$ or $a=0$, $k\geq3$. Moreover, $\underline{x}_i(0,3)$ admits a lift $\tilde{x}_i(0,3)\in C_{-2}^{S^1}(0,3)$ such that $B\left(\tilde{x}_i(0,3)\right)=x_i(0,2)\in C_{-1}^\mathrm{nd}(0,2)$ is a cycle, whose homology class in the total complex $C_{-1}^\mathrm{nd}(0)$ coincides with $(-1)^{n+1}[L]$ under the isomorphism between the de Rham homology and the singular homology of $\mathcal{L}(0)L$. Here, the chain level marking map $B$ is defined by (\ref{eq:B}).
	\item[(vi)] $\underline{z}_i(a,k)\neq0$ only if $\lambda(a)\geq2\varepsilon$ or $a=0$. Moreover, $\underline{z}_i(0,0)$ lifts to a cycle $\tilde{z}_i(0,0)\in C_{2d-1}^{S^1}(0,0)$, whose homology class $\left[\tilde{z}_i(0,0)\right]$ in the total complex $C_{2d-1}^{S^1}(0)$ corresponds to $(-1)^{n+1}[\![L]\!]\otimes h^{-d+1}$ under the isomorphism between the $S^1$-equivariant de Rham homology and the $S^1$-equivariant singular homology of $\mathcal{L}(0)L$.
	\item[(vii)] $\underline{y}_i(a,k)\neq0$ only if $\lambda(a)\geq-C_d^\mathrm{GH}(X)$.
	\item[(viii)] Consider the following subsets of $H_1(L;\mathbb{Z})$:
	\begin{equation}\label{eq:A1}
	\underline{A}_x:=\left\{a\in H_1(L;\mathbb{Z})\vert\exists(i,k)\textrm{ such that }\bar{\underline{x}}_i(a,k)\neq0\right\},
	\end{equation}
	\begin{equation}\label{eq:A2}
	\underline{A}_x^+:=\left\{a_1+\cdots+a_m\vert m\geq1, a_1,\cdots,a_m\in\underline{A}_x\right\},
	\end{equation}
	\begin{equation}\label{eq:A3}
	\underline{A}_{y,z}:=\left\{a\in H_1(L;\mathbb{Z})\left\vert\exists(i,k)\textrm{ such that }\left(\bar{\underline{y}}_i(a,k),\bar{\underline{z}}_i(a,k)\right)\neq(0,0)\right.\right\},
	\end{equation}
	\begin{equation}\label{eq:A4}
	\underline{A}_{y,z}^+:=\left\{a_1+\cdots+a_m\vert m\geq1, a_1\in\underline{A}_{y,z},a_2\cdots,a_m\in\underline{A}_x\right\}.
	\end{equation}
	Then for any $\Xi>0$,
	\begin{equation}
	\underline{A}_x^+(\Xi):=\left\{a\in\underline{A}_x^+\vert\lambda(a)<\Xi\right\}\textrm{ and }\underline{A}_{y,z}^+(\Xi):=\left\{a\in\underline{A}_{y,z}^+\vert\lambda(a)<\Xi\right\} \nonumber
	\end{equation}
	are finite sets.
\end{itemize}
\end{theorem}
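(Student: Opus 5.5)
The plan is to follow the inductive scheme of Irie \cite{kic} as adapted in \cite{yla}, Section 5: define all chains by pushing forward virtual fundamental chains of the K-spaces of Theorem \ref{theorem:moduli} to the de Rham chain model of Section \ref{section:de Rham}. First fix $\varepsilon$ with $2\varepsilon$ below the minimal area of $J$-discs on $L$. Since $C^{\mathrm{GH}}_d(X)<\infty$, pick for every $\nu>0$ an admissible $H_t$ and an equivariant cochain $\tilde{y}=\sum_{j\ge 0} y_j\otimes u^{-j}$ with $\partial^{S^1}\tilde{y}=e_X\otimes u^{-d+1}$ and action at most $C^{\mathrm{GH}}_d(X)+\nu$. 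Each $y_j$ is a finite sum of orbits, so choose $U$ with $\varepsilon(U-1)\ge\max|A_{H_t}(x)|$ over these orbits. For $P=\{m\}$ and $P=[m,m+1]$ we then have the K-spaces $\overline{\mathcal{R}}_{k+1,\vartheta}(L,\bar a;P)$ and ${}_l\overline{\mathcal{R}}^1_{k+1}(x,L,\mathring{\bar a};P)$ in the energy range $\lambda(a)<(m-k+1)\varepsilon$, respectively $(m-k-U)\varepsilon$. Using the CF-perturbation theory of \cite{fooo4} in the form of \cite{kic}, Section 7, choose inductively on $m$ compatible CF-perturbations. These must agree on the $\{m\}$ and $\{m+1\}$ ends of $[m,m+1]$ and respect the fiber-product descriptions of boundaries and corners (\ref{eq:bd1}), (\ref{eq:bd2}), (\ref{eq:bd5}), (\ref{eq:corner2}), (\ref{eq:corner1}). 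They must also be $\mathbb{Z}_{k+1}$-invariant (Theorem \ref{theorem:moduli}(v)), so that the pushforwards descend to Connes' complex.

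Next, define the chains by pushforward along the evaluation maps into $P\times L^{k+1}$, lifted to $\mathcal{L}_{k+1}$ by recording the boundary arcs between consecutive marked points as elements of $\Pi_1L$. Set $x_m(a,k)$ to be the pushforward from $\overline{\mathcal{R}}_{k+1,\vartheta}(L,\bar a;\{m\})$, projected to $C^\lambda$. Set $y_m$ to be $\sum_{j,l}$ of the pushforwards from ${}_l\overline{\mathcal{R}}^1_{k+1}(x,L,\mathring{\bar a};\{m\})$, weighted by the coefficient of the orbit $x$ in $y_j$ with $l$ matched to the $u$-power (and converted to $h$-powers). Define $z_m$ the same way, using the $S^1$-boundary strata ${}_{l-1}\overline{\mathcal{R}}^{S^1}$ via $\phi^i_f$ and $\tau_i$, together with the contribution of $\partial^{S^1}\tilde y=e_X\otimes u^{-d+1}$. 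The barred versions come from $P=[m,m+1]$ with the relative plots of $\overline{C}^{\mathrm{dR}}$. Then set $(\underline x_i,\dots)$ to be the $m=i$ chains.

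The properties are checked as follows.
\begin{itemize}
\item[(i)] The filtration conditions follow directly from the energy windows.
\item[(ii)] $\underline{e}_-$ restricts to the $\{m\}$ end.
\item[(iv)] $\underline{e}_+$ restricts to the $\{m+1\}$ end, and agreement holds up to the energy cutoff.
\item[(iii)] Apply Stokes' formula for CF-perturbed pushforwards to the boundary decompositions. Disc bubbling at $z_i$ produces the bracket terms $\{\bar{\underline{x}},\cdot\}$; the cyclic symmetrization in (\ref{eq:bracket-pr}) matches the sum over $i$ via $\pi_{\vartheta,i}$. The Floer breaking strata ${}_j\overline{\mathcal{M}}\times{}_{l-j}\overline{\mathcal{R}}^1$ assemble into the pushforward of $\partial^{S^1}\tilde y$. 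The strata ${}^{j,j+1}\overline{\mathcal{R}}$ cancel by the forgetful compatibility. The remaining $S^1$-strata give $h\delta_{\mathrm{cyc}}$. All errors occur only above the energy cutoff, which produces the stated filtration levels.
\item[(v)] Only constant discs have $a=0$, and $\mathcal{R}_{k+1,\vartheta}(L,0)=\emptyset$ for $k\le 2$. For $k=3$, the moduli space is $L$ times a point of $\mathcal{R}_{4,\vartheta}$, and $B$ applied to it gives $\pm[L]$, with the sign computed as in \cite{yla}.
\item[(vi)] The $a=0$ part of $z$ comes from the constant solutions contributing $e_X$, which evaluate to the constant loops, i.e.\ $[\![L]\!]\otimes h^{-d+1}$.
\item[(vii)] This is Lemma \ref{lemma:action}: a nonempty moduli space forces $\lambda(a)\ge -|A_{H_t}(x)|\ge -C_d^{\mathrm{GH}}-\nu$. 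To get exactly $-C_d^{\mathrm{GH}}(X)$, either take the infimum or note that the action spectrum is discrete so that $\nu$ can be chosen zero.
\item[(viii)] Gromov compactness: classes represented by $J$-discs, or by Floer solutions with fixed finitely many asymptotic orbits, with $\lambda(a)<\Xi$ form a finite set, and sums of such classes with bounded energy are finite because each summand has energy at least $2\varepsilon$.
\end{itemize}

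The main obstacle is the simultaneous, compatible choice of CF-perturbations that is $\mathbb{Z}_{k+1}$-invariant, consistent with the auxiliary-rescaling identifications $\phi^i_f$ on the $S^1$-strata, and consistent across the $P$-interpolations, so that the Stokes computation in (iii) produces exactly the Connes-complex bracket and $h\delta_{\mathrm{cyc}}$ with the correct signs. I would handle this by running the induction on $(m,k,l)$ with cyclically averaged perturbations, following \cite{yla}, Theorem 48 and Section 5. The sign verification would be deferred to the orientation conventions of \cite{yla}, Appendix B.
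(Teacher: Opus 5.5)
Your overall architecture is the paper's. Both use pushforwards of CF-perturbed virtual chains of $\overline{\mathcal{R}}_{k+1,\vartheta}$ and of the Cohen--Ganatra spaces along the Wang-model evaluation maps. Both apply the Stokes and fiber-product formulas to (\ref{eq:bd4}) and (\ref{eq:bd5}), use $\underline{e}_\pm$ at the ends of $[m,m+1]$ for (ii) and (iv), use the sector identity (\ref{eq:id0}) to turn disc bubbling into the bracket, and use Gromov compactness for (viii).

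The gap is in how you assemble $\tilde y$ and $z$, which is exactly where the $y$-relation in (iii) is decided. In the paper, $\tilde y_m=y_{m,0}\otimes h^{-d+1}+y_{m,1}\otimes h^{-d}$. The first component $y_{m,0}(k)$ is the pushforward of ${}_l\overline{\mathcal{R}}^1_{k+1}(\beta_l,L)$, summed over all $l$ and placed in the single power $h^{-d+1}$. This is what allows the Floer-breaking strata ${}_{l-j}\overline{\mathcal{R}}^1_{k+1}(\delta_j\beta_l,L)$ to be regrouped by $p=l-j$ and collapsed onto the unit-input space, using $\sum_j\delta_j(\beta_{p+j})=e_X$ for $p=0$ and $0$ for $p>0$. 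The second component $y_{m,1}(k+1)$ is the pushforward of ${}_{l-1}\overline{\mathcal{R}}^1_{k+2}(\beta_l,L)$, $l\geq1$: same input, one interior auxiliary point fewer, one boundary marked point more. The $S^1$-strata ${}_{l-1}\overline{\mathcal{R}}^{S^1}_{k+1}(\beta_l,L)$ of ${}_l\overline{\mathcal{R}}^1_{k+1}(\beta_l,L;[m,m+1])$ are identified with $\bar\delta_{\mathrm{cyc}}(\bar y_{m,1}(k+1))$. This uses the sectors $\phi^i_f$, the $\tau_i$-compatibility of the Cohen--Ganatra Floer data, and the transitive $\mathbb{Z}_{k+1}$-action on the $\tau_i$-sectors. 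That identification is (\ref{eq:id3}), and it is how these strata are absorbed into $\bar\partial^{S^1}\bar{\tilde y}_m$. Your $y_m$ only contains the pieces with $l$ matched to the $u$-power. So when you say in (iii) that the $S^1$-strata ``give $h\delta_{\mathrm{cyc}}$'', there is no component of $\tilde y$ for $h\delta_{\mathrm{cyc}}$ to act on.

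Instead you route the $S^1$-strata into $z_m$, and as written that fails. $z_m$ has to be exactly the pushforward of the unit-input space $\overline{\mathcal{R}}^1_{k+1}(e_X,L;P)$ produced by the regrouping above. The $S^1$-strata have inputs $\beta_l$ whose action is of order $C_d^{\mathrm{GH}}(X)$, so Lemma \ref{lemma:action} only bounds $\lambda(a)$ below by roughly $-C_d^{\mathrm{GH}}(X)$ on them. Including them in $z$ therefore causes three problems:
\begin{itemize}
\item there is no reason for (vi), or for $z\in\mathcal{F}^{-1}$ in (i), to hold;
\item they are double-counted against the $h\delta_{\mathrm{cyc}}$ term you also invoke;
\item the $z$-relation in (iii) is left unproved, since these chains have boundary of their own that nothing in your argument controls.
\end{itemize}
With the unit-input definition there are no auxiliary points, and the breaking at the input contributes only the chain of $\partial e_X=0$, so that relation follows as for $x$.

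Two smaller corrections. First, $\mathbb{Z}_{k+1}$-invariance is not needed to descend to Connes' complex, since every chain descends. It is needed to turn the sums over sectors in (\ref{eq:id0}) and (\ref{eq:id3}) into the cyclic sums defining $B$, $\delta_{\mathrm{cyc}}$ and the bracket. Second, the ${}^{j,j+1}$-strata do not cancel; they contribute zero. The Floer data there are pulled back along the forgetful map $\pi_j$, so the pushforward factors through a lower-dimensional space.
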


We will show that Theorem \ref{theorem:finite-chain} implies Theorem \ref{theorem:chain}. We start with the following generalization of \cite{yla}, Lemma 35.

\begin{lemma}\label{lemma:induction}
Let $I,U\in\mathbb{Z}_{\geq3}$ and $\underline{x}_i,\underline{y}_i,\underline{z}_i,\bar{\underline{x}}_i,\bar{\underline{y}}_i,\bar{\underline{z}}_i$ be as in Theorem \ref{theorem:finite-chain}. Then there exists a sequence
\begin{equation}
(\underline{x}_{i,j},\underline{y}_{i,j},\underline{z}_{i,j},\bar{\underline{x}}_{i,j},\bar{\underline{y}}_{i,j},\bar{\underline{z}}_{i,j})_{i\geq I,j\geq0}
\end{equation}
of (relative) de Rham chains satisfying the following conditions:
	\begin{itemize}
	\item[(i)] $\underline{x}_{i,0}=\underline{x}_i$, $\underline{y}_{i,0}=\underline{y}_i$, $\underline{z}_{i,0}=\underline{z}_i$, $\bar{\underline{x}}_{i,0}=\bar{\underline{x}}_i$,  $\bar{\underline{y}}_{i,0}=\bar{\underline{y}}_i$, $\bar{\underline{z}}_{i,0}=\bar{\underline{z}}_i$.
	\item[(ii)] $\underline{x}_{i,j}\in\mathcal{F}^1C_{-2}^\lambda$, $\bar{\underline{x}}_{i,j}\in\overline{\mathcal{F}}^1\overline{C}_{-2}^\lambda$, $\underline{y}_{i,j}\in \mathcal{F}^{-U}C_{2d}^\lambda$, $\bar{\underline{y}}_{i,j}\in\overline{\mathcal{F}}^{-U}\overline{C}_{2d}^\lambda$, $\underline{z}_{i,j}\in\mathcal{F}^{-1}C_{2d-1}^\lambda$, $\bar{\underline{z}}_{i,j}\in\overline{\mathcal{F}}^{-1}\overline{C}_{2d-1}^\lambda$.
	\item[(iii)] $\underline{x}_{i,j}=\underline{e}_-(\underline{\bar{x}}_{i,j})$, $\underline{y}_{i,j}=\underline{e}_-(\bar{\underline{y}}_{i,j})$, $\underline{z}_{i,j}=\underline{e}_-(\bar{\underline{z}}_{i,j})$.
	\item[(iv)] $\bar{\partial}^\mathrm{tot}(\bar{\underline{x}}_{i,j})-\frac{1}{2}\left\{\bar{\underline{x}}_{i,j},\bar{\underline{x}}_{i,j}\right\}\in\overline{\mathcal{F}}^{i+j}\overline{C}_{-3}^\lambda$, $\bar{\partial}^\mathrm{tot}(\bar{\underline{y}}_{i,j})-\left\{\bar{\underline{x}}_{i,j},\bar{\underline{y}}_{i,j}\right\}-\bar{\underline{z}}_{i,j}\in\overline{\mathcal{F}}^{i+j-U-1}\overline{C}_{2d-1}^\lambda$, $\bar{\partial}^\mathrm{tot}(\bar{\underline{z}}_{i,j})-\left\{\bar{\underline{x}}_{i,j},\bar{\underline{z}}_{i,j}\right\}\in\overline{\mathcal{F}}^{i+j-2}\overline{C}_{2d-2}^\lambda$.
	\item[(v)] $\underline{x}_{i+1,j}-\underline{e}_+(\bar{\underline{x}}_{i,j})\in\mathcal{F}^{i+j}C_{-2}^\lambda$, $\underline{y}_{i+1,j}-\underline{e}_+(\bar{\underline{y}}_{i,j})\in\mathcal{F}^{i+j-U-1}C_{2d}^\lambda$, $\underline{z}_{i+1,j}-\underline{e}_+(\bar{\underline{z}}_{i,j})\in\mathcal{F}^{i+j-2}C_{2d-1}^\lambda$.
	\item[(vi)] $\bar{\underline{x}}_{i,j+1}-\bar{\underline{x}}_{i,j}\in\overline{\mathcal{F}}^{i+j}\overline{C}_{-2}^\lambda$, $\bar{\underline{y}}_{i,j+1}-\bar{\underline{y}}_{i,j}\in\overline{\mathcal{F}}^{i+j-U-1}\overline{C}_{2d}^\lambda$, $\bar{\underline{z}}_{i,j+1}-\bar{\underline{z}}_{i,j}\in\overline{\mathcal{F}}^{i+j-2}\overline{C}_{2d-1}^\lambda$.
	\item[(vii)] $\bar{\underline{x}}_{i,j}(a,k)\neq0$ only if $\lambda(a)\geq2\varepsilon$ or $a=0$, $k\geq3$. Moreover, $\bar{\underline{x}}_{i,j}(0,3)$ lifts to a chain $\bar{\tilde{x}}_{i,j}(0,3)\in\overline{C}^{S^1}_{-2}$ satisfying $\overline{B}\left(\bar{\tilde{x}}_{i,j}(0,3)\right)=\bar{x}_{i,j}(0,2)$, where $\overline{B}:\overline{C}_\ast^{S^1}(a,k+1)\rightarrow\overline{C}_{\ast+1}^{\mathrm{nd}}(a,k)$ is the marking map for relative chains defined in the same way as (\ref{eq:B}), and $\bar{x}_{i,j}(0,2)\in\overline{C}_{-1}^\mathrm{nd}(0,2)$ is a cycle whose homology class $\left[\bar{x}_{i,j}(0,2)\right]$ in the total complex coincides with $(-1)^{n+1}[L]$ under the isomorphism between the relative de Rham homology and the singular homology of $\mathcal{L}(0)L$.
	\item[(viii)] $\bar{\underline{z}}_{i,j}(a,k)\neq0$ only if $\lambda(a)\geq2\varepsilon$ or $a=0$. Moreover, $\bar{\underline{z}}_{i,j}(0,0)\in\overline{C}_{2d-1}^\lambda$ lifts to a cycle $\bar{\tilde{z}}_{i,j}(0,0)\in\overline{C}_{2d-1}^{S^1}(0,0)$ whose homology class $\left[\bar{\tilde{z}}_{i,j}(0,0)\right]$ in the total complex corresponds to $(-1)^{n+1}[\![L]\!]\otimes h^{-d+1}$ under the isomorphism between the relative $S^1$-equivariant de Rham homology and the $S^1$-equivariant singular homology of $\mathcal{L}(0)L$.
	\item[(ix)] $\bar{\underline{y}}_{i,j}(a,k)\neq0$ only if $\lambda(a)\geq-C_d^\mathrm{GH}(X)$.
	\item[(x)] If there exists a triple $(i,j,k)\in\mathbb{Z}_{\geq I}\times\mathbb{Z}_{\geq0}\times\mathbb{Z}_{\geq0}$ such that 
	\begin{itemize}
	\item $\bar{\underline{x}}_{i,j}(a,k)\neq0$, then $a\in\underline{A}_x^+$.
	\item $\left(\bar{\underline{y}}_{i,j}(a,k),\bar{\underline{z}}_{i,j}(a,k)\right)\neq(0,0)$, then $a\in\underline{A}_{y,z}^+$.
	\end{itemize}
	\end{itemize}
\end{lemma}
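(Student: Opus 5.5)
We construct the sequence by induction on $j$, with $i\geq I$ fixed at each stage, following the scheme of \cite{yla}, Lemma 35 (itself modeled on \cite{kic}, Section 6). For $j=0$ we set everything equal to the data of Theorem \ref{theorem:finite-chain}; conditions (i)--(ix) for $j=0$ are then exactly (i)--(vii) of that theorem. Condition (x) holds tautologically, since $\underline{A}_x\subset\underline{A}_x^+$ and $\underline{A}_{y,z}\subset\underline{A}_{y,z}^+$.

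For the inductive step, suppose the chains with index $(i,j)$ have been constructed for all $i\geq I$. We define the obstruction chains
\[
\bar{\underline{o}}^x_{i,j}:=\bar{\partial}^\mathrm{tot}(\bar{\underline{x}}_{i,j})-\tfrac12\{\bar{\underline{x}}_{i,j},\bar{\underline{x}}_{i,j}\}\in\overline{\mathcal{F}}^{i+j},
\]
and similarly $\bar{\underline{o}}^y_{i,j}\in\overline{\mathcal{F}}^{i+j-U-1}$ and $\bar{\underline{o}}^z_{i,j}\in\overline{\mathcal{F}}^{i+j-2}$. The Jacobi identity (\ref{eq:Jacobi}) gives the Bianchi-type relations
\[
\bar\partial^\mathrm{tot}\bar{\underline{o}}^x-\{\bar{\underline{x}},\bar{\underline{o}}^x\}=0,
\]
and the analogous ones for $\bar{\underline{o}}^y$ and $\bar{\underline{o}}^z$, which are twisted by $\bar{\underline{o}}^x$. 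Because of the filtration gaps, this shows that the graded pieces of the obstructions in $\overline{\mathcal{F}}^{i+j}/\overline{\mathcal{F}}^{i+j+1}$ (respectively with the shifts by $U+1$ and $2$) are $\bar\partial^\mathrm{tot}$-cycles. We next use condition (v) at level $j$ to compare $\underline{e}_+$ of the obstruction for $i$ with $\underline{e}_-$ of the obstruction for $i+1$; both are congruent modulo the next filtration level. We then glue the relative chains along $\underline{e}_\pm$, using that $\underline{e}_\pm$ are quasi-isomorphisms with common quasi-inverse $\underline{i}$. This exhibits each graded obstruction class as a boundary in the associated graded complex, which is a product over finitely many $(a,k)$ once the energy is bounded. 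Here (x) and Theorem \ref{theorem:finite-chain}(viii) guarantee that only finitely many classes $a$ with $\lambda(a)$ bounded can appear. We therefore choose primitives $\bar{\underline{c}}_{i,j}$ and set
\[
\bar{\underline{x}}_{i,j+1}=\bar{\underline{x}}_{i,j}+\bar{\underline{c}}^x_{i,j},
\]
with the analogous updates for $\bar{\underline{y}}$ and $\bar{\underline{z}}$, and take $\underline{x}_{i,j+1}:=\underline{e}_-(\bar{\underline{x}}_{i,j+1})$, and so on. Conditions (ii), (iii) and (vi) then hold by construction. Condition (iv) at level $j+1$ follows because the correction kills the leading graded piece, and the quadratic term $\{\bar{\underline{c}},\bar{\underline{c}}\}$ lies in $\overline{\mathcal{F}}^{2(i+j)}\subset\overline{\mathcal{F}}^{i+j+1}$, since $\{\mathcal{F}^m,\mathcal{F}^{m'}\}\subset\mathcal{F}^{m+m'}$. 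Condition (v) at level $j+1$ is arranged by choosing the primitive for index $i+1$ compatibly with $\underline{e}_+$ of the one for index $i$.

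The support conditions (vii)--(x) are preserved as follows. We choose primitives supported only on those components $(a,k)$ where the obstruction is supported. The obstruction is built from brackets of pieces in classes in $\underline{A}_x^+$ (respectively $\underline{A}_{y,z}^+$), and these sets are closed under the relevant sums, so (x) persists. Since $\lambda(a)\geq 2\varepsilon$ is additive, the bounds $\lambda(a)\geq2\varepsilon$ and $\lambda(a)\geq -C^\mathrm{GH}_d(X)$ persist as well. The low-energy components with $a=0$, namely $\bar{\underline{x}}(0,3)$ and $\bar{\underline{z}}(0,0)$, are left untouched: a correction in $\overline{\mathcal{F}}^{i+j}$ with $i\geq I\geq3$ can be taken to vanish on the components $(0,k)$ with $k$ small, because $\mathcal{F}^m$ at $a=0$ requires $k\geq m+1$. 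Hence the lifts and homology classes in (vii) and (viii) are unchanged.

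The main obstacle is to show that the graded obstruction cycles are actually exact, and not merely closed. In \cite{kic} this comes from the vanishing of the associated graded homology in the relevant degrees, or equivalently from the fact that after adding a relative correction the obstruction can be absorbed. I would first try the argument of \cite{yla}, Lemma 35, verbatim, with the only new feature being the triple $(\underline{x},\underline{y},\underline{z})$ and the shifted filtration levels $-U-1$ and $-2$. The key point to check is that the Bianchi identities for the $y$- and $z$-obstructions involve only the $x$-obstruction at a strictly higher filtration level, so the induction closes.
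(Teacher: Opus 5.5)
Your overall architecture (induction on $j$, correcting the leading graded piece of the three obstructions, setting $\underline{x}_{i,j+1}:=\underline{e}_-(\bar{\underline{x}}_{i,j+1})$, and the support bookkeeping for (vii)--(x)) is what the paper defers to \cite{yla}, Lemma 35. However, the step that is supposed to give (v) at level $j+1$ does not work. Write $\bar{\underline{x}}_{i,j+1}=\bar{\underline{x}}_{i,j}+\bar{\underline{c}}_{i,j}$ and $\underline{\delta}_{i,j}:=\underline{x}_{i+1,j}-\underline{e}_+(\bar{\underline{x}}_{i,j})\in\mathcal{F}^{i+j}C_{-2}^\lambda$. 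Then
\[
\underline{x}_{i+1,j+1}-\underline{e}_+(\bar{\underline{x}}_{i,j+1})=\underline{\delta}_{i,j}+\underline{e}_-(\bar{\underline{c}}_{i+1,j})-\underline{e}_+(\bar{\underline{c}}_{i,j}).
\]
Condition (vi) forces $\bar{\underline{c}}_{i+1,j}\in\overline{\mathcal{F}}^{i+j+1}$, so the index-$(i+1)$ correction cannot absorb anything at level $i+j$. What (v) at level $j+1$ actually requires is a boundary condition on the correction for index $i$ itself: $\underline{e}_+(\bar{\underline{c}}_{i,j})\equiv\underline{\delta}_{i,j}$ modulo $\mathcal{F}^{i+j+1}$. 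Nothing forces $\underline{\delta}_{i,j}$ to vanish at level $i+j$. If you take an arbitrary primitive of the obstruction and ``coordinate'' it with the primitive for index $i+1$, this defect is untouched. So (v) fails at level $j+1$, and with it the exactness argument at the next stage, which uses (v). The same happens for $\underline{y}$ and $\underline{z}$ with the shifts $-U-1$ and $-2$.

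The repair also settles what you call the main obstacle. It is not a vanishing of associated graded homology: the graded pieces are built componentwise from the de Rham complexes of the $\mathcal{L}_{k+1}(a)$, whose homology does not vanish in general. On each graded piece, $\bar\partial^\mathrm{tot}$ reduces to the componentwise de Rham differential, and $\underline{e}_+$ is a quasi-isomorphism with section $\underline{i}$.

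Use (iii)--(v) at level $j$ for the indices $i$ and $i+1$, together with $\underline{x}_{i+1,j}\in\mathcal{F}^1$. This gives $\underline{e}_+(\bar{\underline{o}}^x_{i,j})\equiv-\partial^\mathrm{tot}\underline{\delta}_{i,j}$ modulo $\mathcal{F}^{i+j+1}$. This corrects your claim that $\underline{e}_+(\bar{\underline{o}}^x_{i,j})$ and $\underline{e}_-(\bar{\underline{o}}^x_{i+1,j})$ agree modulo the next level; they agree only up to this exact term.

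Hence the graded class of $\bar{\underline{o}}^x_{i,j}$ is sent to zero by the quasi-isomorphism $\underline{e}_+$, so it vanishes, and $\bar{\underline{o}}^x_{i,j}\equiv-\bar\partial^\mathrm{tot}\bar{\underline{c}}'$ for some $\bar{\underline{c}}'$. Then $\underline{\delta}_{i,j}-\underline{e}_+(\bar{\underline{c}}')$ is a graded cycle. The chain $\bar{\underline{c}}_{i,j}:=\bar{\underline{c}}'+\underline{i}\bigl(\underline{\delta}_{i,j}-\underline{e}_+(\bar{\underline{c}}')\bigr)$ both kills the obstruction and satisfies the boundary condition. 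No induction on $i$ is needed.

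For $\underline{y}$ the analogous congruence holds modulo $\mathcal{F}^{i+j-U}$, because $\{\underline{\delta}^x,\underline{y}\}$ and $\underline{\delta}^z$ lie there. This, together with the term $\bar{\underline{o}}^z$ in the $y$-Bianchi identity (which you omitted), is where $i+j-2\geq i+j-U$ is used.

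Choose each correction componentwise, supported on the union of the supports of the obstruction and of $\underline{\delta}_{i,j}$. Both supports lie in $\underline{A}_x^+$ (resp. $\underline{A}_{y,z}^+$), so your arguments for (vii)--(x) go through. Two smaller points: the finiteness from Theorem \ref{theorem:finite-chain}, (viii) is not needed at this stage. Also, (vii)--(ix) at $j=0$ concern the relative chains, whereas Theorem \ref{theorem:finite-chain} states them only for $\underline{x}_i,\underline{y}_i,\underline{z}_i$.
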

\begin{proof}
This is a slight modification of the proof of \cite{yla}, Lemma 35, which deals with the $d=1$ case. The only essential difference is that the $y$ and $z$ type chains now have different degrees. We therefore omit the details and leave the proof to the reader. Note that although (\ref{eq:bracket-pr}) differs from \cite{yla}, (3.70), the actual form of the chain level string bracket $\{\cdot,\cdot\}$ on $C_\ast^\lambda$ and $\overline{C}_\ast^\lambda$ does not affect the proof, since we only need to use its property as an odd Lie bracket. Moreover, during the proof, we define the chains $\underline{x}_{i,j+1}$, $\underline{y}_{i,j+1}$ and $\underline{z}_{i,j+1}$ as
\[
\underline{x}_{i,j+1}:=\underline{e}_-(\bar{\underline{x}}_{i,j+1}),\textrm{ }\underline{y}_{i,j+1}:=\underline{e}_-(\bar{\underline{y}}_{i,j+1}),\textrm{ }\underline{z}_{i,j+1}:=\underline{e}_-(\bar{\underline{z}}_{i,j+1}). \qedhere
\]
\end{proof}

\begin{proposition}\label{proposition:filt}
Theorem \ref{theorem:finite-chain} implies Theorem \ref{theorem:chain}.
\end{proposition}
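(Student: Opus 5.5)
We first apply Lemma \ref{lemma:induction} to the sequence provided by Theorem \ref{theorem:finite-chain}, and then pass to a diagonal limit. The target chains are
\[
\underline{x}:=\lim_{j\to\infty}\underline{x}_{I,j},\qquad \underline{y}:=\lim_{j\to\infty}\underline{y}_{I,j},\qquad \underline{z}:=\lim_{j\to\infty}\underline{z}_{I,j},
\]
with the limits taken in the completion of $C_\ast^\lambda$ with respect to $\mathcal{F}^\bullet$.

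\textbf{Existence of the limits.} By (iii) and (vi) of Lemma \ref{lemma:induction},
\[
\underline{x}_{I,j+1}-\underline{x}_{I,j}=\underline{e}_-\left(\bar{\underline{x}}_{I,j+1}-\bar{\underline{x}}_{I,j}\right)\in\mathcal{F}^{I+j}C_{-2}^\lambda,
\]
since $\underline{e}_-$ preserves the filtrations. The same holds for $y$ with shift $-U-1$ and for $z$ with shift $-2$. Hence the sequences are Cauchy for $\mathcal{F}^\bullet$ and the limits exist.

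\textbf{Equations (i) and (ii).} Apply $\underline{e}_-$ to Lemma \ref{lemma:induction}, (iv). Because $\underline{e}_-$ is a chain map compatible with the brackets, we get
\[
\partial^\mathrm{tot}(\underline{x}_{I,j})-\tfrac{1}{2}\{\underline{x}_{I,j},\underline{x}_{I,j}\}\in\mathcal{F}^{I+j},
\]
and the analogous statement for $\underline{y}_{I,j}$. The differential and the bracket are continuous for $\mathcal{F}^\bullet$, since $\partial^\mathrm{tot}\mathcal{F}^m\subset\mathcal{F}^m$ and $\{\mathcal{F}^m,\mathcal{F}^{m'}\}\subset\mathcal{F}^{m+m'}$, and $\mathcal{F}^{-U}$ gives a uniform lower bound for the $y$-chains. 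Letting $j\to\infty$ yields (i) and (ii) in the $\mathcal{F}$-completion.

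\textbf{Properties (iii)--(v).} These are closed conditions, checked componentwise in $(a,k)$.
\begin{itemize}
\item The support conditions pass to the limit from (vii)--(ix) of Lemma \ref{lemma:induction}.
\item The low-energy components stabilise. For fixed $(a,k)$, the difference $\underline{x}_{I,j+1}(a,k)-\underline{x}_{I,j}(a,k)$ vanishes once $\varepsilon(I+j+1-k)>\lambda(a)$. So $\underline{x}(0,3)$ and $\underline{z}(0,0)$ equal $\underline{x}_{I,j}(0,3)$ and $\underline{z}_{I,j}(0,0)$ for $j$ large.
\item By Lemma \ref{lemma:induction}, (vii) and (viii), these components admit lifts with the required homology classes. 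To pass from the relative chains to the absolute ones, apply $\tilde{e}_-$, which is compatible with $B$ and is a quasi-isomorphism.
\end{itemize}

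\textbf{Main obstacle: identifying the two completions.} We must show that the $\mathcal{F}$-limits actually lie in $\widehat{C}^\lambda_\ast$, the completion with respect to the action filtration $F^\Xi$. Following the strategy of \cite{kie}, we use Lemma \ref{lemma:induction}, (x): every nonzero component $\underline{x}(a,k)$ has $a\in\underline{A}_x^+$, and every nonzero $\underline{y}(a,k)$ or $\underline{z}(a,k)$ has $a\in\underline{A}_{y,z}^+$. By Theorem \ref{theorem:finite-chain}, (viii), for each $\Xi$ only finitely many classes $a$ with $\lambda(a)<\Xi$ occur.

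Fix such an $a$. The $\mathcal{F}$-convergence means that, for this fixed $a$, the components $\underline{x}_{I,j}(a,k)$ stabilise in $j$ for each $k$. This gives a well-defined element of $\prod_k C^\lambda_\ast(a,k)$, which is the correct $a$-summand of $C^\lambda_\ast=\bigoplus_a\prod_k$. Therefore the image of $\underline{x}$ in $C^\lambda_\ast/F^\Xi$ is a finite sum of such elements, so it lies in $C^\lambda_\ast/F^\Xi$ rather than in a larger product. These images are compatible as $\Xi\to\infty$, and hence define an element of $\widehat{C}^\lambda_\ast$.

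Finally, (i) and (ii) hold there, because both sides agree componentwise in every $(a,k)$ and both completions embed into $\prod_{a,k}C^\lambda_\ast(a,k)$. The delicate point is checking that the bracket computed in $\widehat{C}^\lambda_\ast$ matches the componentwise limit. We handle it using the finiteness of $\underline{A}^+(\Xi)$: for fixed $a$, only finitely many decompositions $a=a_1+a_2$ with $a_1,a_2$ in the relevant sets contribute.
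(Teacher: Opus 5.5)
Your proposal is correct and follows essentially the same route as the paper. You take $\mathcal{F}$-limits of the chains from Lemma \ref{lemma:induction} using its items (iii) and (vi), pass the Maurer--Cartan equation and the $y$-equation to the limit by continuity of $\partial^\mathrm{tot}$ and $\{\cdot,\cdot\}$, and identify the $\mathcal{F}$-completion with $\widehat{C}^\lambda_\ast$ via Lemma \ref{lemma:induction}(x) and the finiteness in Theorem \ref{theorem:finite-chain}(viii); your stabilization argument for the $(0,3)$ and $(0,0)$ components and your finiteness check for the bracket simply make explicit what the paper leaves implicit.
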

\begin{proof}
Fix an integer $i\geq I\in\mathbb{Z}_{\geq3}$. For every $j\in\mathbb{Z}_{\geq0}$, applying the map $\underline{e}_-$ to the chains in Lemma \ref{lemma:induction}, (vi) we obtain
\begin{equation}
\underline{x}_{i,j+1}-\underline{x}_{i,j}\in\mathcal{F}^{i+j}C_{-2}^\lambda,\textrm{ }\underline{y}_{i,j+1}-\underline{y}_{i,j}\in\mathcal{F}^{i+j-U-1}C_{2d}^\lambda,\textrm{ }\underline{z}_{i,j+1}-\underline{z}_{i,j}\in\mathcal{F}^{i+j-2}C_{2d-1}^\lambda. \nonumber
\end{equation}
Thus the limits
\begin{equation}
\underline{x}:=\lim_{j\rightarrow\infty}\underline{x}_{i,j},\textrm{ }\underline{y}:=\lim_{j\rightarrow\infty}\underline{y}_{i,j},\textrm{ }\underline{z}:=\lim_{j\rightarrow\infty}\underline{z}_{i,j} \nonumber
\end{equation}
exist in the completion of $C_\ast^\lambda$ with respect to the filtration $\mathcal{F}^\bullet$, and they satisfy the equations
\begin{equation}
\partial^\mathrm{tot}(\underline{x})-\frac{1}{2}\left\{\underline{x},\underline{x}\right\}=0,\textrm{ }\partial^\mathrm{tot}(\underline{y})-\left\{\underline{x},\underline{y}\right\}=\underline{z}. \nonumber
\end{equation}
by Theorem \ref{theorem:finite-chain}, (iv) and the compatibility of $\underline{e}_-$ with the odd Lie brackets. To show that we actually have $\underline{x}\in\widehat{C}_{-2}^\lambda$, $\underline{y}\in\widehat{C}_{2d}^\lambda$ and $\underline{z}\in\widehat{C}_{2d-1}^\lambda$, i.e. the completion of $C_\ast^\lambda$ with respect to the filtrations $F^\Xi$ and $\mathcal{F}^m$ coincide, we need to verify that for any $\Xi>0$, there are only finitely many classes $a\in H_1(L;\mathbb{Z})$ with $\lambda(a)<\Xi$ and $\left(x(a,k),y(a,k),z(a,k)\right)\neq0$ for some $k\in\mathbb{Z}_{\geq0}$. By Lemma \ref{lemma:induction} (x), such a class $a$ must satisfy $a\in\underline{A}_x^+\cup\underline{A}_{y,z}^+$. On the other hand, Theorem \ref{theorem:finite-chain}, (vii) implies that $\underline{A}_x^+(\Xi)\cup\underline{A}_{y,z}^+(\Xi)$ is a finite set. Finally, Theorem \ref{theorem:chain}, (iii) and (iv) follow from Lemma \ref{lemma:induction}, (viii) and (x).
\end{proof}

\subsection{Defining the chains}\label{section:chain-d}

In order to define de Rham chains on our models $\{\mathcal{L}_{k+1}\}_{k\geq0}$ of the free loop space of $L$ using the moduli spaces introduced in Section \ref{section:CG}, we need to have strongly smooth maps from these moduli spaces to $\{\mathcal{L}_{k+1}\}_{k\geq0}$.

For $\bar{a}\in\pi_2(X,L)$ and $(T,B)\in\mathcal{T}(k+1,\bar{a})$, the interior evaluation map (\ref{eq:ev-int}) gives rise to a smooth map
\begin{equation}
\mathrm{ev}_\mathrm{int}^P:\prod_{v\in V_{\mathrm{int}}(T)}P\times\mathcal{L}_{k_v+1}(\partial B(v))\rightarrow\prod_{e\in E_{\mathrm{int}}(T)}(P\times L)^2 \nonumber
\end{equation}
on our finite-dimensional models of the free loop space. Using the concatenation map (\ref{eq:con}), which is also smooth, we obtain a smooth map
\begin{equation}\label{eq:ev-con}
\left(\prod_{e\in E_{\mathrm{int}}(T)}P\times L\right)\textrm{ }{{}_\Delta\times_{\mathrm{ev}_\mathrm{int}^P}}\textrm{ }\left(\prod_{v\in V_{\mathrm{int}}(T)}P\times\mathcal{L}_{k_v+1}(\partial B(v))\right)\rightarrow P\times\mathcal{L}_{k+1}(a).
\end{equation}
Similarly, we have a smooth map
\begin{equation}\label{eq:ev-con1}
\left(\prod_{e\in E_\mathrm{int}(T)}P\times L\right){{}_\Delta\times_{\mathrm{ev}_\mathrm{int}^P}}\left(\prod_{v\in V_\mathrm{int}(T)}P\times\mathcal{L}_{k_v+1}\left(\partial \ring{B}(v)\right)\right)\rightarrow P\times\mathcal{L}_{k+1}(a)
\end{equation}
for any $\ring{\bar{a}}\in\pi_2(X,x,L)$ and $(T,\ring{B},v_0)\in\mathcal{T}(k+1,\ring{\bar{a}})$.

Using Wang's model $\{\mathcal{L}_{k+1}\}_{k\geq0}$, it is easy to prove the following.

\begin{lemma}[\cite{yla}, Proposition 51]\label{lemma:smev}
For $k,m\in\mathbb{Z}_{\geq0}$, and $P=\{m\}$ or $[m,m+1]$, there are strongly smooth maps
\begin{equation}\label{eq:sc1}
\mathrm{Ev}^\mathcal{R}:\overline{\mathcal{R}}_{k+1}(L,\bar{a};P)\rightarrow P\times\mathcal{L}_{k+1}(a),\textrm{ where }\lambda(a)<(m+1-k)\varepsilon,
\end{equation}
\begin{equation}\label{eq:sc2}
\mathrm{Ev}^{\mathcal{R}}_\vartheta:\overline{\mathcal{R}}_{k+2,\vartheta}(L,\bar{a};P)\rightarrow P\times\mathcal{L}_{k+1}(a),\textrm{ where }\lambda(a)<(m+1-k)\varepsilon,
\end{equation}
\begin{equation}\label{eq:sc3}
{}_l\mathrm{Ev}^\mathcal{R}:{}_l\overline{\mathcal{R}}_{k+1}^1(x,L,\ring{\bar{a}};P)\rightarrow P\times\mathcal{L}_{k+1}(a),\textrm{ where }\lambda(a)<(m-k-U)\varepsilon,
\end{equation}
\begin{equation}\label{eq:sc4}
{}_{l-1}\mathrm{Ev}^{S^1}:{}_{l-1}\overline{\mathcal{R}}_{k+1}^{S^1}(x,L,\ring{\bar{a}};P)\rightarrow P\times\mathcal{L}_{k+1}(a),\textrm{ where }\lambda(a)<(m-k-U)\varepsilon,
\end{equation}
\begin{equation}\label{eq:sc5}
{}_l^{j,j+1}\mathrm{Ev}^\mathcal{R}:{}_l^{j,j+1}\overline{\mathcal{R}}_{k+1}^1(x,L,\ring{\bar{a}};P)\rightarrow P\times\mathcal{L}_{k+1}(a),\textrm{ where }\lambda(a)<(m-k-U)\varepsilon,
\end{equation}
\begin{equation}\label{eq:sc6}
{}_{l-1}\mathrm{Ev}_i:{}_{l-1}\overline{\mathcal{R}}_{k+1,\tau_i}^1(x,L,\ring{\bar{a}};P)\rightarrow P\times\mathcal{L}_{k+1}(a),\textrm{ where }\lambda(a)<(m-k-U)\varepsilon,
\end{equation}
such that the following diagram commutes for every $(T,B)\in\mathcal{T}(k+2,\bar{a})$:
\begin{equation}
	\begin{tikzcd}[font=\small]
	(\prod_eP\times L){{}_\Delta\times_{\mathrm{ev}_\mathrm{int}}}\left(\prod_{v\neq v_0}\mathcal{R}_{k_v+1}(B(v);P)\times\mathcal{R}_{k_{v_0}+1,\vartheta}(B(v_0);P)\right) \arrow[r] \arrow[d] &\overline{\mathcal{R}}_{k+2,\vartheta}(\bar{a};P) \arrow[d] \\
	(\prod_eP\times L){{}_\Delta\times_{\mathrm{ev}_\mathrm{int}^P}}\left(\prod_vP\times\mathcal{L}_{k_v+1}(\partial B(v))\right) \arrow[r,"(\ref{eq:ev-con})"] &P\times\mathcal{L}_{k+2}(a) \nonumber
	\end{tikzcd}
\end{equation}
where the first horizontal map is defined from (\ref{eq:corner1}) by setting $d=0$, and the vertical maps are given by the $\mathrm{Ev}^\mathcal{R}$ and $\mathrm{Ev}_\vartheta^\mathcal{R}$ above; and the diagram
\begin{equation}\label{eq:cd2}
\begin{tikzcd}[font=\small]
(\prod_eP\times L){{}_\Delta\times_{\mathrm{ev}_\mathrm{int}}}\left(\prod_{v\neq v_0}\mathcal{R}_{k_v+1}\left(\ring{B}(v);P\right)\times{}_l\mathcal{R}^1_{k_{v_0}+1}\left(\ring{B}(v_0);P\right)\right) \arrow[r] \arrow[d] &{}_l\overline{\mathcal{R}}_{k+1}^1(\ring{\bar{a}};P) \arrow[d] \\
(\prod_eP\times L){{}_\Delta\times_{\mathrm{ev}_\mathrm{int}^P}}\left(\prod_vP\times\mathcal{L}_{k_v+1}\left(\partial \ring{B}(v)\right)\right) \arrow[r,"(\ref{eq:ev-con1})"] &P\times\mathcal{L}_{k+1}(a)
\end{tikzcd}
\end{equation}
commutes for every $(T,\ring{B},v_0)\in\mathcal{T}(k+1,\ring{\bar{a}})$, where the first horizontal map is defined from (\ref{eq:corner2}) by setting $d=0$, and the vertical maps are given by ${}_l\mathrm{Ev}^\mathcal{R}$. 
	
In the above, we have abbreviated the notations of the moduli spaces so that the boundary conditions specified by the Lagrangian submanifold $L$ and the asymptotic conditions specified by a Hamiltonian orbit $x$ of $X_{H_t}$ are omitted. In the commutative diagram (\ref{eq:cd2}) above, one can also include cylinder bubbles in $\prod_{i=1}^{r_2}{}_{j_i}\mathcal{M}(y_{j_i},x)$. There are similar compatibility diagrams for the strongly smooth maps $\mathrm{Ev}^\mathcal{R}$, ${}_{l-1}\mathrm{Ev}^{S^1}$, ${}_l^{j,j+1}\mathrm{Ev}^\mathcal{R}$ and ${}_{l-1}\mathrm{Ev}_i$, which we have omitted.
\end{lemma}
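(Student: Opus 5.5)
The plan is to build the maps $\mathrm{Ev}$ by lifting the boundary-marked-point evaluation maps of Theorem \ref{theorem:moduli}, (ii) to Wang's model $\mathcal{L}_{k+1}$. A point of $\mathcal{L}_{k+1}(a)$ is a tuple $(c_0,\dots,c_k)$ of homotopy classes of paths. So it suffices to record, for each element of the moduli space, the boundary arcs of the (possibly nodal) disc between consecutive boundary marked points, up to homotopy rel endpoints.

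\emph{Step 1 (underlying maps).} For a smooth element $((S;z_0,\dots,z_k,\dots),u)$, let $c_i\in\Pi_1L$ be the class of $u|_{[z_i,z_{i+1}]}$, the counterclockwise boundary arc (indices mod $k+1$). Then $t(c_k)=s(c_0)$ and $c_0\ast\cdots\ast c_k$ represents $\partial\bar a=a$. For nodal elements indexed by a tree $(T,\ring B,v_0)$ or $(T,B)$, I concatenate the arcs of the components, following the ribbon structure. By construction this is exactly the map (\ref{eq:ev-con}), resp. (\ref{eq:ev-con1}), applied componentwise. Hence the commutative diagrams hold by definition. For the auxiliary point $z_f$ in ${}_{l-1}\overline{\mathcal{R}}^1_{k+1,\tau_i}$, I concatenate across $z_f$; the forgetful maps $\mathcal{R}_{k'+1,f_i}$ are treated in the same way. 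For $\mathcal{R}_{k+2,\vartheta}$, I forget the arc endpoint $z_{k+1}$ similarly. Continuity on $\overline{\mathcal{R}}$ follows because Gromov convergence is $C^0$ on the boundary away from nodes. Homotopy classes of paths are also locally constant under $C^0$-close perturbations with fixed or nearby endpoints, since $\Pi_1L\to L\times L$ is a covering. The energy bounds $\lambda(a)<(m\pm\cdots)\varepsilon$ only restrict which K-spaces are defined, so they play no role here.

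\emph{Step 2 (strong smoothness).} Since $\Pi_1L\to L^2$ is a local diffeomorphism, $\mathcal{L}_{k+1}\to L^{k+1}$, $(c_i)\mapsto(s(c_i))$, is a covering map onto its image. Thus $\mathrm{Ev}$ is, locally on each Kuranishi chart, the unique continuous lift of the strongly smooth evaluation map $\mathrm{ev}^{\mathcal{R},P}$ of Theorem \ref{theorem:moduli}, (ii). Lifts of strongly smooth (resp. corner-stratified, submersive) maps through a covering inherit the same properties. Concretely, on a Kuranishi chart $(U,E,s,\psi)$ the map $\mathrm{ev}$ extends to $U$; I take $U$ simply connected or small enough, and define the extension to $\mathcal{L}_{k+1}$ by lifting. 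The lift agrees with Step 1 on $s^{-1}(0)$ by the uniqueness of lifts, and it is compatible with coordinate changes for the same reason. The $\mathbb{Z}_{k+1}$-equivariance in Theorem \ref{theorem:moduli}, (v) is preserved because cyclic permutation on $\mathcal{L}_{k+1}$ covers cyclic permutation on $L^{k+1}$.

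\emph{Main obstacle.} The hard point is making the chart-level lifts compatible with the corner identifications (\ref{eq:corner2}) and (\ref{eq:corner1}), together with their fiber-product descriptions and the covering maps $\pi_{r,r'}$. On a neighborhood of a corner, the extension of $\mathrm{ev}$ off the zero set is given by gluing. I must check that the lifted map restricted to the corner equals the concatenation map (\ref{eq:ev-con}) applied to the lifts on the factors, and not merely up to homotopy. My first attempt is again uniqueness of lifts. Both maps lift the same map to $L^{k+1}$, which is the compatibility of $\mathrm{ev}$ with corners already built into Theorem \ref{theorem:moduli}. They also agree at one point of each connected chart, namely on the zero set by Step 1, so they coincide. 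If some corner chart does not meet the zero set, I shrink the charts so that every connected component does.
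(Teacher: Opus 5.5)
Your approach is the right one, and it matches what the paper means by "easy using Wang's model"; the paper itself gives no proof and only cites \cite{yla}, Proposition 51. Because $\mathcal{L}_{k+1}\to L^{k+1}$, $(c_i)\mapsto(s(c_i))$, is a covering, the following properties are inherited from the evaluation maps of Theorem \ref{theorem:moduli}, (ii): strong smoothness, corner stratification, admissibility, and submersivity of $\mathrm{ev}_0$. The diagrams then reduce to the fact that the boundary arcs of a nodal disc concatenate along the ribbon tree. A simplification is available: define the lift on an entire chart directly from the boundary arcs of the maps $u$ (solutions of $\bar\partial u\in E$) that make up the chart. Then agreement on zero sets, compatibility with coordinate changes and corners, and isotropy invariance are automatic, and you avoid the shrinking and connectivity bookkeeping needed to invoke uniqueness of lifts.

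One correction. For $\overline{\mathcal{R}}_{k+2,\vartheta}(L,\bar a;P)$ you must not forget $z_{k+1}=z_0e^{i\vartheta_{k+1}}$. This point is a genuine marked point, not an auxiliary one. The map has to land in $P\times\mathcal{L}_{k+2}(a)$, as in the first commutative diagram of the statement and in (\ref{eq:ev2}); the $\mathcal{L}_{k+1}(a)$ in (\ref{eq:sc2}) is a typo. With your choice, the first diagram is not even well-typed, since its lower horizontal map lands in $P\times\mathcal{L}_{k+2}(a)$, so your claim that it commutes by definition fails there. Forgetting $z_{k+1}$ would also break the later use of the lemma. The chain $x_{m,0}(k+1)$ must live on $\mathcal{L}_{k+2}$, because forgetting a marked point is carried out at the chain level by $\circ_{k+2-i}e_L$ in (\ref{eq:id0}) and in the marking map $B$. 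Forgetting it already at the level of $\mathrm{Ev}_\vartheta$ would double up that operation. Keep all $k+2$ arcs; the rest of your argument then goes through unchanged.
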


The following theorem is a variant of \cite{kic}, Theorem 7.2, see also \cite{yla}, Appendix A.2 for details.

\begin{theorem}\label{theorem:pushforward}
Let $(\mathbb{X},\widehat{\mathcal{U}})$ be a compact, oriented admissible K-space of dimension $d$, equipped with an admissible map $\hat{f}:(\mathbb{X},\widehat{\mathcal{U}})\rightarrow\mathcal{L}_{k+1}$, a differential form $\hat{\omega}$, and a CF-perturbation $\widehat{\mathcal{S}}=(\widehat{\mathcal{S}}^\theta)_{0<\theta\leq1}$. We assume that $\widehat{\mathcal{S}}$ is transversal to $0$, and $\mathrm{ev}_0\circ\hat{f}:(\mathbb{X},\widehat{\mathcal{U}})\rightarrow L$ is a corner-stratified strong submersion with respect to $\widehat{\mathcal{S}}$. Then one can define a de Rham chain
\begin{equation}
\hat{f}_\ast(\mathbb{X},\widehat{\mathcal{U}},\hat{\omega},\widehat{\mathcal{S}}^\theta)\in C_{d-|\hat{\omega}|}^\mathrm{dR}(\mathcal{L}_{k+1}) \nonumber
\end{equation}
for sufficiently small $\theta>0$, so that the following are true.
\begin{itemize}
\item[(i)] (Stokes' formula)
\begin{equation}
\begin{split}
\partial\left(\hat{f}_\ast(\mathbb{X},\widehat{\mathcal{U}},\hat{\omega},\widehat{\mathcal{S}}^\theta)\right)&=(-1)^{|\hat{\omega}|}(\hat{f}|_{\partial\mathbb{X}})_\ast(\partial\mathbb{X},\widehat{\mathcal{U}}|_{\partial\mathbb{X}},\hat{\omega}|_{\partial\mathbb{X}},\widehat{\mathcal{S}}^\theta|_{\partial\mathbb{X}}) \\
&+(-1)^{|\hat{\omega}|+1}\hat{f}_\ast(\mathbb{X},\widehat{\mathcal{U}},d\hat{\omega},\widehat{\mathcal{S}}^\theta), \nonumber
\end{split}
\end{equation}
where $\partial\mathbb{X}$ is the normalized boundary of $\mathbb{X}$.
\item[(ii)] (Fiber product formula) Suppose we are given the following data
\begin{itemize}
	\item a compact oriented admissible K-space $(\mathbb{X}_i,\widehat{\mathcal{U}}_i)$ of dimension $d_i$;
	\item an admissible map $\hat{f}_i:(\mathbb{X}_i,\widehat{\mathcal{U}}_i)\rightarrow\mathcal{L}_{k_i+1}$;
	\item a differential form $\hat{\omega}_i$ on $(\mathbb{X}_i,\widehat{\mathcal{U}}_i)$;
	\item a CF-perturbation $\widehat{\mathcal{S}}_i$ on $(\mathbb{X}_i,\widehat{\mathcal{U}}_i)$ such that $\mathrm{ev}_0\circ\hat{f}_i:(\mathbb{X}_i,\widehat{\mathcal{U}}_i)\rightarrow L$ is corner-stratified strongly submersive with respect to $\widehat{\mathcal{S}}_i$
\end{itemize}
for $i=1,2$ such that $d_1+d_2=d$. For each $1\leq j\leq k_1$, consider the fiber product of admissible K-spaces
\begin{equation}
(\mathbb{X}_{12},\widehat{\mathcal{U}}_{12}):=(\mathbb{X}_1,\widehat{\mathcal{U}}_1)\textrm{ }{{}_{\mathrm{ev}_j\circ\hat{f}_1}\times_{\mathrm{ev}_0\circ\hat{f}_2}}\textrm{ }(\mathbb{X}_2,\widehat{\mathcal{U}}_2). \nonumber
\end{equation}
equipped with the CF-perturbation $\widehat{\mathcal{S}}_{12}:=\widehat{\mathcal{S}}_1\times\widehat{\mathcal{S}}_2$ (cf. \cite{fooo4}, \S10.2), the differential form $\hat{\omega}_{12}:=(-1)^{(d-|\hat{\omega}|_1-n)|\hat{\omega}_2|}\hat{\omega}_1\times\hat{\omega}_2$, and an admissible map
\begin{equation}
\begin{split}
\hat{f}_{12}:(\mathbb{X}_{12},\widehat{\mathcal{U}}_{12})&\rightarrow\mathcal{L}_{k_1+k_2},\\
(f_{12})_{p_1,p_2}(x_1,x_2)&:=\mathrm{con}_j\left((f_1)_{p_1}(x_1),(f_2)_{p_2}(x_2)\right), \nonumber
\end{split}
\end{equation}
where $x_1\in U_{p_1}$, $x_2\in U_{p_2}$, and $\mathrm{ev}_j\circ f_{p_1}(x_1)=\mathrm{ev}_0\circ f_{p_2}(x_2)$. Then we have
\begin{equation}
(\hat{f}_{12})_\ast\left(\mathbb{X}_{12},\widehat{\mathcal{U}}_{12},(-1)^{|\hat{\omega}_{12}|+n}\hat{\omega}_{12},\widehat{\mathcal{S}}_{12}^\theta\right)=(\hat{f}_1)_\ast(\mathbb{X}_1,\widehat{\mathcal{U}}_1,\hat{\omega}_1,\widehat{\mathcal{S}}_1^\theta)\circ_j(\hat{f}_2)_\ast(\mathbb{X}_2,\widehat{\mathcal{U}}_2,\hat{\omega}_2,\widehat{\mathcal{S}}_2^\theta). \nonumber
\end{equation}
\end{itemize}

Similarly, if we replace the admissible map $\hat{f}$ above with an admissible map $\hat{f}:(\mathbb{X},\widehat{\mathcal{U}})\rightarrow[a,b]\times\mathcal{L}_{k+1}$, where $a<b$, and
\begin{equation}
\left(\mathrm{pr}_{[a,b]}\circ\hat{f},\mathrm{ev}_0\circ\mathrm{pr}_{\mathcal{L}_{k+1}}\circ\hat{f}\right):(\mathbb{X},\widehat{\mathcal{U}})\rightarrow[a,b]\times L \nonumber
\end{equation}
is a corner-stratified strong submersion, where $\mathrm{pr}_{[a,b]}$ and $\mathrm{pr}_{\mathcal{L}_{k+1}}$ are projections to $[a,b]$ and $\mathcal{L}_{k+1}$, respectively, then there is a well-defined relative de Rham chain
\begin{equation}
\hat{f}_\ast(\mathbb{X},\widehat{\mathcal{U}},\hat{\omega},\widehat{\mathcal{S}}^\theta)\in \overline{C}_{d-|\hat{\omega}|-1}^\mathrm{dR}(\mathcal{L}_{k+1}) \nonumber
\end{equation}
for sufficiently small $\theta>0$, which satisfies
\begin{equation}\label{eq:epm}
\begin{split}
e_\pm\left(\hat{f}_\ast(\mathbb{X},\widehat{\mathcal{U}},\hat{\omega},\widehat{\mathcal{S}}^\theta)\right)&=(-1)^{d-1}(\hat{f}|_{\partial_\pm\mathbb{X}})_\ast\left(\partial_\pm\mathbb{X},\widehat{\mathcal{U}}|_{\partial_\pm\mathbb{X}},\hat{\omega}|_{\partial_\pm\mathbb{X}},\widehat{\mathcal{S}}^\theta|_{\partial_\pm\mathbb{X}}\right) \\
&\in C_{d-|\hat{\omega}|-1}^\mathrm{dR}(\mathcal{L}_{k+1}).
\end{split}
\end{equation}
The Stokes' formula in this case is
\begin{equation}
\begin{split}
\partial\left(\hat{f}_\ast(\mathbb{X},\widehat{\mathcal{U}},\hat{\omega},\widehat{\mathcal{S}}^\theta)\right)&=(-1)^{|\hat{\omega}|}(\hat{f}|_{\partial_h\mathbb{X}})_\ast\left(\partial\mathbb{X},\widehat{\mathcal{U}}|_{\partial_h\mathbb{X}},\hat{\omega}|_{\partial_h\mathbb{X}},\widehat{\mathcal{S}}^\theta|_{\partial_h\mathbb{X}}\right)\\
&+(-1)^{|\hat{\omega}|+1}\hat{f}_\ast(\mathbb{X},\widehat{\mathcal{U}},d\hat{\omega},\widehat{\mathcal{S}}^\theta), \nonumber
\end{split}
\end{equation}
where $\partial_h\mathbb{X}=\hat{f}^{-1}\left(\{a,b\}\times\mathcal{L}_{k+1}\right)$ is the horizontal boundary, $\partial_-\mathbb{X}=\hat{f}^{-1}(\{a\}\times\mathcal{L}_{k+1})$ and $\partial_+\mathbb{X}=\hat{f}^{-1}(\{b\}\times\mathcal{L}_{k+1})$. The fiber product formula takes the same form as above, with the admissible maps $\hat{f}_i$ taking the form $(\mathbb{X}_i,\widehat{\mathcal{U}}_i)\rightarrow[a,b]\times\mathcal{L}_{k_i+1}$.
\end{theorem}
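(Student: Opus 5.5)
The statement to prove is Theorem \ref{theorem:pushforward}, the construction of pushforward de Rham chains from admissible K-spaces. The plan is to follow the template of \cite{kic}, Theorem 7.2, and of \cite{fooo4}, replacing Irie's Moore-loop target by Wang's model $\mathcal{L}_{k+1}$. Since $\mathcal{L}_{k+1}$ is a genuine finite-dimensional smooth manifold of dimension $(k+1)n$, the argument should simplify rather than complicate.

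\textbf{Definition of the chain.} We would first choose a good coordinate system $\{(U_{\mathfrak p},E_{\mathfrak p},\Gamma_{\mathfrak p},s_{\mathfrak p},\psi_{\mathfrak p})\}$ compatible with $\widehat{\mathcal U}$, together with a strongly smooth partition of unity $\{\chi_{\mathfrak p}\}$ subordinate to it. On each chart, the CF-perturbation $\widehat{\mathcal S}^\theta$ is represented by a family $(W_{\mathfrak p},\eta_{\mathfrak p},\mathfrak s^\theta_{\mathfrak p})$, where $W_{\mathfrak p}$ is a parameter manifold, $\eta_{\mathfrak p}$ is a compactly supported top form on $W_{\mathfrak p}$, and $\mathfrak s^\theta_{\mathfrak p}$ is the perturbed section. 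Transversality to $0$ makes the zero locus $V_{\mathfrak p}:=(\mathfrak s^\theta_{\mathfrak p})^{-1}(0)\subset U_{\mathfrak p}\times W_{\mathfrak p}$ a smooth manifold. We can embed $V_{\mathfrak p}$ in some $\mathbb R^N$, so that it becomes an element of $\mathfrak V$. The composite $V_{\mathfrak p}\to U_{\mathfrak p}\xrightarrow{f_{\mathfrak p}}\mathcal L_{k+1}$ is a plot, because the strong submersion hypothesis on $\mathrm{ev}_0\circ\hat f$ says exactly that $\mathrm{ev}_0\circ f_{\mathfrak p}$ restricted to $V_{\mathfrak p}$ is a submersion. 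We then define
\[
\hat f_\ast(\mathbb X,\widehat{\mathcal U},\hat\omega,\widehat{\mathcal S}^\theta):=\sum_{\mathfrak p}\frac{1}{\#\Gamma_{\mathfrak p}}\left(V_{\mathfrak p},f_{\mathfrak p}|_{V_{\mathfrak p}},(\chi_{\mathfrak p}\hat\omega\times\eta_{\mathfrak p})|_{V_{\mathfrak p}}\right).
\]
This form is compactly supported and has the degree dictated by the grading convention of (\ref{eq:sp}).

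\textbf{Independence of choices.} We would show that changing the partition of unity, the representatives of $\widehat{\mathcal S}$, or the good coordinate system changes the sum only by elements of $Z_N$. The reason is that all such changes are related by submersions between the parameter spaces, combined with integration along fibres $\pi_!$, which is precisely the relation imposed by $Z_N$. We expect this to be the main obstacle: Kuranishi charts are glued by coordinate changes that are embeddings rather than submersions. We would handle this as in \cite{fooo4}, \S9, by passing to a common refinement on which every coordinate change is realized by projecting away the normal bundle directions, after thickening $W_{\mathfrak p}$.

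\textbf{Stokes formula.} This follows chart by chart from the definition $\partial(V,\varphi,\omega)=(-1)^{|\omega|+1}(V,\varphi,d\omega)$ together with Stokes' theorem on each $V_{\mathfrak p}$. Because the differential only sees the form, the term $d\hat\omega$ contributes the second summand. The term $d\chi_{\mathfrak p}$ sums to zero over $\mathfrak p$. The boundary of $V_{\mathfrak p}$ lies over the normalized boundary $\partial\mathbb X$, since $\hat f$ is corner-stratified, and this produces the first summand. Orientation signs are fixed by the conventions of \cite{yla}, Appendix B.

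\textbf{Fiber product formula.} We would take products of charts and of perturbations, $\widehat{\mathcal S}_1\times\widehat{\mathcal S}_2$. The zero locus of the product chart is then $V_{\mathfrak p_1}\,{}_{\mathrm{ev}_j}\!\times_{\mathrm{ev}_0}V_{\mathfrak p_2}$. This fibre product is transverse because $\mathrm{ev}_0\circ f_2$ is a submersion. Comparing with the definition of $\circ_j$ via $\mathrm{con}_j$ gives the formula, up to the sign $(-1)^{|\hat\omega_{12}|+n}$, which comes from matching the two orientation conventions.

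\textbf{Relative version.} We would repeat the same construction with target $[a,b]\times\mathcal L_{k+1}$. Using a collar of $\partial_\pm\mathbb X$ adapted to $\mathrm{pr}_{[a,b]}\circ\hat f$, we extend each $V_{\mathfrak p}$ cylindrically past $a$ and $b$ to produce the data $\tau_\pm$ required of a relative plot. The identity (\ref{eq:epm}) then holds by construction, because restricting to the slices $V_{\pm1}$ recovers the pushforward from $\partial_\pm\mathbb X$. The relative Stokes formula is proved as before, except that only the horizontal boundary $\partial_h\mathbb X$ contributes, since the vertical ends have been absorbed into the cylindrical structure.
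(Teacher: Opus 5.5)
Your overall route, rerunning the pushforward construction behind Irie's Theorem 7.2 with Wang's manifold $\mathcal{L}_{k+1}$ in place of Moore loops, is exactly what the paper relies on; it gives no argument of its own and only cites Irie and the second author's earlier appendix. However, your treatment of the boundary has a genuine gap.

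When $\mathbb{X}$ has boundary or corners, your zero locus $V_{\mathfrak p}$ is a manifold with corners. This is the case for every moduli space the theorem is applied to, e.g.\ $\overline{\mathcal{R}}_{k+1}(L,\bar{a};\{m\})$. The form $(\chi_{\mathfrak p}\hat\omega\times\eta_{\mathfrak p})|_{V_{\mathfrak p}}$ does not vanish near $\partial V_{\mathfrak p}$, so your triple is not a de Rham chain in the model used here. Plots in $\mathfrak{V}$ are boundaryless submanifolds carrying compactly supported forms. This is not a convention you can relax: $\partial$ is well defined on the quotient by $Z_N$ only because $d\pi_!=\pi_!d$ for submersions with boundaryless fibres.

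More seriously, your Stokes step cannot work as stated. The differential $\partial(V,\varphi,\omega)=(-1)^{|\omega|+1}(V,\varphi,d\omega)$ is purely formal: no integration over $V$ takes place, so there is no ``Stokes' theorem on $V_{\mathfrak p}$''. Nothing in your definition produces the summand $(\hat f|_{\partial\mathbb X})_\ast(\partial\mathbb X,\ldots)$ in (i).

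The missing ingredient is what the hypothesis \emph{admissible} supplies. Admissible K-spaces, maps, forms and CF-perturbations are collared near the boundary and corners. The construction then runs as follows.
\begin{enumerate}
\item Attach an outer collar $(-1,0]\times\partial V_{\mathfrak p}$ on which $f_{\mathfrak p}$, $\hat\omega$ and the perturbation data are pulled back from $\partial V_{\mathfrak p}$, and work on the resulting boundaryless manifold.
\item Multiply the form by a cut-off $\rho(t)$ in the collar coordinate that vanishes near $t=-1$ and equals $1$ near $t=0$.
\item Then $\partial$ of the chain contains a term $\rho'(t)\,dt\wedge(\cdots)$ supported in the collar. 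The $Z_N$-relation for the submersion $\mathrm{pr}\colon(-1,0)\times\partial V_{\mathfrak p}\to\partial V_{\mathfrak p}$, with $\mathrm{pr}_!(\rho'\,dt\wedge\mathrm{pr}^\ast\alpha)=\pm\alpha$, identifies this term with the chain of the normalized boundary.
\item At corners, use products of cut-offs; this reproduces the same construction on $\partial\mathbb X$ itself.
\end{enumerate}
This is the same mechanism as the cylindrical ends $\tau_\pm$ you use in the relative case. You need it there too, for the part of $\partial\mathbb X$ not lying over $\{a,b\}$. You are right that only this part should appear in the relative Stokes formula, whatever the written definition of $\partial_h\mathbb X$ in the statement suggests.

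Finally, your independence-of-choices step is not what the statement needs. The theorem only asks that the chains can be defined so that (i) and (ii) hold. The actual requirement is that the chosen good coordinate systems, partitions of unity, CF-perturbations and collars restrict on boundary strata to the product data used in (ii).
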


Let $X$ be a Liouville domain with $c_1(X)=0$ and $C_d^\mathrm{GH}(X)<\infty$ for some $d\in\mathbb{N}$. It follows that there exists a cochain $\tilde{\beta}=\sum_{l=0}^\infty\beta_{l}\otimes u^{-d-l+1}\in\mathit{SC}_{S^1}^{-2d+1}(X)$, where $\beta_{l}\in\mathit{SC}^{2l-1}(X)$ and only finitely many cochains $\beta_l$ are non-vanishing, such that
\begin{equation}\label{eq:cl}
\partial^{S^1}(\tilde{\beta})=\left(\partial(\beta_0)+\sum_{l=1}^\infty\delta_l(\beta_l)\right)\otimes u^{-d+1}=e_X\otimes u^{-d+1}.
\end{equation}
Note that the terms in $\partial^{S^1}(\tilde{\beta})$ with different powers of $u$ necessarily vanish due to the form of the right-hand side of (\ref{eq:cl}), which gives the first equality above.

Fix $a\in H_1(L;\mathbb{Z})$. For a closed Lagrangian submanifold $L\subset\mathrm{int}(X)$ that is oriented and $\mathit{Spin}$, choose $a_l\in H_1(L;\mathbb{Z})$ for each $l\in\mathbb{Z}_{\geq0}$ such that $a_l=0$ when $\beta_l=0$ and $\sum_{l=0}^\infty a_l=a$. For $\ring{\bar{a}}_l\in\pi_2(X,\beta_l,L)$ with $\partial\ring{\bar{a}}_l=a_l$, and $P\in\left\{\{m\},[m,m+1]\right\}$ for some $m\in\mathbb{Z}_{\geq0}$, consider the moduli spaces
\begin{equation}
{}_l\mathcal{R}_{k+1}^1(\beta_l,L,\ring{\bar{a}}_l;P),\textrm{ }l\in\mathbb{Z}_{\geq0}.
\end{equation}
Let $\ring{\bar{a}}=\sum_{l=0}^\infty\ring{\bar{a}}_{l-1}$, it follows that $\partial\ring{\bar{a}}=a$. 

Recall that we have the following data for every $k,m,l\in\mathbb{Z}_{\geq0}$ and $P\in\{\{m\},[m,m+1]\}$.
\begin{itemize}
	\item[(i)] Compact admissible K-spaces, where $l\geq1$ in (\ref{eq:S1}) and (\ref{eq:tau}), $l\geq2$ in (\ref{eq:jj}):
	\begin{equation}\label{eq:d}
	\overline{\mathcal{R}}_{k+1}(L,\bar{a};P),\textrm{ }\lambda(a)<(m+1-k)\varepsilon,
	\end{equation}
	\begin{equation}\label{eq:theta}
	\overline{\mathcal{R}}_{k+2,\vartheta}(L,\bar{a};P),\textrm{ }\lambda(a)<(m+1-k)\varepsilon,
	\end{equation}
	\begin{equation}\label{eq:CG}
	{}_l\overline{\mathcal{R}}_{k+1}^1(\beta_{l},L,\ring{\bar{a}}_{l};P),\textrm{ }\lambda(a)<(m-k-U)\varepsilon,
	\end{equation}
	\begin{equation}\label{eq:S1}
	{}_{l-1}\overline{\mathcal{R}}^{S^1}_{k+1}(\beta_{l},L,\ring{\bar{a}}_{l};P),\textrm{ }\lambda(a)<(m-k-U)\varepsilon,
	\end{equation}
	\begin{equation}\label{eq:tau}
	{}_{l-1}\overline{\mathcal{R}}_{k+1,\tau_i}^1(\beta_{l},L,\ring{\bar{a}}_{l};P),\textrm{ }0\leq i\leq k,\textrm{ }\lambda(a)<(m-k-U)\varepsilon,
	\end{equation}
	\begin{equation}\label{eq:jj}
	{}_l^{j,j+1}\overline{\mathcal{R}}_{k+1}^1(\beta_{l},L,\ring{\bar{a}}_{l};P),\textrm{ }1\leq j\leq l-1,\textrm{ }\lambda(a)<(m-k-U)\varepsilon,
	\end{equation}
	and admissible CF-perturbations on these moduli spaces.
	
	Moreover, Kuranishi structures and CF-perturbations on (\ref{eq:d}), (\ref{eq:theta}), (\ref{eq:CG}), (\ref{eq:S1}) and (\ref{eq:jj}) are $\mathbb{Z}_{k+1}$-invariant with respect to the cyclic permutation of the boundary marked points $z_0\cdots,z_k$. For the moduli spaces (\ref{eq:tau}), we require that $\mathbb{Z}_{k+1}$ acts transitively on the set $\left\{\left({}_{l-1}\overline{\mathcal{R}}_{k+1,\tau_i}^1(\beta_l,L,\ring{\bar{a}}_l;P),\widehat{\mathcal{S}}_i^\theta\right)\right\}_{i=0}^k$ of K-spaces together with their admissible CF-perturbations.
	\item[(ii)] Admissible maps
	\begin{equation}\label{eq:ev1}
	\mathrm{Ev}^\mathcal{R}:\overline{\mathcal{R}}_{k+1}(L,\bar{a};P)\rightarrow P\times\mathcal{L}_{k+1}(a),
	\end{equation}
	\begin{equation}\label{eq:ev2}
	\mathrm{Ev}_\vartheta^\mathcal{R}:\overline{\mathcal{R}}_{k+2,\vartheta}(L,\bar{a};P)\rightarrow P\times\mathcal{L}_{k+2}(a),
	\end{equation}
	\begin{equation}\label{eq:ev3}
	{}_l\mathrm{Ev}^\mathcal{R}:{}_l\overline{\mathcal{R}}_{k+1}^1(\beta_{l},L,\ring{\bar{a}}_{l};P)\rightarrow P\times\mathcal{L}_{k+1}(a_{l}),
	\end{equation}
	\begin{equation}\label{eq:ev4}
	{}_{l-1}\mathrm{Ev}^{S^1}: {}_{l-1}\overline{\mathcal{R}}^{S^1}_{k+1}(\beta_{l},L,\ring{\bar{a}}_{l};P)\rightarrow P\times\mathcal{L}_{k+1}(a_{l}),
	\end{equation}
	\begin{equation}\label{eq:ev5}
	{}_{l-1}\mathrm{Ev}_i:{}_{l-1}\overline{\mathcal{R}}_{k+1,\tau_i}^1(\beta_{l},L,\ring{\bar{a}}_l;P)\rightarrow P\times\mathcal{L}_{k+1}(a_{l}),
	\end{equation}
	\begin{equation}\label{eq:ev6}
	{}_l^{j,j+1}\mathrm{Ev}^\mathcal{R}:{}_l^{j,j+1}\overline{\mathcal{R}}_{k+1}^1(\beta_{l},L,\ring{\bar{a}}_{l};P)\rightarrow P\times\mathcal{L}_{k+1}(a_{l}),
	\end{equation}
	such that their compositions with $\mathrm{id}_P\times\mathrm{ev}_0^\mathcal{L}$ are corner stratified strong submersions with respect to the CF-perturbations fixed in (i).
	\item[(iii)] Isomorphisms of admissible K-spaces  (\ref{eq:bd2}), (\ref{eq:bd5}), (\ref{eq:bd1}), (\ref{eq:bd4}) that are compatible with CF-perturbations fixed in (i) and the evaluation maps in (ii).
\end{itemize}

By Theorem \ref{theorem:pushforward}, we obtain the following (relative) de Rham chains
\begin{equation}\label{eq:c1}
x_m(k):=\sum_{\substack{a\in H_1(L;\mathbb{Z})\\ \lambda(a)<(m+1-k)\varepsilon}}(-1)^{n+1}\mathrm{Ev}_\ast\left(\overline{\mathcal{R}}_{k+1}\left(L,\bar{a};\{m\}\right)\right)\in C_{-1},
\end{equation}
\begin{equation}\label{eq:c2}
\bar{x}_m(k):=\sum_{\substack{a\in H_1(L;\mathbb{Z})\\ \lambda(a)<(m+1-k)\varepsilon}}(-1)^{k+1}\mathrm{Ev}_\ast\left(\overline{\mathcal{R}}_{k+1}\left(L,\bar{a};[m,m+1]\right)\right)\in\overline{C}_{-1},
\end{equation}
\begin{equation}\label{eq:c3}
x_{m,0}(k+1):=\sum_{\substack{a\in H_1(L;\mathbb{Z})\\ \lambda(a)<(m+1-k)\varepsilon}}(-1)^{n+1}\mathrm{Ev}_\ast\left(\overline{\mathcal{R}}_{k+2,\vartheta}\left(L,\bar{a};\{m\}\right)\right)\in C_{-2},
\end{equation}
\begin{equation}\label{eq:c4}
\bar{x}_{m,0}(k+1):=\sum_{\substack{a\in H_1(L;\mathbb{Z})\\ \lambda(a)<(m+1-k)\varepsilon}}(-1)^k\mathrm{Ev}_\ast\left(\overline{\mathcal{R}}_{k+2,\vartheta}\left(L,\bar{a};[m,m+1]\right)\right)\in\overline{C}_{-2},
\end{equation}
\begin{equation}\label{eq:c5}
y_{m,0}(k):=\sum_{\substack{a\in H_1(L;\mathbb{Z})\\ \lambda(a)<(m-U-k)\varepsilon}}\sum_{l=0}^\infty(-1)^{n+k+1}\mathrm{Ev}_\ast\left({}_{l}\overline{\mathcal{R}}_{k+1}^1\left(\beta_{l},L,\ring{\bar{a}}_{l};\{m\}\right)\right)\in C_2,
\end{equation}
\begin{equation}\label{eq:c6}
y_{m,1}(k+1):=\sum_{\substack{a\in H_1(L;\mathbb{Z})\\ \lambda(a)<(m-U-k-1)\varepsilon}}\sum_{l=1}^\infty(-1)^{n+k+1}\mathrm{Ev}_\ast\left({}_{l-1}\overline{\mathcal{R}}_{k+2}^1\left(\beta_{l},L,\ring{\bar{a}}_{l};\{m\}\right)\right)\in C_0,
\end{equation}
\begin{equation}\label{eq:c7}
\bar{y}_{m,0}(k):=\sum_{\substack{a\in H_1(L;\mathbb{Z})\\ \lambda(a)<(m-U-k)\varepsilon}}\sum_{l=0}^\infty\mathrm{Ev}_\ast\left({}_l\overline{\mathcal{R}}_{k+1}^1\left(\beta_{l},L,\ring{\bar{a}}_{l};[m,m+1]\right)\right)\in\overline{C}_2,
\end{equation}
\begin{equation}\label{eq:c8}
\bar{y}_{m,1}(k+1):=\sum_{\substack{a\in H_1(L;\mathbb{Z})\\ \lambda(a)<(m-U-k-1)\varepsilon}}\sum_{l=1}^\infty\mathrm{Ev}_\ast\left({}_{l-1}\overline{\mathcal{R}}_{k+2}^1\left(\beta_{l},L,\ring{\bar{a}}_{l};[m,m+1]\right)\right)\in\overline{C}_0,
\end{equation}
\begin{equation}\label{eq:c9}
z_{m}(k):=\sum_{\substack{a\in H_1(L;\mathbb{Z})\\ \lambda(a)<(m-1-k)\varepsilon}}(-1)^{n+k+1}\mathrm{Ev}_\ast\left(\overline{\mathcal{R}}_{k+1}^1\left(e_X,L,\bar{a};\{m\}\right)\right)\in C_1,
\end{equation}
\begin{equation}\label{eq:c10}
\bar{z}_{m}(k):=-\sum_{\substack{a\in H_1(L;\mathbb{Z})\\ \lambda(a)<(m-1-k)\varepsilon}}\mathrm{Ev}_\ast\left(\overline{\mathcal{R}}_{k+1}^1\left(e_X,L,\bar{a};[m,m+1]\right)\right)\in\overline{C}_1.
\end{equation}
Under the natural projections $C_\ast\rightarrow C_\ast^\mathrm{nd}$ and $\overline{C}_\ast\rightarrow\overline{C}_\ast^\mathrm{nd}$, we get non-degenerate chains which will still denoted by $x_m,\cdots,\bar{z}_m$ by abuse of notations. Note that in the definitions of all the de Rham chains above, we have omitted the differential form $\hat{\omega}$, which is always taken to be $1$, and the CF-perturbations, which are fixed earlier. The superscripts and subscripts for evaluation maps on various different moduli spaces are also omitted to simplify the notations.

Using the chains defined above, we can then form the following $S^1$-equivariant de Rham chains:
\begin{equation}\label{eq:t1}
\tilde{x}_m(k):=x_{m,0}(k)\otimes1\in C_{-2}^{S^1},
\end{equation}
\begin{equation}\label{eq:t2}
\bar{\tilde{x}}_m(k):=\bar{x}_{m,0}(k)\otimes1\in\overline{C}_{-2}^{S^1},
\end{equation}
\begin{equation}\label{eq:t3}
\tilde{y}_m(k,k+1):=y_{m,0}(k)\otimes h^{-d+1}+y_{m,1}(k+1)\otimes h^{-d}\in C_{2d}^{S^1},
\end{equation}
\begin{equation}\label{eq:t4}
\bar{\tilde{y}}_m(k,k+1):=\bar{y}_{m,0}(k)\otimes h^{-d+1}+\bar{y}_{m,1}(k+1)\otimes h^{-d}\in \overline{C}_{2d}^{S^1},
\end{equation}
\begin{equation}\label{eq:t5}
\tilde{z}_m(k):=z_m(k)\otimes h^{-d+1}\in C_{2d-1}^{S^1},
\end{equation}
\begin{equation}\label{eq:t6}
\bar{\tilde{z}}_m(k):=\bar{z}_m(k)\otimes h^{-d+1}\in\overline{C}_{2d-1}^{S^1}.
\end{equation}
Note that for the $y$-type chains defined above, the $k$ and $(k+1)$-components are mixed. This will become natural once the $S^1$-equivariant differentials on $C_\ast^{S^1}$ and $\overline{C}_\ast^{S^1}$ are applied to these chains, as the outcome will eventually lie in the $k$-component.

Besides these chains that will appear in the statement of Theorem \ref{theorem:finite-chain}, there are also some auxiliary chains that we need to deal with, which are defined as follows.
\begin{equation}\label{eq:c11}	\bar{y}^{j,j+1}_m(k):=\sum_{\substack{a\in H_1(L;\mathbb{Z})\\ \lambda(a)<(m-k-U)\varepsilon}}\sum_{l=2}^\infty\mathrm{Ev}_\ast\left({}_l^{j,j+1}\overline{\mathcal{R}}_{k+1}^1\left(\beta_{l},L,\ring{\bar{a}}_{l};[m,m+1]\right)\right)\in\overline{C}_1,
\end{equation}
\begin{equation}\label{eq:c12}
\bar{y}_m^{S^1}(k):=\sum_{\substack{a\in H_1(L;\mathbb{Z})\\ \lambda(a)<(m-k-U)\varepsilon}}\sum_{l=1}^\infty\mathrm{Ev}_\ast\left({}_{l-1}\overline{\mathcal{R}}_{k+1}^{S^1}\left(\beta_{l},L,\ring{\bar{a}}_{l};[m,m+1]\right)\right)\in\overline{C}_1.
\end{equation}
\begin{equation}\label{eq:c13}
\bar{y}_{m,1}^i(k):=\sum_{\substack{a\in H_1(L;\mathbb{Z})\\ \lambda(a)<(m-k-U)\varepsilon}}\sum_{l=1}^\infty\mathrm{Ev}_\ast\left({}_{l-1}\overline{\mathcal{R}}_{k+1,\tau_i}^1\left(\beta_{l},L,\ring{\bar{a}}_{l};[m,m+1]\right)\right)\in\overline{C}_1.
\end{equation}

\subsection{Completion of proof}\label{section:completion}

By our discussions in Section \ref{section:chain-s}, in order to prove Theorem \ref{theorem:deform-Viterbo}, it suffices to prove Theorem \ref{theorem:finite-chain}. 

Recall from Theorem \ref{theorem:moduli} that the codimension one boundary of the admissible K-space ${}_{l}\overline{\mathcal{R}}_{k+1}^1\left(\beta_{l},L,\ring{\bar{a}}_{l};[m,m+1]\right)$ contain the following strata:
\begin{equation}\label{eq:st1}
\bigsqcup_{0\leq j\leq l}{}_j\overline{\mathcal{M}}\left(\beta_{l},y_{j,l};[m,m+1]\right)\times{}_{l-j}\overline{\mathcal{R}}_{k+1}^1\left(y_{j,l},L,\ring{\bar{a}}_{l};[m,m+1]\right)
\end{equation}
where the $y_{j,l}$'s are 1-periodic orbits of $X_{H_t}$,
\begin{equation}\label{eq:st2}
\bigsqcup_{1\leq j\leq l-1}{}_l^{j,j+1}\overline{\mathcal{R}}_{k+1}^1\left(\beta_{l},L,\ring{\bar{a}}_{l};[m,m+1]\right),
\end{equation}
and
\begin{equation}\label{eq:st3}
{}_{l-1}\overline{\mathcal{R}}_{k+1}^{S^1}\left(\beta_{l},L,\ring{\bar{a}}_{l};[m,m+1]\right).
\end{equation}
By analyzing these strata we obtain the following.

\begin{lemma}\label{lemma:ids}
We have the following identities for (relative) de Rham chains in $\overline{C}_\ast$ for any $k,l\in\mathbb{Z}_{\geq0}$:
\begin{equation}\label{eq:id0}
\bar{x}_m(k)=\sum_{i=1}^{k+1}(-1)^{|\bar{x}_{m,0}|+k(i-1)}(\bar{\tau}_{k+1})^i_\ast\bar{x}_{m,0}(k+1)\circ_{k+2-i}\bar{e}_L,
\end{equation}
where $\bar{\tau}_{k+1}$ is the cyclic structure on the relative complex $\overline{C}_\ast(k+1)$ (cf. (\ref{eq:cyc})), and
\begin{equation}\label{eq:id1}
\sum_{l=0}^\infty\sum_{j=0}^ly_{j,l}=\sum_{l=0}^\infty\sum_{j=0}^l\delta_j(\beta_{l})=e_X,
\end{equation}
\begin{equation}\label{eq:id2}
\bar{y}_m^{j,j+1}(k)=0,\textrm{ }1\leq j\leq l-1,
\end{equation}
\begin{equation}\label{eq:id3}
\bar{y}_m^{S^1}(k)=\bar{\delta}_\mathrm{cyc}\left(\bar{y}_{m,1}(k+1)\right),
\end{equation}
where the $\delta_j$'s are structure maps on the $S^1$-complex $\mathit{SC}^\ast(X)$, with $\delta_0=\partial$ the usual Floer differential, and $\bar{\delta}_\mathrm{cyc}$ is the BV operator on $\overline{C}_\ast$.
\end{lemma}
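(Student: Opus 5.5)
Each of the four identities will be derived from a specific piece of the structure of the moduli spaces of Section \ref{section:CG}, pushed forward to de Rham chains by Theorem \ref{theorem:pushforward}. I will treat them in turn, in the order (\ref{eq:id0}), (\ref{eq:id2}), (\ref{eq:id3}), (\ref{eq:id1}).

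For (\ref{eq:id0}), I use the maps $\pi_{\vartheta,i}$ of (\ref{eq:f1}). By \cite{yla}, Lemma 38, the images $(-1)^{ki}\pi_{\vartheta,i}(\mathcal{R}_{k+2,\vartheta})$, $1\le i\le k+1$, cover $\mathcal{R}_{k+1}$ up to codimension one, and they do so compatibly with the Kuranishi structures on $\overline{\mathcal{R}}_{k+2,\vartheta}(L,\bar a;P)$. On the chain side, cyclically permuting $i$ times is $(\bar\tau_{k+1})^i_\ast$. Forgetting the marked point $z_{k+2-i}$ corresponds to $\circ_{k+2-i}\bar e_L$, because $\sigma$ is induced by concatenation with a constant path. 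Since codimension-one overlaps contribute zero de Rham chains, summing the pushforwards over $i$ gives $\bar x_m(k)$. The $\mathbb{Z}_{k+1}$-invariance of the CF-perturbations ensures that the perturbations match. The only real work here is checking that the signs $(-1)^{|\bar x_{m,0}|+k(i-1)}$ agree with the orientation conventions of \cite{yla}, Appendix B, together with the normalizing signs in (\ref{eq:c2}) and (\ref{eq:c4}).

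For (\ref{eq:id2}), I use condition (ii) of the Cohen--Ganatra Floer datum. Near ${}_l^{j,j+1}\overline{\mathcal{R}}_{k+1}^1$, the datum is pulled back via the forgetful map $\pi_j$ to ${}_{l-1}\overline{\mathcal{R}}_{k+1}^1$. The Kuranishi structure, CF-perturbation and evaluation map can be chosen to factor through this map. Its fibres are positive dimensional: one can rotate $p_j$ along the circle $|p_j|=|p_{j+1}|$. Hence the pushforward of the form $1$ is a degenerate chain and vanishes in $C^{\mathrm{dR}}$, by the quotient by $Z_N$, i.e. by integration along the fibre. This is the argument of Seidel/Ganatra for the $S^1$-complex structure, transplanted to the present moduli spaces.

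For (\ref{eq:id3}), I decompose ${}_{l-1}\overline{\mathcal{R}}_{k+1}^{S^1}$ into the sectors ${}_{l-1}\overline{\mathcal{R}}_{k+1}^{S^1_{i,i+1}}$. Each sector is identified, via the oriented diffeomorphism $\phi_f^i$ of (\ref{eq:aux-res}), with ${}_{l-1}\overline{\mathcal{R}}_{k+1,\tau_i}^1$. This gives $\bar y_m^{S^1}(k)=\sum_i \bar y^i_{m,1}(k)$, since the codimension-one loci ${}_{l-1}\overline{\mathcal{R}}^{S^1_i}_{k+1}$ do not contribute. Next, the auxiliary point $z_f$ is forgotten under the evaluation map. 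So ${}_{l-1}\overline{\mathcal{R}}_{k+1,\tau_i}^1$ is the cyclic rotation by $\kappa^i$ of a copy of ${}_{l-1}\overline{\mathcal{R}}_{k+2}^1$ with one boundary point forgotten. This identifies $\bar y^i_{m,1}(k)$ with $\pm\bar\tau^{i}_{k+1}\bar y_{m,1}(k+1)\circ_{k+2-i}\bar e_L$. Here I use the transitivity requirement on $\{\widehat{\mathcal{S}}_i^\theta\}$ and the compatibility of the Floer data with $\tau_i$ from condition (iii). Comparing with the formula (\ref{eq:BV-c}) for $\delta_{\mathrm{cyc}}$ then gives (\ref{eq:id3}). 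I expect this step to be the main obstacle, because the orientation and sign bookkeeping between $\phi^i_f$, $\kappa$ and the sign $(-1)^{|x|+k(j-1)}$ is delicate.

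Finally, (\ref{eq:id1}) is not a moduli statement but a restatement of (\ref{eq:cl}): $\partial(\beta_0)+\sum_{l\ge1}\delta_l(\beta_l)=e_X$. The orbits $y_{j,l}$ in (\ref{eq:st1}) are, by definition of the angle-decorated cylinder counts ${}_j\overline{\mathcal{M}}$, the outputs of $\delta_j(\beta_l)$ (\cite{sg1}, Section 4.3). Summing over $j$ and $l$ and using the $S^1$-complex relations among the $\delta_j$, the collection of boundary strata (\ref{eq:st1}) reassembles into the single moduli space with input $e_X$. This is exactly what is recorded as $\sum y_{j,l}=e_X$.
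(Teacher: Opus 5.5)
Your arguments for (\ref{eq:id0}), (\ref{eq:id2}) and (\ref{eq:id3}) follow the paper's route. The paper itself only sketches these and defers to \cite{yla}, Lemmas 55 and 56. The three steps are: covering $\mathcal{R}_{k+1}$ by the sectors $\pi_{\vartheta,i}(\mathcal{R}_{k+2,\vartheta})$; discarding the ${}_l^{j,j+1}$-strata because everything factors through the forgetful map, whose fibres are one-dimensional; and splitting ${}_{l-1}\overline{\mathcal{R}}^{S^1}_{k+1}$ into $\tau_i$-sectors via $\phi^i_f$.

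The gap is in (\ref{eq:id1}).

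\textbf{What is missing.} You read (\ref{eq:id1}) as the displayed part of (\ref{eq:cl}), namely $\partial(\beta_0)+\sum_{l\geq1}\delta_l(\beta_l)=e_X$. But (\ref{eq:id1}) states $\sum_l\sum_{j=0}^l\delta_j(\beta_l)=e_X$, so it also contains every term $\delta_j(\beta_l)$ with $j<l$. These terms come from the strata (\ref{eq:st1}) in which $l-j\geq1$ auxiliary points remain on the disc. They therefore feed into ${}_{l-j}\overline{\mathcal{R}}^1_{k+1}(\delta_j(\beta_l),L,\ldots)$, not into $\overline{\mathcal{R}}^1_{k+1}(e_X,L,\ldots)$, and they must cancel for (\ref{eq:fun}) to hold.

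\textbf{Why your justification fails.} The $S^1$-complex relations $\sum_{i+j=k}\delta_i\delta_j=0$ are identities about compositions of the $\delta$'s. They say nothing about the particular elements $\delta_j(\beta_l)$, so they cannot make these terms cancel. As the paper points out, $\tilde\beta$ is not an equivariant cocycle here, unlike the cyclic dilation in \cite{yla}.

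\textbf{The missing input.} You need the full equation $\partial^{S^1}(\tilde\beta)=e_X\otimes u^{-d+1}$. Expand $\partial^{S^1}(\tilde\beta)=\sum_{l,j}\delta_j(\beta_l)\otimes u^{j-l-d+1}$ and compare coefficients of $u^{-d+1-r}$:
\begin{itemize}
\item for every $r\geq1$ you get $\sum_l\delta_{l-r}(\beta_l)=0$; since these elements have degree $2r$, this is equivalent to $\sum_{l\geq1}\sum_{j=0}^{l-1}\delta_j(\beta_l)=0$;
\item for $r=0$ you get $\sum_l\delta_l(\beta_l)=e_X$.
\end{itemize}
Now group the strata (\ref{eq:st1}) by $r=l-j$ and use linearity of the pushforward in the input orbit; this gives (\ref{eq:fun}). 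Without this step, $\bar\partial\bar y_{m,0}$ would carry extra terms that are not accounted for in (\ref{eq:ana1}).
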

\begin{proof}
This follows essentially from \cite{yla}, Lemmas 55 and 56. Roughly speaking, the identity (\ref{eq:id0}) holds since the forgetful map $\pi_{\vartheta,i}$ embeds $\mathcal{R}_{k+2,\vartheta}$ as an open sector in $\mathcal{R}_{k+1}$, and by varying $i$, their images cover $\mathcal{R}_{k+1}$ up to codimension $1$ strata. (\ref{eq:id2}) holds because of dimension reasons, while (\ref{eq:id3}) is true since up to codimension $1$ strata, the moduli space ${}_{l-1}\mathcal{R}_{k+1}^{S^1}$ can be decomposed as $\bigsqcup_{i=0}^k{}_{l-1}\mathcal{R}_{k+1,\tau_i}^1$ and our choice of the Floer data in Section \ref{section:CG} ensures that similar decomposition holds for the corresponding moduli space of Floer solutions. The reason for the validity of (\ref{eq:id1}) is slightly different in our case, since $\tilde{\beta}=\sum_{l=0}^\infty\beta_l\otimes u^{-d-l+1}$ is no longer a cyclic dilation, therefore not necessarily an $S^1$-equivariant cocycle. However, we can apply our observation made in (\ref{eq:cl}), which gives $\sum_{l=1}^\infty\sum_{j=0}^{l-1}\delta_j(\beta_l)=0$.
\end{proof}

\begin{proof}[Proof of Theorem \ref{theorem:finite-chain}]
Recall that we have defined the $S^1$-equivariant chains $\tilde{x}_m$, $\bar{\tilde{x}}_m$, $\tilde{y}_m$, $\bar{\tilde{y}}_m$, $\tilde{z}_m$ and $\bar{\tilde{z}}_m$ in (\ref{eq:t1})---(\ref{eq:t6}). We need to show that under the natural projections $C_\ast^{S^1}\twoheadrightarrow C_\ast^\lambda$ and $\overline{C}_\ast^{S^1}\twoheadrightarrow\overline{C}_\ast^\lambda$, the images of these chains satisfy the conditions (i)--(vi) of Theorem \ref{theorem:finite-chain}.

By definitions and (\ref{eq:epm}), we have the relations $\tilde{x}_m=\tilde{e}_-(\bar{\tilde{x}}_m)$, $\tilde{y}_m=\tilde{e}_-(\bar{\tilde{y}}_m)$, and $\tilde{z}_m=\tilde{e}_-(\bar{\tilde{z}}_m)$. After projecting to $C_\ast^\lambda$ and $\overline{C}_\ast^\lambda$, we have verified (ii).

The fact that $\tilde{x}_{m+1}-\tilde{e}_+(\bar{\tilde{x}}_m)\in\mathcal{F}^mC_{-2}^{S^1}$ follows from
\begin{equation}
\left(\tilde{x}_{m+1}-\tilde{e}_+(\bar{\tilde{x}}_m)\right)(a,k)\neq0\Rightarrow\lambda(a)\geq(m-k)\varepsilon, \nonumber
\end{equation}
which is a direct consequence of the definition of the chains $\tilde{x}_{m+1}$ and $\bar{\tilde{x}}_m$. After taking the projections, we obtain $\underline{x}_{m+1}-\underline{e}_+(\bar{\underline{x}}_m)\in\mathcal{F}^mC_{-2}^\lambda$. The verifications of $\underline{y}_{m+1}-\underline{e}_+(\bar{\underline{y}}_m)\in\mathcal{F}^{m-U-1}C_{2d}^\lambda$ and $\underline{z}_{m+1}-\underline{e}_+(\bar{\underline{z}}_m)\in\mathcal{F}^{m-2}C_{2d-1}^\lambda$ are similar. This confirms (iv).

It follows from the isomorphism (\ref{eq:bd4}) and the fiber product formula stated in Theorem \ref{theorem:pushforward} that
\begin{equation}\label{eq:dx1}
\bar{\partial}\bar{x}_{m,0}(a,k+1)=\sum_{\substack{k_1+k_2=k+1\\a_1+a_2=a\\1\leq i\leq k_1}}(-1)^{(k_1-i)(k_2-1)+k_1}\bar{x}_{m,0}(a_1,k_1+1)\circ_i\bar{x}_m(a_2,k_2).
\end{equation}
Combining with (\ref{eq:id0}) of Lemma \ref{lemma:ids}, we further deduce
\begin{equation}\label{eq:dx2}
\begin{split}
&\;\;\;\;\;\;\bar{\partial}\bar{x}_{m,0}(a,k+1) \\
&=\sum_{\substack{k_1+k_2=k+1\\a_1+a_2=a\\1\leq i\leq k_1}}\sum_{j=1}^{k_2+1}(-1)^{\maltese_{ij}^1}\bar{x}_{m,0}(a_1,k_1+1)\circ_i\left(\overline{\tau}_{k_2+1}^j\left(\bar{x}_{m,0}(a_2,k_2+1)\right)\circ_{k_2+2-j}e_L\right),
\end{split}
\end{equation}
where
\begin{equation}
\maltese_{ij}^1=(i-1)(k_2-1)+(k_1-1)k_2+k_2(j-1) \nonumber
\end{equation}
for every $(a,k)$ with $\lambda(a)<(m+k-1)\varepsilon$. This verifies the requirement $\bar{\partial}^\mathrm{tot}(\bar{\underline{x}}_m)-\frac{1}{2}\left\{\bar{\underline{x}}_m,\bar{\underline{x}}_m\right\}\in\overline{\mathcal{F}}^m\overline{C}_{-3}^\lambda$ in (iii). Note that the right-hand side of (\ref{eq:dx2}) is different from $\frac{1}{2}\left\{\bar{\tilde{x}}_m,\bar{\tilde{x}}_m\right\}(a,k+1)$ in $\overline{C}^{S^1}_\ast$. However, after passing to the quotient $\overline{C}_\ast^\lambda$, it becomes $\frac{1}{2}\left\{\bar{\underline{x}}_m,\bar{\underline{x}}_m\right\}(a,k+1)$. Similarly one can show that $\bar{\partial}^\mathrm{tot}(\bar{\underline{z}}_m)-\left\{\bar{\underline{x}}_m,\bar{\underline{z}}_m\right\}\in \overline{\mathcal{F}}^{m-2}\overline{C}_{2d-2}^\lambda$. The verification of $\bar{\partial}^\mathrm{tot}(\bar{\underline{y}}_m)-\left\{\bar{\underline{x}}_m,\bar{\underline{y}}_m\right\}-\bar{\underline{z}}_m\in \overline{\mathcal{F}}^{m-U-1}\overline{C}_{2d-1}^\lambda$ is slightly more complicated. First note that the fiber product formula combined with the isomorphism (\ref{eq:bd5}) gives
\begin{equation}\label{eq:ana}
\begin{split}
\bar{\partial}{\bar{y}}_{m,0}(a,k)&=\sum_{\substack{k_1+k_2=k+1\\a_1+a_2=a\\1\leq i\leq k_1}}(-1)^{(k_1-i)(k_2-1)+k_1-1}\bar{y}_{m,0}(a_1,k_1)\circ_i\bar{x}_m(a_2,k_2) \\
&+\sum_{\substack{k_1+k_2=k+1\\a_1+a_2=a\\1\leq i\leq k_1}}(-1)^{(k_1-i)(k_2-1)+k_1}\bar{x}_m(a_1,k_1)\circ_i\bar{y}_{m,0}(a_2,k_2) \\
&-\sum_{l=0}^\infty\sum_{j=0}^l\mathrm{Ev}_\ast\left({}_{l-j}\overline{\mathcal{R}}_{k+1}^1\left(\delta_j(\beta_{l}),L,\ring{\bar{a}}_{l};[m,m+1]\right)\right) \\
&-\bar{y}_m^{j,j+1}(a,k)-\bar{y}_m^{S^1}(a,k).
\end{split}
\end{equation}
Applying the identity (\ref{eq:id1}), we obtain
\begin{equation}\label{eq:fun}
\begin{split}
&\textrm{ }\textrm{ }\textrm{ }\sum_{l=0}^\infty\sum_{j=0}^l\mathrm{Ev}_\ast\left({}_{l-j}\overline{\mathcal{R}}_{k+1}^1\left(\delta_j(\beta_{l}),L,\ring{\bar{a}}_{l};[m,m+1]\right)\right) \\
&=\mathrm{Ev}_\ast\left(\overline{\mathcal{R}}_{k+1}^1(e_X,L,\bar{a};[m,m+1])\right).
\end{split}
\end{equation}
Since the right-hand side of (\ref{eq:fun}) is by definition the de Rham chain $\bar{z}_m(a,k)\in\overline{C}_1$, using the identities (\ref{eq:id2}) and (\ref{eq:id3}) we can write
\begin{equation}\label{eq:ana1}
\begin{split}
&\textrm{ }\textrm{ }\textrm{ }\textrm{ }\textrm{ }\textrm{ }\bar{\partial}{\bar{y}}_{m,0}(a,k)\otimes h^{-d+1} \\
&=\sum_{\substack{k_1+k_2=k+1\\a_1+a_2=a\\1\leq i\leq k_1}}(-1)^{(k_1-i)(k_2-1)+k_1-1}\bar{y}_{m,0}(a_1,k_1)\otimes h^{-d+1}\circ_i\bar{x}_m(a_2,k_2) \\
&+\sum_{\substack{k_1+k_2=k+1\\a_1+a_2=a\\1\leq i\leq k_1}}(-1)^{(k_1-i)(k_2-1)+k_1}\bar{x}_m(a_1,k_1)\circ_i\bar{y}_{m,0}(a_2,k_2)\otimes h^{-d+1} \\
&+\bar{z}_m(a,k)\otimes h^{-d+1}-\bar{\delta}_\mathrm{cyc}\left(\bar{y}_{m,1}(a,k+1)\right)\otimes h^{-d+1}.
\end{split}
\end{equation}
Note that $\bar{\partial}{\bar{y}}_{m,0}(a,k)\otimes h^{-d+1}+\bar{\delta}_\mathrm{cyc}\left(\bar{y}_{m,1}(a,k+1)\right)\otimes h^{-d+1}$ gives the $(a,k)$-part of $\bar{\partial}^{S^1}(\bar{\tilde{y}}_m)(a,k)$. By (\ref{eq:bracket-pr}) (or more precisely, its relative version) and (\ref{eq:id0}), we can write (\ref{eq:ana1}) as
\begin{equation}
\bar{\partial}^{S^1}(\bar{\tilde{y}}_m)(a,k)-\left\{\bar{\tilde{x}}_m,\bar{\tilde{y}}_m\right\}(a,k)-\bar{\tilde{z}}_m(a,k)=0. \nonumber
\end{equation}
In other words, $\bar{\partial}^{S^1}(\bar{\tilde{y}}_m)-\left\{\bar{\tilde{x}}_m,\bar{\tilde{y}}_m\right\}-\bar{\tilde{z}}_m\in\overline{\mathcal{F}}^{m-U-1}\overline{C}_{2d-1}^\lambda$. This completes the verification of (iii).

To see that (v) and (vi) are true, note that $\tilde{x}_{m,0}(a,k+1)\neq0$ implies $\mathcal{R}_{k+2,\vartheta}(L,\bar{a};\{m\})\neq\emptyset$, thus $\lambda(a)\geq2\varepsilon$ or $a=0$, $k\geq2$. Moreover, it follows from (\ref{eq:id0}) that
\begin{equation}
B\left(\tilde{x}_{m,0}(0,3)\right)=\sum_{j=1}^{k+1}(-1)^{j-1}\tau_3^j\left(x_{m,0}(0,3)\right)\circ_{4-j}e_L=x_m(0,2). \nonumber
\end{equation}
Since the chain $x_m(0,2)$ is defined using the (oriented) moduli space $(-1)^{n+1}\mathcal{R}_3(L,0;\{m\})$, which counts constant maps to $L$, we obtain $[x_m(0,2)]=(-1)^{n+1}[L]$. Similarly, $\tilde{z}_m(a,k)\neq0$ implies that $\mathcal{R}_{k+1}^1(e_X,L,\bar{a};\{m\})\neq\emptyset$, thus $\lambda(a)\geq2\varepsilon$ or $a=0$, and
\begin{equation}
\left[\tilde{z}_m(0,0)\right]=(-1)^{n+1}\left[\overline{\mathcal{R}}_1^1(e_X,L,0;\{m\})\right]\otimes h^{-d+1}=(-1)^{n+1}[\![L]\!]\otimes h^{-d+1}. \nonumber
\end{equation}
(vii) follows directly from Lemma \ref{lemma:action}, and the fact that the $L_\infty$-homomorphism (\ref{eq:p}) preserves the action.

Finally, we confirm (viii). Note that $\left(\bar{\tilde{y}}_m(a,k),\bar{\tilde{z}}_m(a,k)\right)\neq(0,0)$ implies that there exists some $l\in\mathbb{Z}_{\geq0}$ such that one of the two moduli spaces
\begin{equation}
{}_l\overline{\mathcal{R}}_{k+1}^1(\beta_{l},L,\ring{\bar{a}}_{l})\textrm{ and }{}_{l-1}\overline{\mathcal{R}}_{k+2}^1(\beta_{l},L,\ring{\bar{a}}_{l}) \nonumber
\end{equation}
is non-empty. Note that when $l=0$, $\overline{\mathcal{R}}_{k+1}^1(\beta_0,L,\ring{\bar{a}}_0)\neq\emptyset$. By Gromov compactness,
\begin{equation}
{}_l\overline{\mathcal{R}}_{k+1}^1(x,L,\ring{\bar{a}})=\emptyset\Leftrightarrow{}_l\overline{\mathcal{R}}_1^1(x,L,\ring{\bar{a}})=\emptyset. \nonumber
\end{equation}
It follows that for such an $l$, we have
\begin{equation}\label{eq:l}
\left({}_{l}\overline{\mathcal{R}}_1^1(\beta_{l},L,\ring{\bar{a}}_{l}),{}_{l-1}\overline{\mathcal{R}}_1^1(\beta_{l},L,\ring{\bar{a}}_{l})\right)\neq(\emptyset,\emptyset).
\end{equation}
Hence, $a\in A_x$ implies that $\overline{\mathcal{R}}_{2,\vartheta}(L,\bar{a};\{m\})\neq\emptyset$, while $a\in A_{y,z}$ implies there is an $l$ such that (\ref{eq:l}) holds. From now on, fix the choice of such an $l$. We claim that the set
\begin{equation}\label{eq:s}
\left\{a\in H_1(L;\mathbb{Z})\left\vert\lambda(a)<\Xi,\left(\overline{\mathcal{R}}_{2,\vartheta}(L,\bar{a};\{m\}),{}_{l}\overline{\mathcal{R}}_1^1(\beta_{l},L,\ring{\bar{a}}),{}_{l-1}\overline{\mathcal{R}}_1^1(\beta_{l},L,\ring{\bar{a}})\right)\neq(\emptyset,\emptyset,\emptyset)\right.\right\}
\end{equation}
is finite for any $\Xi>0$. In fact, if this is not the case, then there exists a sequence $(a_\ell)_{\ell\in\mathbb{N}}$ of distinct elements $a_\ell\in H_1(L;\mathbb{Z})$ such that $\lambda(a_\ell)<\Xi+|A_{H_t}(\beta_l)|$ for every $\ell\in\mathbb{N}$, and at least one of the following three conditions holds:
\begin{itemize}
	\item $\overline{\mathcal{R}}_{2,\vartheta}(L,\bar{a}_\ell;\{m\})\neq\emptyset$ for every $\ell\in\mathbb{N}$,
	\item ${}_{l}\overline{\mathcal{R}}_1^1(\beta_{l},L,\ring{\bar{a}}_\ell)\neq\emptyset$ for every $\ell\in\mathbb{N}$,
	\item ${}_{l-1}\overline{\mathcal{R}}_1^1(\beta_{l},L,\ring{\bar{a}}_\ell)\neq\emptyset$ for every $\ell\in\mathbb{N}$.
\end{itemize}
Consider the first case. Pick an element $u_\ell\in\overline{\mathcal{R}}_{2,\vartheta}(L,\bar{a}_\ell;\{m\})$ for each $\ell$. By possibly passing to a subsequence one may assume that $(u_\ell)_{\ell\in\mathbb{N}}$ is Gromov convergent, so $(a_\ell)_{\ell\in\mathbb{N}}$ is constant for $\ell\gg0$, which contradicts the assumption. In the second and the third cases, similar arguments will lead to the same contradiction. Thus we conclude that (\ref{eq:s}) is a finite set, which shows that both $A_x^+(\Xi)$ and $A_{y,z}^+(\Xi)$ are finite sets for every $\Xi>0$. 
\end{proof}

\section{Extremal Lagrangians in spectrally convex Liouville domains}\label{section:extremal}

In this section, we prove Theorem \ref{theorem:boundaryrigidity}, which asserts that extremal aspherical Lagrangians in the sense of Definition \ref{definition:Asp-extremal} lie in the boundary $\partial X$ provided that the Gutt--Hutchings capacities of $X$ decay fast enough in the sense of Definition \ref{definition:asymp-sta}.

\begin{proof}[Proof of Theorem \ref{theorem:boundaryrigidity}]
Let $(X,\lambda)$ be a $2n$-dimensional Liouville domain with $c_1(X)=0$. By our assumption, there exists a sequence $\{d_i\}_{i\in\mathbb{N}}$ of positive integers such that
\[
\lim_{d_i\to\infty}\frac{C^{\operatorname{GH}}_{d_i}(X)}{d_i}=\inf_{d\in\mathbb{N}}\frac{C^{\operatorname{GH}}_d(X)}{d},
\]
and that the convergence occurs at a rate faster than $\frac1{d_i}$. Suppose, for contradiction, that there exists a closed oriented aspherical Lagrangian submanifold $L\subset X$ which is $\mathit{Spin}$, with
\[
A_{\min}(L)=\inf_{d\in\mathbb{N}}\frac{C^{\operatorname{GH}}_d(X)}{d},
\]
and has non-trivial intersections with the interior $\mathrm{int}(X)$ of $X$. We will derive a contradiction using the monotonicity property for pseudoholomorphic curves with Lagrangian boundary conditions. 
	
Pick a decreasing sequence of positive numbers $\{\eta_i\}_{i\in\mathbb{N}}$ and consider the extensions of $X$ by attaching the collars
\[
\left([0,\eta_i]\times \partial X,d(e^r\lambda)\right),
\]
which we denote by
\[
X_{+{\eta_i}}:=X\cup_{\partial X}\left([0,\eta_i]\times \partial X\right).
\]
We can regard $L$ as a Lagrangian submanifold in $\mathrm{int}(X_{+\eta_i})$. By the assumption on the symplectic area spectrum of $L$ in $(X,\lambda)$, every smooth disc
\[
u:(D,\partial D)\to (X_{+\eta_i},L)
\]
satisfies
\[
\int_{\partial D}u^*\lambda>0\quad\Longrightarrow\quad\int_{\partial D}u^*\lambda\ge\inf_{d\in\mathbb{N}}\frac{C^{\operatorname{GH}}_d(X)}{d}.
\]
Here, we emphasize that the symplectic area of a disc $u:(D,\partial D)\to (X_{+\eta_i},L)$ depends only on its relative homotopy class $[u]\in \pi_2(X_{+\eta_i},L)$. Moreover, since every such disc $u:(D,\partial D)\to (X_{+\eta_i},L)$ can be smoothly isotoped to a smooth disc $u:(D,\partial D)\to (X,L)$ with boundary on $L$, we have an identification $\pi_2(X_{+\eta_i},L)\cong\pi_2(X,L)$ between relative homotopy groups. By assumption, the Lagrangian $L$ intersects $\mathrm{int}(X)$ (hence also $\mathrm{int}(X_{\eta_i})$ for each $i\in\mathbb{N}$) non-trivially. Fix a point $p\in L\cap \operatorname{int}(X)$. Let  $B^{2n}(\pi\delta^2)$ be the closed ball of radius $\delta>0$\footnote{Recall our convention in (\ref{eq:ball}).} centered at the origin in $(\mathbb{R}^{2n},\omega_\mathrm{std})$. For sufficiently small $\delta>0$, choose a symplectic embedding $\phi:B^{2n}(\pi\delta^2)\to \operatorname{int}(X)$ such that
\[\phi(0)=p,\ \phi^{-1}(L)=B^{2n}(\pi\delta^2)\cap \mathbb{R}^n.\]
	
From now on, we choose $d_i\in\mathbb{N}$ sufficiently large and $\eta_i<\frac{1}{d_i^2}$ sufficiently small so that
\begin{equation}\label{eq:choice}
d_i\left(\frac{C^{\operatorname{GH}}_{d_i}(X)}{d_i}-\inf_{d\in\mathbb{N}}\frac{C^{\operatorname{GH}}_d(X)}{d}\right)+o(\eta_i) C^{\operatorname{GH}}_{d_i}(X)<\min\left\{\frac{1}{3}\pi \delta^2,\inf_{d\in\mathbb{N}}\frac{C^{\operatorname{GH}}_d(X)}{d}\right\}.
\end{equation}
The following monotonicity lemma is crucial for our argument.
	
\begin{lemma}[\cite{cel}, Lemma 3.6]\label{monotoncity-with-lag-boundary}
Let $(M,\omega)$ be a symplectic manifold and let $K\subset M$ be a closed Lagrangian submanifold. Fix a point $x\in M$, which possibly lies in $K$, and an open neighbourhood $U_x$ of $x$ in $M$. Let $J_0$ be a fixed $\omega$-compatible almost complex structure on $U_x$. Let $J:\mathit{TM}\rightarrow\mathit{TM}$ be an $\omega$-compatible almost complex structure on $M$, such that $J|_{\textit{TU}_x}=J_0$. Then for any compact connected Riemann surface with boundary $(\Sigma,j)$, there exists a constant $\hbar>0$, depending on $\left(M,\omega,J_0,x,U_x\right)$, with the following property. If $u:(\Sigma,\partial \Sigma)\rightarrow (M,K)$ is a non-constant continuous map which is $(j,J)$-holomorphic, passes through $x$, and such that $u|_{u^{-1}(U_x)}:u^{-1}(U_x)\longrightarrow U_x$ is proper, we have
\[
\int_S u^\ast\omega\geq \hbar>0.
\]
\end{lemma}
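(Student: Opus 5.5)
The plan is the classical monotonicity argument, carried out entirely inside $U_x$. Everything will depend only on the fixed data $(\omega,J_0)$ on $U_x$ and on $K\cap U_x$. The behaviour of $J$ outside $U_x$ and the topology of $\Sigma$ will not enter, which is why $\hbar$ is uniform. Let $g_0=\omega(\cdot,J_0\cdot)$ be the metric on $U_x$. I treat two cases.

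\emph{Case $x\notin K$.} Shrink $U_x$ so that $\overline{U}_x\cap K=\emptyset$.

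\emph{Case $x\in K$.} Shrink $U_x$ and replace $g_0$ near $x$ by a metric $g$ which is uniformly equivalent to $g_0$ and for which $J_0$ is orthogonal, $J_0TK\perp TK$, and $K$ is totally geodesic. Such a metric exists by the standard construction of Frauenfelder and McDuff--Salamon. With respect to $g$, the energy of a $J_0$-holomorphic map still bounds its area up to a fixed constant, and the reflection/doubling trick along $K$ is available.

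Next, fix $r_0>0$ with the closed $g$-ball $\overline{B}_{r_0}(x)\subset U_x$. I then establish an isoperimetric inequality with constant $C$ depending only on this ball. In the interior case the statement is: for a $J_0$-holomorphic curve in $B_r(x)$ with boundary loops $\gamma$,
\[\mathrm{Area}\leq C\,\mathrm{length}(\gamma)^2.\]
In the Lagrangian case I need the relative version, in which the boundary consists of arcs $\gamma$ on $\partial B_r$ together with arcs on $K$, and only $\mathrm{length}(\gamma)$ appears on the right. The route is to write $\omega=d\lambda_0$ on the ball with $\lambda_0|_K$ exact, for instance by taking a Weinstein neighbourhood in which $\lambda_0$ vanishes on $K$. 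Then $\int u^*\omega$ reduces to $\int_\gamma\lambda_0$, which is bounded by $|\lambda_0|_{C^0(B_r)}\,\mathrm{length}(\gamma)$. This gives the cone-type estimate with $|\lambda_0|\lesssim r$; alternatively one can use the sharper fact that area is bounded by $\frac{1}{4\pi}\mathrm{length}^2$ up to constants.

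Now let $u$ pass through $x$. Properness of $u$ on $u^{-1}(U_x)$ guarantees that for almost every $r<r_0$ the set $\Sigma_r=u^{-1}(\overline{B}_r(x))$ is a compact surface. Its boundary then lies in $u^{-1}(\partial B_r)\cup u^{-1}(K)$; this is exactly where properness is used, since it prevents $\Sigma_r$ from running into $\partial\Sigma$ away from $K$. Set $a(r)=\int_{\Sigma_r}u^*\omega$, which is the $g$-area because $u$ is $J_0$-holomorphic on $\Sigma_r$. By the coarea formula, $a'(r)\geq \ell(r)$, where $\ell(r)$ is the length of $u(\partial\Sigma_r\cap u^{-1}(\partial B_r))$. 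The isoperimetric step gives $a(r)\le C\ell(r)^2$, and combining the two yields
\[\frac{d}{dr}\sqrt{a(r)}\geq \frac{1}{2\sqrt C}.\]
Since $x$ lies in the image, $a(r)>0$ for every $r>0$, and non-constancy of $u$ rules out a degenerate image. Integrating therefore gives $a(r_0)\ge r_0^2/(4C)$. Finally $\int_\Sigma u^*\omega\geq a(r_0)$, because $u^*\omega\ge0$ pointwise on all of $\Sigma$ by $J$-holomorphicity and $\omega$-compatibility of $J$. So $\hbar=r_0^2/(4C)$ works.

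The main obstacle is the relative isoperimetric inequality near $x\in K$: the boundary arcs lying on $K$ must contribute nothing. I would first try the exact-primitive argument, choosing $\lambda_0$ with $\lambda_0|_K=0$ and $|\lambda_0|=O(r)$ on $B_r$. If that runs into trouble, I would fall back on the totally-geodesic metric together with reflection across $K$, which doubles $\Sigma_r$ into a closed-boundary curve and reduces to the interior case.
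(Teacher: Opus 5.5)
The paper gives no proof of this lemma; it is quoted from \cite{cel}. Your skeleton is the standard monotonicity argument behind it: an isoperimetric inequality fed into the coarea formula, with the case split, the positivity of $u^*\omega$ and the integration all in order. The integration step is also right: from $a\le C\ell^2$ and $a'\ge\ell$ you get $(\sqrt a)'\ge 1/(2\sqrt C)$, so a non-sharp $C$ suffices.

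\textbf{The gap.} It lies in how you obtain $a\le C\ell^2$ when $x\in K$. Bounding $\int_\gamma\lambda_0$ by $\sup_{B_r}|\lambda_0|\cdot\ell(r)$ with $|\lambda_0|=O(r)$ yields only the cone inequality $a(r)\le C\,r\,\ell(r)$. That does not imply $a(r)\le C\ell(r)^2$, since you have no a priori control preventing $\ell(r)\ll r$.

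Nor does the cone inequality close the argument by itself. Combined with $a'\ge\ell$ it only shows that $a(r)/r^{1/C}$ is nondecreasing, which is vacuous for $C>\frac12$ because $a(r)$ decays like $r^2$ as $r\to0$. It works only with two further ingredients:
\begin{itemize}
\item the sharp constant $\frac12(1+O(r))$, e.g.\ from the radial primitive $\frac12\sum_i(q_i\,dp_i-p_i\,dq_i)$, which also vanishes on $T\{p=0\}$;
\item a separate density bound $\liminf_{r\to0}a(r)/r^2\ge\pi/2$ at $x$.
\end{itemize}
You supply neither. The Weinstein primitive $-\sum_i p_i\,dq_i$ has $|\lambda_0|\approx|p|$, which is as large as $r$ on $B_r$, so its constant is $1$, not $\frac12$.

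\textbf{The repair.} It stays inside your primitive approach: bound $|\lambda_0|$ by the distance to $K$ rather than the distance to $x$.
\begin{itemize}
\item Work in Weinstein--Darboux coordinates with $K=\{p=0\}$, $\omega=\sum_i dq_i\wedge dp_i$ and $\lambda_0=-\sum_i p_i\,dq_i$. Then $|\lambda_0|\le C\,\mathrm{dist}(\cdot,K)$.
\item For a.e.\ $r$, $u^{-1}(\partial B_r)$ consists of closed loops and of arcs whose endpoints lie on $\partial\Sigma$, hence map to $K$.
\item An arc of length $\ell_j$ with endpoints on $K$ stays within distance $\ell_j/2$ of $K$, so its $\lambda_0$-integral is at most $C\ell_j^2$.
\item For a closed loop, first replace $\lambda_0$ by $\lambda_0+d\langle p_0,q\rangle$, with $p_0$ the $p$-value at a point of the loop. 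This leaves the integral unchanged and gives the same bound.
\item The pieces of $\partial\Sigma_r$ lying on $\partial\Sigma$ contribute nothing, since $\lambda_0|_{TK}=0$.
\end{itemize}
Hence $a(r)\le C\sum_j\ell_j^2\le C\ell(r)^2$. Your reflection fallback also works if you use it only for this inequality: double $u|_{\Sigma_r}$ by $\sigma\circ u$ with $\sigma(q,p)=(q,-p)$. No holomorphicity of the double and no totally geodesic metric are needed.

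\textbf{Two smaller slips.}
\begin{itemize}
\item $\int_{\Sigma_r}u^*\omega$ is the area for $g_0=\omega(\cdot,J_0\cdot)$, not for your modified metric $g$. Either run the coarea step with $g_0$ (the Frauenfelder metric is unnecessary for the primitive argument), or carry the comparison constant through.
\item Properness is what makes $\Sigma_r$ compact so that Stokes applies. This is automatic for compact $\Sigma$ and matters for domains like the punctured disc in the paper's application. It is not what keeps $\partial\Sigma_r$ on $K$, since $u(\partial\Sigma)\subset K$ holds by hypothesis.
\end{itemize}
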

	
We will apply Lemma \ref{monotoncity-with-lag-boundary} to the tuple
\begin{equation}
\left(M,K,\omega,J_0,x,U_x\right)=
\left(X_{+\eta_i},L,d\lambda,\phi_\ast J_{\mathrm{std}},p,\phi\left(B^{2n}(\pi\delta^2)\right)\right), \nonumber
\end{equation}
where $J_\mathrm{std}$ is the standard complex structure on $\mathbb{C}^n$. Note that in this case, because $U_x$ is a standard symplectic ball at $p\in L\cap\mathrm{int}(X)$, we can actually take $\hbar=\frac{\pi\delta^2}{2}$.
	
Consider the domain-dependent contact type $d\lambda$-compatible almost complex structure $J_S:S\times\mathit{TX}_{+\eta_i}\rightarrow\mathit{TX}_{+\eta_i}$ as part of the Floer datum fixed in Definition \ref{definition:data}, where in our case $S=[0,\infty)\times S^1$ and $J_S|_{[0,\kappa)\times S^1}=J$ for some fixed almost complex structure $J:\mathit{TX}_{+\eta_i}\rightarrow\mathit{TX}_{+\eta_i}$ and $\kappa>0$ (possibly varies over the Cohen--Ganatra moduli space) so that $J|_{\phi\left(B^{2n}(\pi\delta^2)\right)}=\phi_\ast J_{\mathrm{std}}$. We repeat the argument used in the proof of Theorem \ref{theorem:GH-ALbound}, under the assumption that $C^{\operatorname{GH}}_{d_i}(X)<\infty$, until we obtain non-trivial classes $a_1,\dots,a_m\in H_1(L;\mathbb{Z})$, $m\geq d_i$, such that $x(a_j)\neq 0$ for all $j=1,\dots,m$. Moreover,
\[
y(-a)\neq 0,\textrm{ where }a:=\sum_{j=1}^m a_j,
\]
and we have
\begin{equation}\label{grading-y}
2m\geq \sum_{j=1}^m \mu(a_j)=\mu(a)\geq 2d_i. \nonumber
\end{equation}
	
By the construction of the class $x\in\widehat{\mathbb{H}}_{-2}^{S^1}$ in the proof of Theorem \ref{theorem:deform-Viterbo}, which corresponds to the chain $\underline{x}\in\widehat{C}_{-2}^\lambda$ in Theorem \ref{theorem:chain}, together with the fact that $x(a_j)\neq 0$ for each $j=1,\dots,m$, there exist non-constant $J$-holomorphic discs $u_1,\dots,u_m:(D,\partial D)\rightarrow(X_{+\eta_i},L)$
such that
\[
\left[u_j(\partial D)\right]=a_j\in H_1(L;\mathbb{Z})
\]
for $j=1,\dots,m$ and the almost complex structure $J$ on $X_{+\eta_i}$ fixed above. Similarly, by the construction of $y\in\widehat{\mathbb{H}}_{2d}^{S^1}$, which corresponds to the chain $\underline{y}\in\widehat{C}_{2d}^\lambda$ in Theorem \ref{theorem:chain}, together with the fact that $y(-a)\neq 0$, there exists a smooth map
\[
v:(S,\partial S)\rightarrow (X_{+\eta_i},L)
\]
satisfying
\begin{equation}
\left\{
\begin{array}{l}
(dv-X_{H_S}\otimes\gamma_S)^{0,1}=0,\\
v\left(\partial S\right)\subset L, \\
\left[v\left(\partial S\right)\right]=-a\in H_1(L;\mathbb{Z}), \\
\displaystyle \lim_{s\to\infty} v(s,\cdot)=\beta,
\end{array}
\right.
\end{equation}
where $\gamma_S\in\Omega^1(S)$ is a sub-closed $1$-form which equals $dt$ on $[R,\infty)\times S^1$ for $R\gg1$ and vanishes near $\{0\}\times S^1$, $\beta$ is a $1$-periodic orbit of the Hamiltonian vector field $X_{H_S}$, where $H_S:S\times X_{+\eta_i}\rightarrow\mathbb{R}$ is a domain-dependent Hamiltonian which vanishes on $[0,\kappa)\times S^1$ and equals $X_{H_t}$ on $[R,\infty)\times S^1$ for some time-dependent Hamiltonian $H_t:S^1\times X_{+\eta_i}\rightarrow\mathbb{R}$. Here the $(0,1)$-part in the Floer equation is taken with respect to the domain-dependent almost complex structure $J_S$, which agrees on $[0,\kappa)\times S^1$ with the fixed almost complex structure $J$ on $X_{+\eta_i}$. In particular, it follows that $v$ is genuinely $J$-holomorphic on $[0,\kappa)\times S^1\subset S$ for some fixed $\kappa>0$.

It follows from \eqref{eq:lm} and Theorem \ref{theorem:deform-Viterbo}, (ii) and Lemma \ref{lemma:action} that
\[
E_\mathrm{top}(v)+\sum_{j=1}^m \int_{\partial D}u_j^\ast\lambda\leq C^{\operatorname{GH}}_{d_i}(X_{+\eta_i}),
\]
where
\[
E_\mathrm{top}(v):=\int_Sv^\ast d\lambda-d(v^\ast H_S\cdot\gamma_S)
\]
is the topological energy of the map $v$.

We claim that the number of $J$-holomorphic discs with boundary on $L$ satisfies $m=d_i$. Suppose, to the contrary, that $m\geq d_i+1$. By our assumption on the minimal symplectic area $A_{\min}(L)$, we have
\[
\int_{\partial D}u_j^\ast\lambda
\geq
\inf_{d\in\mathbb{N}}\frac{C^{\operatorname{GH}}_d(X)}{d}
\]
for each $j=1,\dots,m$. Combining this estimate with the preceding energy estimate and \eqref{eq:choice}, we obtain
\[
\begin{aligned}
0&\leq E_\mathrm{top}(v) \\
&\leq d_i\left(\frac{C^{\operatorname{GH}}_{d_i}(X)}{d_i}-\inf_{d\in\mathbb{N}}\frac{C^{\operatorname{GH}}_d(X)}{d}\right)+o(\eta_i) C^{\operatorname{GH}}_{d_i}(X)-\inf_{d\in\mathbb{N}}\frac{C^{\operatorname{GH}}_d(X)}{d} \\
&<0,
\end{aligned}
\]
which is impossible. Hence $m=d_i$ and we have
\begin{equation}\label{eq:areaest}
0\leq E_\mathrm{top}(v)<\min\left\{\frac{1}{3}\pi\delta^2,\inf_{d\in\mathbb{N}}\frac{C^{\operatorname{GH}}_d(X)}{d}\right\}.
\end{equation}

By (\ref{eq:c5}) and (\ref{eq:t3}), the map $v:(S,\partial S)\rightarrow(X_{+\eta_i},L)$ (together with parametrizations of the domains) gives rise to an element of the Cohen--Ganatra moduli space ${}_l\mathcal{R}_{k+1}^1(\beta,L,-\ring{\bar{a}})$ for some $k,l\in\mathbb{Z}_{\geq0}$ and $\beta\in\mathit{SC}^{2l-1}(X_{+\eta_i})$, where $\ring{\bar{a}}\in\pi_2(X_{+\eta_i},\beta,L)$ satisfies $\partial\ring{\bar{a}}=a\in H_1(L;\mathbb{Z})$. Note that the area bound (\ref{eq:areaest}) rules out the disc bubbling for elements in ${}_l\mathcal{R}_{k+1}^1(\beta,L,-\ring{\bar{a}})$ since any non-constant $J$-holomorphic disc bubble with boundary on $L$ has symplectic area at least $A_{\min}(L)=\inf_{d\in\mathbb{N}}\frac{C^{\operatorname{GH}}_d(X)}{d}$, whereas the total symplectic area of the curves in this moduli space is strictly smaller than this quantity by (\ref{eq:areaest}). By (\ref{eq:dim}) we have
\begin{equation}\label{eq:dim1}
\dim{}_l\mathcal{R}_{k+1}^1(\beta,L,-\ring{\bar{a}})=n+1+k-\mu(a).
\end{equation}
Here we pick $k\geq\mu(a)$ so that $\dim{}_l\mathcal{R}_{k+1}^1(\beta,L,-\ring{\bar{a}})\geq n+1$.

Let ${}_l\overline{\mathcal{R}}_{k+1}^1(\beta,L,-\ring{\bar{a}})$ be the compactification of ${}_l\mathcal{R}_{k+1}^1(\beta,L,-\ring{\bar{a}})$ obtained by adding broken configurations of Floer cylinders at the unique puncture $\zeta$. With the absence of disc and sphere bubbles, ${}_l\overline{\mathcal{R}}_{k+1}^1(\beta,L,-\ring{\bar{a}})$ is an oriented compact smooth manifold with corners for generic choices of Floer data, specifically the time-dependent Hamiltonian $H_t$ and the almost complex structure $J_t$ on $[R,\infty)\times S^1\subset S$ with $R\gg1$. Consider the evaluation map
\[
\operatorname{ev}_0:{}_l\overline{\mathcal{R}}_{k+1}^1(\beta,L,-\ring{\bar{a}})\rightarrow L
\]
at the boundary marked point $z_0\in\partial S$, which is given by (\ref{eq:eva2}) composed with the projection to the factor corresponding to $z_0$. This map is a corner-stratified smooth submersion with appropriate choices of Floer data. It follows that $\mathrm{ev}_0^{-1}(p)\neq\emptyset$ for any $p\in L$, and $\mathrm{ev}_0^{-1}(p)$ is in fact a compact manifold with corners of dimension $\geq1$ by (\ref{eq:dim1}), with the boundary strata corresponding to semi-stable breakings of (parametrized) Floer cylinders at $\zeta$.
	
In particular, let $p\in L\cap \operatorname{int}(X)$ be the point chosen at the beginning of the proof. There exists a smooth map
\[
v':(S,\partial S)\rightarrow(X_{+\eta_i},L)
\]
arising as an element of $\mathrm{ev}_0^{-1}(p)\cap{}_l\mathcal{R}_{k+1}^1(\beta,L,-\ring{\bar{a}})$, which is asymptotic to the Hamiltonian orbit $\beta$ at its unique puncture and satisfies $v'(z_0)=p$. Moreover, $v'$ is $J$-holomorphic on the neighborhood $[0,\kappa)\times S^1$ for some fixed $\kappa>0$, with $J|_{\phi\left(B^{2n}(\pi\delta^2)\right)}=\phi_\ast J_{\mathrm{std}}$. It follows that if we choose $\delta>0$ to be sufficiently small, then
\[
\phi^{-1}\circ v'\big|_{(v')^{-1}\left(\phi\left(B^{2n}(\pi\delta^2)\right)\right)}:(v')^{-1}\left(\phi\left(B^{2n}(\pi\delta^2)\right)\right)\longrightarrow B^{2n}(\pi\delta^2)
\]
is a proper $J_{\mathrm{std}}$-holomorphic map passing through the origin. Since $v'$ lies in the same moduli space as $v$, our previous estimate holds for $v'$ as well. By Lemma \ref{monotoncity-with-lag-boundary} and \eqref{eq:areaest}, we obtain
\begin{equation}
\begin{split}
\frac{1}{2}\pi\delta^2&\leq\int_{(v')^{-1}\left(\phi\left(B^{2n}(\pi\delta^2)\right)\right)}(\phi^{-1}\circ v')^\ast\omega_{\mathrm{std}} \\
&=\int_{(v')^{-1}\left(\phi\left(B^{2n}(\pi\delta^2)\right)\right)}(v')^\ast d\lambda \\
&\leq E_\mathrm{top}(v)<\frac{1}{3}\pi\delta^2, \nonumber
\end{split}
\end{equation}
which is a contradiction. This completes the proof.
\end{proof}

\section{Examples}\label{section:example}

In this section we discuss the examples (beyond star-shaped domains in $\mathbb{C}^n$) where the results in this paper are applicable. In Section \ref{section:dilation}, we explore the relations between the finiteness of Gutt--Hutchings capacities and the existence of higher dilations introduced by Zhao \cite{jz}, which provides examples of Liouville domains for which Corollary \ref{Lagrangian-width} and Theorem \ref{theorem:deform-Viterbo} hold. In Section \ref{section:capacity}, we explicitly compute the Lagrangian capacities for some Liouville domains admitting higher dilations.

\subsection{Higher dilations and Gutt--Hutchings capacities}\label{section:dilation}

Let $X$ be a Liouville domain with $c_1(X)=0$. Its \textit{completed periodic symplectic symplectic cohomology}, denoted $\widehat{\mathit{PSH}}^\ast(X)$, is the cohomology of the complex
\begin{equation}\label{eq:PSC}
\left(\mathit{SC}^\ast(X)\otimes_\mathbb{R}\mathbb{R}(\!(u)\!),\partial^{S^1}:=\partial+u\delta_1+u^2\delta_2+\cdots\right), \nonumber
\end{equation}
where $u$ is a formal variable of degree $2$. Just like in the case of ordinary symplectic cohomology, there is a version of the PSS map
\begin{equation}\label{eq:PSS}
\phi_X:H^\ast(\widehat{X};\mathbb{R})_{\otimes_\mathbb{R}}\mathbb{R}(\!(u)\!)\rightarrow\widehat{\mathit{PSH}}^\ast(X),
\end{equation}
where $\widehat{X}$ is the Liouville manifold obtained by completing $X$ with the cylindrical end $[1,\infty)\times\partial X$.

The following notion is introduced by J. Zhao in her thesis.

\begin{definition}[\cite{jz}, Definition 4.2.1]
We say that $X$ admits a higher dilation if the unit $1\in H^\ast(\widehat{X};\mathbb{R})$ lies in the kernel of the PSS map (\ref{eq:PSS}).
\end{definition}

The existence of a higher dilation is related to the finiteness of the Gutt--Hutchings capacities $C_d^\mathrm{GH}(X)$, $d\in\mathbb{N}$ in the following sense.

\begin{proposition}\label{proposition:finite}
The Liouville domain $X$ admits a higher dilation if and only if $C_d^\mathrm{GH}(X)<\infty$ for all $d\in\mathbb{N}$.
\end{proposition}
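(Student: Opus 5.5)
The plan is to compare the two complexes directly. On one side is the completed periodic complex $\mathit{SC}^\ast(X)\otimes\mathbb{R}(\!(u)\!)$. On the other is the $S^1$-equivariant complex $\mathit{SC}^\ast(X)\otimes\mathbb{R}(\!(u)\!)/u\mathbb{R}[\![u]\!]$ used to define $C_d^\mathrm{GH}$. Both carry the differential $\partial^{S^1}=\partial+u\delta_1+u^2\delta_2+\cdots$, and the second is a quotient of the first. Write $e_X$ for the cochain representing the unit. On the chain level the PSS map sends $1\otimes u^{j}$ to $e_X\otimes u^{j}$, modulo the usual identification of the image of constant orbits. Hence $X$ admits a higher dilation if and only if $e_X$ is $\partial^{S^1}$-exact in $\mathit{SC}^\ast(X)\otimes\mathbb{R}(\!(u)\!)$. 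Since $\partial^{S^1}$ is $u$-linear, this holds if and only if $e_X\otimes u^{-d+1}$ is exact for every, equivalently some, $d$.

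\emph{Forward direction.} Suppose $\partial^{S^1}(\tilde{\beta})=e_X$ with $\tilde{\beta}\in\mathit{SC}^{-1}(X)\otimes\mathbb{R}(\!(u)\!)$. Multiply by $u^{-d+1}$ and project to the quotient by $u\mathbb{R}[\![u]\!]$. Because $\partial^{S^1}$ only raises powers of $u$, this projection is a chain map. The image $\tilde{y}$ of $u^{-d+1}\tilde{\beta}$ therefore satisfies $\partial^{S^1}(\tilde{y})=e_X\otimes u^{-d+1}$, and it lies in some finite action level $F^{\le a}$. This gives $C^\mathrm{GH}_d(X)\le a<\infty$ for all $d$. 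One point needs care here: the Laurent series $\tilde{\beta}$ may be infinite in positive powers of $u$, but those terms die in the quotient. Only the finitely many terms with exponent $\le 0$ survive, so the action stays bounded.

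\emph{Converse.} Assume $C_d^\mathrm{GH}(X)<\infty$ for all $d$, and pick $\tilde{y}_d=\sum_{l\ge0}\beta^{(d)}_l\otimes u^{-d-l+1}$ with $\partial^{S^1}\tilde{y}_d=e_X\otimes u^{-d+1}$ in the quotient, as in (\ref{eq:cl}). Lifting $\tilde{y}_d$ to the periodic complex, the defect $\partial^{S^1}\tilde{y}_d-e_X\otimes u^{-d+1}$ is supported in powers $u^{\ge1}$. Multiplying by $u^{d-1}$ produces a solution of $\partial^{S^1}(\cdot)=e_X$ modulo $u^{d}\mathbb{R}[\![u]\!]$.

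\textbf{Main obstacle: passing from these approximations to an exact primitive.} For each $d$ we have a primitive of $e_X$ correct modulo $u^{d}$. We need a genuine primitive in the completed complex, which requires a Mittag-Leffler type argument. The approximate primitives for different $d$ need not be compatible, so I would correct them inductively. The difference of consecutive approximations is a cycle modulo $u^{d}$. I would try to adjust it using the fact that $e_X\otimes u^{-d}$ is already exact in the quotient, at the next stage $d+1$, to absorb the obstruction at order $u^{d}$. Concretely, I would show that the projective system $\{H^\ast(\mathit{SC}\otimes\mathbb{R}[u^{\pm}]/u^{d})\}_d$ satisfies a vanishing $\varprojlim^1$ condition on the relevant classes. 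This can be checked via the Milnor exact sequence for the completed periodic complex, which identifies it with the inverse limit of the truncations.

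If the $\varprojlim^1$ term does not vanish, I would instead use the action filtration. Bound the action of the corrections by $C^{\mathrm{GH}}_{d+1}(X)$, and use the completeness of $\mathbb{R}(\!(u)\!)$ in the $u$-adic topology to sum the corrections.
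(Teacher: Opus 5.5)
You have a genuine gap in the converse. The forward direction is fine: projecting $u^{-d+1}\tilde\beta$ to $\mathit{SC}^\ast(X)\otimes\mathbb{R}(\!(u)\!)/u\mathbb{R}[\![u]\!]$ is a chain map, leaves finitely many terms, and bounds $C^{\mathrm{GH}}_d(X)$.

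The step you postpone in the converse carries all the content. Write $P=\mathit{SC}^\ast(X)\otimes\mathbb{R}(\!(u)\!)$ and $N=\mathit{SC}^\ast(X)\otimes\mathbb{R}[\![u]\!]$. Multiplication by $u^{d-1}$ identifies the equivariant complex with $P/u^dN$. So the hypothesis says exactly that $[e_X]\in H^0(P)$ dies in every $H^0(P/u^dN)$. By the Milnor sequence, $[e_X]$ then lies in $\varprojlim^1_d H^{-1}(P/u^dN)$, the $\varprojlim^1$ of the tower $\cdots\xrightarrow{u}\mathit{SH}^{-1-2d}_{S^1}(X)\xrightarrow{u}\mathit{SH}^{1-2d}_{S^1}(X)\to\cdots$. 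You must show this phantom part vanishes, and that is not formal.

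Here is a counterexample for abstract $S^1$-complexes. Take generators $e$ in degree $0$, $a_d$ in degree $-1$, and $c_d$ in degree $-2d$, for $d\geq1$. Set $\partial a_d=e$, $\delta_d a_d=c_d$, and all other structure maps zero. Then $\partial^{S^1}(a_d\otimes u^{-d+1})=e\otimes u^{-d+1}$ in the quotient, for every $d$. But by parity, a degree $-1$ element of $P$ is just $\sum\alpha_d a_d$, and $\partial^{S^1}\bigl(\sum\alpha_d a_d\bigr)=\bigl(\sum\alpha_d\bigr)e+\sum\alpha_d u^d c_d$, which never equals $e$.

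Now give $a_d$ and $c_d$ action $d$. Every action window is then finite-dimensional, yet the conclusion still fails. So your fallback does not work either. The approximate primitives sit at action $\approx C^{\mathrm{GH}}_d(X)$, which is in general unbounded in $d$. Also, $u$-adic completeness only lets you sum corrections in increasing powers of $u$. For $u^{d-1}\tilde y_d$ to converge, the coefficients of each fixed power of $u$, including all non-positive ones, must stabilize as $d\to\infty$, and completeness does not provide that.

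The paper does not argue at the chain level here. It cites Zhao's Lemma 4.2.5, which states directly that a higher dilation exists iff $1\otimes u^{-d+1}$ lies in the kernel of the $S^1$-equivariant PSS map $H^\ast_{S^1}(\widehat{X};\mathbb{R})\to\mathit{SH}^\ast_{S^1}(X)$ for all $d$. The equivalence with $C^{\mathrm{GH}}_d(X)<\infty$ is then just the definition.

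To complete your route, you need input specific to the Floer complex. One option is a Mittag-Leffler condition for the tower above, which is automatic if the groups $\mathit{SH}^{1-2d}_{S^1}(X)$ are finite-dimensional. The other is to invoke that lemma.
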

\begin{proof}
It is proved in \cite{jz}, Lemma 4.2.5 that $X$ admits a higher dilation if and only if the classes $1\otimes u^{-d+1}\in H_{S^1}^\ast(\widehat{X};\mathbb{R})=H^\ast(X;\mathbb{R})\otimes_\mathbb{R}\mathbb{R}(\!(u)\!)/u\mathbb{R}[\![u]\!]$, $d\in\mathbb{N}$ lie in the kernel of the $S^1$-equivariant PSS map
\begin{equation}
H_{S^1}^\ast(\widehat{X};\mathbb{R})\rightarrow\mathit{SH}_{S^1}^\ast(X). \nonumber
\end{equation}
In other words, there exists a finite sequence of chains $y_d,y_{d+1},\cdots,y_N\in\mathit{SC}^\ast(X)$ such that
\begin{equation}
\partial^{S^1}\left(\sum_{i=d}^Ny_i\otimes u^{-i}\right)=e_X\otimes u^{-d+1} \nonumber
\end{equation}
for any $d\in\mathbb{N}$. This is equivalent to $C_d^\mathrm{GH}(X)<\infty$.
\end{proof}

\begin{remark}
There are many variations of the notion of a ``dilation" introduced by Seidel-Solomon \cite{sss}. For example, the existence of a cyclic dilation in the sense of \cite{yle} (with the marking map hitting the identity) is equivalent to $C_1^\mathrm{GH}(X)<\infty$, which is in turn equivalent to the existence of a $k$-dilation for some $k\in\mathbb{N}$ in the sense of Zhou \cite{zzs}.
\end{remark}

It is clear that every Liouville domain $X$ with $\mathit{SH}^\ast(X)=0$ admits a higher dilation. This is the case of star-shaped domains in $\mathbb{C}^n$, where most of the studies of symplectic capacities occur. Nevertheless, there are more interesting examples of Liouville domains admitting higher dilations. Let $X=D^\ast Q$ be the unit disc cotangent bundle of some closed manifold $Q$, it is proved in \cite{jz}, Corollary 5.2.5 that $X$ admits a higher dilation if and only if $Q$ is \textit{rationally inessential}, i.e. the fundamental class $[Q]\in H_n(Q;\mathbb{R})$ vanishes under the map $H_n(Q;\mathbb{R})\rightarrow H_n\left(B\pi_1(Q);\mathbb{R}\right)$ induced by the classifying map. In particular, any disc cotangent bundle of a simply-connected closed manifold admits a higher-dilation, therefore having $C_d^\mathrm{GH}(X)<\infty$ for any $d\in\mathbb{N}$ by Proposition \ref{proposition:finite}. 

In another direction, Zhao proved in \cite{jz}, Proposition 5.3.2 that if the Liouville manifold $\widehat{X}$ admits a symplectic Lefschetz fibration $\pi:\widehat{X}\rightarrow\mathbb{C}$ whose fibers admit higher dilations, then the total space $\widehat{X}$ also admits a higher-dilation. This implies, for example, any $(A_m)$ Milnor fiber of dimension $2n\geq6$ admits a higher dilation, since it admits a Lefschetz fibration with fiber symplectomorphic to $T^\ast S^{n-1}$. Zhao's argument can be extended to prove the following.

\begin{proposition}\label{proposition:Lefschetz}
Let $\pi:\widehat{X}\rightarrow\mathbb{C}^\ast$ be a Lefschetz fibration on some Liouville manifold of dimension $2n\geq4$, such that the monodromy around the origin is trivial. If the smooth fiber of $\pi$ admits a higher dilation, then so is the total space $\widehat{X}$.
\end{proposition}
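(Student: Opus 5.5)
The plan is to follow Zhao's argument for Lefschetz fibrations over $\mathbb{C}$ (\cite{jz}, Proposition 5.3.2), replacing the base $\mathbb{C}$ by $\mathbb{C}^\ast$, and to use the triviality of the monodromy to control the extra generators coming from the base. By Proposition \ref{proposition:finite}, it suffices to show that for every $d\in\mathbb{N}$ the class $1\otimes u^{-d+1}$ lies in the kernel of the $S^1$-equivariant PSS map $H^\ast_{S^1}(\widehat{X};\mathbb{R})\rightarrow\mathit{SH}^\ast_{S^1}(X)$. Equivalently, we need the unit of $\widehat{\mathit{PSH}}^\ast(X)$ to vanish.

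First, we model symplectic cohomology of $\widehat X$ through the fibration. We choose a Hamiltonian that splits asymptotically as a sum of a fiberwise admissible Hamiltonian and a base Hamiltonian on $\mathbb{C}^\ast$ (cylindrical at both ends of $\mathbb{C}^\ast\cong\mathbb{R}\times S^1$), in the style of McLean and Seidel. The action/degree filtration from the base then gives a spectral sequence. Its low-winding (winding zero around the origin) part is computed by the Lefschetz thimbles together with a copy of the fiber's $\mathit{SC}^\ast_{S^1}$ transported around the base. The nonzero-winding parts are twisted by the monodromy around $0$. Because that monodromy is trivial, the region near the origin looks like $F\times T^\ast S^1$ up to symplectic isotopy. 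The unit $e_X$ restricts, on the winding-zero summand, to $e_F\otimes e_{\mathbb{C}^\ast}$ under a Künneth-type map.

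Next, we transfer the fiber's higher dilation. Since $F$ admits a higher dilation, we have $\partial^{S^1}(\tilde\beta_F)=e_F\otimes u^{-d+1}$. We take the product cochain $\tilde\beta_F\otimes e$, where $e$ is the unit on the base piece, using the $S^1$-equivariant Künneth map of Zhao (a map of $S^1$-complexes up to homotopy). This produces a primitive for $e_X\otimes u^{-d+1}$ modulo terms supported near the critical values. These correction terms correspond to the vanishing cycles and thimbles. As in Zhao's proof, each thimble contributes a contractible (acyclic) piece, so the correction can be absorbed: we solve inductively over the critical values, ordered by action, using the filtration.

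The main obstacle is the winding around the puncture of $\mathbb{C}^\ast$. In the $\mathbb{C}$ case the base contributes only the unit, but $\mathit{SH}^\ast(T^\ast S^1)\neq0$, and the Künneth product with the fiber could in principle obstruct killing the unit through the nonzero-winding sectors. My first attempt would be to trivialize the fibration over an annulus near infinity and near $0$, using the trivial monodromy. On that annulus the relevant complex becomes $\mathit{SC}^\ast_{S^1}(F)\otimes\mathit{SC}^\ast(T^\ast S^1)$, and we use that the unit only involves the winding-zero sector, which is preserved by the differential up to higher filtration. This is where I would check carefully that the equivariant structure maps $\delta_j$ respect the winding grading, so that the primitive built in the winding-zero sector really solves the full equation. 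Finally, we conclude via Proposition \ref{proposition:finite}.
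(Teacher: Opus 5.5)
There is a genuine gap, and it is the step you pass over quickly: the claim that each thimble contributes a contractible (acyclic) piece, so that the error term of $\tilde\beta_F\otimes e$ near the critical values can be absorbed. This is false. Up to homotopy, $\widehat{X}$ is obtained from the product region over an annulus around $0$ (this is where trivial monodromy is used) by attaching one $n$-handle per critical point. So each Lefschetz singularity contributes a copy of $\mathbb{R}$ in degree $n$, not an acyclic piece.

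Concretely, one gets a long exact sequence $\mathbb{R}^{\mathrm{Crit}(\pi)}[-n](\!(u)\!)\to H^\ast(\widehat{X};\mathbb{R})(\!(u)\!)\to H^\ast(\widehat{F};\mathbb{R})\otimes H^\ast(S^1;\mathbb{R})(\!(u)\!)\to\cdots$, together with its Floer-theoretic counterpart. An error cycle landing in the critical-point term cannot simply be killed there. The paper maps the topological sequence to the Floer one by PSS maps, with the map on the critical-point terms an isomorphism. It then uses $n>1$ in the diagram chase to show these terms cannot obstruct the unit once $\phi_F(1_F)=0$. Your proposal never uses the hypothesis $2n\geq 4$, which is a symptom of exactly this missing step.

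The rest of your plan targets the wrong obstacle, and part of it fails as stated. You cannot trivialize the fibration near infinity in the base. Only the monodromy around $0$ is assumed trivial; the monodromy at infinity is the composition of the Dehn twists along the vanishing cycles. So nonzero-winding sectors at the outer end would involve fixed-point Floer cohomology of powers of that map.

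None of this is needed. The paper uses Hamiltonians with slopes only in the fiber direction, defining $\widehat{\mathit{PSH}}^\ast_\mathrm{vert}(X)$, so the base contributes only $H^\ast(S^1;\mathbb{R})$ over the product region near $0$. The continuation map $\widehat{\mathit{PSH}}^\ast_\mathrm{vert}(X)\to\widehat{\mathit{PSH}}^\ast(X)$ composed with $\phi_X^\mathrm{vert}$ is $\phi_X$, so killing $1_X$ vertically kills it in $\widehat{\mathit{PSH}}^\ast(X)$.

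Even in your setup, the winding worry dissolves. The class of the projected loop in $H_1(\mathbb{C}^\ast;\mathbb{Z})$ is preserved by $\partial$ and by all $\delta_j$. Hence the winding-zero part is a direct summand, and the extra sectors can never obstruct killing the unit.
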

\begin{proof}
Denote by $\widehat{F}$ a smooth fiber of $\pi$, which is a Liouville manifold. The same argument as in \cite{jz}, Section 5.3 produces a commutative diagram
\begin{equation}\label{eq:cd}
\begin{tikzcd}[font=\small]
\cdots \arrow[r] &\mathbb{R}^{\mathrm{Crit}(\pi)}[-n](\!(u)\!) \arrow[d,"\cong"] \arrow[r] &H^\ast(\widehat{X};\mathbb{R})(\!(u)\!) \arrow[d,"\phi_X^\mathrm{vert}"] \arrow[r] &H^\ast(\widehat{F};\mathbb{R})\otimes H^\ast(S^1;\mathbb{R})(\!(u)\!)\arrow[d,"\phi_F\otimes\mathrm{id}"] \arrow[r] &\cdots \\
\cdots \arrow[r] &\mathbb{R}^{\mathrm{Crit}(\pi)}[-n](\!(u)\!) \arrow[r] &\widehat{\mathit{PSH}}_\mathrm{vert}^\ast(X) \arrow[r] &\widehat{\mathit{PSH}}^\ast(F)\otimes H^\ast(S^1;\mathbb{R}) \arrow[r] &\cdots
\end{tikzcd}
\end{equation}
where $\widehat{\mathit{PSH}}_\mathrm{vert}^\ast(X)$ is a vertical version of the completed periodic symplectic cohomology, defined as the direct limit of the Floer cohomology of a sequence of Hamiltonians with increasing slopes in the fiber direction of $\pi$, and $\phi_X^\mathrm{vert}$ is the corresponding PSS map. There is a continuation map
\begin{equation}
\widehat{\mathit{PSH}}_\mathrm{vert}^\ast(X)\rightarrow\widehat{\mathit{PSH}}^\ast(X), \nonumber
\end{equation}
whose composition with $\phi_X^\mathrm{vert}$ gives the PSS map $\phi_X$ in (\ref{eq:PSS}). It follows from the assumption $n>1$ and the commutativity of (\ref{eq:cd}) that if the identity $1_F\in H^0(\widehat{F};\mathbb{R})$ vanishes under the PSS map $\phi_F$, then the identity $1_X\in H^0(\widehat{X};\mathbb{R})$ also vanishes under $\phi_X^\mathrm{vert}$. Thus $X$ also admits a higher dilation.
\end{proof}

As concrete examples, consider the affine $(A_m)$ Milnor fibers
\begin{equation}
\widehat{X}=\left\{(z_1,\cdots,z_n,z_{n+1})\in\mathbb{C}^n\times\mathbb{C}^\ast\left\vert z_1^2+\cdots+z_n^2=z_{n+1}^{m+1}-1\right.\right\}, \nonumber
\end{equation}
and let $X$ be any Liouville domain whose completion gives $\widehat{X}$. It follows from Proposition \ref{proposition:Lefschetz} that $C_d^\mathrm{GH}(X)<\infty$ for all $d\in\mathbb{N}$. These examples are interesting since $\pi_1(X)\neq0$.

\subsection{Lagrangian capacities of disc cotangent bundles}\label{section:capacity}

As examples of our discussions in Section \ref{section:dilation}, we explicitly compute the Lagrangian capacities $C^\mathrm{CM}(X)$ and $C^\mathrm{AL}(X)$ for the unit disc cotangent bundles over $S^2$, $S^3$, $\mathbb{RP}^2$ and Zoll spheres. As we will point out, the computational methods presented here actually extend to many other Liouville domains in dimensions $4$ and $6$.

\subsubsection{$S^2$ and $\mathbb{RP}^2$}

We first consider the case of $(D^\ast S^2,\lambda_\mathrm{can})$, where $\lambda_\mathrm{can}$ is the canonical Liouville form on the cotangent bundle. The contact boundary $S^\ast S^2:=\partial D^\ast S^2$ is $\mathbb{RP}^3$ equipped with the contact form $4\alpha_\mathrm{std}$, where $\alpha_\mathrm{std}$ denotes the standard contact form. Since the Reeb flow on $(\mathbb{RP}^3,4\alpha_\mathrm{std})$ is periodic, we can explicitly write down the orbits, see for example \cite{kvb}, Appendix A. In particular, all simple orbits $\gamma$ have symplectic action
\begin{equation}
\mathcal{A}(\gamma)=4\int_\gamma\alpha_\mathrm{std}=2\pi.
\end{equation}

By identifying $T^\ast S^2$ with the affine quadric surface
\begin{equation}
\widehat{X}=\left\{(z_1,z_2,z_3)|z_1^2+z_2^2+z_3^2=1\right\}
\end{equation}
equipped with the restriction of the standard Liouville form $\lambda_\mathrm{std}$ on $\mathbb{C}^3$, we get a standard Lefschetz fibration $p_X:\widehat{X}\rightarrow\mathbb{C}$ by projecting to the $z_1$ coordinate plane. Given any embedded closed curve $\sigma\subset\mathbb{C}$ away from the critical values $\pm1$ of $p_X$, we get a Lagrangian torus $T_\sigma\subset T^\ast S^2$ by parallel transporting the vanishing cycle. It is not hard to show that if we choose $\sigma$ to be a curve enclosing $\pm1$, then $T_\sigma\subset T^\ast S^2$ is a monotone Lagrangian torus. See for example \cite{lmt}, Section 2.2. 

To find the monotonicity constant of $T_\sigma$ (or equivalently, the symplectic area of a Maslov $2$ disc in $T^\ast S^2$ with boundary on $T_\sigma$), consider a parametrized curve
\begin{equation}
c(t)=\left(\sigma(t),n(t)e^{2\pi ia(t)},n(t)e^{2\pi ia(t)}\right),
\end{equation}
where $t\in[0,1]$, $n(t)>0$, $a(t)$ is a real-valued function satisfying $2n(t)^2e^{4\pi ia(t)}=1-\sigma(t)^2$. Then $c(t)\subset T_\sigma$ is the lift of $\sigma\subset\mathbb{C}$, and $p_X|_{c(t)}$ is a degree $1$ map. The symplectic area of any disc $u:(D,\partial D)\rightarrow(T^\ast S^2,T_\sigma)$ with boundary on $c(t)$ can be expressed as
\begin{equation}\label{eq:area1}
\int_Du^\ast d\lambda_\mathrm{can}=\frac{i}{4}\int_\sigma zd\bar{z}-\bar{z}dz+2\pi\int_0^1a'(t)n(t)^2dt,
\end{equation}
where the first term is the area enclosed by $\sigma\subset\mathbb{C}$, and the second term records the sum of areas in the other two coordinate projections.

Here we take $\sigma$ to be the ellipse with foci at $\pm1$ and eccentricity $\sqrt{\frac{\sqrt{5}-1}{2}}$, see Figure \ref{fig:base}. In this case, $T_\sigma\subset T^\ast S^2$ coincides with the Polterovich torus $T_\mathrm{Pol}$ (cf. \cite{afa}), which is constructed as the geodesic flow of unit covectors over the point $(1,0,0)\in S^2$. See \cite{lmt}, Proposition 2.10. By construction, $T_\mathrm{Pol}$ is contained in $D^\ast S^2$. In fact, the restriction of the Lefschetz fibration $p_X$ to $D^\ast S^2$ maps it to the filled ellipse, and $T_\mathrm{Pol}$ lies in the contact boundary $S^\ast S^2$. Let $u:(D,\partial D)\rightarrow(D^\ast S^2,T_\mathrm{Pol})$ be a $J$-holomorphic disc with boundary on the curve $c(t)\subset T_\mathrm{Pol}$ for some compatible almost complex structure $J$. Applying the formula (\ref{eq:area1}) we obtain
\begin{equation}\label{eq:area2}
\int_Du^\ast d\lambda_\mathrm{can}=\pi+\frac{\pi}{2}+\frac{\pi}{2}=2\pi,
\end{equation}
because the projections of $c(t)$ in the $z_2$ and $z_3$ coordinate planes are the same ellipse $\sigma(t)$ scaled by $\frac{1}{\sqrt{2}}$.

\begin{figure}
\centering
\begin{tikzpicture}
\draw[->] (-4, 0)--(4, 0);
\draw[->] (0,-3)--(0,3);
\draw [blue] (0,0) ellipse (2.544 and 1.572);
\draw (2,0) node {$\times$};
\draw (-2,0) node {$\times$};
\node at (2,-0.3) {$1$};
\node at (-2,-0.3) {$-1$};
\draw [blue] (2.7,0.9) node {$T_\mathrm{Pol}$};
\draw [blue] (3,-0.5) node {$\sqrt{\frac{\sqrt{5}+1}{2}}$};
\draw [blue] (-0.7,2) node {$\sqrt{\frac{\sqrt{5}-1}{2}}$};
\end{tikzpicture}
\caption{The image of the Polterovich torus $T_\mathrm{Pol}\subset D^\ast S^2$ under the Lefschetz fibration $p_X:T^\ast S^2\rightarrow\mathbb{C}$}\label{fig:base}
\end{figure}

\begin{proposition}\label{Lag-cap-S2}
For the unit disc cotangent bundle $D^\ast S^2$, we have
\begin{equation}
C^\mathrm{CM}(D^\ast S^2)=C^\mathrm{AL}(D^\ast S^2)=2\pi.
\end{equation}
\end{proposition}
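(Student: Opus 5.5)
The plan is to prove a lower bound and an upper bound and match them. The lower bound comes from the Polterovich torus $T_\mathrm{Pol}\subset S^\ast S^2\subset D^\ast S^2$ described above, and the upper bound from Theorem \ref{theorem:GH-ALbound}.

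For the lower bound, $T_\mathrm{Pol}$ is a monotone Lagrangian torus, and the computation (\ref{eq:area2}) shows that a Maslov $2$ disc has area $2\pi$. Monotonicity means that symplectic area is proportional to Maslov index on $\pi_2(D^\ast S^2,T_\mathrm{Pol})$. A torus in $T^\ast S^2$ is orientable, hence has even Maslov indices. So every class with positive area has area a positive multiple of $2\pi$, and $A_{\min}(T_\mathrm{Pol})=2\pi$. The one thing I would verify is that Maslov index $2$ is actually realized, rather than only some larger multiple; the explicit disc with boundary on $c(t)$ gives this. Hence $C^\mathrm{AL}(D^\ast S^2)\geq C^\mathrm{CM}(D^\ast S^2)\geq 2\pi$.

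For the upper bound, note that $c_1(T^\ast S^2)=0$, so Theorem \ref{theorem:GH-ALbound} gives $C^\mathrm{AL}(D^\ast S^2)\leq\inf_d C^\mathrm{GH}_d(D^\ast S^2)/d$. Finiteness of these capacities follows from Proposition \ref{proposition:finite}, since $S^2$ is simply connected and therefore rationally inessential. To compute them, I would use the known behaviour of Gutt--Hutchings capacities for Liouville domains whose boundary Reeb flow is periodic. Every simple orbit on $(\mathbb{RP}^3,4\alpha_\mathrm{std})$ has action $2\pi$. The $S^1$-equivariant symplectic cohomology is computed by a Morse--Bott spectral sequence: it is generated by copies of $H^\ast_{S^1}$ of the orbit space, which is $S^2$, in action windows $2\pi k$. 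After accounting for the Conley--Zehnder shifts (the $k$-fold orbits shift degree by $2k$ in this normalization), the class $e_X\otimes u^{-d+1}$ should die at action level at most $2\pi d$, or $2\pi(d+c)$ for some constant $c$ independent of $d$.

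Either outcome is enough: we get $C^\mathrm{GH}_d(D^\ast S^2)\leq 2\pi(d+c)$, and letting $d\to\infty$ gives $\inf_d C^\mathrm{GH}_d/d\leq 2\pi$. This is the same device as in Corollary \ref{CM=AL}, where the bound $\delta_\Omega(d+n-1)$ sufficed. An alternative route to this bound is to compare with ellipsoids: embed $D^\ast S^2$ into the completion and use an upper bound from a convex or concave toric model. I would try the Morse--Bott computation first, or cite the existing computation of Gutt--Hutchings capacities of $D^\ast S^2$, which equal the ellipsoid values for $E(2\pi,2\pi)$ up to the index shift.

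The main obstacle is this capacity estimate, specifically controlling the degree and index bookkeeping so that the growth is linear in $d$ with slope exactly $2\pi$. Once that estimate is in hand, combining the two bounds gives $C^\mathrm{CM}(D^\ast S^2)=C^\mathrm{AL}(D^\ast S^2)=2\pi$.
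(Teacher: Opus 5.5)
Your lower bound is correct and is the same as the paper's: monotonicity of $T_\mathrm{Pol}$, even Maslov indices, and the explicit Maslov $2$ disc of area $2\pi$ from (\ref{eq:area2}) give $A_{\min}(T_\mathrm{Pol})=2\pi$. The gap is in the upper bound. You reduce it to the estimate $C_d^{\mathrm{GH}}(D^\ast S^2)\le 2\pi(d+c)$ for all $d$, and you never establish it.

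The Morse--Bott sketch is only heuristic ("should die at action level at most\dots"), and both fallback routes fail as stated. First, the ellipsoid comparison: $E(2\pi,2\pi)$ has $C_d^{\mathrm{GH}}=2\pi\lceil d/2\rceil$, with slope $\pi$. Fed into Theorem \ref{theorem:GH-ALbound}, this would give $C^{\mathrm{AL}}(D^\ast S^2)\le\pi$, contradicting your own lower bound. Second, the toric route: to bound capacities from above by monotonicity you would need to embed $D^\ast S^2$ into a convex or concave toric domain in $\mathbb{C}^2$. That is impossible, because the zero section would become a Lagrangian sphere in $\mathbb{C}^2$, which cannot exist. Such a sphere would be exact since $H^1(S^2)=0$, and Gromov rules out closed exact Lagrangians in $\mathbb{C}^n$.

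You do not need large $d$ at all. Since $\inf_d C_d^{\mathrm{GH}}/d\le C_1^{\mathrm{GH}}$, it suffices to show $C_1^{\mathrm{GH}}(D^\ast S^2)\le 2\pi$, which is what the paper does. By \cite{psd}, Example 2.5, $D^\ast S^2$ admits a dilation in $\mathit{SH}^1$ carried by the shortest Reeb orbits of $(\mathbb{RP}^3,4\alpha_\mathrm{std})$. Perturb $4\alpha_\mathrm{std}$ to a nondegenerate form keeping the relevant orbit's action arbitrarily close to $2\pi$. The dilation then gives $\beta_0,\beta_1$ with $\partial\beta_1=0$ and $\partial\beta_0+\delta_1\beta_1=e_X$. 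Hence $\tilde y=\beta_0\otimes u^0+\beta_1\otimes u^{-1}$ solves $\partial^{S^1}\tilde y=e_X$ at action about $2\pi$. Together with \cite{ghs}, Theorem 1.1, this gives $C_1^{\mathrm{GH}}(D^\ast S^2)=2\pi$, and Theorem \ref{theorem:GH-ALbound} with $d=1$ finishes the proof.

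In your Morse--Bott language, this is just the $d=1$ case: $e_X$ dies at action level $2\pi$. The degree bookkeeping uniform in $d$, which you identified as the main obstacle, is unnecessary.
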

\begin{proof}
By \cite{psd}, Example 2.5, a dilation in $\mathit{SH}^1(D^\ast S^2)$, whose action gives the first Gutt--Hutchings capacity $C_1^\mathrm{GH}(D^\ast S^2)$, corresponds to the shortest Reeb orbit on the contact boundary $(\mathbb{RP}^3,4\alpha_\mathrm{std})$. We can perturb the contact form $4\alpha_\mathrm{std}$ on $\mathbb{RP}^3$ to a non-degenerate one and keep the symplectic action $\mathcal{A}(\gamma)$ for the orbit $\gamma$ contributing to the first Gutt--Hutchings capacity $C_1^\mathrm{GH}(D^\ast S^2)$ arbitrarily close to $2\pi$. It follows from \cite{ghs}, Theorem 1.1 and our Theorem \ref{theorem:GH-ALbound} that
\begin{equation}\label{eq:1}
C^\mathrm{CM}(D^\ast S^2)\leq C^\mathrm{AL}(D^\ast S^2)\leq C_1^\mathrm{GH}(D^\ast S^2)=2\pi.
\end{equation}
On the other hand, it follows from (\ref{eq:area2}) that for the Polterovich torus $T_\mathrm{Pol}\subset D^\ast S^2$, $A_{\min}(T_\mathrm{Pol})=2\pi$, which gives
\begin{equation}\label{eq:2}
C^\mathrm{CM}(D^\ast S^2)\geq2\pi.
\end{equation}
Combining (\ref{eq:1}) and (\ref{eq:2}) finishes the proof.
\end{proof}

\begin{remark}
In fact, the equality $C^\mathrm{CM}(D^\ast S^2)=C^\mathrm{AL}(D^\ast S^2)$ also follows from the fact that any aspherical Lagrangian surface in $D^\ast S$ must be a torus. Since $D^\ast S^2$ is $(1,\Lambda)$-uniruled for some $\Lambda>0$ in the sense of \cite{mms}, it does not admit hyperbolic Lagrangians by the Viterbo--Eliashberg SFT argument. See \cite{egh}, Theorem 1.7.5. The same holds for $D^\ast\mathbb{RP}^2$ considered below, since $T^\ast\mathbb{RP}^2$ is symplectically equivalent to the complement of a smooth conic in $\mathbb{CP}^2$, which is known to be $\mathbb{A}^1$-uniruled.
\end{remark}

Our analysis above shows that the Polterovich torus $T_\mathrm{Pol}\subset D^\ast S^2$ is an extremal Lagrangian torus. In fact, we have the following.

\begin{proposition}
Any extremal Lagrangian torus $L\subset D^\ast S^2$ lies entirely on the boundary $S^\ast S^2$ and is non-displaceable. Furthermore, up to Hamiltonian isotopies preserving $S^\ast S^2$, the Polterovich torus $T_\mathrm{Pol}$ is the unique extremal Lagrangian torus.
\end{proposition}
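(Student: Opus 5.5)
The proof splits into three claims: boundary rigidity, non-displaceability, and uniqueness up to Hamiltonian isotopy.

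\textbf{Boundary rigidity.} This follows from Theorem \ref{theorem:boundaryrigidity}. We have $c_1(D^\ast S^2)=0$, and Proposition \ref{Lag-cap-S2} together with (\ref{eq:1}) gives
\[
\inf_{d}\frac{C_d^\mathrm{GH}(D^\ast S^2)}{d}\le C_1^\mathrm{GH}(D^\ast S^2)=2\pi=C^\mathrm{AL}(D^\ast S^2)\le\inf_d\frac{C_d^\mathrm{GH}}{d}.
\]
So the infimum is attained at $d'=1$, and $D^\ast S^2$ is spectrally convex with the constant sequence $d_i=1$. An extremal torus $L$ satisfies $A_{\min}(L)=2\pi$, and tori are aspherical, oriented and Spin. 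Theorem \ref{theorem:boundaryrigidity} therefore places $L$ in $S^\ast S^2$.

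\textbf{Non-displaceability.} I would argue by contradiction using Corollary \ref{Lagrangian-width}. If $L$ were displaceable in $T^\ast S^2$ with small energy, the Irie-type moduli space produces a Maslov $2$ disc through every point of $L$ with area at most $2e(L)$. That disc has area at least $2\pi$, so $e(L)\ge\pi$. This only gives a lower bound, so it does not prove non-displaceability by itself.

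Since $L\subset S^\ast S^2$, a cleaner route is available. A torus inside the $3$-dimensional contact boundary which is Lagrangian in $D^\ast S^2$ is foliated by Reeb orbits. Here the Reeb flow is the periodic geodesic flow, so $L$ is a union of Reeb orbits of action $2\pi$. It is then the preimage of a closed curve $\Gamma$ under the prequantization bundle $S^\ast S^2=\mathbb{RP}^3\to S^2\times S^2/\text{antidiag}\cong$ the space of oriented geodesics $\cong S^2$. The Lagrangian condition forces this reduction, with $L=\pi^{-1}(\Gamma)$. The minimal area of such a torus is governed by how $\Gamma$ divides the area of the reduced $S^2$, and extremality ($A_{\min}=2\pi$, the maximum) forces $\Gamma$ to be an equator, i.e. to divide $S^2$ into equal halves. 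The Polterovich torus is exactly the preimage of a great circle: geodesics through $(1,0,0)$ correspond to a great circle in the space of oriented geodesics.

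\textbf{Uniqueness.} Any two area-bisecting curves on $S^2$ are isotopic through area-bisecting curves by a Hamiltonian isotopy of $S^2$. Lifting this isotopy to a contact isotopy of $S^\ast S^2$ commuting with the Reeb flow, and extending it to a Hamiltonian isotopy of $D^\ast S^2$ preserving the boundary, gives uniqueness. Non-displaceability of $T_\mathrm{Pol}$ is known from Albers–Frauenfelder \cite{afa}, for example via nonvanishing Floer cohomology with a local system. By uniqueness it transfers to every extremal torus.

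\textbf{Main obstacle.} The hardest step is the reduction to $L=\pi^{-1}(\Gamma)$ and the area computation on the quotient. One must show that a Lagrangian surface in $\{1\}\times S^\ast S^2$ is tangent to the Reeb field, which holds because the characteristic line field of the hypersurface lies in $TL$. It must also be checked that the relative classes have boundary a fibre (area $2\pi$) plus a lift of $\Gamma$ with area equal to the enclosed area. The remaining difficulty is to verify carefully that the normalizations give $A_{\min}<2\pi$ unless $\Gamma$ bisects.
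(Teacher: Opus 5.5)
Your proposal is correct and follows essentially the same route as the paper. It uses boundary rigidity from Theorem \ref{theorem:boundaryrigidity} with the constant sequence $d_i=1$, since $C_1^\mathrm{GH}(D^\ast S^2)=2\pi$ already realizes the infimum, and then the reduction $S^\ast S^2\to S^2$ of total area $4\pi$, under which an extremal $L$ is the preimage of a loop enclosing area $2\pi$; a lifted, radially extended Hamiltonian isotopy carries $L$ to the preimage of the equator, i.e.\ $T_\mathrm{Pol}$, whose non-displaceability from \cite{afa} transfers to $L$. You also make explicit two points the paper only asserts: that $L\subset S^\ast S^2$ is tangent to the characteristic (Reeb) foliation, and that the area group $2\pi\mathbb{Z}+A\mathbb{Z}$ has minimal positive element $2\pi$ only when the enclosed area is $A=2\pi$.
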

\begin{proof}
There is the symplectic reduction
\[
R:S^*S^2\longrightarrow S^*S^2/S^1=(S^2,\omega),
\]
where
\[
\int_{S^2}\omega=4\pi .
\]
Since the Lagrangian capacity $C^\mathrm{CM}(D^\ast S^2)$ is already achieved by $C_1^\mathrm{GH}(D^\ast S^2)$, it follows from Theorem \ref{theorem:boundaryrigidity} that for any extremal torus $L\subset D^*S^2$, we have $L\subset S^*S^2$. Under the reduction map, $R(L)\subset S^2$ is an embedded loop which bounds a region of area $2\pi$. Any such loop can be mapped to the equator by a Hamiltonian isotopy $\phi_h^t:S^2\to S^2$ generated by $h:S^1\times S^2\to \mathbb{R}$.

Pulling back $h$ by the reduction map $R$, we obtain a Hamiltonian $h\circ(\operatorname{Id}\times R):S^1\times S^*S\to \mathbb{R}$. We extend this Hamiltonian to $T^*S\setminus S=\mathbb{R}\times S^*S $
by translations in the radial direction. After a smooth extension across the zero section, this gives a Hamiltonian $
\hat h:S^1\times T^*S\to \mathbb{R}$ whose flow preserves the hypersurfaces $S_r^*S$ for any $r\geq 1$. The time-$1$ flow of $\hat h$ sends $L$ to the lift of the equator under $R$.

Since $L$ is Hamiltonian isotopic to the Polterovich torus $T_\mathrm{Pol}\subset D^\ast S^2$, and it is shown in \cite{afa} that the Floer cohomology $\mathit{HF}^\ast(T_\mathrm{Pol},T_\mathrm{Pol})=H^\ast(T^2;\mathbb{Z}_2)$, it follows that $\mathit{HF}^\ast(L,L)\neq0$ (with $\mathbb{Z}_2$ coefficient) and $L$ is non-displaceable.
\end{proof}

Similar arguments can be used to compute the Lagrangian capacities for all $4$-dimensional $(A_m)$ Milnor fibers
\begin{equation}
A_{m,r}:=\left\{\left.(z_1,z_2,z_3)\in\mathbb{C}^3\right\vert z_1^{m+1}+z_2^2+z_3^2=\delta,|z_1|^2+|z_2|^2+|z_3|^2\leq r\right\},
\end{equation}
where $\delta>0$ is sufficiently small and $r>\sqrt{\delta}$. However, in this dimension it is unclear whether they admit higher dilations (or even cyclic dilations) if $m>1$, so there may not be a Reeb orbit on the contact boundary $\partial A_{m,r}\cong L(m+1,m)$ (equipped with $r(n+1)$ times the standard contact form) whose symplectic action coincides with the first Gutt--Hutchings capacity $C_1^\mathrm{GH}(A_{m,r})$.

We then compute the Lagrangian capacities of $(D^\ast\mathbb{RP}^2,\bar{\lambda}_\mathrm{can})$, where $\bar{\lambda}_\mathrm{can}$ is the induced Liouville form under the quotient map
\begin{equation}\label{eq:quo}
q:D^\ast S^2\rightarrow D^\ast\mathbb{RP}^2.
\end{equation}
First note that since $\mathbb{RP}^2$ is rationally inessential, $D^\ast\mathbb{RP}^2$ admits a higher dilation, therefore having finite Gutt-Hutchings capacities $C_d^\mathrm{GH}(D^\ast\mathbb{RP}^2)$ for $d\in\mathbb{N}$. The simple Reeb orbits on the contact boundary $S^\ast\mathbb{RP}^2$, which is diffeomorphic to the lens space $L(4,1)$ equipped with the contact form $4\bar{\alpha}_\mathrm{std}$, have period $\pi$, where $\bar{\alpha}_\mathrm{std}$ is the standard contact form on $L(4,1)$ coming from the quotient of $S^3$. However, these Reeb orbits are not contractible, therefore cannot be the cocycle in the definition of $C_1^\mathrm{GH}(D^\ast\mathbb{RP}^2)$ that kills the unit. Under the quotient map $q$, the the Floer cylinder $u:\mathbb{R}\times S^1\rightarrow D^\ast S^2$ with asymptotics $\beta_1$ and the constant orbit $e_X$ contributing to the equation $\delta(\beta_1)=e_X-\partial(\beta_0)$ in the symplectic cochain complex $\mathit{SC}^\ast(D^\ast\mathbb{RP}^2)$ now becomes a Floer cylinder in $D^\ast\mathbb{RP}^2$ that is asymptotic to the double cover of a simple Reeb orbit on $\left(L(4,1),4\bar{\alpha}_\mathrm{std}\right)$ at its positive puncture. It follows that
\begin{equation}\label{eq:C1}
C_1^\mathrm{GH}(D^\ast\mathbb{RP}^2)=2\pi.
\end{equation}

On the other hand, the Polterovich torus $T_\mathrm{Pol}\subset D^\ast S^2$ descends to a Lagrangian torus $T'_\mathrm{Pol}=q(T_\mathrm{Pol})\subset D^\ast\mathbb{RP}^2$ under the $\mathbb{Z}_2$-quotient.

\begin{lemma}\label{lemma:area}
Let $\bar{u}:(D,\partial D)\rightarrow(D^\ast\mathbb{RP}^2,T'_\mathrm{Pol})$ be a $J'$-holomorphic disc of Maslov index $2$ for some compatible almost complex structure $J'$ on $D^\ast\mathbb{RP}^2$ inherited from $D^\ast S^2$, then
\begin{equation}
\int_D\bar{u}^\ast d\bar{\lambda}_\mathrm{can}=2\pi.
\end{equation}
\end{lemma}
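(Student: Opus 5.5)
The plan is to lift $\bar u$ to $D^\ast S^2$ through the double cover $q$ and reduce the statement to the area computation (\ref{eq:area2}) for $T_\mathrm{Pol}$.

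First, we observe that $q:D^\ast S^2\rightarrow D^\ast\mathbb{RP}^2$ is an unbranched double cover, induced by the antipodal map on $S^2$. Since $D$ is simply connected, $\bar u$ lifts to a map $u:(D,\partial D)\rightarrow(D^\ast S^2,q^{-1}(T'_\mathrm{Pol}))$. The lift is $J$-holomorphic for $J=q^\ast J'$, and $J$ is compatible because $J'$ is inherited from $D^\ast S^2$. The boundary loop $u(\partial D)$ is connected, so it lies in a single component of $q^{-1}(T'_\mathrm{Pol})$.

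Second, we determine $q^{-1}(T'_\mathrm{Pol})$. The Polterovich torus consists of unit covectors cogeodesic through $(1,0,0)$. It is invariant under the antipodal map, because great circles through $(1,0,0)$ also pass through $(-1,0,0)$. Hence $q^{-1}(T'_\mathrm{Pol})=T_\mathrm{Pol}$, and $q|_{T_\mathrm{Pol}}$ is a double cover onto $T'_\mathrm{Pol}$. So $u$ is a $J$-holomorphic disc with boundary on $T_\mathrm{Pol}$.

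Third, we compare invariants. Since $q$ is a local symplectomorphism with $q^\ast\bar\lambda_\mathrm{can}=\lambda_\mathrm{can}$, we get $\int_D u^\ast d\lambda_\mathrm{can}=\int_D\bar u^\ast d\bar\lambda_\mathrm{can}$. The Maslov index is computed from the pullback of $TD^\ast\mathbb{RP}^2$ and of the Lagrangian subbundle, and $q$ identifies both with the corresponding data upstairs. Hence $\mu(u)=\mu(\bar u)=2$.

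Finally, $T_\mathrm{Pol}$ is monotone, so the area of a disc in $D^\ast S^2$ with boundary on $T_\mathrm{Pol}$ is proportional to its Maslov index. By (\ref{eq:area2}), the Maslov $2$ disc with boundary on the lift $c(t)$ has area $2\pi$. Any other Maslov $2$ disc therefore also has area $2\pi$, and this applies to $u$.

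The main obstacle is the monotonicity step: we need the area-to-Maslov ratio to be the same for all relative classes, including discs whose boundary is not homologous to $c(t)$. I would handle this as follows. $H_2(T^\ast S^2)\cong\mathbb{Z}$ is generated by the zero section, which has area $0$ and Chern number $0$ with respect to the trivialization relevant to $c_1=0$. The Lagrangian $T_\mathrm{Pol}$ is disjoint from the zero section. The class of the fiber circle is killed in $\pi_2(D^\ast S^2,T_\mathrm{Pol})$ by a disc that has both Maslov index $0$ and area $0$. This reduces the claim to checking the class of $c(t)$, and if necessary I would cite \cite{lmt}, Section 2.2 for the monotonicity of $T_\sigma$ to complete this.
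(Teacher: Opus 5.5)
Your proposal is correct and follows the same route as the paper. You lift $\bar{u}$ through the double cover $q$ to a Maslov index $2$ disc with boundary on $T_\mathrm{Pol}$, use $q^\ast\bar{\lambda}_\mathrm{can}=\lambda_\mathrm{can}$ to match the areas, and invoke the $2\pi$ computation (\ref{eq:area2}). The paper cites \cite{lmt}, Proposition 3.6 for the lift and leaves implicit the monotonicity step that you make explicit. Your direct version of that step also works, once you note that the cotangent fiber circle over $(1,0,0)$ is exactly the vanishing cycle over the vertex of $\sigma$, so it and $[c(t)]$ form a basis of $H_1(T_\mathrm{Pol};\mathbb{Z})$.
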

\begin{proof}
Any such $J'$-holomorphic disc admits a lift to a $J$-holomorphic disc $u:(D,\partial D)\rightarrow(D^\ast S^2,T_\mathrm{Pol})$ of Maslov index $2$ (cf. \cite{lmt}, Proposition 3.6), which gives $\bar{u}$ under the $\mathbb{Z}_2$-quotient. According to the previous discussion, we have $\int_Du^\ast d\lambda_\mathrm{can}=2\pi$. Since $q^\ast\bar{\lambda}_\mathrm{can}=\lambda_\mathrm{can}$, the quotient map (\ref{eq:quo}) preserves the symplectic area.
\end{proof}

Combining (\ref{eq:C1}), Theorem \ref{theorem:GH-ALbound} and Lemma \ref{lemma:area}, we obtain the following.

\begin{proposition}
For the unit disc cotangent bundle $D^\ast\mathbb{RP}^2$, we have
\begin{equation}
C^\mathrm{CM}(D^\ast\mathbb{RP}^2)=C^\mathrm{AL}(D^\ast\mathbb{RP}^2)=2\pi.
\end{equation}
\end{proposition}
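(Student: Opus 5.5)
The plan has two halves, mirroring the proof of Proposition \ref{Lag-cap-S2}: an upper bound from Theorem \ref{theorem:GH-ALbound} and a lower bound from an explicit torus. Since $C^\mathrm{CM}\leq C^\mathrm{AL}$ holds in general, it suffices to prove
\[
C^\mathrm{AL}(D^\ast\mathbb{RP}^2)\leq 2\pi\leq C^\mathrm{CM}(D^\ast\mathbb{RP}^2).
\]

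\textbf{Upper bound.} First I check the hypotheses of Theorem \ref{theorem:GH-ALbound}.
\begin{itemize}
\item $c_1(D^\ast\mathbb{RP}^2)=0$, since cotangent bundles have vanishing first Chern class.
\item $C_1^\mathrm{GH}(D^\ast\mathbb{RP}^2)<\infty$. This holds because $\mathbb{RP}^2$ is rationally inessential, as $B\pi_1=\mathbb{RP}^\infty$ has trivial rational homology. By \cite{jz} and Proposition \ref{proposition:finite}, it follows that $D^\ast\mathbb{RP}^2$ admits a higher dilation.
\end{itemize}
Taking $d=1$ in (\ref{eq:ubound}) then gives $C^\mathrm{AL}(D^\ast\mathbb{RP}^2)\leq C_1^\mathrm{GH}(D^\ast\mathbb{RP}^2)$.

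Next I use the computation (\ref{eq:C1}) that $C_1^\mathrm{GH}(D^\ast\mathbb{RP}^2)=2\pi$. Only the inequality $\leq 2\pi$ is actually needed, and it is the delicate part. The argument runs as follows.
\begin{itemize}
\item Start from the cochain $\beta_0+\beta_1\otimes u^{-1}$ on $D^\ast S^2$ that kills the unit; its action is close to $2\pi$ after a nondegenerate perturbation.
\item Choose the perturbation $\mathbb{Z}_2$-invariant, so the relevant Floer cylinders descend under the quotient $q$ in (\ref{eq:quo}).
\item The simple Reeb orbits on $L(4,1)$ have period $\pi$ but are noncontractible, so they cannot enter a class killing the unit. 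Their double covers have action $2\pi$ and provide the cochain.
\item Taking the perturbation to zero gives $C_1^\mathrm{GH}\leq 2\pi+\nu$ for every $\nu>0$, hence $C_1^\mathrm{GH}\leq 2\pi$.
\end{itemize}
I expect this step to be the main obstacle: making the descent of the equivariant equation $\partial^{S^1}(\tilde\beta)=e_X$ through the $\mathbb{Z}_2$-quotient rigorous. I will take (\ref{eq:C1}) as established just above the statement.

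\textbf{Lower bound.} I exhibit the torus $T'_\mathrm{Pol}=q(T_\mathrm{Pol})$ and show $A_{\min}(T'_\mathrm{Pol})\geq 2\pi$. It is a Lagrangian torus, and it is orientable and Spin. By Lemma \ref{lemma:area}, Maslov index $2$ holomorphic discs have area exactly $2\pi$. I need this for all classes in $\pi_2(D^\ast\mathbb{RP}^2,T'_\mathrm{Pol})$ of positive area, which I get from monotonicity.
\begin{itemize}
\item $T_\mathrm{Pol}$ is monotone in $D^\ast S^2$ with area equal to $\pi$ times the Maslov index, by (\ref{eq:area2}).
\item The $\mathbb{Z}_2$-cover preserves area and Maslov index for discs that lift. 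By \cite{lmt}, Proposition 3.6, every relative class lifts, since $\pi_1(T'_\mathrm{Pol})\to\pi_1(D^\ast\mathbb{RP}^2)=\mathbb{Z}_2$ is handled by the lifting criterion for discs with boundary on the torus.
\item Hence $T'_\mathrm{Pol}$ is monotone with the same constant. Since an orientable Lagrangian has even Maslov indices, every positive-area class has area at least $2\pi$.
\end{itemize}
This gives $A_{\min}(T'_\mathrm{Pol})\geq 2\pi$, and so $C^\mathrm{CM}(D^\ast\mathbb{RP}^2)\geq 2\pi$.

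Combining the two bounds yields $C^\mathrm{CM}(D^\ast\mathbb{RP}^2)=C^\mathrm{AL}(D^\ast\mathbb{RP}^2)=2\pi$.
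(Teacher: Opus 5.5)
Your proposal is correct and follows the paper's proof. The upper bound is Theorem \ref{theorem:GH-ALbound} with $d=1$ plus (\ref{eq:C1}), and the lower bound is the descended torus $T'_\mathrm{Pol}$ as in Lemma \ref{lemma:area}; your monotonicity argument makes explicit the passage from Maslov-$2$ holomorphic discs to $A_{\min}$ that the paper leaves implicit (any disc lifts to $D^\ast S^2$ simply because $D$ is simply connected, and even Maslov indices then force positive areas to be at least $2\pi$).

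One correction: $c_1(T^\ast\mathbb{RP}^2)$ does not vanish integrally, since it is the Bockstein of $w_1(\mathbb{RP}^2)$, the nonzero class in $H^2(\mathbb{RP}^2;\mathbb{Z})\cong\mathbb{Z}_2$. What you should verify is $2c_1=0$, which supplies the $\mathbb{Z}$-grading and Maslov indices the proof of Theorem \ref{theorem:GH-ALbound} actually uses. The paper tacitly applies the theorem to $D^\ast\mathbb{RP}^2$ under this weaker condition as well.
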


Note that the Lagrangian capacities for $D^\ast S^2$ and $D^\ast\mathbb{RP}^2$ coincide with their Gromov width, see \cite{fr}, Theorem 1.1. Generalizing the example of $D^\ast\mathbb{RP}^2$, we have the rational homology balls $B_{p,q}$, where $p>q>0$ and $(p,q)=1$, which arise as the quotients of the $(A_{p-1})$ Milnor fiber by the cyclic group $\mathbb{Z}_p$. Assuming the $B_{p,q}$'s admit higher dilations\footnote{Note however that $2c_1(B_{p,q})$ is in general not $0$, therefore $\widehat{\mathit{PSH}}^\ast(B_{p,q})$ is not $\mathbb{Z}$-graded.}, then their Lagrangian capacities can be computed similarly as in the case of $D^\ast\mathbb{RP}^2$.

\subsubsection{Zoll spheres}

Proposition \ref{Lag-cap-S2} has an alternative proof based on the works \cite{fr,fr1} on the symplectic embedding into disc cotangent bundles, which can be extended to compute the Lagrangian capacities of a more general classes of Liouville domains, namely unit disc cotangent bundles of Zoll spheres of resolution.

Let $S\subset \mathbb{R}^3$ be a compact smooth surface of genus zero which is invariant under rotations about a fixed coordinate axis. We may write $S$ in the form
\[
S=\left\{(\rho(x)\cos\theta,\rho(x)\sin\theta,x)\vert x\in [a,b],\
\theta\in \mathbb{R}/2\pi\mathbb{Z}\right\},
\]
where $\rho:[a,b]\to[0,\infty)$ is a smooth function satisfying $\rho(a)=\rho(b)=0$. We say that $S$ is a Zoll sphere of revolution if all geodesics of the induced Riemannian metric are closed and have the same length.

\begin{proposition}\label{Lag-cap-S}
Let $S\subset \mathbb{R}^3$ be a Zoll sphere of revolution, and let $l$ denote
the length of any simple closed geodesic on $S$. Then
\[
C^\mathrm{CM}(D^\ast S)=C^\mathrm{AL}(D^\ast S)=l.
\]
In particular, for the unit round sphere $S=S^2$ this recovers Proposition \ref{Lag-cap-S2}.
\end{proposition}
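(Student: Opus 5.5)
The proof has two halves, an upper bound and a lower bound, following the pattern of Proposition \ref{Lag-cap-S2}.

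\textbf{Upper bound.} Since $S$ is simply connected, it is rationally inessential. By Zhao's result recalled in Section \ref{section:dilation}, $D^\ast S$ therefore admits a higher dilation, and Proposition \ref{proposition:finite} gives $C_d^\mathrm{GH}(D^\ast S)<\infty$ for all $d$. By Theorem \ref{theorem:GH-ALbound} it then suffices to show $C_1^\mathrm{GH}(D^\ast S)\le l$.

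For a Zoll metric the geodesic flow on $S^\ast S$ is periodic, and every simple Reeb orbit has action $l$. The contact boundary is again $\mathbb{RP}^3$ with a Zoll contact form. I would argue that the dilation in $\mathit{SH}^1(D^\ast S)$ is represented, after a small nondegenerate perturbation of the Morse--Bott family, by a cochain supported on the simple orbit family. One route is to use that a Zoll sphere of revolution has $D^\ast S$ symplectomorphic to $D^\ast S^2$ rescaled, at the level of the contact boundary, via the classical Zoll-to-round contactomorphism (the two contact forms are strictly contactomorphic up to the scaling $l/2\pi$). The alternative is to repeat the argument of \cite{psd}, Example 2.5 verbatim in the Morse--Bott setting.

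Either route yields $C_1^\mathrm{GH}(D^\ast S)\le l+\epsilon$ for every $\epsilon>0$. The Gutt--Hutchings capacity only depends on the Liouville domain up to the approximation by nondegenerate perturbations, so we obtain $C^\mathrm{AL}(D^\ast S)\le l$.

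\textbf{Lower bound.} I would produce a Lagrangian torus $T\subset D^\ast S$ with $A_{\min}(T)=l$, which gives $C^\mathrm{CM}\ge l$. The natural candidate is the analogue of the Polterovich torus. Fix a pole $p$ of the rotation axis, and let $T$ be the set of unit covectors along geodesics through $p$, i.e. the image of the geodesic flow of the unit cotangent fiber over $p$ swept over half a period. By the Zoll property this is an embedded torus in $S^\ast S$.

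The second, cleaner, option uses \cite{fr,fr1}: for Zoll spheres of revolution there are symplectic embeddings $\mathrm{int}\,B^4(l)\hookrightarrow D^\ast S$, and more precisely toric-type embeddings of a domain containing the Clifford-type torus $T_\mathrm{Cl}(l/2)\times$ (suitably). I would prefer to extract from \cite{fr} an embedding of an open toric domain $X_\Omega$ with diagonal $l$ (essentially the ``Lagrangian bidisc''/ball picture) into $D^\ast S$. The Clifford torus of diagonal $l$ then sits inside, giving a torus $T$ with area class spanned by discs of area $l$. Alternatively, one can compute directly as in (\ref{eq:area2}) using the integrable system of the rotation action together with the geodesic-flow action, whose moment map has image a triangle/square; the torus at the appropriate point has all disc classes of area in $l\mathbb{Z}$.

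\textbf{Main obstacle.} The main obstacle is verifying that $A_{\min}(T)=l$ exactly. One has to check that $\pi_2(D^\ast S,T)$ has positive-area generators only of area $\ge l$, including classes whose boundary is non-primitive, and for a torus sitting inside a nonconvex embedded domain the ambient $\pi_2$ could contribute extra classes. I would handle this using $\pi_2(T^\ast S)=\mathbb{Z}$ generated by the zero section, which has zero area since it is exact Lagrangian. Then the area map on $\pi_2(D^\ast S,T)$ factors through $H_1(T)$, and I would compute it on a basis via the action coordinates of the toric structure, which are $l\cdot(\text{integers})$ by the Zoll period. Combining the two bounds gives $C^\mathrm{CM}=C^\mathrm{AL}=l$, and for the round sphere $l=2\pi$, recovering Proposition \ref{Lag-cap-S2}.
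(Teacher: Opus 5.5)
Your upper bound is the paper's argument. The paper also gets $C_1^{\mathrm{GH}}(D^\ast S)=\frac{l}{2\pi}C_1^{\mathrm{GH}}(D^\ast S^2)=l$ from the classification of Zoll contact forms on $\mathbb{RP}^3$, and then applies Theorem \ref{theorem:GH-ALbound} with $d=1$. Your detour through higher dilations and finiteness of all $C_d^{\mathrm{GH}}$ is harmless but unnecessary.

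The lower bound is where you differ. The paper does not build a torus in $D^\ast S$ directly. It quotes the embedding $B^2(l)\times B^2(l)\hookrightarrow D^\ast S$ from \cite{fr1}, and reads off $l$ as the diagonal of the polydisc via Corollary \ref{CM=AL} (only its elementary Clifford-torus half is used). It then transfers this bound to $D^\ast S$ using exactly the observation in your ``main obstacle'' paragraph: since $\pi_1(D^\ast S)=0$ and $\omega=d\lambda_{\mathrm{can}}$ is exact, $A_{\min}(L)$ is the smallest positive value of $\int_\gamma\lambda_{\mathrm{can}}$ over $\gamma\in\pi_1(L)$.

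Your Polterovich-type torus is a more elementary, self-contained route. Take $T$ to be the cogeodesic (Reeb) flow-out of the Legendrian fiber $S^\ast_pS$ over a pole, i.e. all unit covectors tangent to meridians.
\begin{itemize}
\item $T$ is Lagrangian because it is spanned by the Reeb vector and the flow-image of a Legendrian direction.
\item $T$ is embedded because the geodesic from $p$ first returns to $p$ at time $l$, and distinct initial directions give distinct Reeb orbits.
\item $\pi_1(T)$ is generated by the fiber circle (period $0$) and a lifted meridian (period $l$), so $A_{\min}(T)=l$.
\end{itemize}
No embedding theorem is needed; this is the direct analogue of the paper's own Polterovich-torus argument for the round sphere. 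The paper's route buys generality instead: it works whenever a polydisc of the right size is known to embed, at the cost of relying on the nontrivial result of \cite{fr1}.

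Two small corrections. First, the flow-out must be taken over the full period $l$; over half a period you only get an annulus from $S^\ast_pS$ to the fiber over the opposite pole. Second, if you use the embedding route, a ball $B^4(l)$ has diagonal $l/2$ and only yields $l/2$, so you genuinely need the polydisc.
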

\begin{proof}
Since $\pi_1(D^\ast S)=0$, for every Lagrangian torus $L\subset D^\ast S$, the long exact sequence for homotopy groups associated to the pair $(D^\ast S,L)$ gives rise to a short exact sequence
\[
0 \longrightarrow \pi_2(D^\ast S)\longrightarrow \pi_2(D^\ast S,L)\longrightarrow \pi_1(L)\longrightarrow 0.
\]
Let $\lambda_\mathrm{can}$ be the canonical Liouville form on $D^\ast S$. Since the symplectic form is exact, its integrals over the spherical classes
in $\pi_2(D^\ast S)$ vanish. It follows that
\[
A_{\min}(L)=\inf\left\{\left.\int_{\gamma}\lambda_{\mathrm{can}}>0\right\vert\gamma\in \pi_1(L)\right\}.
\]
Consequently, if the Liouville domain $(X,\omega)$ admits a symplectic embedding into $(D^\ast S,d\lambda_{\mathrm{can}})$, then
\begin{equation}\label{eq:cal}
C^{\mathrm{AL}}(X,\omega)\leq C^{\mathrm{AL}}(D^\ast S,d\lambda_{\mathrm{can}}).
\end{equation}
We emphasize that the capacity $C^{\mathrm{AL}}$ (or $C^\mathrm{CM}$) is in general not monotone under symplectic embeddings, but monotonicity holds, for instance, for symplectic embeddings $X\hookrightarrow Y$ satisfying $\pi_2(Y,X)=0$. Although this condition does not hold in the present situation, the exactness of the symplectic form on $D^\ast S$ allows the preceding argument to go through. By \cite{fr1}, Proposition 1.6 (see also \cite{fr}, Theorem 1.1, which deals with the case of a round sphere) there exists a symplectic embedding
\begin{equation}\label{eq:emb}
\left(B^2(l)\times B^2(l),\omega_{\mathrm{std}}\right)\hookrightarrow(D^\ast S,d\lambda_{\mathrm{can}}).
\end{equation}
On the other hand, by Corollary \ref{CM=AL} ($B^2(l)\times B^2(l)$ is an exact symplectic manifold with corners, but we can approximate it using convex toric domains),
\[
C^{\mathrm{AL}}\left(B^2(l)\times B^2(l)\right)=l.
\]
Therefore, it follows from (\ref{eq:cal}) and (\ref{eq:emb}) that
\[
C^{\mathrm{AL}}(D^\ast S,d\lambda_{\mathrm{can}})\geq l.
\]
Finally, the restriction of $\lambda_{\mathrm{can}}$ to the unit cosphere bundle $S^*S$ is Zoll, with all simple Reeb orbits having period $l$. By the classification of Zoll contact forms on $\mathbb{RP}^3$ in \cite{abhs}, Theorem B.2 we have
\[C_1^\mathrm{GH}(D^\ast S)=\frac{l}{2\pi}C_1^\mathrm{GH}(D^\ast S^2)=l.\]
Hence, by our Theorem \ref{theorem:GH-ALbound}, we have
\[
C^{\mathrm{AL}}(D^*S,d\lambda_{\mathrm{can}})\leq l.
\]
Combining the two inequalities, we conclude that
\[
C^{\mathrm{AL}}(D^*S,d\lambda_{\mathrm{can}})=l. \qedhere
\]
\end{proof}

\subsubsection{$3$-dimensional lens spaces}

Finally, we consider the higher-dimensional case and compute the Lagrangian capacity of $D^\ast L(p,q)$, the unit disc cotangent bundle over the $3$-dimensional lens space $L(p,q)$ equipped with the canonical Liouville form $\lambda_\mathrm{can}$. 

We start with the case of $D^\ast S^3$. First observe that the minimal period of simple Reeb orbits on the contact boundary $S^\ast S^3\cong S^2\times S^3$ is $2\pi$. Moreover, it is known that $D^\ast S^3$ admits a dilation and the dilation comes from a simple Reeb orbit, see \cite{psd}, Example 2.5. Thus it follows that
\begin{equation}\label{eq:uppb}
C^\mathrm{CM}(D^\ast S^3)\leq C^\mathrm{AL}(D^\ast S^3)\leq C_1^\mathrm{GH}(D^\ast S^3)\leq2\pi.
\end{equation}
The cotangent bundle
\begin{equation}
T^\ast S^3=\left\{\left.(u,v)\in\mathbb{R}^4\times\mathbb{R}^4\right\vert |u|=1,\langle u,v\rangle=0\right\}
\end{equation}
with $\lambda_\mathrm{can}=\sum_{i=1}^4v_idu_i$, where $u=(u_1,u_2,u_3,u_4)$ and $v=(v_1,v_2,v_3,v_4)$ can in fact be realized as the smooth affine $3$-fold
\begin{equation}
\widehat{Y}=\left\{\left.(x_1,x_2,y_1,y_2,z)\in\mathbb{C}^5\right\vert x_1y_1=z+1,x_2y_2=z-1\right\}
\end{equation}
equipped with the restriction of the standard Liouville form $\lambda_\mathrm{std}$ on $\mathbb{C}^5$ up to a constant factor. To see that this is indeed the case, consider the change of variables
\begin{equation}\label{eq:wxy}
w_1=\frac{x_1+y_1}{2\sqrt{2}},\textrm{ }w_2=\frac{x_1-y_1}{2\sqrt{2}i},\textrm{ }w_3=\frac{x_2+y_2}{2\sqrt{2}i},\textrm{ }w_4=\frac{y_2-x_2}{2\sqrt{2}},
\end{equation}
under which  $\widehat{Y}\subset\mathbb{C}^5$ is identified with the affine quadric
\begin{equation}
\widehat{Z}=\left\{\left.(w_1,w_2,w_1,w_4)\in\mathbb{C}^4\right\vert w_1^2+w_2^2+w_3^2+w_4^2=1\right\}.
\end{equation}
An exact symplectomorphism $\widehat{Z}\rightarrow T^\ast S^3$ is given by the map
\begin{equation}
\begin{split}
&(w_1,w_2,w_3,w_4)\mapsto \\
&\left(\frac{\left(\mathrm{Re}(w_1),\mathrm{Re}(w_2),\mathrm{Re}(w_3),\mathrm{Re}(w_4)\right)}{\sqrt{\sum_{k=1}^4\mathrm{Re}(w_k)^2}},\right. \\
&\left.\sqrt{\sum_{k=1}^4\mathrm{Re}(w_k)^2}\cdot\left(\mathrm{Im}(w_1),\mathrm{Im}(w_2),\mathrm{Im}(w_3),\mathrm{Im}(w_4)\right)\right). \nonumber
\end{split}
\end{equation}
For the unit disc cotangent bundle $D^\ast S^3\subset T^\ast S^3$, we have $|v|=1$, so it follows that
\begin{equation}\label{eq:wuv}
w_k=\sqrt{\frac{\sqrt{5}+1}{2}}u_k+i\sqrt{\frac{\sqrt{5}-1}{2}}v_k,\textrm{ }k=1,2,3,4.
\end{equation}
Combining with (\ref{eq:wxy}), we can write down an explicit symplectomorphism $\widehat{Y}\rightarrow T^\ast S^3$, with
\begin{equation}
x_1=\sqrt{\sqrt{5}+1}(u_1+iu_2)+i\sqrt{\sqrt{5}-1}(v_1+iv_2),
\end{equation}
\begin{equation}
y_1=\sqrt{\sqrt{5}+1}(u_1-iu_2)+i\sqrt{\sqrt{5}-1}(v_1-iv_2),
\end{equation}
\begin{equation}
x_2=\sqrt{\sqrt{5}+1}(iu_3-u_4)+\sqrt{\sqrt{5}-1}(iv_4-v_3),
\end{equation}
\begin{equation}
y_2=\sqrt{\sqrt{5}+1}(iu_3+u_4)+\sqrt{\sqrt{5}-1}(v_3+iv_4).
\end{equation}
In particular,
\begin{equation}
\begin{split}
z=\frac{x_1y_1+x_2y_2}{2}&=\frac{\sqrt{5}+1}{2}(u_1^2+u_2^2-u_3^2-u_4^2)-\frac{\sqrt{5}-1}{2}(v_1^2+v_2^2-v_3^2-v_4^2) \\
&+2i(u_1v_1+u_2v_2-u_3v_3-u_4v_4).
\end{split}
\end{equation}
Using the facts $|u|=1$, $|v|=1$ and $\langle u,v\rangle=0$, direct computation yields
\begin{equation}\label{eq:ell}
\frac{\mathrm{Re}(z)^2}{5}+\frac{\mathrm{Im}(z)^2}{4}=1.
\end{equation}
In other words, under the projection $p_Y:\widehat{Y}\rightarrow\mathbb{C}$ to the $z$-coordinate plane, the unit disc cotangent bundle $D^\ast S^3\subset T^\ast S^3$ maps to the filled ellipse with foci $\pm1$, major semiaxis $\sqrt{5}$ and minor semiaxis $2$, see Figure \ref{fig:MBf}.

\begin{figure}
	\centering
	\begin{tikzpicture}
	\draw [blue] (0,0) ellipse (2.236 and 2);
	\draw [->] (-3.5,0)--(3.5,0);
	\draw [->] (0,-3)--(0,3);
	\draw (1,0) node {$\times$};
	\draw (-1,0) node {$\times$};
	\draw (-1,-0.3) node {$-1$};
	\draw (1,-0.3) node {$1$};
	\draw [blue] (2.5,-0.3) node {$\sqrt{5}$};
	\draw [blue] (0.2,2.2) node {$2$};
	\draw [blue] (2.3,1.2) node {$T_\mathrm{Pol}$};
	\end{tikzpicture}
	\caption{The image of the generalized Polterovich torus $T_\mathrm{Pol}\subset D^\ast S^3$ under the Morse-Bott fibration $p_Y:T^\ast S^3\rightarrow\mathbb{C}$}\label{fig:MBf}
\end{figure}

In a similar way, we can find the images of $D^\ast S^3$ under the projections to the $x_1$, $y_1$, $x_2$ and $y_2$ coordinate planes, which turn out to be the same filled ellipse
\begin{equation}\label{eq:ell1}
\frac{\mathrm{Re}(\zeta)^2}{2\sqrt{5}+4}+\frac{\mathrm{Im}(\zeta)^2}{2\sqrt{5}-4}\leq1,
\end{equation}
where $\zeta$ stands for any of the variables $x_1,y_1,x_2,y_2$.

The projection $p_Y:\widehat{Y}\rightarrow\mathbb{C}$ to the $z$-coordinate plane is in fact a Morse-Bott fibration, with smooth fiber symplectomorphic to $T^\ast T^2$, and two singular fibers at $\pm1$ isomorphic to $(\mathbb{C}\vee\mathbb{C})\times\mathbb{C}^\ast$. If we fix a global basis $\alpha,\beta\in H_1(T^2;\mathbb{Z})$, the vanishing cycles at $-1$ and $+1$ correspond to $\alpha$ and $\beta$, respectively. It follows from \cite{swd}, Lemma 4.1 that the symplectic parallel transport is well-defined for $p_Y$ away from the critical values. Take $\sigma\subset\mathbb{C}$ to be a closed curve encircling $\pm1$. Parallel transporting the zero section $T^2$ in the smooth fibers of $p_Y$ along $\sigma$ we obtain a Lagrangian $3$-torus $T_\sigma\subset T^\ast S^3$, which is actually monotone by \cite{adm}, Lemma 2.12. We denote by $T_\mathrm{Pol}\subset D^\ast S^3$ the monotone Lagrangian torus corresponding to the particular choice when $\sigma$ is the ellipse (\ref{eq:ell}), and call it the \textit{generalized Polterovich torus}.

\begin{lemma}\label{lemma:area1}
Let $u:(D,\partial D)\rightarrow(D^\ast S^3,T_\mathrm{Pol})$ be a $J$-holomorphic disc of Maslov index $2$ for some compatible almost complex structure $J$, then
\begin{equation}
\int_{D}u^\ast d\lambda_\mathrm{can}=2\pi. \nonumber
\end{equation}
\end{lemma}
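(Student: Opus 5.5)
We follow the proof of (\ref{eq:area2}) for the Polterovich torus in $D^\ast S^2$. Since $T_\mathrm{Pol}$ is monotone, the symplectic area of a Maslov index $2$ disc depends only on the Maslov index. It is therefore enough to compute the area of \emph{one} topological disc with boundary on $T_\mathrm{Pol}$ whose boundary class has Maslov index $2$. It need not be $J$-holomorphic, since area is a homotopy invariant relative to $T_\mathrm{Pol}$ and $d\lambda_\mathrm{can}$ is exact. We then get $\int_D u^\ast d\lambda_\mathrm{can}=\int_{\partial D}u^\ast\lambda_\mathrm{can}$, so only the boundary loop matters. Because $\pi_1(T^\ast S^3)=0$ and $\pi_2$ contributes no area, every class in $\pi_1(T_\mathrm{Pol})$ bounds.

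First we pick the loop. Take $c(t)\subset T_\mathrm{Pol}$ to be a lift of the ellipse $\sigma$ under $p_Y$ of degree one, with constant fibre component. Concretely, write $x_1=n_1(t)e^{2\pi i a_1(t)}$, $y_1=n_1(t)e^{2\pi i a_1(t)}$, and similarly for $x_2,y_2$. Here $n_1^2e^{4\pi i a_1}=\sigma+1$ and $n_2^2e^{4\pi i a_2}=\sigma-1$, so that the fibre $T^2$-coordinates are balanced ($|x_j|=|y_j|$), which is where the zero section sits. This is the analogue of the loop used for (\ref{eq:area1}). The standard fact that such a section loop, with fibre classes spanned by the vanishing cycles $\alpha,\beta$, gives the Maslov $2$ class for these Morse–Bott/Lefschetz tori (as in \cite{adm}) identifies $\mu([c])=2$. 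Fibre classes $\alpha,\beta$ bound Lefschetz thimbles of Maslov index $0$; these have zero area and so do not affect the count.

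Next we compute the action. Since $\lambda_\mathrm{std}$ on $\mathbb{C}^5$ restricts to $\lambda_\mathrm{can}$ up to the constant factor fixed by (\ref{eq:wxy})–(\ref{eq:wuv}), the area is the sum of the signed areas enclosed by the projections of $c$ to the $z$, $x_1$, $y_1$, $x_2$, $y_2$ planes, multiplied by that factor. The $z$-projection is the ellipse (\ref{eq:ell}), which has area $2\sqrt5\,\pi$. Each of the other four projections traces a curve lying on the boundary of the ellipse (\ref{eq:ell1}), because $T_\mathrm{Pol}\subset S^\ast S^3$. The winding number of each is determined by $a_j$: the argument of $\sigma\pm1$ winds once as $\sigma$ encircles $\pm1$, so $x_j$ winds by $1/2$. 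The corresponding contributions are then read off from (\ref{eq:ell1}), whose area is $\pi\sqrt{(2\sqrt5+4)(2\sqrt5-4)}=2\pi$.

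The remaining step is to add these up with the correct normalizing constant and check that the total is $2\pi$. This is the main obstacle: the constant relating $\lambda_\mathrm{std}|_{\widehat Y}$ to $\lambda_\mathrm{can}$, and the signs and half-windings of the four fibre projections, must be tracked carefully. As a cross-check we will use the fact that $T_\mathrm{Pol}\subset S^\ast S^3$ is foliated by closed Reeb orbits of period $2\pi$. A Maslov $2$ class should be represented by a loop homotopic within $T_\mathrm{Pol}$ to one such orbit, which has action exactly $2\pi$, the analogue of the geodesic-flow description of the Polterovich torus. This reproduces $2\pi$ directly and confirms the computation. We expect this Reeb-orbit argument to be the cleaner route if the coordinate bookkeeping becomes unwieldy.
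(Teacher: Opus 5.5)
Your proposal does not establish the statement. The decisive computation is deferred (``the remaining step''), and both ingredients you rely on fail.

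First, the lift $c(t)$ with $x_j=y_j$ is not a closed loop. You note yourself that $x_j$ winds by $\tfrac12$. That means after one circuit of $\sigma$ each of $x_1,y_1,x_2,y_2$ has changed sign, so $c$ ends at $(-x_1,-y_1,-x_2,-y_2,\sigma(0))$, a different point of the fibre torus. Its $\zeta$-projections are half-arcs, so there is no enclosed area to read off from (\ref{eq:ell1}).

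Moreover, the area is not one constant times the sum of the projected areas in the $z,x_1,y_1,x_2,y_2$ planes. The coordinate $z=\tfrac12(x_1y_1+x_2y_2)$ is a function of the others, and pulling back along (\ref{eq:wxy}) (which is $\tfrac12$ times a unitary map) gives $d\lambda_\mathrm{can}=\pm\tfrac14\sum_{\zeta}d\mathrm{Re}(\zeta)\wedge d\mathrm{Im}(\zeta)$ over $\zeta\in\{x_1,y_1,x_2,y_2\}$. There is no $z$-term.

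Second, the Reeb-orbit shortcut misidentifies the Maslov class. A closed Reeb orbit in $T_\mathrm{Pol}$ has Maslov index $4$, not $2$.

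To see this, write $e_\phi=(\cos\phi,\sin\phi)$ and $\mathbb{R}^4=\mathbb{R}^2\oplus\mathbb{R}^2$. Then (\ref{eq:wxy}) and (\ref{eq:wuv}) give
\[
T_\mathrm{Pol}=\left\{u=(\cos\tau\,e_{\phi_1},-\sin\tau\,e_{\phi_2}),\ v=(\sin\tau\,e_{\phi_1},\cos\tau\,e_{\phi_2})\right\},
\]
with $(\phi_1,\phi_2,\tau)\sim(\phi_1+\pi,\phi_2+\pi,\tau+\pi)$. On it $z=\sqrt5\cos2\tau+2i\sin2\tau$, and $\lambda_\mathrm{can}|_{T_\mathrm{Pol}}=-d\tau$; indeed one checks $v\cdot\partial_{\phi_i}u=0$.

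For $\Omega=\frac{dx_1}{x_1}\wedge\frac{dx_2}{x_2}\wedge dz$ one gets $\Omega(\partial_{\phi_1},\partial_{\phi_2},\partial_\tau)=-\partial_\tau z$. Its phase depends only on $\tau$ and winds once as $\tau$ increases by $\pi$. So for any loop along which $\tau$ changes by $\Delta\tau$, the action is $\Delta\tau$ and the Maslov index is $2\Delta\tau/\pi$, up to a common sign.

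The Reeb orbits are the $\tau$-circles. They have $\Delta\tau=2\pi$, hence $\mu=4$, and each covers the ellipse (\ref{eq:ell}) twice. Because of the identification, $s\mapsto(s,s,s)$ for $s\in[0,\pi]$ is a closed loop with $\Delta\tau=\pi$, so it has $\mu=2$ and action $\pi$. Your $c$, closed up by rotating both fibre circles through $\pi$, is a loop of this kind.

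Since $\pi_1(T^\ast S^3)=\pi_2(T^\ast S^3)=0$ and the form is exact, every Maslov-$2$ disc on $T_\mathrm{Pol}$ has area $\pi$, and $A_{\min}(T_\mathrm{Pol})=\pi$. Either of your routes, done correctly, therefore gives $\pi$, not the stated $2\pi$. The paper's own computation reaches $2\pi$ using the same non-closed $c(t)$ plus a separate $z$-plane contribution of $\pi$, which is not part of $d\lambda_\mathrm{can}$.
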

\begin{proof}
Under the Euclidean metric on the $z$-coordinate plane $\mathbb{C}_z$, the area bounded by the ellipse (\ref{eq:ell}) is $2\sqrt{5}\pi$. However, unlike the situation of $D^\ast S^2$, the standard metric on $\mathbb{C}_z$ rescales the restriction of the canonical symplectic form $d\lambda_\mathrm{can}$ on $D^\ast S^3$ by a constant factor. To find this factor, we compute
\begin{equation}
\begin{split}
d\mathrm{Re}(z)&=(\sqrt{5}+1)(u_1du_1+u_2du_2-u_3du_3-u_4du_4) \\
&-(\sqrt{5}-1)(v_1dv_1+v_2dv_2-v_3dv_3-v_4dv_4), \nonumber
\end{split}
\end{equation}
\begin{equation}
\begin{split}
d\mathrm{Im}(z)&=2(u_1dv_1+v_1du_1)+2(u_2dv_2+v_2du_2)-2(u_3dv_3+v_3du_3) \\
&-2(u_4dv_4+v_4du_4), \nonumber
\end{split}
\end{equation}
which imply that
\begin{equation}\label{eq:coeff1}
d\mathrm{Re}(z)\wedge d\mathrm{Im}(z)=2\sqrt{5}\sum_{k=1}^4dv_k\wedge du_k=2\sqrt{5}d\lambda_\mathrm{can}|_{\mathbb{C}_z},
\end{equation}
where the expression on the right-hand side is simplified using $|u|=|v|=1$ and $\langle u,v\rangle=0$. On the other hand, from (\ref{eq:wxy}) and (\ref{eq:wuv}), it is easy to find that
\begin{equation}\label{eq:coeff2}
d\mathrm{Re}(\zeta)\wedge d\mathrm{Im}(\zeta)=8d\lambda_\mathrm{can}|_{\mathbb{C}_\zeta}
\end{equation}
on the $\zeta$-coordinate plane, where $\zeta$ stands for any one of the variables $x_1,y_1,x_2,y_2$.

Given a parametrized curve
\begin{equation}
c(t)=\left(n_1(t)e^{2\pi ia(t)},n_1(t)e^{2\pi ia(t)},n_2(t)e^{2\pi ib(t)},n_2(t)e^{2\pi ib(t)},\sigma(t)\right),
\end{equation}
where $t\in[0,1]$, $n_1(t)>0$, $n_2(t)>0$, $a(t)$ and $b(t)$ are real-valued functions satisfying
\begin{equation}
n_1(t)^2e^{4\pi i a(t)}=1+\sigma(t),\textrm{ }n_2(t)^2e^{4\pi ib(t)}=1-\sigma(t)
\end{equation} 
on the generalized Polterovich torus $T_\mathrm{Pol}$ that projects to the ellipse (\ref{eq:ell}), which is denoted by $\sigma(t)$ here, there is a formula similar to (\ref{eq:area1}) computing the symplectic area of a smooth disc $u:(D,\partial D)\rightarrow(D^\ast S^3,T_\mathrm{Pol})$ with boundary on $c(t)\subset T_\mathrm{Pol}$, which reads
\begin{equation}\label{eq:area3}
\int_{D}u^\ast d\lambda_\mathrm{can}=\frac{i}{8\sqrt{5}}\int_\sigma(zd\bar{z}-\bar{z}dz)+\frac{\pi}{4}\int_0^1a'(t)n_1(t)^2dt+\frac{\pi}{4}\int_0^1b'(t)n_2(t)^2dt,
\end{equation}
where the first term on the right-hand side is the area bounded by the ellipse $\sigma(t)$, the second term is the sum of the area contributions from the $x_1$ and $y_1$ coordinate projections, i.e. twice of the area bounded by the ellipse (\ref{eq:ell1}), and the third term is the sum of the area contributions from the $x_2$ and $y_2$ coordinate projections. The coefficients before these terms are rescaled according to our computations in (\ref{eq:coeff1}) and (\ref{eq:coeff2}). Thus by (\ref{eq:area3}) we have
\[
\int_{D}u^\ast d\lambda_\mathrm{can}=\pi+\frac{\pi}{4}+\frac{\pi}{4}+\frac{\pi}{4}+\frac{\pi}{4}=2\pi. \nonumber \qedhere
\]
\end{proof}

Lemma \ref{lemma:area1} shows that $A_{\min}(T_\mathrm{Pol})=2\pi$. Combining with (\ref{eq:uppb}) we obtain the following.

\begin{proposition}\label{Lag-cap-S3}
For the unit disc cotangent bundle $D^\ast S^3$, we have
\begin{equation}
C^\mathrm{CM}(D^\ast S^3)=C^\mathrm{AL}(D^\ast S^3)=2\pi. \nonumber
\end{equation}
\end{proposition}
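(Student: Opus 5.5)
The proof is a squeeze between an upper bound from Theorem \ref{theorem:GH-ALbound} and a lower bound from the generalized Polterovich torus, both of which are essentially in place.

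\textbf{Upper bound.} The displayed inequality (\ref{eq:uppb}) already gives
\[
C^\mathrm{CM}(D^\ast S^3)\leq C^\mathrm{AL}(D^\ast S^3)\leq C_1^\mathrm{GH}(D^\ast S^3)\leq 2\pi .
\]
I will justify its ingredients in order.
\begin{itemize}
\item $c_1(D^\ast S^3)=0$, so Theorem \ref{theorem:GH-ALbound} applies with $d=1$.
\item By \cite{psd}, Example 2.5, the dilation is represented by a simple Reeb orbit. After a small non-degenerate perturbation of the Morse--Bott contact form on $S^\ast S^3$, this orbit has action arbitrarily close to $2\pi$.
\item Therefore $C_1^\mathrm{GH}(D^\ast S^3)\leq 2\pi$, by the definition of $C_1^\mathrm{GH}$ together with the continuity and monotonicity of Gutt--Hutchings capacities under such perturbations.
\end{itemize}

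\textbf{Lower bound.} I use the generalized Polterovich torus $T_\mathrm{Pol}\subset D^\ast S^3$. To compute $A_{\min}(T_\mathrm{Pol})$, I argue as in the proof of Proposition \ref{Lag-cap-S}. Since $\pi_1(D^\ast S^3)=0$ and $d\lambda_\mathrm{can}$ is exact, the area of a class in $\pi_2(D^\ast S^3,T_\mathrm{Pol})$ depends only on its boundary in $H_1(T_\mathrm{Pol};\mathbb{Z})\cong\mathbb{Z}^3$. The area is then a homomorphism $H_1\to\mathbb{R}$, given by integrating $\lambda_\mathrm{can}$.

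Because $T_\mathrm{Pol}$ is monotone (by \cite{adm}, Lemma 2.12), area is proportional to the Maslov index, and Lemma \ref{lemma:area1} gives area $2\pi$ in Maslov index $2$. Hence the area homomorphism equals $\pi\mu$.

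The minimal positive value of this homomorphism is $2\pi$ provided the Maslov class takes only values in $2\mathbb{Z}$ (automatic, since $T_\mathrm{Pol}$ is orientable) and attains the value $2$. The value $2$ is attained by the lift $c(t)$ of the ellipse $\sigma$. Thus $A_{\min}(T_\mathrm{Pol})=2\pi$, and so $C^\mathrm{CM}(D^\ast S^3)\geq 2\pi$.

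Combining the two bounds gives
\[
C^\mathrm{CM}(D^\ast S^3)=C^\mathrm{AL}(D^\ast S^3)=2\pi .
\]

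\textbf{Main obstacle.} The delicate point is that $T_\mathrm{Pol}$ genuinely lies in $D^\ast S^3$ and not only in $T^\ast S^3$. This requires checking that parallel transport of the zero section along the ellipse (\ref{eq:ell}) lands in the unit cosphere bundle. I would verify this using the explicit coordinates (\ref{eq:wuv}): points with $|u|=|v|=1$ and $\langle u,v\rangle=0$ project exactly onto (\ref{eq:ell}). One then checks that the fibrewise zero-section tori over $\sigma$ are the loci $|v|=1$, via the $T^2$-symmetry rotating the $(x_1,y_1)$ and $(x_2,y_2)$ pairs.

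If $T_\mathrm{Pol}$ only sits on the boundary $S^\ast S^3$, I would either allow Lagrangians in the closed domain, as in Definition \ref{definition:CM}, or shrink the ellipse slightly and take a limit. The latter gives $C^\mathrm{CM}\geq 2\pi-\epsilon$ for every $\epsilon>0$, which suffices because $C^\mathrm{CM}$ is defined as a supremum.
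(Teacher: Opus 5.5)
Your proposal is correct and is essentially the paper's proof. The upper bound is (\ref{eq:uppb}), namely Theorem \ref{theorem:GH-ALbound} with $d=1$ and the dilation coming from a simple Reeb orbit of period $2\pi$. The lower bound is $A_{\min}(T_\mathrm{Pol})=2\pi$ via Lemma \ref{lemma:area1}, and your monotonicity and even-Maslov argument spells out what the paper leaves implicit.

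One small correction to your side remark: not every point of $S^\ast S^3$ projects to the ellipse (\ref{eq:ell}). For example, $u=(1,0,0,0)$, $v=(0,1,0,0)$ gives $z=1$. What does project onto it is the $T^2$-invariant locus $\{u_1v_2=u_2v_1,\ u_3v_4=u_4v_3\}\subset S^\ast S^3$, and that locus is exactly $T_\mathrm{Pol}$. So the torus sits on $\partial D^\ast S^3$, which Definition \ref{definition:CM} permits.
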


It is clear that the generalized Polterovich torus $T_\mathrm{Pol}\subset D^\ast S^3$ descends to a monotone Lagrangian torus $T_\mathrm{Pol}'\subset D^\ast L(p,q)$ under the quotient of the $\mathbb{Z}_p$-action
\begin{equation}
(x_1,y_1,x_2,y_2,z)\mapsto\left(e^{\frac{2\pi i}{p}}x_1,e^{-\frac{2\pi i}{p}}y_1,e^{\frac{2\pi iq}{p}}x_2,e^{-\frac{2\pi iq}{p}}y_2,z\right) \nonumber
\end{equation}
on the affine $3$-fold $\widehat{Y}\cong T^\ast S^3$, where $(p,q)=1$. In fact, when $q=1$, $T_\mathrm{Pol}'$ can also be obtained by parallel transporting the zero section $T^2\subset T^\ast T^2$ over the same ellipse (\ref{eq:ell}) of a similar Morse-Bott fibration $p_Y':T^\ast L(p,1)\rightarrow\mathbb{C}$. The critical values of $p_Y'$ are still $\pm1$, but the vanishing cycles are now given by $\alpha$ and $\alpha\pm p\beta$, respectively. By the same argument as in Lemma \ref{lemma:area}, one can show that $A_{\min}(T_\mathrm{Pol}')=2\pi$.

On the other hand, the minimal Reeb period on the contact boundary $S^\ast L(p,q)\cong L(p,q)\times S^2$ is $\frac{2\pi}{p}$, but the simple Reeb orbits are not contractible. Instead, the Reeb orbit contributing to the first Gutt--Hutchings capacity is the $p$-fold cover of a simple Reeb orbit, so we have $C_1^\mathrm{GH}\left(D^\ast L(p,q)\right)=2\pi$, see \cite{psd}, Example 3.14. Thus we have proved

\begin{proposition}
For the unit disc cotangent bundle $D^\ast L(p,q)$, we have
\begin{equation}
C^\mathrm{CM}\left(D^\ast L(p,q)\right)=C^\mathrm{AL}\left(D^\ast L(p,q)\right)=2\pi. \nonumber
\end{equation}
\end{proposition}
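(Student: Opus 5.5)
The proof has two halves: an upper bound from Theorem \ref{theorem:GH-ALbound} and a lower bound from an explicit torus, following the pattern used above for $D^\ast S^3$ and $D^\ast\mathbb{RP}^2$.

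\textbf{Upper bound.} Theorem \ref{theorem:GH-ALbound} applies once $c_1(D^\ast L(p,q))=0$ and $C_1^{\mathrm{GH}}(D^\ast L(p,q))$ is finite and equal to $2\pi$. The vanishing of $c_1$ holds for any cotangent bundle. Finiteness holds because $L(p,q)$ has finite fundamental group, so it is rationally inessential; by Zhao's criterion it admits a higher dilation, and Proposition \ref{proposition:finite} then gives finite Gutt--Hutchings capacities. To evaluate $C_1^{\mathrm{GH}}$, I would use the $\mathbb{Z}_p$-quotient $D^\ast S^3\to D^\ast L(p,q)$. The Floer cylinder in $D^\ast S^3$ that kills the unit, coming from the dilation of action $2\pi$ (\cite{psd}, Example 2.5), descends to a cylinder asymptotic to the $p$-fold cover of a simple Reeb orbit of $S^\ast L(p,q)$. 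A simple orbit has period $2\pi/p$, so this cover has action $2\pi$. Because simple orbits (and their covers of order less than $p$) are noncontractible, no orbit of smaller action can kill the unit. This gives $C_1^{\mathrm{GH}}=2\pi$, matching \cite{psd}, Example 3.14, and Theorem \ref{theorem:GH-ALbound} yields $C^{\mathrm{CM}}\le C^{\mathrm{AL}}\le 2\pi$.

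\textbf{Lower bound.} I would show that the quotient torus $T'_{\mathrm{Pol}}=q(T_{\mathrm{Pol}})$ is a Lagrangian torus with $A_{\min}(T'_{\mathrm{Pol}})=2\pi$.
\begin{enumerate}
\item The stated $\mathbb{Z}_p$-action acts freely on $\widehat{Y}$ and preserves $z$. Since $T_{\mathrm{Pol}}$ is built by parallel transport over the ellipse (\ref{eq:ell}), the action preserves it, so $T'_{\mathrm{Pol}}$ is an embedded Lagrangian torus. Its monotonicity is inherited from $T_{\mathrm{Pol}}$.
\item Take the lifted almost complex structure $J$ on $D^\ast S^3$. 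Since $D^\ast S^3$ is simply connected, every disc $\bar u$ with boundary on $T'_{\mathrm{Pol}}$ lifts to a disc in $D^\ast S^3$ with boundary on $T_{\mathrm{Pol}}$. The Maslov index and the area are preserved, because $q^\ast\bar\lambda_{\mathrm{can}}=\lambda_{\mathrm{can}}$.
\item Lemma \ref{lemma:area1} then gives area $2\pi$ for Maslov $2$ discs.
\end{enumerate}

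\textbf{Main obstacle.} The delicate point is computing $A_{\min}$ as an infimum over all of $\pi_2(D^\ast L(p,q),T'_{\mathrm{Pol}})$, not only over holomorphic Maslov $2$ classes. Since the form is exact, area is $\lambda$ integrated over the boundary loop, and monotonicity makes it proportional to the Maslov index. Any class in the image of $\pi_1(T'_{\mathrm{Pol}})\to\pi_1(D^\ast L(p,q))=\mathbb{Z}_p$ that is trivial in the target bounds a disc. The disc lifting argument covers the classes that come from $T_{\mathrm{Pol}}$. The issue is whether the quotient torus has new contractible loops — boundary classes not coming from $T_{\mathrm{Pol}}$ — with smaller area. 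I would check that $q$ induces an injection $H_1(T_{\mathrm{Pol}})\hookrightarrow H_1(T'_{\mathrm{Pol}})$ with index-$p$ image, and that the image equals the kernel of $H_1(T'_{\mathrm{Pol}})\to\mathbb{Z}_p$. Given that, every null-homotopic loop lifts to a closed loop on $T_{\mathrm{Pol}}$, so the minimal positive area is still $2\pi$.

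The two inequalities together give $C^{\mathrm{CM}}=C^{\mathrm{AL}}=2\pi$.
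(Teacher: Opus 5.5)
Your proposal is correct and follows the paper's argument. The upper bound comes from Theorem \ref{theorem:GH-ALbound} with $C_1^{\mathrm{GH}}(D^\ast L(p,q))=2\pi$, where the paper simply cites \cite{psd}, Example 3.14, and the lower bound comes from the quotient torus $T'_{\mathrm{Pol}}$ via the disc-lifting argument of Lemma \ref{lemma:area}. The ``main obstacle'' you flag is already settled by your step 2: every disc, holomorphic or not, lifts through $D^\ast S^3\to D^\ast L(p,q)$ because $D$ is simply connected, with boundary in $q^{-1}(T'_{\mathrm{Pol}})=T_{\mathrm{Pol}}$, so the sets of areas and Maslov indices for $T_{\mathrm{Pol}}$ and $T'_{\mathrm{Pol}}$ coincide and your $H_1$ check is automatic.
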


More generally, one can try to compute the Lagrangian capacities of the double-bubble or even multi-bubble plumbings studied in \cite{swd} and \cite{xlp}. These are Weinstein $6$-manifolds carrying similar Morse-Bott fibrations as $T^\ast S^3$, therefore monotone Lagrangian tori can be constructed in the corresponding Weinstein domains by parallel transporting the $T^2$ vanishing cycles. However, the existence of a higher-dilation is in general unclear for these Weinstein $6$-manifolds, despite the partial results obtained in \cite{alp,gpa,swd}.

\Addresses

\end{document}